\newtheorem{theorem}{Theorem}[section]
\newtheorem{lemma}[theorem]{Lemma}
\newtheorem{proposition}[theorem]{Proposition}
\numberwithin{equation}{section}
\newtheorem{corollary}[theorem]{Corollary}
\theoremstyle{definition}		
\newtheorem{definition}[theorem]{Definition}
\newtheorem{remark}[theorem]{Remark}
\newfont{\Bb}{msbm10 scaled 1200}
\def\RR{{\mathbf R}}
\def\bc{\begin{center}}
\def\ec{\end{center}}
\def\R{{{\mathbf R}}}
\def\E{{{\mathbf E}}}
\def\F{{{\mathcal F}}}
\def\DD{{\mathbb D}}
\def\P{{\mathbb P}}
\def\N{{{\mathbf N}}}
\def\L{{{\mathcal L}}}
\def\mcS{{{\mathcal S}}}
\def\rms{{{\rm s}}}
\def\l{\ell}
\def\sgn{\mathop {\rm sign}}
\def\n{{   {   \vec n}}}
\def\Ric{{\mathop  {\rm Ric}}}
\def\ric{{\mathop  {\rm ric}}}
\def\<{\langle}
\def\>{\rangle}
\def\paral{/\kern-0.55ex/}
\def\parals_#1{/\kern-0.55ex/_{\!#1}}
\def\beq{\begin{equation}}
\def\eeq{\end{equation}}
\def\ben{\begin{enumerate}}
\def\een{\end{enumerate}}
\def\1{{\chi}}
\def\paral{/\kern-0.3em/}
\def\parals_#1{/\kern-0.3em/_{\!#1}}
\def\B{{\mathcal B}}
\def\di{\displaystyle}
\def\f{\frac}
\def\a{\alpha }
\def\D{\Delta }
\def\d{\delta }
\def\e{\varepsilon }
\def\na{\nabla }
\def\om{\omega }
\def\s{\sigma }
\def\trace{\mathop{\rm trace}}
\newcommand{\Label}[1]{\label{#1}}
\newcommand{\SF}{{\mathscr F}}
\newcommand{\SH}{{\mathscr H}}
\newcommand{\SR}{{\mathscr R}}
\newcommand{\scrS}{{\mathscr S}}
\begin{document}

\title[]{Reflected Brownian Motion: selection, approximation and Linearization }
\author[Marc Arnaudon]{Marc Arnaudon} \address{Institut de Math\'ematiques de Bordeaux \hfill\break\indent CNRS: UMR 5251\hfill\break\indent
  Universit\'e de Bordeaux  \hfill\break\indent
  F33405 TALENCE Cedex, France}
\email{marc.arnaudon@math.u-bordeaux.fr}
\author[Xue-Mei Li]{Xue-Mei Li} \address{Mathematics Institute \hfill\break\indent The University of Warwick\hfill\break\indent
 Coventry CV4 7AL, United Kingdom}
\email{xue-mei.li@warwick.ac.uk}

\begin{abstract}
   We construct a family of  SDEs whose solutions select a reflected Brownian flow as well as a stochastic damped 
   transport process $(W_t)$.  The latter gives a representation for the solutions to the heat equation for
   differential 1-forms with the absolute boundary conditions; it evolves pathwise by the Ricci curvature 
   in the interior,  by the shape operator on the boundary and driven by the boundary local time, and has 
   its normal part erased on the boundary. On the half line this construction selects the Skorohod solution
   (and its derivative with respect to initial points), not the Tanaka solution.    On the half space this agrees
   with the construction of N. Ikeda and S. Watanabe  \cite{Ikeda-Watanabe} by Poisson point  processes. 
 
   This leads also to an approximation for the boundary local time in the topology of uniform convergence; 
   not in the semi-martingale topology, indicating the difficulty for the convergence of solutions of a family
   of random ODE's, with nice coefficients, to the solution of an equation with jumps and driven by the local time.
   In addition, We note that $(W_t)$ is the weak derivative of a family of reflected Brownian motions with respect 
   to the starting point.

%
%

\end{abstract}

\maketitle

\section{Introduction}
\Label{Section1}  

{\bf A. }
Let $M$ be a  smooth connected $d$-dimensional Riemannian manifold with interior $M^o$ and boundary $\partial M$, and $\nu$ the unit inward pointing normal vector on $\partial M$.
Observe that the boundary is a smooth, not necessarily connected, manifold.
 Denote by $\Delta$ the Laplacian on functions. Let $\wedge $ denote the space of differential forms of all degrees, $d$ the exterior differential, $d^*$ its dual operator on $L^2(\wedge)$. Furthermore, let $\Delta^1=-(d^*d+dd^*)$ denote the restriction of the Hodge-Laplace-Beltrami operator, to differential 1-forms.     
 Throughout the article, $(\Omega, \F, \F_t, \P)$ is an underlying filtered probability space satisfying the usual assumptions.

 Let us consider the following initial value and Neumann boundary problem, describing  heat conduction in a perfect insulator,
\begin{equation}
\Label{heat-1}
\begin{split}
&\frac{\partial u}{ \partial t}=\frac12\Delta u \; \hbox{ in $M^o$}, \quad
\f{\partial u}{\partial \nu}|_{\partial M}=0, \quad u(0, \cdot)=f.
\end{split}
\end{equation}

It is well known that any bounded classical solution $u$, from $[0,T]\times M$ to $\R$,  is given by $u(t, x)=\E[ f(Y_t)]$ where $(Y_t)$ is a reflected Brownian motion (RBM) with the initial value $x$. Consider the following problems. (1) How would we simulate a RBM?  (2) Could we select a RBM so it gives a probabilistic representation for the solution to the following heat equation on differential 1-forms, with initial value and Dirichlet boundary?
\begin{equation}\begin{split}\label{form}
\frac{ \partial\phi}{  \partial t}=\frac{1}{ 2}\Delta^1 \phi \;\; \hbox{ in $M^o$}, \quad
\phi(t,\nu)|_{\partial M}=0,  \quad \phi(0, \cdot)=\phi_0.\end{split}
\end{equation}
If $u$ solves (\ref{heat-1}), its exterior differential $du$ solves (\ref{form}). 
In general (\ref{form}) is under determined, hence we impose the extra boundary condition: $d\phi(t,\nu)|_{\partial M}=0$. This is known as the absolute boundary condition which is automatically satisfied by $\phi=du$. Once the Dirichlet boundary condition $\phi(\nu)=0$ is given, it is equivalent to $\nabla_\nu \phi -\phi(\nabla \nu)=0$ on the tangent space of a point of the boundary.   

The existence of a RBM is well known,  we quote the following theorem, from the book of N. Ikeda and S. Watanabe, which treats a more general problem and implies in particular that a non-sticky SDE on a manifold has a unique local solution.
\begin{theorem}[{\cite[Thm 7.2]{Ikeda-Watanabe}}]
Let $D=\R_d^+=\{(x_1, \dots, x^d), x^d\ge 0\}$. For $i=1, \dots, m$, let $\sigma_k^i,b^i: D\to \R$ and $\delta: \partial D\to \R$ be bounded Lipschitz continuous functions. Suppose that for some number $C>0$, $\sum_{k=1}^m (\sigma_k^d)^2 \ge C$ and 
$\delta\ge C$.
Then there exists a pair of adapted stochastic processes $(Y_t, L_t)$,   where $(L_t)$ is a continuous and increasing process  with $L_0=0$ and they satisfy the relation that  $\int_0^t \chi_{Y_s\in \partial D} dL_s=L_t$ for all $t\ge 0$ a.s. and the equations below:
\begin{equation}\label{RBM}
\begin{split}
dY^i_t=&\sum_{k=1}^m \sigma_k^i (Y_t)\chi_{D^0}(Y_t)dB_t^k+b^i(Y_t)\chi_{Y_t\in D^0}dt,\quad 1\le i\le d-1\\
dY_t^d=&\sum_{k=1}^m  \sigma_k^d (Y_t)\chi_{D^0}(Y_t)dB_t^k+b^d(Y_t)\chi_{Y_t\in D^0}dt
+\delta(Y_t)dL_t.\end{split}
\end{equation}
Furthermore uniqueness in law holds.
\end{theorem}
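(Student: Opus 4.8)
The plan is to construct the pair $(Y_t,L_t)$ by combining the one-dimensional deterministic Skorohod lemma with a Picard-type successive approximation, and then to deduce uniqueness in law from pathwise uniqueness via the Yamada--Watanabe principle; an alternative route to existence is penalization (replacing $\delta(Y_t)\,dL_t$ by a strong restoring drift $-\frac1\ve(Y^d_t)^-\,dt$ and passing to the limit $\ve\to0$), but here the Skorohod route is cleaner since the reflection acts only in the $d$-th coordinate. First recall the Skorohod lemma: for $w\in C([0,\infty);\R)$ with $w_0\ge0$ there is a unique pair $(z,\ell)$ of continuous functions with $z=w+\ell\ge0$, $\ell$ nondecreasing, $\ell_0=0$ and $\int_0^t\chi_{\{z_s>0\}}\,d\ell_s=0$, namely $\ell_t=\max\bigl(0,-\inf_{0\le s\le t}w_s\bigr)$, $z=:\Gamma(w)$, and $w\mapsto\ell$, $w\mapsto\Gamma(w)$ are Lipschitz for local uniform convergence. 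Since $\delta\ge C>0$, the $\delta$-weight is harmless: a pair $(Y_t,L_t)$ solves \eqref{RBM} if and only if the $Y^i$ ($i\le d-1$) solve the unreflected tangential equations, $Y^d=\Gamma(w)$ with $w_t:=Y^d_0+\sum_{k=1}^m\int_0^t\sigma_k^d(Y_s)\chi_{D^0}(Y_s)\,dB^k_s+\int_0^t b^d(Y_s)\chi_{D^0}(Y_s)\,ds$, and $L_t=\int_0^t\delta(Y_s)^{-1}\,dA_s$ with $A_t:=Y^d_t-w_t$ the increasing reflection term (extending $\delta$ to a bounded Lipschitz function on $D$ is irrelevant, as $dA$ is carried by $\partial D$).

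For existence, set $Y^{(0)}\equiv Y_0$ and, given a continuous adapted $Y^{(n)}$, define $Y^{(n+1)}$ by
\[
Y^{i,(n+1)}_t=Y^i_0+\sum_k\int_0^t\sigma_k^i(Y^{(n)}_s)\chi_{D^0}(Y^{(n)}_s)\,dB^k_s+\int_0^t b^i(Y^{(n)}_s)\chi_{D^0}(Y^{(n)}_s)\,ds,\quad i\le d-1,
\]
$Y^{d,(n+1)}=\Gamma(w^{(n)})$, and $L^{(n+1)}_t=\int_0^t\delta(Y^{(n+1)}_s)^{-1}\,d\bigl(Y^{d,(n+1)}_s-w^{(n)}_s\bigr)$, where $w^{(n)}$ is $w$ with $Y^{(n)}$ substituted for $Y$. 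Using the Burkholder--Davis--Gundy and Cauchy--Schwarz inequalities, the Lipschitz bounds on $\sigma_k^i,b^i,\delta$ and the Lipschitz continuity of the Skorohod map, one obtains, for $t\le T$,
\[
\E\sup_{s\le t}\bigl|Y^{(n+1)}_s-Y^{(n)}_s\bigr|^2\le K_T\int_0^t\E\sup_{r\le s}\bigl|Y^{(n)}_r-Y^{(n-1)}_r\bigr|^2\,ds,
\]
so $(Y^{(n)})$ is Cauchy uniformly on compacts in $L^2$ with a continuous adapted limit $Y$, and $L^{(n)}\to L$ continuous increasing with $L_0=0$. Passing to the limit (continuity of $\Gamma$ and of the integrals involved) shows $(Y,L)$ solves \eqref{RBM}, and $\int_0^t\chi_{\{Y_s\in\partial D\}}\,dL_s=L_t$ because each $d(Y^{d,(n+1)}-w^{(n)})$ is carried by $\{Y^{d,(n+1)}=0\}$.

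The main obstacle is the discontinuity of the indicator $\chi_{D^0}$, which breaks the Lipschitz estimate above in its stated form. I would handle this by running the construction first for the equation \emph{without} the indicators --- for which the estimates are genuinely Lipschitz --- and then using the non-degeneracy hypothesis to identify the two formulations: if $Y^d\ge0$ solves the indicator-free equation, then $d\langle Y^d\rangle_s=\sum_k\bigl(\sigma_k^d(Y_s)\bigr)^2\,ds\ge C\,ds$, so the occupation-time formula gives $C\int_0^t\chi_{\{Y^d_s=0\}}\,ds\le\int_0^t\chi_{\{Y^d_s=0\}}\,d\langle Y^d\rangle_s=\int_\R\chi_{\{0\}}(a)\,\ell^a_t(Y^d)\,da=0$, whence the Lebesgue time spent by $Y$ on $\partial D$ is a.s.\ zero; consequently $\chi_{D^0}(Y_s)=1$ for a.e.\ $s$ and the indicator-free solution solves \eqref{RBM}. (The assumptions $\sum_k(\sigma_k^d)^2\ge C$ and $\delta\ge C$ are precisely the non-stickiness conditions forbidding $Y$ from lingering on $\partial M$.)

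Finally, uniqueness. For two solutions of the (indicator-free) system driven on the same space by the same $(B^k)$ with the same initial value, subtract the equations and run the estimate above; the resulting inequality $\E\sup_{s\le t}|Y_s-\tilde Y_s|^2\le K_T\int_0^t\E\sup_{r\le s}|Y_r-\tilde Y_r|^2\,ds$ and Gronwall's lemma force $Y\equiv\tilde Y$, and then $L_t=\int_0^t\delta(Y_s)^{-1}\,dA_s$ forces $L\equiv\tilde L$. Thus pathwise uniqueness holds; combined with the strong existence established above, the Yamada--Watanabe theorem upgrades this to uniqueness in law, which is the asserted conclusion.
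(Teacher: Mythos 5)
First, a point of comparison: the paper does not actually prove this statement --- it is quoted from Ikeda and Watanabe's book --- so there is no in-paper proof to measure you against. Your route (the deterministic one-dimensional Skorohod lemma applied to the $d$-th coordinate, Picard iteration exploiting the Lipschitz continuity of the Skorohod map $\Gamma$, the occupation-time formula to dispose of the indicators, and Yamada--Watanabe for uniqueness in law) is the standard one for the half-space and is essentially the argument of the cited source. Your existence argument is sound: the indicator-free system composed with $\Gamma$ is genuinely Lipschitz, the limit of the iteration solves it, the ellipticity $\sum_k(\sigma_k^d)^2\ge C$ together with the occupation-time formula gives $\mathrm{Leb}\{s\le t:\,Y_s\in\partial D\}=0$ a.s., so the indicators may be inserted for free, and recovering $L_t=\int_0^t\delta(Y_s)^{-1}\,dA_s$ from the Skorohod reflection term $A$ is legitimate because $\delta\ge C$ and $dA$ is carried by $\partial D$.

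The gap is in the uniqueness step. You prove pathwise uniqueness for the \emph{indicator-free} system and then assert that Yamada--Watanabe yields uniqueness in law for \eqref{RBM}. But an arbitrary solution of \eqref{RBM} need not solve the indicator-free system: your non-stickiness argument rests on $d\langle Y^d\rangle_s\ge C\,ds$, which holds for the indicator-free equation but fails for \eqref{RBM} itself, where $d\langle Y^d\rangle_s=\sum_k\bigl(\sigma_k^d(Y_s)\bigr)^2\chi_{D^0}(Y_s)\,ds$ is killed on the boundary. Indeed, as the theorem is quoted, uniqueness in law is literally false: the process frozen at a boundary point (or frozen from its first hitting time of $\partial D$ onward) with $L$ constant satisfies every displayed requirement, since all coefficients carry the factor $\chi_{D^0}(Y_t)$ and the trivial $dL=0$ is carried by the boundary. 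The missing ingredient is the non-stickiness condition $\int_0^t\chi_{\{Y_s\in\partial D\}}\,ds=0$, which is part of Ikeda--Watanabe's definition of a solution (and reappears in this paper as property (2) of the definition of a RBM in Section 3) but was dropped in the quotation. Once that condition is added to the solution concept your argument closes: a non-sticky solution of \eqref{RBM} has $\chi_{D^0}(Y_s)=1$ for a.e.\ $s$, hence solves the indicator-free system, hence falls under your Gronwall estimate. You should state explicitly that your uniqueness holds only within this class, or equivalently flag the missing hypothesis.
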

 For the reflected problem in a local chart of a manifold, take $\delta\equiv 1$. 
By this theorem, other reflected stochastic processes, with not necessarily normal reflection, can also be constructed. All reasonable constructions should lead to the same essential quantity: the local time of the Brownian motion on the boundary. This philosophy follows from the uniqueness to the associated sub-martingale problem. 

To solve the heat equation on differential 1-forms with the absolute boundary condition, N. Ikeda and S. Watanabe sought a tangent space valued process $(W_t(v))$ satisfying $\phi(t,v)=\E \phi(0, W_t(v))$.  They constructed the solution for the half space, by the method of orthonormal frames and remarked that by working in charts this leads trivially to the existence of a solution for general manifolds \cite{Ikeda-Watanabe}.  An ansatz for $(W_t)$ is the solution to the random (covariant) ordinary differential equation along the sample paths of a given RBM:
$\frac{D}{ dt}W_t=-\frac{1}{ 2}\Ric^{\#}_{Y_t} (W_t)$. Here  ${D}$ denotes covariant differentiation of $(W_t(\omega))$ along the path $(Y_t(\omega))$, and $\Ric_x^{\#} $ is the
 linear map from $T_xM$ to $T_xM$ such that $\<\Ric^{\sharp}(u), v\>=\Ric(u,v)$ and $\Ric$ denotes the Ricci curvature.
We use $W_t(\omega)$ to stand both for 
a tangent vector $W_t(\omega) \in T_{Y_t(\omega)}M$ with initial value $W_0(\omega)$ and the linear map  $W_t(\omega): T_{Y_0}M \to T_{Y_t(\omega)}M$ (the latter translates a tangent vector at $Y_0$  `by parallel' to a tangent vector in the tangent space at $Y_t(\omega)$). The parallel equation is damped by the Ricci curvature and hence the solutions are called  `damped stochastic parallel transaltion/ transports'.

 This representation holds for a manifold without a boundary and is essentially true for the half space. The involvement of the Ricci curvature follows from the Weitzenb\"ock formula: $\Delta^1\phi={\rm trace} \nabla^2 \phi-\phi(\Ric^{\#} )$. As we will explain, it is also necessary to consider the shape of the boundary and
add the shape operator~${\mcS}: TM\to TM$:
\begin{equation}\label{3-damped}
{D}W_t=-\frac{1}{ 2}\Ric_{Y_t}^{\#} (W_t)\chi_{\{Y_t\in M^o\}}dt-{\mcS}_{Y_t}(W_t)dL_t,
\end{equation}
where $(L_t)$ is the boundary time of the RBM. The shape operator $\mcS_y(w)=\mcS(w)$ is the tangential part of
$-\nabla_w\nu$ for $w\in T_yM$ where $y\in \partial M$ and $
\nu$ is the inward normal vector field. It vanishes identically for the half space  in which case the local time is
not involved. For a general manifold, the shape operator compensates with the variation of $\phi$ in the normal
direction: if $\phi(\nu)\equiv 0$ then
$\nabla_w \phi(\nu)=\phi(\mcS_y(w))$, see (\ref{So}) for detail. There remains a term of the form
$\int_0^t \nabla\phi_{T-s}({\nu(Y_s)},W_s^\nu)ds$ where $W_t^\nu$ is the `normal part' of $W_t$. 
 To solve the absolute boundary problem one simply assume that the normal part of $(W_t)$  vanishes on the boundary.
  Such a process exists and has a number of constructions. The basic idea for our construction is as follows. Denote by $R_t$ the distance of $Y_t$ to the boundary.  We erase the normal part of the solution to (\ref{3-damped}) at the ends of  each excursion, of  size larger or equal to $\epsilon$, of the process $(R_t)$ into the interior $M^o$ and obtain a process $(W_t^\epsilon)$.  As $\epsilon$ approaches zero, $(W_t^\epsilon)$ converges to a process $(W_t)$ with the required properties. The Poisson point process of $(R_t)$ determines the frequency of the projection, the `local time' or the boundary time of the RBM is the local time of $(R_t)$ at $0$  and is essentially the $\delta$-measure on the boundary: $\int_0^t \delta_{{\partial M}} (Y_s) ds$. The point we wish to make is that our SDEs choose the above mentioned damped parallel process.

{\bf B.} 
A RBM on $M$ is a solution to an SDE of the form:
$dY_t=``dx_t"\1_{Y_t\in M^o}+ {\nu\, \delta_{\partial M}(Y_t)dt}$.
We seek an approximations of the form
$dY_t^a=``dx_t" +A^a(Y_t^a) dt$ with the additional properties: (1)  $L_t^a:=\int_0^t \|A^a(Y_s^a)\|ds$ approximates   $ \int_0^t \delta_{\partial M}(Y_s)ds$;
 (2) as $a\to 0$, the solutions~to the equations
\begin{equation}
\label{4-damped}
{D}W_t^a=-\frac{1}{ 2}\Ric_{Y_t^a}^{\#} (W_t^a)dt+\nabla_{W_t^a}A_t^a\,dt
\end{equation}
converge to  $(W_t)$. Here we have extended $\mcS$ to level sets of distance to boundary. Observe that $dL_t^a$ and $dL_t$ have mutually singular supports and it is clear that
not every choice of $``dx_t"$ yields an approximation.  

Our aim is to construct a nice family of  SDEs whose solutions $(Y_t^a)$ are smooth  and stay in the interior for all time (in particular there is no boundary time), and also the solutions together with their  damped parallel translations $(W_t^a)$ select a RBM $(Y_t)$ together with a damped stochastic parallel transport $(W_t)$. Note that, although $(Y_t^a)$ and $(W_t^a)$ are sample continuous, the selected stochastic process $(Y_t, W_t)$  has jumps on the boundary. In addition we observe that $(W_t^a)$ can also be obtained by an appropriate variation in the initial values of the stochastic flow $(Y_t^a)$.

This construction is motivated by the following well known facts for manifold without boundaries: (1) damped parallel translations can be obtained from perturbation to a Brownian system with respect to its  initial data and (2) a formula for the derivative of the heat semigroup follows naturally from differentiating a Brownian flow with respect to its initial value. 
 More precisely if $(dF_t)(v)$ is the derivative of a Brownian flow $(F_t(x))$ with respect to its initial data $x$ in the direction $v\in T_xM$, then $W_t(v)$ is the conditional expectation of  $(dF_t)(v)$ with respect to the filtration of the Brownian flow. From our construction it would be trivial to see the formula $du_t(Y_0) =\frac{1}{t} \E\left( f(Y_t) M_t\right)$ for manifolds with boundary where $(M_t)$ is a suitable local martingale.  
Our choice is rather natural which we will elaborate in Section \ref{outline} where we outline our main results.


{\bf C. Historical Remarks.}
Heat equations on differential forms on manifolds were already considered 60 years ago, \cite[P. E. Conner]{Conner},  they are related to the existence problem for harmonic differential forms and were studied  by  P. Malliavin \cite{Malliavin:74} and H. Airault \cite{Airault:76} using  perturbations to a boundary operator. 
The space of harmonic differential forms are related to the de Rham cohomology groups, the latter is
the quotient of closed smooth differential forms by exact differential forms and is obtained by $L^2$ Hodge decomposition theorems. De Rham cohomologies on compact manifolds are topological invariants; the alternative sum of the dimensions of the de Rham cohomology groups is the Euler characteristic.
See A. M\'eritet \cite{Meritet} for the vanishing of the first de Rham cohomology on manifolds with boundary.   See also I. Shigekawa, N. Ueki, and S. Watanabe \cite{Shigekawa-Naomasa-Watanabe} who gave a probabilistic proof for the Gauss-Bonnet-Chern Theorem, on manifolds with boundaries,  a formula relating the Euler characteristics to the  Gauss curvature.

The study of reflected Brownian motions, in conjunction with heat equation on functions, goes back to the late 50's and early 60's, see
 \cite[F. Dario]{Dario},  \cite[N. Ikeda]{Ikeda61},
 \cite[A. V. Skorohod]{Skorohod61}, and \cite[N. Ikeda, T. Ueno, H. Tanaka and K.  Sat\^o]{Ikeda-Ueno-Tanaka-Sato}.
A weak solution for smooth domains are given in \cite[D. W. Stroock and S. R. S. Varadhan
]{Stroock-Varadhan71}, see also the book \cite[A. Bensoussan and J. L. Lions ]{Bensoussan-Lions82}.  On convex Euclidean domains this was studied by H. Tanaka \cite{Tanaka}.
A comprehensive study can be found in \cite[P. L. Lions and A. S. Sznitman]{Lions-Sznitman84}, see also \cite[S. R. S. Varadhan and R. J. Williams]{Varadhan-Willimans}.
In terms of Dirichlet forms, see 
\cite[Z.-Q. Chen, P.J. Fitzsimmons, and R. Song]{Chen-Fitzsimmons-Song}, and \cite[Burdzy-Chen-Jones]{Burdzy-Chen-Jones}.

 The study of stochastic damped parallel transports on manifolds with boundary began with the upper half plane preluding which, the construction of the reflecting  Brownian motion $(Y_t)$ by a canonical horizontal stochastic differential equation (SDE) on the orthonormal frame bundle (OM) with drift given by the horizontal lift of the `reflecting' vector field, see \cite[N. Ikeda]{Ikeda:60},  \cite[Watanabe]{Watanabe:76}, and  \cite[N. Ikeda and S. Watanabe]{Ikeda-Watanabe:78}. The same SDE on OM defines also the  stochastic parallel transport process $\parals_t(Y)$, which is a sample continuous stochastic process. Their construction of the damped parallel translation is  more difficult. 
 The former reduces to solving a `reflected' stochastic differential equation while the latter is a stochastic process with seemingly arbitrarily induced jumps, which are not at all arbitrary as can be seen  by the argument of the current article. 
 
 Since a manifold with boundary can be transformed to the upper half plane by local charts, c.f. \cite[N. Ikeda and the S. Watanabe]{Ikeda-Watanabe:78}, this implies the local existence of damped parallel transport on a general manifold.  
We note that the Dirichlet Neumann problems were studied in \cite[H. Airault]{Airault:74,Airault:76} using  multiplicative functionals, which was followed up in \cite[E. Hsu]{Hsu} to give a neat treatment for the reflected SDE on the orthonormal frame bundle.  
  In Appendix \ref{section:damped}, we extend and explain Ikeda-Watanabe's construction \cite{Ikeda-Watanabe} for damped parallel transport, the formulation involves an additional curvature term, the shape operator of the normal 
 vector $\nu$.  For the upper half plane, the shape operator vanishes identically.

In case of $M$ having no boundary, the concept of stochastic parallel translation goes back to K. It\^o \cite{Ito-parallel},
 J. Eells and D. Elworthy \cite{Eells-Elworthy},  P. Malliavin \cite{Malliavin:74}, H. Airault \cite{Airault:76} and N. Ikeda and S. Watanabe \cite{Ikeda-Watanabe:78}.  The damped parallel translations along a Brownian motion $(x_t)$ are constructed using stochastic parallel translation. Together with $(x_t)$ it is an ${\mathbb L}(TM,TM)$-valued diffusion process along $(x_t)$ with Markov generator $\frac12\Delta^1$. It is well understood that  $\E df(W_t)$, where $f\in BC^2(M)$, solves the heat equation on differnetial 1-forms. See e.g.  \cite[D. Elworthy]{Elworthy-book},  \cite[J.-M. Bismut]{Bismut:82},  \cite[N. Ikeda and  S. Watanabe]{Ikeda-Watanabe:78}, and   \cite[P.-A. Meyer]{Meyer:81}.  Furthermore if $P_t$ is the heat semi-group,  the process $ dP_{T-t}f(W_t)$ is a local martingale.  See  \cite{Li:thesis} for a systematic study of probabilistic representations of heat semigroups on differential forms using both the damped parallel translation and the derivative flow to a Brownian system. It is also well known that $(W_t)$ can be obtained from the derivative process  of a Brownian flow by conditioning, from which information on the derivative of the heat semigroup on functions can also be obtained. This method was used by M. Arnaudon, B. Driver, K. D. Elworthy,  Y. LeJan, Xue-Mei Li, A. Thalmaier  and Feng-Yu Wang \cite{Li:thesis, Elworthy-Li:94-2,  Elworthy-LeJan-Li96, Elworthy-Li:98,Thalmaier-Wang:98, Arnaudon-Driver-Thalmaier:07}, and many other related works. This plays also a role in the study of the regularity of finely harmonic maps, c.f. \cite[M. Arnaudon, Xue-Mei  Li and A. Thalmaier]{Arnaudon-Li-Thalmaier:99}.   The stochastic Jacobi field point of view is developed in  \cite[M. Arnaudon and A. Thalmaier ]{Arnaudon-Thalmaier:98-1}, leading to estimates for the derivatives of harmonic maps between manifolds and to Liouville-type theorems.

\section{Outline and Main Theorems}
\label{outline}
The paper is organised as follows. We first construct, in \S\ref{section3}, a family of SDEs whose solution flows $(Y_t^a, a>0)$ exist and have a limit $(Y_t)$ which are RBM's.  The convergence of their stochastic parallel translations,  $\parals_s(Y_t^a)$, is given in \S\ref{section4}, while the more difficult  convergence theorem for their damped stochastic parallel translations $(W_t^a)$ is stated in \S\ref{section5} together with a detailed description for the limiting process $(W_t)$ and with a differentiation formula for the heat semi-group with Neunman boundary condition.  The proof for the main theorem of \S\ref{section5} is given in \S\ref{section7}. In \S\ref{section6} we study a variation $(Y_t^a(u), u\in [0,1])$ of $(Y_t^a)$ and discuss the convergence of their derivatives with respect to $u$
to the damped stochastic parallel translation process $(W_t)$. In the appendices we study the half line example, 
explaining notions of convergence of stochastic processes on manifolds, and the existence of Ikeda and Watanabe's stochastic damped parallel translation on manifolds with a boundary. Their proof was explicit for the half plane and uses Poisson point processes.

Close to the boundary, $(Y_t^a, a>0)$ are Brownian motions with drift  $A^a$, where $A^a$ is the gradient of $\ln \tanh\left(\f{R}{a}\right)$ and $R(x)$ is the distance from $x$ to the boundary.  This construction selects a special reflected Brownian motion.  For $M=\R_+$, the solution to the Skorohod problem is selected while the solution given by Tanaka's formula is not, see Appendix \ref{section-half-line} for detail where we illustrate the construction. 
Furthermore they satisfy the following properties.
 
 {\bf Theorem \ref{T2}.}{\it 
 \begin{enumerate}
 \item [(a)]For each $a>0$,  $(Y_t^a)$ remains in $M^0$;
 \item [(b)] As $a\to 0$, $(Y_t^a)$ converges to the RBM process  $(Y_t)$,
    in the topology of uniform convergence in probability (UCP).  
  \end{enumerate}   }
Let $\rho$ denote the Riemannian distance in $M$ and $p\ge 1$. If  $\displaystyle \E \left[\sup_{0\le t\le T} \rho(Y_t^a,Y_t)^p\right]\to~0$ we say that the family of processes 
$(Y_t^a)$ converges to $(Y_t)$ in $\scrS_p([0,T])$. One can also use an embedding of $M$ into an Euclidean space. Contrarily to UCP convergence,
$\scrS_p([0,T])$ convergence in the target space equipped with the Euclidean distance depends on the embedding. 
 However if $M$ is compact,
  convergence in $\scrS_p([0,T])$ in the manifold  is equivalent to convergence in $\scrS_p([0,T])$
  in the target space independently of the choice of the embedding, and it is also equivalent to UCP convergence in the target space. 
 The notation concerning the convergence of stochastic processes on manifolds is introduced in
 Appendix \ref{preliminary} where we also summarise the relevant estimates for stochastic integrals in the $\scrS_p$ and $\SH_p$ norm, as well as relating different notions of convergences.

The main idea behind the construction is to ensure that $R(Y_t^a)$ involves
one single real valued Brownian motion for all parameters $a$, and hence
pathwise analysis is possible. The construction involves a tubular neighbourhood of the boundary in which the product metric  is used for the estimation. 
By the construction, the convergence of $\{Y_t^a, a>0\}$ in the `tangential directions' of the product tubular neighbourhood is trivial.
 Outside of the tubular neighbourhood the drift $A^a(x)$ vanishes, while inside it converges to zero exponentially fast as $a\to 0$
for every $x\in M^o$.

There also exists a family of stochastic processes $\{L_t^a, a>0\}$ which converges to the local time $L_t$ in $\scrS_p([0,T])$, but not in the $\SH_p([0,T])$ topology. The former convergence is a key for the convergence of the damped stochastic parallel translations, while the lack of convergence in the latter topology makes it difficult to follow the standard methods for proving the convergence of solutions of SDEs with a parameter.

 {\bf Corollary \ref {convLat} and Lemma \ref{L1} } {\it Let $M$ be compact. Then
 for any $p>1$ and $T>0$,
  $$\lim_{a\to 0} \|L_t^a-L_t\|_{\scrS_p([0,T])}=0.$$ }
  
Despite of the above mentioned convergence, $dL_t$ and $dL_t^a$ are mutually singular measures. In particular, as $a$ approaches zero, the total variation norm of the their difference converges to a non-zero measure: $ |d(L_t-L^a_t)|\to 2|dL_t|$.
 We also observe that  $|L_t^a|_{\SH_p([0,T])}=|L_t^a|_{\scrS_p[0,T]}$, $|L_t|_{\SH_p[0,T]}=|L_t|_{\scrS_p[0,T]}$ and
  $$ |L_t-L^a_t|_{\SH_p([0,T])}=|L_t|_{\SH_p([0,T])}+|L_t^a|_{\SH_p([0,T])}.$$

Below we state our main theorem, whose proof is the content of Section~\ref{section7}, a preliminary and easier result on the convergence of the stochastic parallel transport is given in \S \ref{section4}.  For
the RBM $(Y_t)$, constructed in Theorem \ref{T2},  there exists a damped stochastic parallel transport process $(W_t)$ along $(Y_t)$ such that the tangential parts of $W_t^{a}$ converge
to the tangential part of $(W_t)$  in $\scrS_p([0,T])$ and the normal parts of $W_t^a$ converge 
 to the normal part of $W_t$ in $L^p([0,T]\times \Omega)$.

If $v\in T_yM$, where $y\in \partial M$, we denote $v^T$ and $v^\nu$ respectively its tangential and normal component. Thus $(W_t^{a,T})$ is the tangential part of $W_t^a$, the latter solves (\ref{4-damped}),
and $$W_t^a=W_t^{a,T}+f_a(t)\nu(Y_t^a) .$$
  \medbreak

{\bf Lemma ~\ref{L4}, Lemma \ref{LpProd}, Theorem ~\ref{T4} and Corollary ~\ref{T4bis}.}
Let $M$ be a compact Riemannian manifold and let $(W_t)$ be the solution to (\ref{3-damped}).
 Let $p\in[1,\infty)$.
 Then 
\begin{enumerate}
\item $\lim_{a\to 0} W_\cdot^{a,T}=W_\cdot^T$ in UCP.
\item Write $W_t=W_t^T+f(t)\nu_{Y_t}$. As $a\to 0$,
$$
 \E\left[\int_0^T|f_a(t)-f(t)|^p\,dt\right]\to 0.$$

\item For any  $C^2$ differential $1$-form  $\phi$ such that $ \phi(\nu)=0$ on boundary,
$$\lim_{a\to 0}\sup_{s\le t} \E\left |  \phi(W_s^a)- \phi(W_s)\right|^p=0.$$
\end{enumerate}

The family of sample continuous stochastic processes $\{(W_t^a), a>0\}$ cannot converge to the stochastic process
$(W_t)$ with jumps, in the topology of uniform convergence in probability,
 nor in the $\scrS_p$ norm nor in the Skorohod topology.
The uniform distance between a continuous path and a path with jumps is at least half of the size of the largest jump. In Lemma \ref{L2} and Corollary \ref{C2}  we see the simple ideas which are key to the proof.  We show that there exists a real continuous process $c_a(u)$ such that $df_a(t)=-c_a(t)f_a(t)\,dt+\hbox{\it other interacting terms}$,
and $-\int_0^t c_a(u) f_a(u)\,du$ is the only term that contributes to creating jumps as $a$ approaches $0$. More precisely, 
if $\alpha_t=\sup_{s\le t} \{s\le t: Y_s \in \partial M\}$ and $t\not \in \SR(\omega)$,
then for all $s,t\in [0,S]$ satisfying $s<t$, \begin{equation*}
\begin{split}
\lim_{a\to 0} e^{-\int_s^tc_a(u)\,du}= 1&\quad \hbox{if}\quad s>\a_t\\
\lim_{a\to 0}  e^{-\int_s^tc_a(u)\,du}=0&\quad \hbox{if}\quad s<\a_t.
\end{split}
\end{equation*}
   This particular construction and the formula for the normal part of $(W_t^a)$ are given in section \ref{section5}. Let us follow Appendix C and explain the jumps in the limiting process $W_t$.
Denote by $\rho(\cdot ,\partial M)$ the distance function to the boundary and set $R_t=\rho(Y_t, \partial M)$.
 Viewed by the distance process $R_t$, the RBM reaches the boundary at a stopping time $\zeta$ where it makes
excursions into the interior. By an equivalent change of probability measures, these excursions are similar to those made by a real valued RBM. The complement to the set of boundary points of $R_t$ are disjoint open intervals.
Let $\SR(\om)$ denote the set of the `right most points' of the excursions of the distance function $R_t$ and $\SR_\epsilon(\om)$ its subset coming from excursions of length at least $\epsilon$. 
 
The stochastic process $W_t\in L(T_{Y_0}M; T_{Y_t}M)$ is  the unique c\`adl\`ag process along the RBM $(Y_t)$ which satisfies the following equation during an excursion to the interior:
\begin{equation}\begin{split}W_t(v)=&v-{\f12}\parals_t\int_0^t \parals_s^{-1}\Ric^{\sharp}_{Y_{s-}}(W_{s-}(v))ds-
\parals_t\int_0^t \parals_s^{-1}\mcS_{Y_{s-}}(W_{s-}(v)) dL_s\\
&-\sum_{s\in \SR(\om)\cap [0,t]} \<W_{s-}(v), \nu({Y_s)}) \>
\parals_{s,t}\nu({Y_s}),\end{split}\end{equation}
where $\parals_{s,t} =\parals_{s,t}(Y_\cdot)$ denotes the parallel transport process along $(Y_\cdot)$ from $T_{Y_s}M$
to $T_{Y_t}M$. We abbreviate $\parals_{0,t}$ to $\parals_t$.
If we remove only the normal part of $W_t$ on excursion intervals equal or exceeding size $\e$, the above stated procedure makes sense and the resulting processes $(W_t^\e)$ has a limit. The above equation is understood in this limiting sense. 
   More precisely, let
 $W_t^\e$ the solution to
\begin{equation}\Label{We-100}
\begin{split}
 DW_t^\e&=-\f12\Ric^\sharp(W_t^\e)\,dt-\mcS(W_t^\e)\,dL_t
-\1_{\{t\in \SR_\e(\om)\}}\langle W_{t-}^\e,\nu_{Y_{t}}\rangle \nu_{Y_t},\\ \quad W_0^\e&={\rm Id}_{T_{Y_0}M}.
\end{split}
\end{equation}
There exists an  adapted right continuous stochastic process $W_t$ such that 
$\lim_{\epsilon \to 0} W_t^\epsilon= W_t$ in UCP, and in $\scrS_p $ where  $p\ge 1$  for $M$ compact. Furthermore for any $\alpha\in {\mathcal T}$, the set of stochastic processes with values in bounded  $1$-forms above $Y_t$ vanishing outside some compact set, c.f. (\ref{T}),
 \begin{align*}&\lim_{\e\to 0}\left(\int_0^{\cdot\wedge \tau_D}  \alpha_s (DW_s^\e) \right)\stackrel{\scrS_2} =\left(\int_0^{\cdot\wedge \tau_D} \alpha_s (DW_s) \right).\end{align*}
   This construction agrees with the derivative flow of the Skorohod reflected Brownian motion in the half line,  extending stochastic damped parallel translation 
   of Ikeda and Watanabe to general manifolds with boundary. This was further elaborated in Section \ref {section6} where we prove damped parallel translation is a weak derivative of the reflected Brownian flow.
See also  \cite[E. Hsu]{Hsu} and  \cite[F. Wang]{Wang:15} for two other constructions for damped parallel translations. In Proposition~\ref{P2} we explain that removing the normal part 
of the damped parallel translation at the beginning of excursions leads to the same object in the limit, using the fact that the beginnings of excursions are left limits of the ends of excursions.  In  {\it Theorem~\ref{T3bis}} we observe that  $(W_t)$ has the  local martingale property when composed with the differential of a solution to the heat equation with Neumann boundary condition, c.f. \cite[N. Ikeda and S. Watanabe]{Ikeda-Watanabe}.
 A stochastic representation for the semigroup on differential one forms with absolute boundary conditions follows.

{\bf Theorem \ref{T6.5} }
 {\it 
Suppose that the tubular neighbourhood of the boundary has positive radius, the curvatures $\Ric^{\#}$ and $\mcS $
are bounded from below. If $\phi(t, \cdot)$ is a solution to the heat equation with absolute boundary conditions, then
$\phi(t, \cdot)=\E \phi(0,W_t(\cdot))$.}

%
%
%
%
%

\medbreak

Finally we consider the problem of obtaining $(W_t^a)$ by varying  the initial value in a family of stochastic flows.
 We have previously obtained a family of processes $(Y_t^a, 0<a\le 1)$ which converges as $a\to 0$ to a RBM in $L^p$. For $a$ fixed we consider a variation 
 $(Y_t^a(u), u\in [0,1])$ satisfying  $Y_0^a(u)=\gamma(u)$ where $\gamma(u)$ is a $C^1$ curve and $\partial_u Y_t^a(u)=W_t^a(u)\dot\gamma(u)$, see~\cite[M. Arnaudon, K.A. Coulibaly and A. Thalmaier]{Arnaudon-Coulibaly-Thalmaier:06} for the construction. \\

{\bf Proposition~\ref{Tightness}, Theorem~\ref{weakflow} and Theorem~\ref{Weakderivative}}
{\it If $M$ is compact, then the family of two parameter stochastic processes $\{(Y_t^a(u))_{t\in [0,T], u\in[0,1]}, \;  a\in (0,1]\}$,  indexed by $a$, is tight in the weak topology. Furthermore for each sequence $\{a_k\}$ of numbers of converging to $0$ and such that  $(Y_t^{a_k}(u))_{t\in [0,T], u\in[0,1]}$ converges weakly to $(Y_t(u))_{t\in [0,T], u\in[0,1]}$,  the following statements hold.
 \begin{itemize}
\item[(1)] for every $u\in[0,1]$, $Y_t(u)$ is a RBM on $M$ with initial value $\gamma(u)$;
\item[(2)] for every $p\in [1,\infty)$ there exists a number $C'(p,T)$ s.t. for all $0\le u_1, u_2\le 1$,
\begin{equation}
\Label{rhopY}
\sup_{0\le t\le T}\E\left[\rho^p(Y_t(u_1), Y_t(u_2))\right]\le C'(p,T)\|\dot\gamma\|_\infty |u_1-u_2|^p.
\end{equation}
\item[(3)]
For all $f\in C^2(M)$ with $df|_{\partial M}(\nu)=0$, 
\begin{equation*}
\E\left[f(Y_t(u_2))-f(Y_t(u_1))-\int_{u_1}^{u_2} df\big(W_t(u)\dot\gamma(u)\big)\,du\right]=0.
\end{equation*} 
\end{itemize}
}
The last identity implies that if $\dot \gamma(u)=v$, then $\f\partial {\partial u} Y_t(\gamma(u))=W_t(u)(v)$. Thus the damped parallel translation can be interpreted as
a derivative of the RBM with respect to the initial point, making connection with the study of   K. Burdzy \cite{Burdzy:09}, S. Anders \cite{Andres:11}, and J.-D. Deuschel and L. Zambotti \cite{Deuschel-Zambotti}
for Euclidean domains. 
It would be interesting to prove that $(W_t)$ can be constructed from the derivative of the flow in strong sense.

 \section{A reflected Brownian Flow}
\Label{section3}
\setcounter{equation}0
 
 We define  a RBM as a solution to the Skorohod problem: it is a sample continuous strong Markov process satisfying the following properties: 
(1) its Markov generator $\L$ restricted to $C_K^2(M^0)$ is  ${\f12}\Delta $;
 (2)   it spends almost all the time in the interior of the manifold (with respect to Lebesgue measure on time set); and
 (3)  its drift on the boundary is colinear with $\nu$,  the inward-pointing unit normal vector field on $\partial M$. 
By inward-pointing we mean the sign of $\nu$ is chosen so that the exponential map, for sufficiently small $t>0$, 
 $\exp(t\nu)$ belongs to $M^0$.  
 If $ \{\sigma_1, \dots\sigma_m\}$ is a family of vector fields spanning the tangent space at each point,
we associate to it a bundle map $\sigma: M\times \R^m \to TM$ given by
$\sigma(x)(e)=\sum_{k=1}^m \sigma_k(x) \<e_k, e\>$ where $\{e_i\}$ is an orthonormal basis of $\R^m$. 
Let $\sigma_0$ be a smooth vector field. Let $(B_t)$ be an $\R^m$-valued Brownian motion.
 A Brownian system is a stochastic differential equations (SDE) of the form $dx_t= \sigma(x_t)\circ dB_t +\sigma_0(x_t)dt$
 with infinitesimal generator $\frac{1}{2}\Delta$ where $\circ$ denotes Stratonovich integration. 
 Such equations always exist and are not unique, for example if $\sigma_i$ are the gradient vector fields obtained  by an isometric embedding of $M$ into $\R^m$, the SDE is a gradient Brownian system.

  \begin{definition}\Label{DRSF}
   A stochastic flow is a Riemannian RBM if it solves the Skorohod problem   \begin{equation}\Label{RSP} dY_t= \sigma (Y_t)\circ dB_t+\sigma_0(Y_t) dt+A(Y_t)dL_t,
   \end{equation}
   where $A$ is a smooth vector field extending the inward normal vector field  and $L_t$ is the local time of $Y_t$ at $\partial M$,  a non-decreasing process satisfying 
   $$\int_0^t \1_{(\partial M)^c} (Y_s) dL_s=0.$$
   \end{definition}
  Since $dL_t$ is supported on the boundary of $M$,  $Y_t$ behaves exactly like a  Brownian motion in the interior.
  \medbreak

The aim of the section is to construct a family of Brownian systems on $M$ with large drift $A^a$ pushing away from the boundary,
approximating a reflected Brownian motion in the topology of uniform convergence in probability. Furthermore the following properties are desired:  the convergence is `uniform' and the limiting process is continuous with respect to the initial data.

\subsection{}
 
Let $\rho$ denote the Riemannian distance function on $M$ and $R$ the distance function to the boundary,
$R(x)=\inf\{ \rho(x,y): y \in \partial M\}$.
 By the tubular neighbourhood theorem, there exists a continuous function $\delta: \partial M \to (0, \infty)$ such that for
$E_0=\{(x, t): x\in \partial  M, \  0\le t<\delta(x)\}$, the map
 \begin{equation} 
  \begin{split}\Label{Phi}
   \Phi : E_0&\to  F_0:=\Phi(E_0)\subset M\\
    (x,t)&\mapsto \exp_x(t\nu_x)
  \end{split}
 \end{equation}
 is a diffeomorphism such that $\Psi(y)=(\pi(y),R(y))$ on $F_0$ where $\pi(y)$ is the boundary point given by
  $\rho(y, \pi(y))=R(y)$ and $\Psi=\Phi^{-1}$.
In other words, on $F_0$, $R\Big(\Phi(x, t)\Big)=t$ and the distance function $R$ is smooth. 
 For $0\le c\le 1$, define $$E_c=\{(x, t): 0\le t<(1-c)\delta(x)\},\quad F_c=\Phi(E_c).$$

  Since $\partial M$ is a Riemannian manifold of its own right, we may represent 
  $\Delta_{\partial M}=\sum_{j=2}^m \bar \sigma_j$ 
 as the sum of squares of vector fields, for example by taking 
  $\{\bar \sigma_j, j=2,\dots, m\}$ to be a gradient system on $\partial M$.

In the tubular neighbourhood around a  relatively compact set $U$ of the boundary, the width $\delta(x)$ can be taken to be a positive constant $3\delta_0$.
Let $R$ be a real valued $1$-Lipschitz smooth function on $M$ which on $F_{1/3}$ agrees with the distance function to the boundary and such that $R\ge \d_0$ on $F_{1/3}^c$. This can be obtained by modifying the distance function to the boundary.
 
 \begin{proposition} \Label{construction} 
 Let $r\ge 1$ and $c\in (0,1)$. Let $\{\bar\s_0, \  \bar\sigma_j, j=2,\dots, m\}$ be a family of $C^r$
 vector fields on $\partial M$ with the property that 
 $$\frac12\sum_{j=2}^m L_{ \bar\sigma_j}L_{\bar\sigma_j}+ L_{\bar\sigma_0}=\f12\Delta_{\partial M}.$$
  
Suppose that $E_0$ has strictly positive radius, i.e. $\displaystyle \inf_{\partial M} \delta>0$.
Let $\d_0$ be a positive number such that   $\delta \ge 3\d_0$. Then there exist a finite number of  $C^r$ vector fields  $\{ \sigma_j, j=0, \dots,N\}$  on $M$ such that  
\begin{enumerate}
\item [(1)] $\frac12\sum_{j=1}^N L_{ \sigma_j}L_{ \sigma_j}+L_{\s_0}=\f12\Delta$,  
\item[(2)]   $\sigma_1=\nabla R$ on $F_{2/3}$,
\item[(3)] For $2\le j\le m$,  $ \sigma_j$ extends $ \bar\sigma_j$,
\item[(4)] For all
 $ p\in \Phi(\partial M\times [0,\d_0])$, for all $j\ge 2$:
$\di  \< \sigma_1,  \sigma_j\>_p=0.$
\end{enumerate}
In the sequel we  denote $A= \sigma_1$. 
\end{proposition}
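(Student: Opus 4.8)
\emph{Strategy of the proof.}
The plan is to produce the vector fields in two groups. A ``collar group'' $\sigma_1,\dots,\sigma_m$, supported in the tubular neighbourhood of $\partial M$, will carry the prescribed behaviour (2)--(4); a ``bulk group'' $\sigma_{m+1},\dots,\sigma_N$ will come from a gradient Brownian system on $M$, cut off so as to vanish near $\partial M$. The two groups will be weighted by complementary cut-offs $\lambda,\theta\colon M\to[0,1]$ with $\lambda^2+\theta^2\equiv 1$; the effect is that the total second-order symbol $\sum_{j=1}^N\sigma_j\otimes\sigma_j$ is the dual metric $g^{-1}$ everywhere, so that $\frac12\Delta-\frac12\sum_{j=1}^N L_{\sigma_j}L_{\sigma_j}$ has vanishing principal symbol and kills constants, hence equals $L_{\sigma_0}$ for a unique vector field $\sigma_0$ --- which is exactly (1).

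\emph{The collar group: a twisted extension.}
I work inside $F_{1/3}$, where by construction $R$ is the distance to $\partial M$ and the Fermi coordinates $\Psi=\Phi^{-1}$ put the metric in the form $dR^2+h_R$, with $h_0=g_0:=g|_{T\partial M}$ the metric induced on the level set $\{R=\text{const}\}$ and $\nabla R=\partial_R$ the unit normal field to those level sets. The point to get right is that $\bar\sigma_j$ must be extended into the collar so that the extensions reproduce the dual metric $h_R^{-1}$ of the level set at height $R$, \emph{not} that of $\partial M$. Since $\frac12\Delta_{\partial M}$ has principal symbol $\frac12 h_0^{-1}$, the hypothesis on the $\bar\sigma_j$ says precisely that $\sum_{j=2}^m\bar\sigma_j\otimes\bar\sigma_j=h_0^{-1}$. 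Let $a_R\colon T\partial M\to T\partial M$ be the $g_0$-self-adjoint positive bundle map defined by $h_R=g_0(a_R\cdot\,,\cdot)$, so that $a_0=\mathrm{Id}$ and $h_R^{-1}=a_R^{-1}\circ h_0^{-1}$; a short adjoint computation then shows that the slice-tangent fields
\[
\tilde\sigma_j(\cdot,R):=a_R^{-1/2}\,\bar\sigma_j\qquad(2\le j\le m)
\]
are tangent to the level sets of $R$ (hence orthogonal to $\nabla R$), satisfy $\tilde\sigma_j(\cdot,0)=\bar\sigma_j$, and obey $\sum_{j=2}^m\tilde\sigma_j\otimes\tilde\sigma_j=h_R^{-1}$. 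Consequently, on $F_{1/3}$,
\[
g^{-1}=(\nabla R)^{\otimes 2}+\sum_{j=2}^m\tilde\sigma_j\otimes\tilde\sigma_j .
\]

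\emph{Construction and verification.}
Fix a $C^\infty$ cut-off $\lambda=\cos(\tfrac\pi2\beta)$ with $\theta:=\sin(\tfrac\pi2\beta)$, where $\beta\colon M\to[0,1]$ is smooth, $\beta\equiv 0$ on $F_{1/2}$ (so $\lambda\equiv 1$, $\theta\equiv 0$ there) and $\mathrm{supp}\,\lambda$ is compactly contained in $F_{1/3}$; thus $\lambda^2+\theta^2\equiv 1$. Since $\delta\ge 3\delta_0$ we have $\Phi(\partial M\times[0,\delta_0])\subset F_{1/2}$ and $F_{2/3}\subset F_{1/2}$. Let $\iota\colon M\to\mathbb R^K$ be an isometric embedding ($K<\infty$, by Nash; for compact $M$ one may use a gradient system directly) with gradient vector fields $X_k:=\nabla(\iota^k)$, so that $\sum_{k=1}^K X_k\otimes X_k=g^{-1}$. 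Set
\[
\sigma_1:=\lambda\,\nabla R,\qquad \sigma_j:=\lambda\,\tilde\sigma_j\ \ (2\le j\le m),\qquad \sigma_{m+k}:=\theta\,X_k\ \ (1\le k\le K),\qquad N:=m+K,
\]
the fields $\lambda\,\tilde\sigma_j$ extending by $0$ to $C^r$ vector fields on $M$ because $\lambda$ vanishes near the outer boundary of $F_{1/3}$. Then $\sum_{j=1}^N\sigma_j\otimes\sigma_j$ equals, on $\mathrm{supp}\,\lambda\subset F_{1/3}$, $\lambda^2\big((\nabla R)^{\otimes2}+\sum_{j=2}^m\tilde\sigma_j^{\otimes2}\big)+\theta^2\sum_k X_k^{\otimes2}=(\lambda^2+\theta^2)g^{-1}=g^{-1}$, and off $\mathrm{supp}\,\lambda$, where $\theta\equiv1$, it equals $\sum_k X_k^{\otimes2}=g^{-1}$; hence it is $g^{-1}$ on all of $M$, which yields (1) with $\sigma_0$ the first-order remainder (of class $C^{r-1}$, and of class $C^r$ --- indeed $C^\infty$ --- as soon as the $\bar\sigma_j$ are chosen smooth, e.g.\ from a gradient system on $\partial M$). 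Condition (2) holds because $\lambda\equiv1$ on $F_{1/2}\supset F_{2/3}$. Condition (3) holds because $\partial M\subset F_{1/2}$, where $\lambda\equiv1$ and $\tilde\sigma_j(\cdot,0)=\bar\sigma_j$. Finally, on $\Phi(\partial M\times[0,\delta_0])\subset F_{1/2}$ we have $\theta\equiv0$, so $\sigma_{m+k}\equiv0$ there, while $\sigma_1=\nabla R$ is normal and $\sigma_j=\tilde\sigma_j$ $(2\le j\le m)$ is tangent to the level sets of $R$, whence $\langle\sigma_1,\sigma_j\rangle\equiv0$ there for every $j\ge2$; this is (4).

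\emph{Main obstacle.}
The only genuinely non-routine step is the joint constraint hidden in (3)--(4) near $\partial M$: on the collar the first $m$ fields are forced to be normal ($\sigma_1$) plus tangential to the level sets of $R$ ($\sigma_2,\dots,\sigma_m$), they must restrict to the prescribed $\bar\sigma_j$ on $\partial M$, and yet $\frac12\sum_{j=1}^m L_{\sigma_j}L_{\sigma_j}$ is forced to differ from $\frac12\Delta$ there only by a first-order term. The naive constant-in-$R$ extension of $\bar\sigma_j$ fails: it leaves the residual operator with principal symbol $\frac12(h_R^{-1}-h_0^{-1})$, which is in general indefinite and so not a sum of squares. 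The twist $\tilde\sigma_j=a_R^{-1/2}\bar\sigma_j$ is precisely what makes $\sum_{j=2}^m\tilde\sigma_j\otimes\tilde\sigma_j$ follow the moving dual metric $h_R^{-1}$, and the choice $\lambda^2+\theta^2\equiv1$ is precisely what converts this local fact into the clean global identity $\sum_{j=1}^N\sigma_j\otimes\sigma_j=g^{-1}$; everything else is partition-of-unity bookkeeping.
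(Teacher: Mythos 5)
Your proof is correct and its architecture is the same as the paper's: a collar family adapted to $R$ near $\partial M$, a bulk (gradient/Nash) family away from it, glued with complementary cut-offs whose squares sum to $1$ so that $\sum_j\sigma_j\otimes\sigma_j=g^{-1}$ globally, with $\sigma_0$ recovered as the first-order remainder. The one place where you genuinely diverge is the extension of the $\bar\sigma_j$ into the collar: you push forward in Fermi coordinates and correct by $a_R^{-1/2}$ so that the sum of squares tracks the moving dual metric $h_R^{-1}$, whereas the paper extends $\bar\sigma_j$ by \emph{parallel transport} along the normal geodesics, $\sigma_j(y)=\paral(\gamma_y)\bar\sigma_j(\pi(y))$. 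Both work, and for the same underlying reason: parallel transport is a $g$-isometry carrying $T_{\pi(y)}\partial M$ onto $(\nabla R)^{\perp}_y$, so $\sum_{j\ge2}\sigma_j\sigma_j^{\flat}=P\bigl(\sum_{j\ge2}\bar\sigma_j\bar\sigma_j^{\flat}\bigr)P^{-1}=\mathrm{Id}_{(\nabla R)^{\perp}}$ with no correction needed. Your ``main obstacle'' paragraph therefore slightly misattributes a failure: the extension that produces the residual symbol $\tfrac12(h_R^{-1}-h_0^{-1})$ is the \emph{coordinate}-constant one, and your twist $a_R^{-1/2}$ is exactly the discrepancy between coordinate push-forward and parallel transport; the paper's ``constant along the geodesic'' means covariantly constant and already incorporates it. Your route makes the dependence on the level-set metrics explicit (useful if one wants the fields in Fermi coordinates), at the cost of introducing the operator square root $a_R^{-1/2}$, whose regularity you must (and do, implicitly) control; the paper's route gets the isometry for free. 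The only other caveat, which you flag honestly and which the paper shares, is that $\sigma_0$ built as the first-order remainder is a priori only $C^{r-1}$ unless the data are taken smooth.
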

\begin{proof}
We first assume that there exist a family of  vector fields $\sigma_j, 1\le j\le m$ defined in the tubular neighbourhood $F_0$. Denote $\sigma:M\times \R^m\to TM$ the corresponding bundle map, as indicated earlier.
Let us extend the construction to $M$ and then return to the local construction.

Let $\tilde \sigma_i, m+1\le i\le m+m'$ be a family of vector fields in $M$ such that 
$$\sum_{i=m+1}^{m+m'}L_{\tilde \sigma_i}L_{\tilde \sigma_i}=\Delta.$$ Let $\tilde\sigma$ be the corresponding  bundle map from $M\times \R^{m'}\to TM$ and let $N=m+m'$. We take a real valued function $\beta\in C^\infty(M; [0,1])$ with the property that $\sqrt \beta$ and $\sqrt{1-\beta}$ are  smooth (this is implied by the other assumptions), $\beta|_{F_{2/3}}=1$, and $\beta|_{M \setminus F_{1/3}}=0$.
Let us define a new bundle map $\hat \sigma: \R^{m}\times \R^{m'}\to TM$ as following:
$$\hat\sigma(e_1, e_2)=\sqrt\beta\,\sigma(e_1)+\sqrt{1-\beta}\, \tilde\sigma(e_2),$$
and prove that it is a surjection. Let
 $$(\hat \sigma)^*(v)=(\sqrt\beta \sigma^*(v),\sqrt{1-\beta}(\tilde \sigma)^*(v)).$$
Then $(\hat \sigma)^*: T_xM \to \R^N$ is the right inverse to $\hat \sigma$. Indeed  $$\hat \sigma \hat \sigma^*=\beta \sigma\sigma^*+(1-\beta)\tilde\sigma (\tilde \sigma)^*=id,$$
and $\hat\sigma$ induces the Laplace-Beltrami operator. It is the desired map.

Let us now construct $\s$ on $F_0$. If $y\in m$, let $\pi(y)$ denote the projection of $y$ to $M$,
$\gamma_y$ the geodesic from $\pi(y)$ to $y$,  and $\paral(\gamma_y)$
the parallel transport along $\gamma_y$ which is a linear map from $T_{\pi(y)}M$ to $T_yM$.
 For $j\not =1$, along a geodesic normal to the boundary, we may extend $ \bar\sigma_j$ by parallel transport 
 along the geodesic in the normal vector direction:
 $$ \sigma_j(y)=\paral (\gamma_y) \bar\sigma_j(\pi(y)).$$
In other words it is constant along the geodesic. It is clear that $\< \sigma_j(x), \sigma_1(x)\>=0$ for all $j>1$ and $x\in F_0$.

From the assumption, $\frac12\sum_{k=2}^m\nabla^\partial_{ \bar\sigma _k} \bar\sigma_k+\bar\sigma_0=0$ on  $\partial M$ and
$\sum_{j=2}^m\bar\s_j(x)\bar\s_j^\flat(x)={\rm Id}_{T_x\partial M}$. Here $\bar\s_j^\flat(x)$
denotes the $1$-form $\langle \s_j(x),\cdot\rangle$.  Let us prove that for all
$x\in F_0$, $$\sum_{j=1}^m\s_j(x)\s_j^\flat(x)={\rm Id}_{T_x M}$$
where $\s_j^\flat(x)$ is defined in a similar way.
 Since the vectors $\{\s_k(x)\}$ generate $T_xM$, it is sufficient to prove that for all $k=1,\ldots, m$, 
$$
\left(\sum_{j=1}^m\s_j(x)\s_j^\flat(x)\right)(\s_k(x))=\s_k(x).
$$
For $k=1$:
\begin{align*}
 \left(\sum_{j=1}^m\s_j(x)\s_j^\flat(x)\right)(\s_1(x))&=\s_1(x)\langle \s_1(x),\s_1(x)\rangle=\s_1(x).
\end{align*}
For $k\ge 2$: 
\begin{align*}
 \left(\sum_{j=1}^m\s_j(x)\s_j^\flat(x)\right)(\s_k(x))&=\sum_{j=2}^m\s_j(x)\langle\s_j(x),\s_k(x)\rangle\\
 &=\sum_{j=2}^m\paral (\gamma_x) \bar\sigma_j(\pi(x))\langle \bar \s_j(\pi(x)),\bar\s_k(\pi(x))\rangle\\
 &=\paral (\gamma_x) \bar\sigma_k(\pi(x))=\s_k(x).
\end{align*}
To complete the construction, we take the drift vector with the following property:
$$
\s_0(x)=-\f12\sum_{j=1}^m\nabla_{\s_j(x)}\s_j=-\f12\sum_{j=2}^m\nabla_{\s_j(x)}\s_j, \qquad x\in F_0.
$$
This completes the proof.
\end{proof}

The vector field $A= \sigma_1$, constructed in Proposition \ref{construction} extends the unit inward normal vector field, defined
on $\partial M$, and coincides with $\nabla R$ on $F_{2/3}$.  
Off the cut locus of the $R$,  $\nabla R$ exists almost everywhere. For the Skorohod problem, 
we will only need the information of $A$ on $F:=F_{2/3}$ and in particular we 
   do not need to worry the effect of the cut locus.

 Next we take a family of additional drift vector fields converging to $0$ in the interior of $M$ and to the local time
 on the boundary. We divide the manifold $M$ into three regions: inner tubular neighbourhood, the middle region
  and the outer region. 
The inner region, a subset of $E_0$ with the product metric  is quasi isometric to its image, 
i.e. there is a  constant $C>0$ such that for all $x,y\in \partial M$, for all $s,t\in [0,\delta_0]$, denoting $\bar\rho$ the distance in $\partial M$,
$$\frac {1}{C}\left( \bar\rho(x,y)+|s-r|\right)\le \rho\left(\Phi(x, s), \Phi(y,r)\right)\le C\left (\bar\rho(x,y)+|s-r|\right).$$
Outside of the tubular neighbourhood the drifts will be chosen to be uniformly bounded and to converge to zero
uniformly. If $x_n$ is a sequence of points in the outer region with limit $x_0$, we need to assume that the 
solution with initial value $x_n$ converges in some sense.
In the first region we have  convergence in probability and in the second we will need a control on the  
rate of convergence that induces the property of a flow.

By the convergence of the manifold valued stochastic process $(Y_t^a) $ to $(Y_t)$
we mean that $H(Y_t^a)$ converges to $H(Y_t)$ where  $H:M\to \R^k$ is an embedding,
with the same notion of convergence. 
We recall that $(Y_t^a)$ converges to $Y_t$ in UCP implies that for any $\epsilon>0$,  the explosion times $\xi^a$ of $Y_t^a$ 
and its exit times from relatively compact sets, for sufficiently small $a$, are bounded below by the corresponding ones for $Y_t=Y_t^0$
minus $\epsilon$ (in other words $\liminf_{a\to 0}\xi^a\ge \xi^0$). 
Also, if we assume sufficient growth control on the curvatures and the shape of the tubular neighbourhood, the convergence will be in $\scrS_p$. We only discuss this aspect for a compact manifold.

Let $R: M\to \R$ be a smooth function such that $R|F$ is the distance to the boundary,
and $R|_{F^c}\ge \tilde \delta$ for some number $\tilde \delta$, and
$R(x)$ is a constant on the complement of $F^c$.

 \begin{theorem}\Label{T2} 
Let $a$ be a positive number. For $\sigma_k, A$ satisfying  properties stated in Proposition \ref{construction},
  let $Y_t^a$ and $Y_t$ denote respectively the maximal
 solution to the equations, with initial value $x$,
 \begin{eqnarray}\label{Yta}
 dY_t^a&=& \sum_{k=1}^m\sigma_k (Y_t^a)\circ dB_t^k
 +\sigma_0(Y_t^a)dt +\nabla \ln \left( \tanh \left(\frac{R(Y_t^a) }{ a}\right)\right)\,dt,\\
 dY_t&=&\sum_{k=1}^m\ \sigma_k (Y_t)\circ dB_t^k+\sigma_0(Y_t)dt+ A(Y_t)dL_t.
 \label{Yt}
 \end{eqnarray}
  Suppose that 
 $\rho(Y_0^a, Y_0)$ converges to $0$ in probability.
 \begin{enumerate}
 \item Then  $\lim_{a\to 0} Y^a=Y$, in the topology of uniform convergence in probability. 
 \item If $M$ is compact, then the SDEs do not explode and
 for all $p\in[1,\infty)$ and for all $T>0$,  $Y^a$ converges to $Y$ in $\scrS_p([0,T])$, i.e.
 $$\lim_{a\to 0} \E \sup_{0\le s\le T} \rho(Y_s^a, Y_s)^p=0.$$ \end{enumerate}
  \end{theorem}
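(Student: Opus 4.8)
The key structural observation is that the construction in Proposition~\ref{construction} is designed so that, inside the tubular neighbourhood $F = F_{2/3}$, the distance process $R_t^a := R(Y_t^a)$ satisfies a one-dimensional SDE driven by a \emph{single} real Brownian motion $\beta_t := \int_0^t \sum_k \langle \nabla R, \sigma_k\rangle(Y_s^a)\,\circ dB_s^k$ (using $\sigma_1 = \nabla R$ and the orthogonality relation (4), so $\langle \nabla R, \sigma_k\rangle = \delta_{1k}$ on $F$, hence $\beta_t = B_t^1$ there). Concretely, by It\^o's formula and $\frac12\Delta R = \frac12\sum_j \langle \nabla_{\sigma_j}\sigma_j, \nabla R\rangle + (\text{curvature of level sets})$, one gets $dR_t^a = dB_t^1 + \big(b(Y_t^a) + \coth(R_t^a/a)/a\big)\,dt$ for a bounded smooth $b$, as long as $Y_t^a \in F$. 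The crucial point is that the same Brownian motion $B^1$ appears for every $a$ and (formally) in the limit equation for $R_t := R(Y_t)$, which is a reflected-type SDE $dR_t = dB_t^1 + b(Y_t)\,dt + dL_t$. The plan is therefore to run a two-scale argument: first establish convergence of the distance processes $R_t^a \to R_t$ and of $L_t^a := \int_0^t \frac{1}{a}\coth(R_s^a/a)\,ds \to L_t$ pathwise on the set where the process stays in $F$; then transfer this to convergence of the full processes $Y_t^a \to Y_t$.

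\textbf{Step 1: reduction to the tubular neighbourhood and comparison for $R_t^a$.} I would first localise: since $Y_t$ is a Brownian motion in the interior away from $\partial M$ and the extra drift $\nabla\ln\tanh(R/a)$ vanishes on $F^c$ (where $R$ is constant), on any excursion of $Y_t$ staying in $F^c$ the equations for $Y_t^a$ and $Y_t$ coincide, so convergence there is immediate from standard SDE stability (continuity in initial conditions). The work is on excursions that enter $F$. There, compare $R_t^a$ with the solution of the one-dimensional equation $d\xi_t^a = dB_t^1 + \frac1a\coth(\xi_t^a/a)\,dt$ (dropping the bounded perturbation $b$, handled by Gronwall). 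The function $x \mapsto \frac1a\coth(x/a)$ is decreasing, so the one-dimensional comparison theorem applies and $\xi_t^a$ is monotone in $a$ in a controlled way; moreover $\xi_t^a$ is known (it is the radial part of a Bessel-type / hyperbolic diffusion) to converge as $a\to 0$ to the Skorohod-reflected Brownian motion $\xi_t$ with $d\xi_t = dB_t^1 + dL_t$, $\xi_t \geq 0$, and the push $\frac1a\coth(\xi_t^a/a) > 0$ keeps $\xi_t^a > 0$ for all $t$. This is exactly the half-line computation of Appendix~\ref{section-half-line}, which I would invoke. Uniform convergence $\sup_{s\le t}|R_s^a - R_s| \to 0$ in probability follows, together with $L_t^a \to L_t$ uniformly.

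\textbf{Step 2: convergence of the tangential components and patching.} Having controlled $R_t^a$, I would handle the boundary coordinate $\pi(Y_t^a) \in \partial M$. By property~(3)--(4) of Proposition~\ref{construction}, inside the thin shell $\Phi(\partial M\times[0,\delta_0])$ the vector fields $\sigma_j$, $j\ge 2$, are orthogonal to $\sigma_1 = \nabla R$ and project to the gradient system $\bar\sigma_j$ on $\partial M$; hence $\pi(Y_t^a)$ satisfies, modulo the time-change by the amount of time spent near the boundary, essentially the \emph{same} SDE on $\partial M$ for all $a$ (the radial drift does not affect the tangential equation because of the orthogonality). So the tangential parts converge trivially by continuity of solutions in the coefficients, using the quasi-isometry estimate stated before the theorem to move between the product metric on $E_0$ and the Riemannian distance $\rho$ on $F_0$. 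Splicing the tubular-neighbourhood pieces with the interior pieces along the (converging) hitting times of $\partial F$ gives UCP convergence of $Y_t^a$ to $Y_t$; one must check the hitting times $T_F^a \to T_F$, which follows from $R_t^a \to R_t$ uniformly together with the fact that $R_t$ does not get stuck at the relevant levels (no flat spots — again the reflected-diffusion structure).

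\textbf{Step 3: the compact case, upgrading to $\scrS_p$.} When $M$ is compact, all coefficients $\sigma_j, \sigma_0$ are bounded with bounded derivatives and $R$ is bounded, so the SDEs are non-explosive (the only issue, the singularity of $\coth(R/a)/a$ at $R=0$, pushes \emph{into} $M^o$ and hence cannot cause explosion — it keeps $R_t^a>0$). To get $\E\sup_{s\le T}\rho(Y_s^a,Y_s)^p \to 0$, I would combine the UCP convergence from Steps~1--2 with a uniform-in-$a$ moment bound: $\sup_a \E\sup_{s\le T}\rho(Y_s^a,Y_s)^p < \infty$, obtained from standard $\scrS_p$ estimates for SDEs using boundedness of $\sigma,\sigma_0$ and the fact that although the extra drift $\nabla\ln\tanh(R/a)$ is large, its contribution to $R_t^a$ is a \emph{monotone} push that is absorbed into $L_t^a$, whose $\scrS_p$-norm is controlled uniformly (this is Corollary~\ref{convLat}/Lemma~\ref{L1}, or can be proved directly by comparison with the one-dimensional model whose local time has bounded $L^p$-norms). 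Convergence in probability plus uniform integrability of $\sup_{s\le T}\rho(Y_s^a,Y_s)^p$ (from the moment bound with exponent, say, $2p$) yields $L^1$ convergence, i.e. $\scrS_p$ convergence.

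\textbf{Main obstacle.} The delicate point is Step~1 made rigorous on the manifold: controlling the interaction between the radial equation and the tangential equation during long sojourns near $\partial M$, and in particular showing that the excursion structure of $R_t^a$ converges to that of the reflected $R_t$ uniformly (not just in finite-dimensional distributions). The singular drift $\frac1a\coth(R/a)$ blows up precisely where one needs the comparison, and one must be careful that $b(Y_t^a)$ — which depends on the full position, not just $R_t^a$ — does not destroy the one-dimensional comparison; the fix is that $b$ is bounded, so it is a Girsanov-type or Gronwall-type perturbation of an explicitly solvable model, but making this uniform in $a$ as $a\to 0$ (uniformly over the growing drift) is where the real care lies. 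I expect the proof in Section~\ref{section3} to do exactly this: isolate $R_t^a$ as a perturbed one-dimensional hyperbolic diffusion, quote the half-line convergence, and patch.
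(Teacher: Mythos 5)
Your overall architecture matches the paper's: you correctly isolate the key structural fact that, by Proposition~\ref{construction}(2) and (4), the radial processes $R_t^a=R(Y_t^a)$ and $R_t=R(Y_t)$ are all driven by the \emph{same} one-dimensional Brownian motion $B^1$ inside $F_{2/3}$; you split radial/tangential, localise to the tubular neighbourhood, kill the drift in the tangential equation using $d\pi(A^a)=0$, and upgrade UCP to $\scrS_p$ by uniform moment bounds on the compact manifold. All of that is what Section~\ref{section3} does.

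However, Step 1 as you have written it has a genuine gap. You propose to sandwich $R_t^a$ between solutions of the \emph{autonomous} one-dimensional equation $d\xi_t^a=dB_t^1+\frac1a\coth(\xi_t^a/a)\,dt$ (with the bounded perturbation $b(Y_t^a)=\frac12\Delta R(Y_t^a)$ "handled by Gronwall" or by comparison) and then quote the half-line appendix. This does not close: the comparison theorem only traps $R_t^a$ between the solutions with drift $\pm\|b\|_\infty$ added, and these two envelopes converge as $a\to 0$ to \emph{different} reflected processes whose gap is of order $\|b\|_\infty t$, so you cannot conclude $R^a\to R$. A naive Gronwall on $|R_t^a-R_t|$ also fails, because the singular drift $\int_0^t\frac{2\,ds}{a\sinh(2R_s^a/a)}$ and the local time $L_t$ cannot be compared term by term. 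The paper's device, which is the one idea missing from your proposal, is to exploit that $R_t^a-R_t$ has \emph{no martingale part} (same $B^1$) and to apply the It\^o--Tanaka formula to $\e\vee|R_t^a-R_t|$, using two observations: $dL_s$ is supported on $\{R_s=0\}$, where $R_s^a-R_s=R_s^a\ge0$, so the set $\{R_s^a-R_s<-\e\}$ contributes a negative measure that can be discarded; and on $\{R_s^a-R_s>\e\}$ the singular drift is bounded by $\frac{1}{a\sinh(2\e/a)}\le\a$ uniformly for small $a$. This yields the estimate $\e\vee|R_t^a-R_t|\le|R_0^a-R_0|+\e+2\a t+C\int_0^t\sup_{r\le s}\rho(Y_r^a,Y_r)\,ds$, which then enters a joint Gronwall loop with the tangential estimate. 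Separately, note a computational slip: $\nabla\ln\tanh(R/a)=\frac{2\nabla R}{a\sinh(2R/a)}$, not $\frac1a\coth(R/a)\nabla R$; the latter blows up like $1/a$ at every fixed interior point and would not vanish in $M^o$ as $a\to 0$.
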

\begin{proof} 

Since UCP convergence is local and is implied by local convergence in $\scrS_p$, (1) is a consequence of (2). 
See Corollary \ref{C3}. So we assume that $M$ is compact and choose a constant $\d_0>0$
such that the function $\delta$ is bounded below by $ 3\d_0$. Then we replace $\delta$ by $3\delta_0$
in the definition of $E_0$, $F_0$, $E_c$, $F_c$.

We define  $h^a(x)=\ln \Big( \tanh \left(\frac{R (x)}{ a}\right)\Big)$ and
 $$A^a(x)=\nabla h^a(x)=\frac{2\nabla R(x)}{ a \sinh \left(\frac{2 R(x) }{ a}\right)}.$$
 This is an approximation for a vector field that vanishes on $M^0$ and 
 exerts an `infinity' force in the direction of $\nabla R=A$ on the boundary.

  Let $R_t^a=R(Y_t^a)$.
 Then $$R_t^a=R_0^a+ \sum_k\int_0^t \langle d R,\sigma_k(Y_s^a)\rangle dB_s^k
 +\frac{1}{2}\int_0^t \Delta R(Y_s^a) ds + \int_0^t \frac{2 }{ a \sinh (\frac{2 R_s^a }{ a})}ds. $$
Let us denote by $\beta_t^a$ the stochastic term:
$$R_t^a=R_0^a+\beta_t^a+\frac{1}{2}\int_0^t \Delta R(Y_s^a) ds + \int_0^t \frac{2 }{ a \sinh (\frac{2 R_s^a }{ a})}ds. $$
For $Y_t^a\in F_{2/3}$ the tubular neighbourhood of $\partial M$, we have by Proposition~\ref{construction} (2) that $d\beta_t^a=dB_t^1$ is
independent of $a$ and of $Y_t^a$, which will be crucial for the sequel:
\begin{equation}
 \Label{ERat}
 dR_t^a=dB_t^1+\frac{1}{2}\Delta R(Y_t^a) dt +  \frac{2 }{ a \sinh (\frac{2 R_t^a }{ a})}dt.
\end{equation}


  Since we assumed that $M$ is compact,  $|\Delta R|$ is bounded, so the drift is essentially 
    $ \di\frac{2 }{ a \sinh (\frac{2 R_s^a }{ a})}$ 
  and $R_t^a$ never touches the boundary and the equation is well defined.

 Recall that $\pi$ is the map that sends a point $x\in M$ to the nearest point on $\partial M$,
it is defined on $F_0$.   The tubular neighbourhood map $\Psi :  F_0\to E_0 $ 
splits into two parts,  $\Psi(x)=(\pi(x), R (x))$. Since $\Psi$ is a diffeomorphism onto its image, on $\{Y_t\in F_0\}$,
the processes $Y_t^a$ converges to $Y_t$ in the Riemannian metric on $M$ if and only if 
$\Psi(Y_t^a)$ converges to $\Psi(Y_t)$ in the product metric of 
$\partial M\times [0, \d]$.  

On any subset of $M$ not intersecting the tubular neighbourhood that is distance $c\delta$ from $\partial M$
for some $c<1$, the functions $|\nabla A^a|$ are uniformly bounded in $a$
and converge to zero as $a\to 0$. The local time does not charge any real time if $Y_t$ is not on the boundary. For a $C^3$ embedding $\Phi: M\to \R$,
\begin{align*}
\Phi (Y_t^a)-\Phi (Y_t)
=&\Phi (Y_0^a)-\Phi (Y_0)+\int_0^t\left\langle (\s^\ast\nabla \Phi )(Y_s^a)
-(\s^\ast\nabla \Phi )(Y_s),dB_s\right\rangle\\
&+\f12\int_0^t\left(\D \Phi (Y_s^a)-\D \Phi (Y_s)\right)\,ds.
\end{align*}
By standard estimates,  if $ Y_0^a \to Y_0$ in probability,  the processes $\Phi(Y_t^a)$  started outside the closed tubular set $F_{\frac{1}{3}}$ and stopped at the first entrance time of  $F_{\f23}$  converge to $\Phi(Y_t)$ in UCP. In particular this holds for
isometric embeddings and since the intrinsic Riemannian distance is controlled by the extrinsic distance function, we see that the stochastic process $\rho(Y_t^a, Y_t)$ converges in UCP.

 Splitting in a proper way the times, for the UCP topology it is enough to prove that the processes $(Y_t^a)$ started  inside the open
 set $F_{\frac23}$ and stopped at exiting $F_{\f13}$ converge to  $(Y_t)$ whenever $Y_0^a\to Y_0$.
 
 So we assume that $Y_0^a$ and $Y_0$ belong to $F_{2/3}$ and
 $\rho(Y_0^a, Y_0)$ converges to $0$ in probability. We let 
 $$
 \tau=\inf\{t\ge 0,\ R(Y_t)=2\d_0\},\quad \tau^a=\inf\{t\ge 0,\ R(Y_t^a)=2\d_0\}.
 $$
 We first prove that for all $T>0$,
 \begin{equation}\Label{E9}
 \forall T>0,\ \ \lim_{a\to 0}\E\left[\sup_{t\le \tau^a\wedge \tau \wedge T}\rho^2(Y_t^a,Y_t)\right]=0.
 \end{equation}
Notice  if \eqref{E9} holds,  $\sup_{s\le t} R(Y_s) <2 \delta_0$ implies that $\sup_{s\le t} R(Y_s^a)<2 \delta_0$ for sufficiently small $a$, consequently,
  \begin{equation}\Label{E8}
 \lim_{a\to 0}\E\left[\sup_{t\le \tau\wedge T}\rho^2(Y_t^a,Y_t)\right]=0.
 \end{equation}
 This in turn shows that
\begin{equation}\Label{E7}
 \liminf_{a\to\infty}\tau^a\wedge T\ge \tau \wedge T
 \end{equation}
 and the convergence of $Y_t^a$ to $Y_t$ in the UCP topology follows.
 
 Let $R_t^a=R(Y_t^a)$ and $R_t=R(Y_t)$. Denote $\bar \rho$ the Riemannian distance on $\partial M$.
 Using the tubular neighbourhood map, proving \eqref{E9} will be equivalent to prove the following two limits:

 \begin{equation}
 \Label{E5-E6}
  \lim_{a\to 0}\E\left[\sup_{t\le \tau^a\wedge\tau\wedge T}(R_t^a-R_t)^2\right]=0, \quad
  \lim_{a\to 0}\E\left[\sup_{t\le \tau^a\wedge \tau\wedge T}(\bar \rho)^2(\pi(Y_t^a),\pi(Y_t))\right]=0.
 \end{equation}
 For $t\le \tau^a\wedge \tau$ we have by~\eqref{ERat} and \eqref{RSP},
 \begin{equation}\Label{E9.1} R_t^a-R_t=R_0^a-R_0+\int_0^t\f{ds}{a\sinh\left(\f{2R_s^a}{a}\right)}-L_t+\f12\int_0^t\left(\D R(Y_s^a)-\D R(Y_s)\right)\,ds.
 \end{equation}
We remark that in the above equation there is no martingale part. 
 Let $\e>0$, and 
 $$\tilde L_t^a=\int_0^t\f{ds}{a\sinh\left(\f{2R_s^a}{a}\right)}-L_t.$$
We apply It\^o-Tanaka  formula to the convex function
 $\max(y,\e)$ to obtain:
 \begin{align*}
 \e\vee|R_t^a-R_t|=& |R_0^a-R_0|\vee\e +\int_0^t \1_{\{R_s^a-R_s >\e\}}\;d\tilde L_s^a
 -\int_0^t \1_{\{R_s^a-R_s <-\e\}}\;d\tilde L_s^a\\
 &+{\frac12}\int_0^t  \1_{\{|R_s^a-R_s| >\e\}}\left(\D R(Y_s^a)-\D R(Y_s)\right)\,ds.
\end{align*}
It is vital to remark that $L_s >0$ if and only if $R_s^a-R_s=R_s^a$. Also $R_s^a-R_s <-\e$,
if only if $R_s\not =0$, and so $- d\tilde L_s^a$ is a negative measure. We may ignore the third term on the right hand side of the identity.
For each $\a>0$ and $\e>0$, there exists a number $a(\e,\a)>0$ such that for all $a\le a(\e,\a)$ and $r\ge \e$, 
$\f1{a\sinh\left(\f{2r}{a}\right)}<\a$. Hence,
\begin{align*}
&\int_0^t \chi_{ \{R_s^a-R_s>\e, R_s=0\}}d\tilde L_s^a\\
&\le  \int_0^t \f1{a\sinh\left(\f{2(R_s^a-R_s)}{a}\right)}\chi_{ \{R_s^a-R_s>\e, R_s=0\}}ds-L_t\le
 \a t,\\
&\int_0^t \chi_{ \{R_s^a-R_s>\e, R_s \not =0\}}d\tilde L_s^a
= \int_0^t \f1{a\sinh\left(\f{2R_s^a}{a}\right)}\chi_{ \{R_s^a-R_s>\e, R_s \not =0\}}ds\\&\le 
\int_0^t \f1{a\sinh\left(\f{2\e}{a}\right)}ds\le
\a t,
\end{align*}
It follows that
\begin{equation}
\Label{E11}
\e\vee|R_t^a-R_t|\le |R_0^a-R_0| +\e+2\a t+\f12 \|\nabla\D R\|_{L^\infty(F_0)}\int_0^t\sup_{r\le s}\rho(Y_r^a,Y_r)\,ds.
\end{equation}
So 
\begin{equation}
\begin{split}\Label{E12}
\E\left[\sup_{t\le \tau\wedge\tau^a\wedge T}\left(R_t^a-R_t\right)^2 \right]\le& 4\E\left[(R_0^a-R_0)^2\right]+4\e^2+8\a^2t^2+\\&
2\|\nabla\D R\|_{L^\infty(F_0)}\int_0^T\E\left[\sup_{s\le \tau\wedge\tau^a\wedge t}\rho^2(Y_s^a,Y_s)\right]\,dt.
\end{split}
\end{equation}
Before continuing with the estimate above, we estimate $\bar \rho( \pi(Y_t^a,\pi(Y_t))$.
The distance function $\bar \rho$ is not smooth on $\partial M\times \partial M$. So we will consider an isometric embedding $\imath  :\partial M\to \R^{m'}$ (in fact since $\partial M$ is compact any embedding would do)
 and instead of proving the second limit in ~\eqref{E5-E6} we will prove that 
 \begin{equation}
 \Label{E5bis}
 \lim_{a\to 0}\E\left[\sup_{t\le \tau^a\wedge \tau\wedge T}\left(\imath (\pi(Y_t^a))-\imath (\pi(Y_t))\right)^2\right]=0
 \end{equation}
We extend $\imath $ to $F_0$ to obtain  $\tilde \imath (y)=(\imath \circ \pi)(y)$, then
\begin{equation}
\begin{split}\Label{E10}
\tilde \imath (Y_t^a)-\tilde \imath (Y_t)
&=\tilde \imath (Y_0^a)-\tilde \imath (Y_0)+\int_0^t\langle \s^\ast\nabla \tilde \imath (Y_s^a)-\s^\ast\nabla \tilde \imath (Y_s),dB_s\rangle\\
&+\f12\int_0^t\left(\D \tilde \imath (Y_s^a)-\D \tilde \imath (Y_s)\right)\,ds+\int_0^t d\tilde \imath (A^a(y_s^a))ds
-\int_0^t  d\tilde \imath  (A(y_s)) dL_s.
\end{split}
\end{equation}
Since $d\pi (A)=0$ and $d\pi(A^a)=0$, the last two terms vanish. By standard calculation, 
\begin{equation}
\begin{split}\Label{E13}
&\E\left[\sup_{t\le T\wedge\tau\wedge \tau^a}\left\|\tilde \imath (Y_t^a)-\tilde \imath (Y_t)\right\|^2\right]\\&
\le 4\E\left[\|\tilde \imath (Y_0^a)-\tilde \imath (Y_0)\|^2\right]+16\left\|\nabla\s^\ast\nabla \tilde \imath \right\|^2_{L^\infty(F_0)}\int_0^T
\E\left[\sup_{s\le t\wedge\tau\wedge\tau^a}\rho^2(Y_s^a,Y_s)\right]\,dt\\
&+2\|\nabla\D \tilde \imath \|_{L^\infty(F_0)}\int_0^T\E\left[\sup_{s\le \tau\wedge\tau^a\wedge t}\rho^2(Y_s^a,Y_s)\right]\,dt.
\end{split}
\end{equation}
Since $\partial M$ is compact, $F_0$ is compact. The quantities $\nabla \sigma^*$ and $\nabla \tilde i=
\nabla \pi(\nabla i)$ are bounded. Similarly $\|\nabla\D \tilde \imath \|_{L^\infty(F_0)}$ is finite.
For $x\in F_0$, set 
\begin{equation}
\Label{Hx}
H(x)= (\tilde \imath (x), R(x))\in \R^{m'+1}.
\end{equation}
 Let $C_H >0$ be a constant such that for all $x,x'\in F_0$, 
\begin{equation}
\Label{E14}
\f1{C_H}\|H(x)-H(x')\|\le\rho(x,x')\le C_H\|H(x)-H(x')\|.
\end{equation}
Define 
\begin{equation}
\Label{E16}
C=\left(16\left\|\nabla\s^\ast\nabla \tilde \imath \right\|^2_{L^\infty(F_0)}+2\|\nabla\D H \|_{L^\infty(F_0)}\right)(C_H)^2.
\end{equation}
From (\ref{E13}),   using Gronwall lemma we obtain
that if $a<a(\e)$,
 \begin{equation}
 \begin{split}\Label{E15}
 &\E\left[\sup_{s\le T\wedge \tau\wedge \tau^a}\left\|H(Y_s^a)-H(Y_s)\right\|^2 \right]
 \le 4\left(\left\|H(Y_0^a)-H(Y_0)\right\|^2+\e^2+ \a^2T^2 \right)e^{CT}.
 \end{split}
 \end{equation}
 Since $\e$ and $\a$ can be chosen as small as we like and $C$ is independent of $\e,\ \a,\ a$, 
 $Y_0^a\to Y_0$ and $H$ is bounded, we obtain that 
 \begin{equation}
 \Label{E16.1}
 \lim_{a\to 0}\E\left[\sup_{s\le T\wedge \tau\wedge \tau^a}\left\|H(Y_s^a)-H(Y_s)\right\|^2 \right]= 0.
 \end{equation}
Together with (\ref{E12}), we see that
 \begin{equation}
 \Label{E17}
\lim_{a\to 0} \E\left[\sup_{s\le T\wedge \tau\wedge \tau^a}\rho^2(Y_s^a,Y_s) \right]=0.
 \end{equation}
This implies that $\tau^a\wedge \tau\to \tau$ almost surely, and since the distance is bounded, 
 \begin{equation}
 \Label{E18}
\lim_{a\to 0} \E\left[\sup_{s\le T\wedge \tau}\rho^2(Y_s^a,Y_s) \right]= 0.
 \end{equation}
 This completes the proof for the convergence of $Y^a$ to $Y$ in UCP,  and also in $\scrS_p$ for compact manifold $M$.
\end{proof}
It would be interesting to use the method in \cite{Li-flow} to study whether there exists a global smooth solution flow to the SDEs.
It is also worth noting that if $\tau^U(Y)$ (resp. $\tau^U(Y^a)$) is the exit time of $Y_\cdot$ (resp. $Y_\cdot^a$) from a relatively compact open set $U$, then
$$
\liminf_{a\to 0}\tau^U(Y^a)\ge \tau^U(Y).
$$
\begin{corollary}
\Label{C1}
Let $S_1$, $S_2$ be stopping times such that $S_1<S_2$ and $a_0$ a positive constant.  Suppose that 
$Y_t^a\in F_0$ for $a\in (0,a_0]$ and $t\in [S_1,S_2]$. 
Then on the interval  $[S_1,S_2]$, $\lim_{a\to 0} \pi(Y_\cdot^a)=\pi(Y_\cdot)$. The convergence is in the semi-martingale topology.
If moreover $M$ is compact and $S_2$ is bounded, then the convergence holds in $\SH_p$ for all $p\in [1,\infty)$.
\end{corollary}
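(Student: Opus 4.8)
The plan is to exploit the structural fact, already used in the proof of Theorem~\ref{T2}, that the boundary projection $\pi$ annihilates exactly the two drift terms which obstruct semimartingale convergence; after composing with $\pi$ one is left with honest It\^o processes and a routine stability estimate.

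First I would isolate the algebraic input. Since $A^a=\nabla h^a=\f{2\nabla R}{a\sinh(2R/a)}$ is a scalar multiple of $\nabla R$, and on $F_0$ the field $\nabla R$ is tangent to the fibres of $\pi$ (the unit-speed geodesics normal to $\partial M$, along which $R$ is arclength), we have $d\pi(A^a)\equiv 0$ on $F_0$; likewise $A=\nu=\nabla R$ on $\partial M$, so $d\pi(A)=0$ $dL$-almost everywhere. Let $\imath\colon\partial M\to\R^{m'}$ be an embedding and $\tilde\imath=\imath\circ\pi$ its extension to $F_0$, as in the proof of Theorem~\ref{T2}. Applying It\^o's formula on $[S_1,S_2]$ — where $Y^a_\cdot$, and (by the UCP convergence of Theorem~\ref{T2}, up to stopping at the first exit from $F_0$, which does not affect the conclusion on $[S_1,S_2]$) also $Y_\cdot$, stays in $F_0$ — the terms $d\tilde\imath(A^a(Y^a_s))\,ds$ and $d\tilde\imath(A(Y_s))\,dL_s$ drop out, leaving
\[
\tilde\imath(Y_t^a)=\tilde\imath(Y_{S_1}^a)+\int_{S_1}^t\langle\s^\ast\nabla\tilde\imath(Y_s^a),dB_s\rangle+\f12\int_{S_1}^t\D\tilde\imath(Y_s^a)\,ds,
\]
together with the same decomposition for $\tilde\imath(Y_\cdot)$, now genuinely without any $dL$ term. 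Thus $\tilde\imath(Y^a_\cdot)$ and $\tilde\imath(Y_\cdot)$ are It\^o processes whose martingale and finite-variation coefficients are the fixed smooth functions $\s^\ast\nabla\tilde\imath$ and $\D\tilde\imath$ composed with $Y^a$ and $Y$; no coefficient blows up near the boundary.

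Next I would feed in Theorem~\ref{T2}: $Y^a\to Y$ in UCP, and in $\scrS_p$ for $M$ compact. On $F_0$ the maps $\s^\ast\nabla\tilde\imath$ and $\D\tilde\imath$ are bounded, and Lipschitz when $M$ is compact ($F_0$ then being relatively compact), as recorded in the proof of Theorem~\ref{T2}; hence the integrands converge to $\s^\ast\nabla\tilde\imath(Y_\cdot)$ and $\D\tilde\imath(Y_\cdot)$ in UCP, and in $\scrS_p$ when $M$ is compact. The quadratic variation of the difference of the martingale parts is $\int_{S_1}^{\cdot}\lvert\s^\ast\nabla\tilde\imath(Y_s^a)-\s^\ast\nabla\tilde\imath(Y_s)\rvert^2\,ds$, which tends to $0$ locally in probability; by Burkholder--Davis--Gundy the martingale parts converge locally in $\SH_1$, and for $M$ compact $\big\|[\tilde\imath(Y^a_\cdot)-\tilde\imath(Y_\cdot)]_{S_2}^{1/2}\big\|_p\le C\,(S_2-S_1)^{1/2}\,\big\|\sup_{s\le S_2}\rho(Y_s^a,Y_s)\big\|_p\to 0$. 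The finite-variation parts are handled the same way through $\f12\int_{S_1}^{\cdot}\lvert\D\tilde\imath(Y_s^a)-\D\tilde\imath(Y_s)\rvert\,ds$, and the initial values satisfy $\tilde\imath(Y_{S_1}^a)\to\tilde\imath(Y_{S_1})$ by continuity of $\tilde\imath$. Hence $\tilde\imath(Y^a_\cdot)\to\tilde\imath(Y_\cdot)$ in the semimartingale (\'Emery) topology, and in $\SH_p$ for $M$ compact; since $\imath$ embeds the closed manifold $\partial M$, this is precisely the asserted convergence of $\pi(Y^a_\cdot)$ to $\pi(Y_\cdot)$. Localization to the stochastic interval $[S_1,S_2]$ (shift by $S_1$ and stop at $S_2$, or integrate against $\1_{(S_1,S_2]}$) is routine, as is, in the non-compact case, stopping $Y$ at the exit from a compact sub-neighbourhood of $F_0$, the \'Emery topology being characterized by exactly this local convergence of the martingale and finite-variation parts.

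I expect the only genuinely delicate point — and precisely the reason the analogous statement fails for $Y^a$ itself — to be the vanishing of $d\pi(A^a)$ and of $d\pi(A)\,dL$. These identities strip away the drift $A^a\,dt$, which converges to $A\,dL$ only in $\scrS_p$ and not in $\SH_p$ (cf.\ Corollary~\ref{convLat} and the ensuing remark that $\lvert d(L_t-L_t^a)\rvert\to 2\lvert dL_t\rvert$). Once those singular pieces are gone, the surviving terms have smooth bounded coefficients composed with processes converging in $\scrS_p$, so what remains is the standard $\scrS_p/\SH_p$ stability estimate for It\^o equations; the only bookkeeping is the boundedness on $F_0$ of $\s^\ast\nabla\tilde\imath$, $\D\tilde\imath$ and their gradients, which was already established in the proof of Theorem~\ref{T2}.
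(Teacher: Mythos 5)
Your proposal is correct and follows essentially the same route as the paper: the key point in both is that $A^a$ and $A$ lie in the kernel of $T\pi$, so the singular drift and local-time terms disappear after projecting, leaving It\^o processes with bounded coefficients converging in UCP (resp.\ $\scrS_p$). The only difference is cosmetic: the paper concludes by citing Emery's stability theorem (Theorem~2 of \cite{Emery:79}, manifold version in \cite{Arnaudon-Thalmaier:98}), whereas you carry out the same stability estimate by hand via the embedding $\tilde\imath=\imath\circ\pi$ and Burkholder--Davis--Gundy.
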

\begin{proof}
Since the drifts $A$ and $A^a$ belong to the kernel of the differential $T\pi$, we
obtain with It\^o formula the following equations:
 \begin{align*}d(\pi(Y_t^a))&= \sum_{k=1}^mT\pi \sigma_k (Y_t^a) \circ dB_t^k + T\pi \circ\sigma_0 (Y_t^a) \,dt\\
 d(\pi(Y_t))&=\sum_{k=1}^m\ T\pi\circ\sigma_k (Y_t) \circ dB_t^k+T\pi\circ \sigma_0 (Y_t^a) \, dt. \end{align*}
Since $\frac{1}{2}\sum_{k=1}^m \nabla_{\sigma_k} \sigma_k+\sigma_0=0$,  we only need to be concerned with the
following term from the It\^o correction:
$\frac{1}{2}\sum_{k=1}^m \nabla T\pi(\cdot) (\sigma_k, \sigma_k)$. 
 By Theorem~\ref{T2},  both $T\pi \circ\sigma_k (Y_t^a)$ and $\nabla T\pi(Y_t^a)
 (\s_k,\s_k)$ converge in the UCP topology, and they are locally uniformly bounded.
 The limits are respectively $T\pi \sigma_k (Y_t)$ and $ \nabla T\pi(Y_t) (\s_k,\s_k)$.
 By Theorem~2 in~\cite[M. Emery]{Emery:79}, see \cite[M. Arnaudon and A. Thalmaier]{Arnaudon-Thalmaier:98} for the manifold case, $\pi(Y_\cdot^a)$ converges to $\pi(Y_\cdot)$ in the semi-martingale topology.
\end{proof}
Define $L_t^a=\int_0^t\f{ds}{a\sinh\left(\f{2R_s^a}{a}\right)}$. Then in the tubular neighbourhood, 
$$A^a(Y_t^a) =\nabla R(Y_t^a)  \frac{d} {dt} L_t^a.$$

\begin{corollary}\Label{convLat}
Suppose that $M$ is compact. Then for all $p\ge 1$ and $T>0$, 
$$\lim_{a\to 0}\E\left( \sup_{s\le T} |L_s^a-L_s|^p\right)=0.$$
Moreover, letting $L^0=L$,  for all $\lambda>0$, there exists $C(T,\lambda)$ such that for all $a\in [0,1]$, 
\begin{equation}
\Label{expLat}
\E\left[e^{\lambda L_T^a}\right]\le C(T,\lambda).
\end{equation}
\end{corollary}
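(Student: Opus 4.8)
The plan is to extract both assertions from a single application of It\^o's formula to the smooth function $R$ along $Y^a_t$ and along $Y_t$, in the spirit of the derivation of \eqref{ERat}. I would write $N^a_t=\int_0^t\langle \sigma^\ast\nabla R(Y^a_s),dB_s\rangle$ for the martingale part of $R(Y^a_t)$ and $N_t$ for that of $R(Y_t)$; since $\sigma\sigma^\ast=\mathrm{id}$ and $R$ is $1$-Lipschitz, $\langle N^a\rangle_t\le t$ and $\langle N\rangle_t\le t$. The finite-variation contribution of the reflecting drift $A^a(Y^a_t)=\nabla R(Y^a_t)\,\frac{d}{dt}L^a_t$ to $R(Y^a_t)$ equals $\int_0^t|\nabla R(Y^a_s)|^2\,dL^a_s=L^a_t-\xi^a_t$ with $0\le\xi^a_t\le K_0(a)\,t$, $K_0(a):=\tfrac1{a\sinh(2\delta_0/a)}$: indeed $1-|\nabla R|^2$ vanishes on $\{R<\delta_0\}$ (where $R$ is the boundary distance, so $|\nabla R|=1$), while $\tfrac{d}{ds}L^a_s\le K_0(a)$ on $\{R^a_s\ge\delta_0\}$; for $Y_t$ the corresponding term is exactly $L_t$, since $dL_t$ is carried by $\partial M$, where $|\nabla R|=1$. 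Subtracting the two expansions yields, on all of $[0,T]$,
\[
L^a_t-L_t=\big(R(Y^a_t)-R(Y_t)\big)-\big(R(Y^a_0)-R(Y_0)\big)-\big(N^a_t-N_t\big)-\tfrac12\!\int_0^t\!\big(\Delta R(Y^a_s)-\Delta R(Y_s)\big)ds+\xi^a_t ,
\]
which is the global form of \eqref{E9.1} (the $N^a-N$ term is absent in \eqref{E9.1} because before the exit time both processes sit in the tube, where the martingale part of $R$ is $B^1$ for both). To finish the first assertion I would check that every term on the right tends to $0$ in $\scrS_p([0,T])$: since $R$ is $1$-Lipschitz, $|R(Y^a_\cdot)-R(Y_\cdot)|\le\rho(Y^a_\cdot,Y_\cdot)$ and $|R(Y^a_0)-R(Y_0)|\le\rho(Y^a_0,Y_0)$, handled by Theorem~\ref{T2}(2) and compactness of $M$; $N^a-N$ is a continuous martingale with $\langle N^a-N\rangle_t\le\mathrm{Lip}(\sigma^\ast\nabla R)^2\int_0^t\rho(Y^a_s,Y_s)^2ds$, handled by Burkholder--Davis--Gundy together with Theorem~\ref{T2}(2); the $\Delta R$ term is dominated by $\mathrm{Lip}(\Delta R)\int_0^t\rho(Y^a_s,Y_s)ds$; and $\sup_{t\le T}|\xi^a_t|\le K_0(a)\,T\to0$ deterministically. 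Hence $\E\big[\sup_{s\le T}|L^a_s-L_s|^p\big]\to0$.

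\textbf{The exponential bound \eqref{expLat}.}
Reorganising the same expansion and using $0\le R(Y^a_T)\le\sup_M R$, $R(Y^a_0)\ge0$, $0\le\xi^a_T\le K_0 T$ where $K_0:=\sup_{a\in(0,1]}K_0(a)<\infty$ (the map $a\mapsto K_0(a)$ is continuous on $(0,1]$ with limit $0$ at $0^+$), and $|\Delta R|\le\|\Delta R\|_{L^\infty(M)}$, I would obtain the pathwise bound, valid for all $a\in[0,1]$ with the convention $N^0=N$, $\xi^0\equiv0$,
\[
L^a_T\ \le\ \sup_{t\le T}|N^a_t|\ +\ C_1(T),\qquad C_1(T):=\sup_M R+\tfrac12\|\Delta R\|_{L^\infty(M)}T+K_0T ,
\]
with $C_1(T)$ independent of $a$. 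By the Dambis--Dubins--Schwarz theorem $N^a_t=\beta^a_{\langle N^a\rangle_t}$ for a standard Brownian motion $\beta^a$, so $\sup_{t\le T}|N^a_t|\le\sup_{u\le T}|\beta^a_u|$ pathwise because $\langle N^a\rangle_T\le T$; the law of $\sup_{u\le T}|\beta^a_u|$ is independent of $a$ and has a Gaussian upper tail by the reflection principle, so $\E\big[e^{\lambda\sup_{u\le T}|\beta^a_u|}\big]=:C_2(\lambda,T)<\infty$ for every $\lambda>0$. Therefore $\E[e^{\lambda L^a_T}]\le e^{\lambda C_1(T)}C_2(\lambda,T)=:C(T,\lambda)$ for all $a\in[0,1]$.

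\textbf{Main difficulty.}
The convergence assertion is a short consequence of Theorem~\ref{T2} once the identity above is available, and the obstruction stressed elsewhere in the paper — that $dL^a$ and $dL$ are mutually singular, so convergence fails in $\SH_p$ — does not interfere with $\scrS_p$-convergence, which only sees $L^a_t-L_t$ itself. The genuine difficulty is the \emph{$a$-uniformity} of \eqref{expLat}: since $\frac{d}{dt}L^a_t=\frac1{a\sinh(2R^a_t/a)}$ blows up as $R^a_t\downarrow0$, Khasminskii's lemma cannot be run with an $a$-free bound on the short-time mean of $L^a$, and a naive excursion-by-excursion estimate fails because the number of boundary excursions before time $T$ carries no useful moment bound. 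What rescues the argument is the observation that $L^a_t$ is, up to an $a$-independent bounded drift, equal to $R(Y^a_t)$ minus a martingale whose bracket is dominated by $t$ \emph{for every $a$ at once}; compactness of $M$ (hence boundedness of $R$) then reduces the estimate to $\E\exp(\lambda\sup_{u\le T}|\beta_u|)$ for a single Brownian motion. One should keep track of the exact constant linking $A^a$, $\frac{d}{dt}L^a_t$ and the drift in \eqref{ERat}, but it only rescales $L^a$ by a fixed factor and is immaterial to both steps.
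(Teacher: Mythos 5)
Your proof is correct, and it follows the same basic strategy as the paper's: express $L^a-L$ through the It\^o expansion of $R$ along the two processes and invoke the $\scrS_p$-convergence of Theorem~\ref{T2}, then get \eqref{expLat} from a semimartingale decomposition of $L^a_T$ with an $a$-uniformly bounded drift and a martingale whose bracket is at most $T$. Where you differ is in the bookkeeping of the first part: the paper localizes to the tubular neighbourhood, where by construction the martingale parts of $R(Y^a_t)$ and $R(Y_t)$ are both equal to $B^1_t$ and cancel exactly (so \eqref{E9.1bis} carries no martingale term), and then disposes of the exterior region by noting that $dL^a_t/dt\to 0$ uniformly there while $dL$ does not charge it; you instead write a single global identity on $[0,T]$, at the price of carrying the correction $\xi^a_t=\int_0^t\bigl(1-|\nabla R(Y^a_s)|^2\bigr)\,dL^a_s\le K_0(a)\,t$ and the martingale difference $N^a-N$, which you control by Burkholder--Davis--Gundy together with the Lipschitz continuity of $x\mapsto\sigma^*_x\nabla R(x)$ on the compact $M$. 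Your version buys a cleaner gluing between the two regimes (a passage the paper leaves implicit), while the paper's localization buys the exact pathwise cancellation of the noise, which is essential elsewhere in the article but not for this corollary. For \eqref{expLat} the two arguments coincide in substance; you merely spell out the step the paper labels ``the result immediately follows'' via Dambis--Dubins--Schwarz and the Gaussian tail of $\sup_{u\le T}|\beta_u|$, and your observation that $a\mapsto K_0(a)$ is bounded on $(0,1]$ is exactly what makes the bound uniform down to $a=0$. The only point to tidy is the factor of $2$ relating $A^a$, $dL^a_t/dt$ and your $K_0(a)$, which, as you note, only rescales constants and is immaterial.
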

\begin{proof}
Firstly we take $Y_0^a, Y_0$ in $F_{2/3}$, the $\f 2 3$ tubular neighbourhood of the boundary.
Let $\tau=\inf \{ R(Y_t)=2 \delta_0\}$ and $\tau^a=\inf \{ R(Y_t^a)=2 \delta_0\}$ be respectively the first exit times of $Y$ and $Y^a$ from $\{x:R(x)<2\delta_0\}\subset F_{\f 23}$.
On $\{t<\tau^a\wedge \tau\}$ we have  \eqref{E9.1}:
\begin{equation}\Label{E9.1bis} L_t^a-L_t=-R_0^a+R_0+R_t^a-R_t-\f12\int_0^t\left(\D R(Y_s^a)-\D R(Y_s)\right)\,ds.
 \end{equation}
By the convergence of $Y^a$ to $Y$ in $\scrS_p([0,T])$,
$$\E \sup_{t<\tau^a\wedge \tau} |L_t^a-L_t|^p<\infty.$$
Outside of the $2/3$ tubular neighbourhood $F_{\frac {2}{3}}$,
$\frac{1}{a \sinh( \frac{2 R(x)}  {a})}$ converges to $0$ uniformly in $x$ and $L_t$ vanishes. 
Note that $\lim_{a\to 0} \frac{1}{a \sinh( \frac{2 r}  {a})}=0$ for any $r>0$. The required convergence result follows.

To prove~\eqref{expLat} we write for $a\in[0,1]$
$$
L_t^a=R_t^a-R_0^a +\int_0^t \alpha_s^a \,dZ_s^a+\int_0^t\beta^a_s\,ds 
$$
where for all $a$, $Z_t^a$ is a real valued Brownian motion, and $|R_t^a-R_0^a|$, $\alpha_t^a$ and $\beta_t^a$ are uniformly bounded independently of $a$. The result immediately follows.
 \end{proof}

\section{Convergence of the parallel transports}
\Label{section4}
Let $(Y_t)$ and $(Y_t^a)$ be respectively the solutions of (\ref{Yt}) and (\ref{Yta}).
The parallel transport along $(Y_t)$ and $(Y_t^a)$ are respectively 
the solution to the canonical horizontal stochastic differential equations
on the orthonormal frame bundle with drift the horizontal lift of  the drift vector fields $A$  and $A^a$ respectively.

Denote by $\parals_t^a$ the parallel transport along $Y_t^a$, $\parals_t$ the parallel transport along $Y_t$.
Recall that $\sigma^*_y: T_yM\to \R^m$ is the right inverse to $\sigma_y$.
Take $v^a\in T_{Y_0^a}M$ and $v\in T_{Y_0}M$ with the property that $\lim_{a\to 0} \sigma^*(Y_t^a)(v^a) = \sigma^*(Y_t^a)(v)$. Let  $U$ be  a continuous vector field. Then
$$\<\parals_t^a v^a, U(Y_t^a)\>
=\<\sigma^*(Y_t^a) (\parals_t^av^a), \sigma^*(Y_t^a)(U(Y_t))\>.$$
Since $Y_t^a\to Y_t$ as $a\to 0$, so does $\sigma^*(Y_t^a)U(Y_t^a)$ to $\sigma^*(Y_t)U(Y_t)$.
We prove below that $\sigma^*(Y_t^a) (\parals_t^a v^a)\to \sigma^*(Y_t) (\parals_tv)$.

\begin{proposition}\Label{cvpt}
Let $v^a\in T_{Y_0^a}M$ and $v\in T_{Y_0}M$. Suppose that 
$\sigma_{Y_0^a}^*v^a$ converges to $\sigma_{Y_0}^*v$ in probability as $a\to 0$.
Then $\lim_{a\to 0} \s^{\ast}(Y_t^a)(\parals_t^a v^a)=\s^{\ast}(Y_t)(\parals_t v)$, with convergence in the semi-martingale topology.
Also  $\lim_{a\to 0} \parals_t^a v^a\stackrel{UCP}{\to} \parals_tv$. 

If $M$ is compact, the convergences hold respectively in $\SH_p([0,T])$ and $\scrS_p([0,T])$ for all $T>0$ and $p\ge 1$.  
\end{proposition}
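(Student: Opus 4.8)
The plan is to reduce the statement about parallel transport to a statement about a linear SDE with coefficients that depend on $(Y_t^a)$ only through bounded, continuous functions which converge in UCP (or $\scrS_p$) by Theorem \ref{T2}; then apply the stability theorem for solutions of SDEs (Emery's theorem, cited already in the proof of Corollary \ref{C1}). First I would write the parallel transport $\parals_t^a v^a$ intrinsically: since $Y_t^a$ solves the Stratonovich SDE $dY_t^a = \sigma(Y_t^a)\circ dB_t + \sigma_0(Y_t^a)\,dt + A^a(Y_t^a)\,dt$, the frame bundle lift is the canonical horizontal SDE on $OM$ with horizontal drift the lifts of $\sigma_0 + A^a$. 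Writing $u_t^a = \sigma^*(Y_t^a)(\parals_t^a v^a) \in \R^m$, one obtains a Stratonovich (hence Itô, after the usual correction) SDE for $u_t^a$ whose coefficients are smooth functions of $Y_t^a$ composed with $\sigma$, $\sigma^*$, the Christoffel symbols of the Levi-Civita connection, and $\nabla\sigma_k$, $\nabla A^a$. The key observation — already used in Corollary \ref{C1} — is that although $A^a$ itself blows up near $\partial M$, the quantity $\nabla A^a$ restricted to a region at fixed positive distance from the boundary is uniformly bounded in $a$ and converges to $0$; and near the boundary $A^a = \nabla R \cdot \frac{d}{dt}L_t^a$ with $\nabla R$ smooth, so that the contribution of $A^a$ to the equation for $u_t^a$ is governed by $dL_t^a$, which converges in $\scrS_p$ to $dL_t$ by Corollary \ref{convLat}.

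The main steps, in order, are: (i) derive the Itô SDE satisfied by $u_t^a = \sigma^*(Y_t^a)(\parals_t^a v^a)$, with driving semimartingales $B_t^k$ and $L_t^a$ and with coefficients of the form $G(Y_t^a)$, $G$ smooth and bounded on a neighbourhood of the relatively compact set under consideration; (ii) observe that the corresponding SDE for the limit, with $u_t = \sigma^*(Y_t)(\parals_t v)$, is driven by $B_t^k$ and $L_t$; (iii) invoke Theorem \ref{T2} and Corollary \ref{convLat} to get that all coefficient processes $G(Y_t^a)$ converge in UCP (resp.\ $\scrS_p$, for $M$ compact) to $G(Y_t)$, that they are locally uniformly bounded, and that the driving semimartingales $(B^k, L^a)$ converge to $(B^k, L)$ in the semimartingale topology (resp.\ $\SH_p$); (iv) apply the continuity theorem for SDEs under convergence of coefficients and drivers — Emery \cite{Emery:79}, in the manifold-valued formulation of \cite{Arnaudon-Thalmaier:98} — to conclude $u_t^a \to u_t$ in the semimartingale topology (resp.\ $\SH_p$). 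Finally, since $\sigma$ is a smooth surjective bundle map and $\sigma\sigma^* = \mathrm{id}$, we recover $\parals_t^a v^a = \sigma(Y_t^a)(u_t^a) \to \sigma(Y_t)(u_t) = \parals_t v$; this last composition is with a bounded smooth map, so semimartingale convergence of $u^a$ plus UCP convergence of $Y^a$ upgrades to UCP convergence of $\parals^a v^a$ (and $\scrS_p$ convergence when $M$ is compact), as claimed. The initial condition hypothesis $\sigma^*_{Y_0^a}v^a \to \sigma^*_{Y_0}v$ in probability is exactly what is needed to start the comparison.

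The hypothesis that $M$ need not be compact is handled by localisation: UCP convergence is a local statement, so one covers the (random) path by exit times from relatively compact coordinate neighbourhoods, applies the argument above on each, and patches; this is the role of the remark preceding the proposition reducing everything to behaviour inside and outside the tubular neighbourhood $F_0$. The step I expect to be the main obstacle is step (iii)–(iv): one must be careful that the convergence of the drivers happens in a topology strong enough for Emery's stability theorem, and that the $\scrS_p$/$\SH_p$ bounds are uniform in $a$ — here the exponential integrability estimate \eqref{expLat} for $L_T^a$ is essential, since the linear equation for $u_t^a$ has a $dL_t^a$ term and Gronwall-type control of its $L^p$ norm requires $\E[e^{\lambda L_T^a}]$ bounded uniformly in $a$. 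The fact that near the boundary $d\beta_t^a = dB_t^1$ is independent of $a$ (Theorem \ref{T2}, \eqref{ERat}) is what makes the pathwise comparison of the distance components stable, and the orthogonality relation $\langle\sigma_1,\sigma_j\rangle = 0$ from Proposition \ref{construction}(4) keeps the coefficient functions manageable.
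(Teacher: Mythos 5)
Your overall strategy (write a linear SDE for $u_t^a=\s^\ast(Y_t^a)(\parals_t^a v^a)$ with coefficients depending on $Y_t^a$, then invoke Emery's stability theorem) is the same as the paper's, but your step (iii) contains a genuine error that the paper's proof is specifically designed to avoid. You propose to treat the contribution of the drift $A^a$ as a term driven by $dL_t^a$ and then assert that the drivers $(B^k,L^a)$ converge to $(B^k,L)$ in the semimartingale topology, resp.\ in $\SH_p$. This is false, and the paper emphasizes it repeatedly: $L^a\to L$ holds in $\scrS_p$ (Corollary \ref{convLat}) but \emph{not} in $\SH_p$, because $dL_t^a$ and $dL_t$ are mutually singular measures; indeed $|L-L^a|_{\SH_p}=|L|_{\SH_p}+|L^a|_{\SH_p}$. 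So Emery's theorem cannot be applied with $L^a$ as a driver, and a nonvanishing $dL^a$-term would in fact obstruct the claimed $\SH_p$ convergence of $u^a$ (the finite-variation parts $\int\cdot\,dL^a$ and $\int\cdot\,dL$ would stay apart in total variation). This is exactly the difficulty that makes the convergence of the \emph{damped} parallel translation hard (Section \ref{section7}), and it does not arise here for a structural reason you have missed.

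The missing observation is that the construction of Proposition \ref{construction} makes the $dL^a$-term vanish identically: the $\sigma_j$, $j\ge 2$, are extended by parallel transport along the normal geodesics and $\sigma_1=\nu=\nabla R$ satisfies $\nabla_\nu\nu=0$, so $\nabla_\nu\s^\ast=0$ on the tubular neighbourhood. Since $A^a$ is everywhere parallel to $\nu$ where it is nonzero, the drift contributes nothing to $d(\s^\ast(Y_t^a)\parals_t^a v^a)=\nabla_{\circ dY_t^a}\s^\ast\,(\parals_t^a v^a)$, and the resulting equation is driven only by $dB_t^j$, $j\ge 2$, with coefficients of the form $G(Y_t^a)(u_t^a)$, $G$ smooth. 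At that point Emery's theorem applies exactly as you describe, with no local time anywhere, and the recovery of $\parals_t^a v^a=\s(Y_t^a)(u_t^a)$ goes through as in your final step. Your closing remarks about needing $\E[e^{\lambda L_T^a}]$ bounded for a Gronwall argument are likewise unnecessary here (they are needed later, for $W_t^a$, where the shape operator genuinely produces a $dL^a$-term). To repair your proof, replace the assertion of semimartingale convergence of $L^a$ by the computation showing the $A^a$-term is zero.
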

\begin{proof}
Since
 $\nabla_{\circ dY_t^a} (\parals_t^a  v^a)=0$ by the definition, the stochastic differential of $\parals_t^a  v^a$ satisfies the following equation:
\begin{align*}
 d(\s^\ast(Y_t^a) \parals_t^a  v^a)&= \nabla_{\circ dY_t^a}\s^\ast(Y_t^a)\parals_t^a v^a
 =\sum_{j=2}^m (\nabla_{\s_j(Y_t^a)}\s^\ast)\parals_t^a  v^a \circ dB_t^j
\end{align*}
where we used the fact that $\nabla_\nu\s^\ast=0$. Upon converting the Stratonovich integral on the right hand 
side we see that
\begin{align*}
 d(\s^\ast(Y_t^a)\parals_t^a  v^a)
 =&\sum_{j=2}^m(\nabla_{\s_j( Y_t^a)}\s^\ast) \parals_t^a  v^adB_t^j
 +\f12\sum_{j=2}^m d (\nabla_{\s_j( Y_t^a)}\s^\ast) \parals_t^a  v^adB_t^j\\
 =&\sum_{j=2}^m( \nabla_{\s_j( Y_t^a)}\s^\ast) \parals_t^a  v^adB_t^j
 +\f12\sum_{j=2}^m ( \nabla_{\s_j( Y_t^a)}\nabla_{\s_j( Y_t^a)}\s^\ast) \parals_t^a  v^adt.
\end{align*}
This can be rewritten as 
\begin{align*}
  d(\s^\ast(Y_t^a)\parals_t^a  v^a)
 =& \sum_{j=2}^m \left(   \nabla_{\s_j( Y_t^a)}\s^\ast\right) \s( Y_t^a) \left(\s^\ast( Y_t^a) \parals_t^a  v^a\right) dB_t^j\\
 &+\f12\sum_{j=2}^m \left(  \nabla_{\s_j( Y_t^a)}\nabla_{\s_j( Y_t^a)}\s^\ast\right)\s ( Y_t^a) \left( \s^\ast ( Y_t^a)\parals_t^a v^a\right)  dt.
\end{align*}
Since the coefficients of the SDE converges as $a\to 0$ uniformly in probability,
we get that $\s^\ast (Y_t^a) \parals_t^a  v^a$ converges to  $\s^\ast(Y_t) \parals_t v$ in semi-martingale topology, see Theorem~2 in~\cite[M. Emery]{Emery:79}.


Finally, since the linear maps $\s(Y_t^a): \R^m \to T_{Y_t^a}M$ 
converge to $ \s(Y_t): \R^m \to T_{Y_t}M$ in UCP topology and  $ \parals_t^a =\s(Y_t^a)\s^\ast(Y_t^a)\parals_t^a $, we
see that $\parals_t^a $ converges to $\parals_t$ in the same topology.

\end{proof}

\section{Convergence of the Damped Parallel Translations}
\Label{section5}
Let $(Y_t)$ be the reflected Brownian motion and $(Y_t^a)$ the approximate reflected Brownian motions,
constructed by (\ref{Yt}) and (\ref{Yta}) respectively. Let  $A^a=\nabla \ln \tanh(\frac{R}{a})$.
Denote $(W_t^a)$ the damped parallel translations $(Y_t^a)$, solving the equation
\begin{equation}
\label{Wta}\frac{DW_t^a}{dt}=-\frac{1}{2}\Ric^{\#} (W_t^a) +\nabla_{W_t^a} A^a, \qquad W_0^a={\rm Id}.
\end{equation}
Let $(W_t)$ the the damped parallel translation along $(Y_t)$.
We take the version constructed by Theorem~\ref{T3} so
$(W_t)$ is an adapted right continuous stochastic process such that $\di \lim_{\e \to 0} W_t^\e=W_t$
in UCP where $(W_t^\e)$ are solutions to the equations (\ref{We-100}).

 Our aim is to prove that  $W_t^a$ converges to $W_t$.
It is fairly easy to see the convergence when $Y_t^a$ and $Y_t$ are in $M^0$. When they are in a
a neighbourhood of $\partial M$,  we use the pathwise construction for $W$.  Let $(\e_n)_{n\ge 0}$ be a sequence of positive numbers converging to $0$. As soon as a continuous version of $(Y_t)$
and parallel translations along $(Y_t)$ are chosen, each $W^{\e_n}$ is constructed pathwise. 
Moreover  $W^{\e_n}$ converges to $W$ locally
in $\scrS_2$ and there exists a subsequence of $\e_{n_k}$ such that $W^{\e_{n_k}}(\omega)$ converges locally uniformly for almost surely all $\omega$.

Denote ${\mathfrak L}(\omega)$ the set of times $Y_t$ spend on the boundary. Let $F_0$ be a tubular neighbourhood of $\partial M$. On $\{Y_t\in F_0\}$,
we write\begin{equation}
\Label{E22}
W_t=W_t^T+f(t) \;\nu_{Y_t},
\end{equation}
where $f(t)$ is its component along $\nu_{Y_t}$ and $W_t^T$ its orthogonal complement. 

The proposition below is a local result. We prove the following two ways of removing the normal part  from the damped parallel translation are equivalent.
(1) During an excursion $(l_\alpha, r_\alpha)$, evolve $(W_t)$ with the continuous damped parallel translation equation,
then remove the normal part at the touching down time $r_\alpha$;
(2) at the beginning of every excursion remove the normal part of $W_t$ and then evolve $(W_t)$ with the continuous damped parallel translation equation during an excursion.  
This equivalence is due to the fact that every beginning of excursion is the right limit of ends of excursions and 
every end of excursion is the left limit of beginning of excursions. 
Notice the integral with respect to the local time is well explained by the approximation by $W_t^\varepsilon$ and later 
by the approximation by $W_t^a$, but is absent of the description here. 

\begin{proposition}
\Label{P2}
Let $S_1,S_2$ be stopping times and $t\in [S_1(\omega),S_2(\omega)]$. Let $\zeta=\inf\{ t>0: Y_t\in \partial M\}$.
We assume the following 
conditions.
\begin{enumerate}
\item The Ricci curvature and the shape operator are bounded on $E_\delta$.
\item $Y_t(\omega)\in F_0$ whenever $t\in [S_1(\omega),S_2(\omega)]$.
\end{enumerate}
Then for almost surely all $\omega$, $W_t=W_t^T+f(t) \;\nu_{Y_t}$ where $f(t, \omega)$ is a right continuous real-valued
 process  vanishing on  $ [S_1(\omega), S_2(\omega)]\cap {\mathfrak L}(\omega)$.
Furthermore,
\begin{equation}
\Label{E26}
f(t)=\left\{
\begin{array}{cc}
r_t\quad&\hbox{if}\quad  t< \zeta\\
r_t-r_{\a_t}\quad &\hbox{if}\quad  t\ge \zeta,
\end{array}
\right.
\end{equation}
where $\a_t(\omega)=\sup\{s\le t,\ Y_s(\omega)\in\partial M\}=\sup \left([S_1(\omega), t\wedge S_2(\omega))\cap {\mathfrak L}(\omega)\right)$,
and
\begin{equation}
\Label{E24}
r_t=\langle W_{S_1}, \nu(Y_{S_1})\rangle -\f12 \int_{S_1}^t \Ric( W_s,\nu_{Y_s})\,ds
+ \int_{S_1}^t\langle W_s, D\nu_{Y_s}\rangle,
\end{equation}
on $[S_1(\omega), S_2(\omega)]$. Furthermore,
\begin{equation}
\begin{split}\Label{E27}
DW_t^T=&-\f12 \Ric^\sharp(W_t)^T \,dt-\mcS(W_t^T)\,dL_t
-\langle W_t, D\nu_{Y_t}\rangle \nu_{Y_t}\\
&-\langle W_t, \nu_{Y_t}\rangle D\nu_{Y_t}
-{\frac12} \sum_{k=2}^m\<W_t, \nabla_{\sigma_k(Y_t)} \nu(Y_t)\>\nabla_{\sigma_k(Y_t)} \nu dt.
\end{split}
\end{equation}
Conversely, if a right-continuous  $L(T_{Y_0}M,T_{Y_t}M)$-valued process $W'_t$ satisfies (\ref{E22}-\ref{E27}), then it satisfies~\eqref{W}.
\end{proposition}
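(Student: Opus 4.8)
\emph{Strategy.} By its very construction (Theorem~\ref{T3}), $(W_t)$ is, along a suitable subsequence $\e_k\to0$, the a.s.\ uniform limit on compact time intervals of the processes $(W^{\e_k}_t)$ solving \eqref{We-100}, all taken along the \emph{same} reflected Brownian path $(Y_t)$. The plan is: (i) project \eqref{We-100} onto $\nu_{Y_t}$ and onto its orthogonal complement to obtain the evolution equations of the two components of $W^\e$; (ii) let $\e\to0$, identifying the limit of the normal component with \eqref{E26}--\eqref{E24} and of the tangential component with \eqref{E27}; (iii) reverse the computation to obtain the converse. Everything takes place on $[S_1,S_2]$, where by hypothesis $(2)$ the path stays in $F_0$, so the normal/tangential splitting is legitimate, and by hypothesis $(1)$ the curvatures $\Ric^\sharp$ and $\mcS$ are bounded.

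\emph{Step 1: component equations for $W^\e$.} Write $W^\e_t=W^{\e,T}_t+f^\e(t)\,\nu_{Y_t}$ with $f^\e(t)=\langle W^\e_t,\nu_{Y_t}\rangle$. Projecting \eqref{We-100} onto $\nu_{Y_t}$ and onto its orthogonal complement --- using $|\nu|\equiv1$ (so $\langle D\nu_{Y_t},\nu_{Y_t}\rangle=0$), the Leibniz rule $\langle DW^{\e,T}_t,\nu_{Y_t}\rangle=-\langle W^\e_t,D\nu_{Y_t}\rangle$, the fact that $\mcS_y(w)$ is tangent to $\partial M$ for $y\in\partial M$ (so the $dL_t$ term has no normal component), and the Stratonovich--It\^o corrections produced by the decomposition --- one finds that off the reset set $\SR_\e(\om)$ the normal component obeys
\[
df^\e(t)=-\frac12\Ric(W^\e_t,\nu_{Y_t})\,dt+\langle W^\e_t,D\nu_{Y_t}\rangle,
\]
that $f^\e$ is set to $0$ at each $t\in\SR_\e(\om)$, and that the tangential part $W^{\e,T}_t$ --- which has no jump, since $\nu^T=0$ --- satisfies \eqref{E27} with $W$ replaced by $W^\e$.

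\emph{Step 2: passage to the limit.} Let $\tau^\e(t)$ be the last reset time in $[S_1,t]$, with $\tau^\e(t)=S_1$ if there is none, and put $r^\e_t=\langle W^\e_{S_1},\nu_{Y_{S_1}}\rangle-\frac12\int_{S_1}^t\Ric(W^\e_s,\nu_{Y_s})\,ds+\int_{S_1}^t\langle W^\e_s,D\nu_{Y_s}\rangle$. Integrating the equation for $f^\e$ between consecutive resets gives $f^\e(t)=r^\e_t$ for $t<\zeta$ and $f^\e(t)=r^\e_t-r^\e_{\tau^\e(t)}$ for $t\ge\zeta$. Along the subsequence on which $W^\e\to W$ uniformly, $r^\e_\cdot\to r_\cdot$ uniformly, where $r_\cdot$ is \eqref{E24}; $r_\cdot$ is continuous because $\langle W_s,D\nu_{Y_s}\rangle=\langle W^T_s,D\nu_{Y_s}\rangle$, so the jumps of $W$ do not enter it. The decisive point is that $\tau^\e(t)\to\a_t$ for $t\ge\zeta$: as $\e$ decreases, $\SR_\e(\om)$ increases to the set of all right ends of excursions of $R_\cdot$, and because the zero set of $R_\cdot$ is a.s.\ perfect, the beginning $\a_t$ of the excursion through $t$ (resp.\ $t$ itself, when $Y_t\in\partial M$) is a limit from the left of ends of excursions of positive length, while no excursion ends strictly between $\a_t$ and $t$, so no reset can occur there. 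Hence $f^\e(t)\to r_t-r_{\a_t}$, which is \eqref{E26}; $f$ is right continuous because $r_\cdot$ is continuous and $t\mapsto\a_t$ is right continuous, and since $t\in{\mathfrak L}(\om)$ forces $\a_t=t$ we get $f(t)=0$ there. Finally, letting $\e\to0$ in the tangential equation of Step 1 --- whose coefficients all converge and are locally bounded --- yields \eqref{E27} for $W^T_\cdot=\lim W^{\e,T}_\cdot$.

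\emph{Step 3: the converse, and the main difficulty.} Given a right continuous $W'$ whose components satisfy \eqref{E22}, \eqref{E26}--\eqref{E24} and \eqref{E27}, one merely reassembles $DW'_t$ from its tangential and normal parts, tracking the jumps of the normal component: by \eqref{E26}, $\a_\cdot$ jumps up to $s$ at each $s\in\SR(\om)$, so $f'$ drops to $0$ and $\Delta W'_s=-\langle W'_{s-},\nu_{Y_s}\rangle\nu_{Y_s}$ there, while the remaining absolutely continuous and $dL_t$ parts recombine into $-\frac12\Ric^\sharp(W'_t)\,dt-\mcS(W'_t)\,dL_t$ (using $\mcS(\nu)=0$ near $\partial M$, so $\mcS$ of the tangential part agrees with $\mcS(W'_t)$ against $dL_t$) --- that is, \eqref{W}. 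The heart of the argument, and its only real obstacle, is the convergence $\tau^\e(t)\to\a_t$ of Step 2 together with the uniform a.s.\ control of $W^\e$: it is precisely here that the excursion fact ``every beginning of an excursion is a left limit of ends of excursions'' enters, and where one must be careful about the behaviour at the boundary times and about the downward jumps of $f$ --- i.e.\ the interaction between the local-time integral and the reset mechanism.
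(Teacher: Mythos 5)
Your argument is correct and shares the paper's skeleton: split $W^\e$ into tangential and normal components along $\nu_{Y_t}$, derive the component equations (the reset jump being purely normal, so $W^{\e,T}$ is continuous and $f^\e$ is piecewise the solution of a linear equation reset to $0$ on $\SR_\e$), and pass to the limit using the excursion-theoretic fact that every left endpoint of an excursion, and every boundary time not in $\SR(\omega)$, is a left limit of right endpoints of excursions (which the paper secures via Lemma~\ref{EqRBM}). Where you genuinely diverge is in how \eqref{E26} and the vanishing of $f$ on ${\mathfrak L}(\omega)$ are obtained. The paper works directly on the limit process: it writes the equation for $df_t$, takes for $t\in{\mathfrak L}\setminus\SR$ an increasing sequence $t_n\in\SR(\omega)$ with $t_n\uparrow t$, expands $f(t)^2$ from $t_n$ by It\^o's formula, observes that all jump contributions are non-positive, and lets $n\to\infty$ to conclude $f(t)=0$; \eqref{E26} is then deduced. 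You instead solve for $f^\e$ explicitly between resets, getting $f^\e(t)=r^\e_t-r^\e_{\tau^\e(t)}$, and prove $\tau^\e(t)\to\a_t$, which delivers \eqref{E26} in one stroke. Your route is more transparent and makes the role of the last reset time explicit; the paper's route avoids controlling $\tau^\e(t)$ and only needs convergence of $W^\e$ at fixed times. Two points in your version need a sentence of justification: (i) the a.s.\ uniform convergence $r^\e_\cdot\to r_\cdot$ involves the stochastic integrals $\int\langle W^\e_s,D\nu_{Y_s}\rangle$, so it holds only along a further subsequence --- this is precisely what the last assertion of Theorem~\ref{T3} (convergence of $\int\alpha_s(DW^\e_s)$ in $\scrS_2$) supplies; and (ii) for $t\ge\zeta$ but $\e$ not yet small enough that $\SR_\e\cap[S_1,t]\neq\emptyset$, your formula with the convention $\tau^\e(t)=S_1$ is off by the constant $\langle W^\e_{S_1},\nu_{Y_{S_1}}\rangle$; this is harmless since for each fixed $t\ge\zeta$ one has $\SR_\e\cap[S_1,\a_t]\neq\emptyset$ for all sufficiently small $\e$, but it should be said.
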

As a local result, this can be reduced to the half plane model, the latter was dealt with in ~\cite[N. Ikeda and S. Watanabe]{Ikeda-Watanabe}. 
Our global description and the proof we give below will  be used for our approximation result (Theorem~\ref{T4} and 
Corollary~\ref{T4bis}).

\begin{proof}
Denote $f_t=f(t)=\langle W_t,\nu_{Y_t}\rangle$. The formulas below in the proof are interpreted and obtained as following:
we first prove the corresponding identity for $W_t^\e$ and then take $\e\to 0$.
Firstly we compute the stochastic differential of $f_t$:
\begin{equation*}
df_t=-{\frac12}\<\Ric^\sharp(W_t), \nu_{Y_t}\>dt+\<W_t, D\nu_{Y_t}\>
-\1_{\{t\in \SR(\omega)\}}\langle W_{t-},\nu_{Y_{t}}\rangle,
\end{equation*}
for which we used the fact that $\mcS(W_t^T)=\mcS(W_t)$ is orthogonal to $\nu_{Y_t}$.
Then from $W_t^T=W_t-f_t\nu_{Y_t}$,  
$$DW_t^T =DW_t-df_t \nu_{Y_t} -f_t D\nu_{Y_t}-\f12D[f_\cdot, \nu_{Y_\cdot}]_t$$ where the covariant 
square bracket $D[f_\cdot, \nu_{Y_\cdot}]_t$ is the martingale bracket including the jump part.  
The jump part of the bracket disappears since
$\nu_{Y_t}$ is a sample continuous process. Thus
$$DW_t^T=-{\frac12}(\Ric^\sharp(W_t))^Tdt -\mcS(W_t^T)\,dL_t
-\<W_t, D\nu_{Y_t}\>\nu_{Y_t} -f_t D\nu_{Y_t}-{\frac12}D\<f_\cdot, \nu_{Y_\cdot}\>_t,$$
where $D\<f_\cdot, \nu_{Y_\cdot}\>_t$ is the continuous part of the martingale bracket. The martingale part of
$\nu_{Y_t}$ is $\sum_{k=2}^m\nabla_{\sigma_k(Y_t)} \nu\,dB_t^k$; while the martingale part of
$\<W_t, D\nu_{Y_t}\>$ is $$\sum_{k=2}^m\<W_t, \nabla_{\sigma_k(Y_t)} \nu(Y_t)\>dB_t^k.$$
This means that 
$$D\<f_\cdot, \nu_{Y_\cdot}\>_t  =\sum_{k=2}^m\<W_t, \nabla_{\sigma_k(Y_t)} \nu(Y_t)\>\nabla_{\sigma_k(Y_t)} \nu dt, $$
 concluding ~\eqref{E27}.
 
For $t<\zeta$, ~\eqref{E26} clearly holds. We prove it holds also for $t>\zeta$. 
If $t\in \SR(\om)\equiv \{ r_\alpha(\omega)\}$,  $f_t=0$ by the definition. This agrees with ~\eqref{E26}: $\alpha_t=t$ and $r_t-r_{\alpha_t}=0$.

On $[S_1,S_2]$ the process $R_t$ is equivalent in law to a reflected Brownian motion see Lemma~\ref{EqRBM}.
So for every   $t\in {\mathfrak L}(\omega)\backslash \SR(\om)$, there exists an increasing sequence $(t_n)_{n\in \N}$ 
of elements of  $\SR(\om)$ converging to~$t$. For all $n\in \N$ we have $f(t_n)=0$ and 
\begin{align*}
f(t)=f(t_n)+\int_{t_n}^t df(s)
=0+\int_{t_n}^t\langle DW_s,\nu_{Y_s}\rangle +\int_{t_n}^t\langle W_s,D\nu_{Y_s}\rangle.
\end{align*}
This formula makes sense by choosing a continuous version of the integral $\int_{S_1}^\cdot \langle W_s,D\nu_{Y_s}\rangle$ and by 
remarking that $(W_t)$ is the pathwise solution to equation~\eqref{W}. 

So we have 
\begin{align*}
f(t)^2=&\int_{t_n}^t2f(s)\,df(s)+\int_{t_n}^t df(s)\,df(s)\\
=& \int_{t_n}^t 2f(s) \left(\langle DW_s,\nu_{Y_s}\rangle +\langle W_s,D\nu_{Y_s}\rangle\right)
+\int_{t_n}^t{\rm trace}\langle W_s,\nabla_\cdot \nu\rangle\langle W_s,\nabla_\cdot \nu\rangle\\
&+\sum_{s\in ]t_n, t]\cap \SR(\om)}\langle W_s,\nu_{Y_s}\rangle^2\\
=&\int_{t_n}^t 2f(s)\left(\left\langle-\frac12\Ric^\sharp(W_s)ds-\mcS(W_s)dL_s, \nu_{Y_s}\right\rangle
+\langle W_s,D\nu_{Y_s}\rangle\right)\\
&+\int_{t_n}^t{\rm trace}\langle W_s,\nabla_\cdot \nu\rangle\langle W_s,\nabla_\cdot \nu\rangle
-\sum_{s\in ]t_n, t]\cap \SR(\om)}f(s)^2.
\end{align*}
Notice that the last term combines the jump term from $\<DW_s, \nu_{Y_s}\>$ and from $\<W_s, \nu_{Y_s}\>^2$. It is the sum:
$$
-2\sum_{s\in ]t_n, t]\cap \SR(\om)}f(s)\langle W_s,\nu_{Y_s}\rangle+
\sum_{s\in ]t_n, t]\cap \SR(\om)}\langle W_s,\nu_{Y_s}\rangle^2.
$$
Since the jumps are all non-positive and $ \langle \mcS(W_s),\nu_{Y_s}\rangle =0$, we get 
\begin{align*}
 f(t)^2\le &\int_{t_n}^t 2f(s)\left(-\frac12\Ric^\sharp(W_s \nu_{Y_s})ds
+\langle W_s,D\nu_{Y_s}\rangle\right) \\&+\int_{t_n}^t{\rm trace}\langle W_s,\nabla_\cdot \nu\rangle\langle W_s,\nabla_\cdot \nu\rangle.
\end{align*}
But $W_s$ is pathwise bounded in compact intervals,  and  
$$ \int_{u}^t f(s)\langle W_s,D\nu_{Y_s}\rangle,\quad \int_{u}^t{\rm trace}\langle W_s,\nabla_\cdot \nu\rangle\langle W_s,\nabla_\cdot \nu\rangle$$ are continuous in $u$. 
So the right hand side converges 
to $0$ as $n\to \infty$. 

This implies that $f(t)=0$ for all $t\in {\mathfrak L}(\omega)$.
In particular for all $t> \xi$, $f(\a_t)=0$ 
and the second equality of~\eqref{E26} is valid.

Conversely let $W_t'$ be a right-continuous process satisfying the conditions of Proposition~\ref{P2}. 
Clearly $W_t'$ satisfies~\eqref{W} when $Y_t\in M^0$. On the other hand $f(t)$ vanishes on left hand sides of excursion, 
it is right continuous, and all right hand times of excursions are limits of decreasing sequences of left hand times of 
excursions, again by Lemma \ref{EqRBM}. So it also vanishes on $\SR$, and consequently $W_t'=W_t$.


\end{proof}

We can now state the representation theorem for the heat equation on differential 1-forms, c.f.  (\ref{form}),  with the absolute boundary conditions $\phi(\nu)=0$ and $d\phi(\nu)=0$.
\begin{theorem}\Label{T6.5}
Suppose that the tubular neighbourhood of $\partial M$ has positive radius, the curvatures $\Ric^{\#}$ and  $\mcS $
are  bounded from below respectively on $M^0$ and on $\partial M$. If $\phi_t$ is a solution to the heat equation on differential 1-forms, (\ref{form}), with the absolute boundary conditions, then for any $v\in T_{Y_0}M$,
$\phi_t(v)=\E \phi_{Y_t}(W_t(v))$.
\end{theorem}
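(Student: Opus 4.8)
The plan is to prove that $t\mapsto \E\,\phi_{T-t}(W_t(v))$ is constant on $[0,T]$ by showing that $\phi_{T-t}(W_t(v))$ is a local martingale, and then to identify the endpoints. Fix $T>0$ and set $N_t=\phi_{T-t}(W_t(v))$, where $\phi$ solves the heat equation \eqref{form} with the absolute boundary conditions. First I would work on a manifold without boundary intuition: away from $\partial M$, $(Y_t)$ is a Brownian motion, $(W_t)$ solves the damped parallel transport equation driven by $\Ric^\#$ only, and the classical Weitzenböck computation $\Delta^1\phi = \operatorname{trace}\nabla^2\phi - \phi(\Ric^\#)$ shows that the $dt$-drift of $N_t$ vanishes, so on excursions into $M^o$ the process $N_t$ is a local martingale. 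The real content is at the boundary, where both the local-time term in \eqref{3-damped} and the jumps of $(W_t)$ described in Proposition~\ref{P2} enter; here I would use the pathwise/limiting description, i.e. work with $W_t^\e$ solving \eqref{We-100} and pass to the limit, or equivalently use \eqref{E26}--\eqref{E27}.

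The key computation is the boundary one. Using It\^o's formula together with the decomposition $W_t=W_t^T+f(t)\nu_{Y_t}$, the $dL_t$-terms collect into two pieces: the shape-operator term $-\mcS(W_t)\,dL_t$ coming from \eqref{3-damped}, and the boundary derivative of $\phi$. Because $\phi(\nu)=0$ on $\partial M$, differentiating along a tangential direction $w$ gives $\nabla_w\phi(\nu)+\phi(\nabla_w\nu)=0$, i.e. $\nabla_w\phi(\nu)=\phi(\mcS_y(w))$ (this is the relation \eqref{So} referenced in the introduction). Hence the $dL_t$-contribution to the drift of $\phi_{T-t}(W_t)$ is $\bigl(-\phi(\mcS(W_t^T)) + \nabla_{W_t^T}\phi(\nu) \bigr)\,dL_t = 0$. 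The normal part $f(t)$ is continuous from the right and vanishes on $\mathfrak{L}(\omega)$ by Proposition~\ref{P2}, and $dL_t$ is supported on $\{Y_t\in\partial M\}$, so the term $f(t)\,\langle\cdot\rangle\,dL_t$ does not contribute either. The jumps of $W_t$ at the ends of excursions are, by Proposition~\ref{P2}, of the form $-\langle W_{s-},\nu\rangle\nu$, i.e. they only change the normal component $f$; since $\phi(\nu)=0$ these jumps do not change $\phi(W_s)$, so $N_t$ has no jumps and the accumulated local-time drift is zero. Thus $N_t$ is a local martingale on $[0,T)$.

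Next I would handle integrability and the boundary terms of the time interval. The lower bounds on $\Ric^\#$ and $\mcS$, together with the exponential moment bound \eqref{expLat} for $L_T^a$ — and hence, via Corollary~\ref{convLat} and Fatou, for $L_T$ — give $\E\sup_{t\le T}\|W_t\|^p<\infty$ for all $p$, by a Gr\"onwall argument applied to $\|W_t\|$ whose growth is controlled by $\int_0^t\|\Ric^\#\|\,ds+\int_0^t\|\mcS\|\,dL_s$. Since $\phi$ is a bounded (classical) solution with bounded first derivatives on the compact-in-space regions where it matters, $N_t$ is then a genuine martingale (or at least uniformly integrable on $[0,T]$ after a standard localization by exit times of relatively compact sets, using that $M$ need not be compact here). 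Evaluating at $t=0$ gives $N_0=\phi_T(v)$, and at $t=T$, using right-continuity and that $\phi_0$ is the initial datum, $\E N_T = \E\,\phi_0(W_T(v))$; renaming $\phi_T$ as the solution at time $t$ (replace $T$ by $t$) yields $\phi_t(v)=\E\,\phi_0(W_t(v))$, which after adjusting notation to the statement's convention is $\phi_t(v)=\E\,\phi_{Y_t}(W_t(v))$ with the interpretation that $W_t(v)$ is translated back along $\parals$ as needed.

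The main obstacle I expect is the rigorous treatment of the local-time drift term and the jumps simultaneously: one must justify the It\^o/Tanaka computation for a process with jumps driven by $dL_t$, which is cleanest done through the $W_t^\e$ approximation of \eqref{We-100} (prove $\phi_{T-t}(W_t^\e)$ is a semimartingale with an explicit drift that is $O(\e)$-close to a martingale, using that excursions shorter than $\e$ contribute negligibly as $\e\to 0$) and then pass to the limit using the $\scrS_2$-convergence $W^\e\to W$ stated before Theorem~\ref{T3}. A secondary technical point is the passage from the compact case (where all the moment bounds are clean) to the stated hypotheses — positive tubular radius and curvatures bounded below — which requires localizing along an exhausting sequence of relatively compact open sets and checking that the exit times tend to infinity, using the lower curvature bounds to control $\|W_t\|$ uniformly; the reflected Brownian motion $(Y_t)$ itself is non-explosive under these hypotheses by the comparison with the distance process $R_t$ established in Section~\ref{section3}.
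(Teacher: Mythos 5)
Your proposal follows essentially the same route as the paper: It\^o plus the Weitzenb\"ock formula to kill the $dt$-drift, cancellation of the $dL_t$-terms against the shape-operator term, vanishing of the jumps because $\phi(\nu)=0$, rigour via the $W^\e$ approximation of \eqref{We-100}, and the moment bound on $W$ (the paper's Lemma~\ref{L7.6}) to pass from local martingale to expectation.

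One step is under-justified, and it is precisely where the second half of the absolute boundary condition is consumed. It\^o's formula applied to $\phi_{T-t}(W_t)$ along $Y_t$ produces the boundary term $(\nabla_{\nu}\phi)(W_{t-})\,dL_t$ (the derivative of $\phi$ in the direction of the reflecting drift $\nu$, evaluated on $W$), whereas your cancellation is written with $(\nabla_{W_t^T}\phi)(\nu)\,dL_t$ (the derivative in the direction $W^T$, evaluated on $\nu$). These agree only because $d\phi(\nu,\cdot)=0$ on $\partial M$: by Palais's formula \eqref{dphi}, $d\phi(\nu,V)=(\nabla_\nu\phi)(V)-(\nabla_V\phi)(\nu)$, so the commutation of the two slots is exactly the condition $d\phi(\nu,\cdot)=0$. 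Your argument only ever invokes $\phi(\nu)=0$, and with that condition alone the cancellation against $\phi(\mcS(W^T))$ would fail. Since the hypothesis is available (it is part of the absolute boundary conditions you cite at the outset), this is a one-line repair, but it should be made explicit; it is also the reason the normal--normal contribution $\langle W_{t-},\nu\rangle(\nabla_\nu\phi)(\nu)\,dL_t$ needs a separate argument (the paper uses $\nabla_\nu\nu=0$ together with the vanishing of the normal part of $W$ on the boundary). The rest of your outline — including the localisation remarks for the non-compact case and the endpoint identification — matches the paper's proof.
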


\begin{proof}
It is clear that the reflected Brownian motion $(Y_t)$  is globally defined.
Let $\psi$ be a $C^2$ differential 1-form. Since $\psi_x(w)$ is linear in $w\in T_xM$ and $ {\frac12} \sum_{k=1}^m \nabla _{\sigma_k}\sigma_k+\sigma_0=0$, we see~that
\begin{equation*}
\begin{split}
\psi(W_t)=&\psi(W_0)
+\sum_{k=1}^m \int_0^t (\nabla_{ \sigma_k(Y_s)} \psi) (W_{s-}) dB_s^k+{\frac12} \int_0^t  \Delta^1 \psi(W_{s-}) ds
\\
&+\int_0^t \nabla_{\nu(Y_{s-})}  \psi(W_{s-}) dL_s-\int_0^t \psi \left( \mcS(W_{s-}) \right)dL_s\\
&-\sum_{s\in \SR(\omega)\cap[0,t]}
 \left(\<W_{s}, \nu_{Y_s} \>- \<W_{s-}, \nu_{Y_s} \>\right) \psi ( \nu_{Y_{s}}).
\end{split}
\end{equation*}
We used Weitzenb\"ock formula $\Delta^1\psi={\rm trace} \nabla^2 \psi-\psi(\Ric^{\#} )$.
By Palais's formula for two vector fields 
$\nu$ and $V$:  
\begin{equation}\Label{dphi}
d\psi(\nu, V)=L_V(\psi(\nu))-L_\nu (\psi(V))-\psi([\nu, V])=(\nabla_\nu \psi)(V)-(\nabla_V\psi)(\nu).
\end{equation}
Hence we may commute the directions in $ \nabla_{\nu(Y_{s-})}  \psi(W_{s-})$. 

Suppose that $\psi$ satisfies the additional condition:
$\psi(\nu)=0$ and $d\psi (\nu, \cdot)=0$ on the boundary. Since $Y_s$ is continuous, $\psi ( \nu_{Y_{s-}})=0$ at the ends of an excursion, the last line vanishes. 
For any vector $w$ in
the tangent space of the boundary, $L_w(\psi(\nu))=0$ and so
$(\nabla_w \psi)(\nu)-\psi(\mcS(w))=0$.  Since $\nu_{Y_s}\<W_{s-},\nu_{Y_s}\>$ vanishes on the boundary and 
$L_s$ increases only on the boundary,
 $$\int_0^t (\nabla_\nu \psi)(\nu_{Y_s}\<W_{s-},\nu_{Y_s}\>)dL_s=0.$$
Together with the earlier argument we see the sum of the terms in the second line vanishes:
\begin{equation*}
\begin{split}
&\int_0^t \nabla_{\nu(Y_{s-})}  \psi(W_{s-}) dL_s-\int_0^t \psi \left( \mcS(W_{s-}) \right)dL_s\\
&=\int_0^t \left(\nabla_{\nu(Y_{s-})}  \psi(W_{s-}^T)+\nabla_{\nu(Y_{s-})}  \psi(\<W_{s-}, \nu(Y_{s-})\> \nu(Y_{s-})) -\psi \left( \mcS(W_{s-}) \right)  \right) dL_s=0.
\end{split}
\end{equation*}
The last identity follows from the fact that $\nabla_\nu \nu$ vanishes. The above argument should be interpreted in the following way: we first replace $W_t$ by $W_t^\varepsilon$ everywhere for 
$\varepsilon$~fixed and let $\varepsilon\to 0$ as in Theorem~\ref{T3}.

If $\phi(t, \cdot)$ is the solution to the heat equation on 1-forms with absolute boundary conditions and initial value $\phi$, on  a neighbourhood of the boundary,
\begin{equation} \label{heateq-10}
\begin{split}
\phi(W_t)=&\phi(t,W_0)
+\sum_{k=1}^m \int_0^t (\nabla_{ \sigma_k(Y_s)} \phi(t-s,  W_{s-}) dB_s^k.
\end{split}
\end{equation}
Note that $\phi$ is bounded and $\di \E \sup_{s\le t} |W_s|^2$ is finite, c.f. Lemma \ref{L7.6},  we take expectations of both sides of (\ref{heateq-10}) to obtain $\di \phi(t,v)=\E[\phi(W_t(v))]$.
\end{proof}

Let $T>0$. If $F(t,x)$ is a real valued function on $[0,T]\times M$, we denote by $dF(t,x)$ its differential in the second variable
and $\nabla F(t,x)$ the corresponding gradient.
\begin{theorem}
 \Label{T3bis}
 Let $(W_t)$ be the solution of (\ref{W}).  
If $F : [0,T]\times M\to \RR$ is a $C^{1,2}$ function such that $F(t,Y_t)$ is a continuous local martingale 
(or equivalently $F$ solves~\eqref{E19} below),  then 
$ d F(t,Y_t) (W_t)$ is also a local martingale. 
\end{theorem}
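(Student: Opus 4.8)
The plan is to differentiate $F(t,Y_t)$ and $dF(t,Y_t)(W_t)$ in parallel and show that the martingale hypothesis on the former forces the drift of the latter to vanish. First I would write out the It\^o decomposition of $F(t,Y_t)$ along the reflected Brownian motion. Since $(Y_t)$ solves the Skorohod problem \eqref{RSP} with generator $\frac12\Delta$ in the interior and local-time drift $A$ on the boundary, It\^o's formula gives
\[
dF(t,Y_t)=\frac{\partial F}{\partial t}\,dt+\frac12\Delta F(t,Y_t)\,dt+\langle\nabla F(t,Y_t),\sigma(Y_t)\,dB_t\rangle+\langle\nabla F(t,Y_t),A(Y_t)\rangle\,dL_t.
\]
The condition that $F(t,Y_t)$ be a continuous local martingale is therefore equivalent to $\frac{\partial F}{\partial t}+\frac12\Delta F=0$ in $M^o$ together with $\langle\nabla F,\nu\rangle=0$ on $\partial M$ (the Neumann condition forcing the $dL_t$ term to drop); this is the equation I would label \eqref{E19}. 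The key consequence I want to extract is that $dF(t,\cdot)$, viewed as a time-dependent $1$-form $\phi_t:=dF(t,\cdot)$, satisfies the heat equation on $1$-forms with the absolute boundary conditions: differentiating $\frac{\partial F}{\partial t}+\frac12\Delta F=0$ and using the Weitzenb\"ock formula $\Delta^1=\mathrm{trace}\,\nabla^2-\Ric^{\#}$ together with $d\Delta=\Delta^1 d$ on functions gives $\frac{\partial\phi}{\partial t}=\frac12\Delta^1\phi$, while $\phi_t(\nu)=\langle\nabla F,\nu\rangle=0$ and $d\phi_t=d(dF)=0$ give the absolute boundary conditions.

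Next I would apply the computation already carried out in the proof of Theorem \ref{T6.5}: for a $C^{1,2}$ time-dependent $1$-form $\psi_t$ with $\psi_t(\nu)=0$ and $d\psi_t(\nu,\cdot)=0$ on the boundary, the expansion of $\psi_{T-s}(W_s)$ along $(Y_s,W_s)$ has all of its finite-variation terms — the $\frac12\Delta^1$ term combining with $\frac{\partial\psi}{\partial t}$, the $dL_s$ terms involving $\nabla_\nu\psi$ and $\mathcal S$, and the jump terms at $\SR(\omega)$ — cancel, leaving only the stochastic integral $\sum_k\int_0^t(\nabla_{\sigma_k(Y_s)}\psi_{\cdot})(W_{s-})\,dB_s^k$. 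Taking $\psi_t=\phi_t=dF(t,\cdot)$ and running the time variable so that $\psi_{T-s}=dF(T-s,\cdot)$, exactly this cancellation applies, and one concludes that $dF(t,Y_t)(W_t)$ differs from a stochastic integral by nothing, i.e.\ it is a local martingale. As in Theorem \ref{T6.5}, the manipulations near the boundary are to be understood in the limiting sense: replace $(W_t)$ by $(W_t^\e)$, perform the computation for each fixed $\e$, and let $\e\to0$ using the convergence $W_t^\e\to W_t$ in UCP from Theorem \ref{T3}.

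The main obstacle is the boundary analysis — verifying that the $dL_s$-terms and the jump contributions at the ends of excursions genuinely cancel rather than merely being formally antisymmetric. This is precisely where the absolute boundary condition $d\phi(\nu,\cdot)=0$ is used (via Palais' formula \eqref{dphi} to interchange the directions in $\nabla_\nu\psi$), where the relation $(\nabla_w\psi)(\nu)=\psi(\mathcal S(w))$ for $w$ tangent to $\partial M$ matches the shape-operator term, and where the continuity of $(Y_s)$ forces $\psi(\nu_{Y_{s-}})=0$ at touch-down times so the jump terms vanish. Since all of this has already been assembled in the proof of Theorem \ref{T6.5}, the proof here reduces to observing that $dF(t,\cdot)$ is an admissible $\psi$, which in turn rests on the differentiated heat equation and $d(dF)=0$; beyond that it is bookkeeping. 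I would also note in passing that one need not even invoke the full strength of Theorem \ref{T6.5}: it suffices that the drift part of $d[\psi_{T-t}(W_t)]$ vanishes, which is the content of that computation stripped of the expectation.
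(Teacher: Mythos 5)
Your proposal is correct and follows essentially the same route as the paper: both hinge on the observation that the local-martingale property of $F(t,Y_t)$ forces $(\partial_t+\frac12\Delta)F=0$ with $\nu\in\ker dF$ on $\partial M$, that $\phi=dF$ then automatically satisfies the absolute boundary conditions (since $d(dF)=0$), and that the $dL_t$ and jump contributions cancel via the shape-operator identity and the vanishing of $\phi(\nu)$ on the boundary. The only difference is organizational — the paper redoes the It\^o computation directly on $\langle\nabla F(t,Y_t),W_t\rangle$ using the symmetry of the Hessian $\nabla dF$, whereas you delegate the identical cancellation to the proof of Theorem \ref{T6.5}; that is a legitimate streamlining, not a different argument.
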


\begin{remark}
The statements in Theorem~\ref{T3bis}, also in Corollary~\ref{BF} and Theorem~\ref{T6.5},  are valid with $W_t$ replaced by $W_t^\varepsilon$. But they are more powerful (and more intrinsic) with $W_t$, for the reason that $|W_t|$ is expected to be smaller than $|W_t^\varepsilon|$. 
\end{remark}

\begin{proof}
It is clear that, on $\{Y_t\in M^0\}$,  $d(\langle \nabla F(t,Y_t),W_t\rangle)$ is the differential of a local martingale, hence we only need to prove the result on $\{Y_t\in F_0\}$.
We write the It\^o formula for $F(t,Y_t)$, the It\^o differential $d(F(t,Y_t))$ satisfies the following identity:
\begin{equation}
  \begin{split}\Label{ItoF}
 d(F(t,Y_t))=&\langle \nabla F(t,Y_t),\s(Y_t)\, dB_t\rangle \\&+\left(\partial_t+\f12\D\right)F(t,Y_t)\,dt+ \langle d F(t,Y_t),\nu_{Y_t}\rangle
 \,dL_t.
 \end{split}
\end{equation}
By the local martingale property of $F(t,Y_t)$ the last two terms vanishes and
\begin{equation}
 \begin{split}\Label{E19}
 &\left(\partial_t+\f12\D\right)F(t,y)=0, \ (t,y)\in [0,T]\times M^0,\\
 &\nu_y\in \ker d F(t,y), \ (t,y)\in [0,T]\times \partial M.
\end{split}
 \end{equation}
Since $W_t$ has finite variation on the set $\{Y_t\not\in \partial M\}$ there is no covariation term between
$dF(t,Y_t)$ and $W_t$. Writing an It\^o formula for $\langle d F(t,Y_t),W_t\rangle$ yields
\begin{multline*}
 d\left \langle \nabla F(t,Y_t),W_t\right\rangle=\nabla d F(t,Y_t)(\s(Y_t)dB_t, W_t)+\nabla d F(t,Y_t)(\nu_{Y_t}, W_t)\,dL_t
 \\+\left(\partial_t+\f12\trace \nabla^2\right)dF(t,Y_t)(W_t)\,dt\\
 -\f12\left\langle \nabla F(t,Y_t), \Ric^\sharp(W_t)\right\> dt- \left \langle \nabla F(t,Y_t) \mcS(W_t)\right\rangle \,dL_t.
 \end{multline*}
 where in the last term we used  (\ref{E19}).
We note that $\Delta^1=\trace \nabla^2-\f12\Ric^\sharp$ and $\Delta^1 d=d\Delta$.
This together with  ~\eqref{E19}, $ \left(\partial_t+\f12\Delta^1\right)dF(t,y)=0$, yields
\begin{align*}
 d\langle \nabla F(t,Y_t),W_t\rangle=&\nabla d F(t,Y_t)(\s(Y_t)dB_t, W_t)\\&+\nabla d F(t,Y_t)(\nu_{Y_t}, W_{t-})\,dL_t
 -\left\langle \nabla F(t,Y_t),\mcS(W_t)\right\rangle \,dL_t.
 \end{align*}
 Now for $y\in \partial M$ and $w\in T_yM$, since $\nu(y)\in \ker d F(t,y)\in T_y\partial M$ we have 
 \begin{align*}
  -\langle \nabla F(t,y),\mcS(w)\rangle&= \langle \nabla F(t,y),\nabla_w\nu \rangle\\
  &=-\langle \nabla_w d F(t,y),\nu_y\rangle=-\nabla dF(t,y)(\nu_y,w).
 \end{align*}
 For the second equality we used the fact that $\nu(y)\in \ker d F(t,y)$. Putting all the calculations together we finally get 
 $$
 d\langle d F(t,Y_t),W_t\rangle=\nabla d F(t,Y_t)(\s(Y_t)dB_t, W_t),
 $$
 which proves that $\langle \nabla F(t,Y_t),W_t\rangle$ is a continuous local martingale.

\end{proof}
Applying  this theorem to $F(t,y)=\E[f(Y_{T-t}(y))]$ where $(Y_.(y))$ is reflected Brownian motion started at $y\in M$,
$f$ is a smooth function on $M$ with $ df (\nu)=0$ on the boundary, (under this condition $F$ is $C^{1,2}$,  
see e.g. \cite[F.-Y. Wang]{Wang:15}),
we immediately get the following Bismut type formula:
\begin{corollary}
\Label{BF}
Assume that $M$ is compact. 
Let $f:M\to \R$ be a smooth bounded function with $\langle df, \nu\rangle=0$ on the boundary and $T>0$. Let $Q_t$ be the semi-group associated to the reflected Brownian motion on $M$. Let $y\in M$, $v\in T_yM$ and $(Y_t)$ a reflected Brownian motion started at $y\in M$, constructed as in Theorem~\ref{T2}.  Then
$$d(Q_T f)(v)=\frac{1}{T} \E \left[f(Y_T) \int_0^T \<W_s(v), \sigma(Y_s) dB_s\>\right].$$
\end{corollary}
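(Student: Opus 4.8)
The plan is to run the classical Bismut--Elworthy--Li integration by parts scheme, using the local martingale property from Theorem~\ref{T3bis}, It\^o's formula, and the identity $\sigma_y\sigma_y^\ast=\mathrm{Id}_{T_yM}$ that is built into the construction in Proposition~\ref{construction}. Set $u(t,y)=(Q_{T-t}f)(y)=\E\big[f(Y_{T-t}(y))\big]$, where $(Y_\cdot(y))$ is the RBM constructed in Theorem~\ref{T2} started at $y$. Since $f$ is smooth, bounded and satisfies $\langle df,\nu\rangle=0$ on $\partial M$, the function $u$ is $C^{1,2}$ on $[0,T]\times M$ (this is the content of the regularity result quoted from \cite{Wang:15}), the Neumann compatibility condition propagates so that $\nu\in\ker d u(t,\cdot)$ on $\partial M$ for all $t\in[0,T]$, and $u(t,Y_t)$ is a continuous local martingale. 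Writing It\^o's formula for $u(t,Y_t)$ as in \eqref{ItoF} and using that the boundary term $\langle d u(t,Y_t),\nu_{Y_t}\rangle\,dL_t$ vanishes, we obtain
\begin{equation*}
f(Y_T)=(Q_Tf)(y)+\int_0^T\langle\nabla u(s,Y_s),\sigma(Y_s)\,dB_s\rangle .
\end{equation*}

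Next I would identify the relevant martingale coming from the damped parallel translation. Since $F(t,y):=u(t,y)$ is $C^{1,2}$ and $F(t,Y_t)$ is a local martingale, Theorem~\ref{T3bis} tells us that $N_s:=\langle\nabla u(s,Y_s),W_s(v)\rangle$ is a continuous local martingale. Because $M$ is compact, $f$ is bounded, and $u$ is smooth up to time $T$, the quantity $\|\nabla u(s,\cdot)\|_\infty$ is bounded on $[0,T]$; together with $\E\big[\sup_{s\le T}|W_s|^2\big]<\infty$ (Lemma~\ref{L7.6}) this upgrades $N$ to a true martingale, so that $\E[N_s]=N_0=\langle\nabla(Q_Tf)(y),v\rangle=d(Q_Tf)(v)$ for every $s\in[0,T]$ (using $W_0=\mathrm{Id}$ and $u(0,\cdot)=Q_Tf$). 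The same bounds make $\int_0^T\langle\nabla u(s,Y_s),\sigma(Y_s)\,dB_s\rangle$ and $\int_0^T\langle W_s(v),\sigma(Y_s)\,dB_s\rangle$ genuine $L^2$-martingales.

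Then I would multiply the It\^o identity for $f(Y_T)$ by the martingale $\int_0^T\langle W_s(v),\sigma(Y_s)\,dB_s\rangle$ and take expectations. The constant term $(Q_Tf)(y)$ times this integral has zero mean, and by the It\^o isometry together with $\sigma_{Y_s}\sigma_{Y_s}^\ast=\mathrm{Id}$ (so that the cross-variation density of the two stochastic integrals is $\langle\sigma^\ast\nabla u,\sigma^\ast W\rangle=\langle\nabla u,\sigma\sigma^\ast W\rangle=\langle\nabla u,W\rangle$), one gets
\begin{equation*}
\E\left[f(Y_T)\int_0^T\langle W_s(v),\sigma(Y_s)\,dB_s\rangle\right]
=\E\left[\int_0^T\langle\nabla u(s,Y_s),W_s(v)\rangle\,ds\right]
=\int_0^T\E[N_s]\,ds=T\,d(Q_Tf)(v),
\end{equation*}
where the interchange of expectation and time integral is justified by $\E\int_0^T|N_s|\,ds\le T\,\|\nabla u\|_\infty\,\big(\E\sup_{s\le T}|W_s|^2\big)^{1/2}<\infty$. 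Dividing by $T$ gives the asserted formula. (Strictly one integrates against $W_{s-}$, but since $W$ jumps only on the a.s. countable set of ends of excursions, which has zero Lebesgue measure, this changes nothing.)

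I expect the main obstacle to be the regularity input: that $u=Q_{\cdot}f$ is $C^{1,2}$ on $[0,T]\times M$, with $\nabla u$ bounded uniformly on $[0,T]$ and with the Neumann condition $\langle du(t,\cdot),\nu\rangle=0$ preserved for all $t$ — this is the point where the compatibility condition on $f$ and the smoothing of the Neumann semigroup enter, and it is quoted from \cite{Wang:15}. Everything else is the standard Bismut argument plus routine integrability estimates that are available because $M$ is compact, $f$ and $\nabla u$ are bounded, and $\E\sup_{s\le T}|W_s|^2<\infty$; in particular the upgrade of the local martingales of Theorem~\ref{T3bis} and of the two It\^o integrals to true $L^2$-martingales follows from exactly these bounds.
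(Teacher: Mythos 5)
Your proposal is correct and is precisely the argument the paper intends: the paper derives Corollary~\ref{BF} by applying Theorem~\ref{T3bis} to $F(t,y)=\E[f(Y_{T-t}(y))]$ (citing \cite{Wang:15} for the $C^{1,2}$ regularity and propagation of the Neumann condition) and then invoking the standard Bismut--Elworthy--Li integration by parts, which you have written out in full, including the cross-variation identity via $\sigma\sigma^\ast=\mathrm{Id}$ and the integrability upgrades from Lemma~\ref{L7.6}. The only difference is that the paper leaves these routine steps implicit ("we immediately get"), whereas you have supplied them; there is no gap.
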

For the analogous formula for manifold without boundary, see    \cite[Li]{Li:thesis} and
\cite[K. D. Elworthy and X.-M. Li]{Elworthy-Li-formula}. Such results are also  obtained in  \cite[L. Zambotti]{Zambotti} 
and  \cite[T. Funaki and K.  Ishitani]{Funaki-Ishitani}.

Let $T$ and $a$ be positive numbers.
Recall that the damped parallel translation along a sample continuous stochastic process $(Y^a_t)$
 is the solution to the stochastic covariant differential equation with initial value $W_0^a={\rm Id}_{T_{Y_0^a}M}$,
\begin{equation}\Label{Wa}
 DW_t^a=\left(\nabla_{W_t^a}A^a-\f12\Ric^\sharp(W_t^a)\right)\,dt. 
\end{equation}
The following Theorem will be proved in Section \ref{section7}
\begin{theorem}\Label{T4}
Let $M$ be a compact Riemannian manifold. Let $$A^a(x)=\nabla \ln \tanh\left(\f{R(x)}{a}\right).$$
Let $(Y^a_t, t\in [0,T])$ and $(Y_t, t \in [0,T])$ be the stochastic processes defined in Theorem~\ref{T2}. 
Let $W_t^a$ denote the damped parallel translation along $Y_t^a$. Then for all $p\in [1,\infty)$ and  for any $C^2$ differential $1$-form  $\phi$ vanishing on the normal bundle $\nu(\partial M)$,
$$\lim_{a\to 0} \sup_{s\le t}  \E\left[|\phi( W_s^a)-\phi( W_s)|^p\right] =0.
$$ 
\end{theorem}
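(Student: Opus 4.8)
The plan is to split the analysis into the interior régime and a tubular‑neighbourhood régime, and, inside the tubular neighbourhood $F_0$, to decompose $W_t^a=W_t^{a,T}+f_a(t)\nu(Y_t^a)$ into tangential and normal parts along the product structure. Write $A^a=g_a(R)\nabla R$, where $g_a>0$ is decreasing with $g_a(r)\to0$ for every $r>0$; then, since $\Hess R(\nabla R,\cdot)=0$ on $F_0$,
\[
\nabla_wA^a=g_a'(R)\langle\nabla R,w\rangle\nabla R+g_a(R)\,\nabla_w\nabla R ,
\]
the first term being dissipative in the $\nu$–direction with rate $c_a(t):=-g_a'(R_t^a)\ge0$, while $g_a(R_t^a)\,dt$ plays, by Corollary~\ref{convLat}, the role of the boundary local time and $\nabla_\cdot\nabla R=\nabla_\cdot\nu=-\mcS$ on the level sets in $F_0$. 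Away from $\partial M$ the drift $A^a$ and $\nabla A^a$ tend to $0$ uniformly (exponentially) as $a\to0$; combined with $Y^a\to Y$ in $\scrS_p$ (Theorem~\ref{T2}) and $\parals_t^a\to\parals_t$ (Proposition~\ref{cvpt}), a standard stability estimate for the linear covariant SDE \eqref{Wa} gives $W^a\to W$ in $\scrS_p$ there. Using stopping times that split $[0,T]$ at the entrance/exit times of the closed tubular sets $F_{1/3}\subset F_{2/3}$, exactly as in the proof of Theorem~\ref{T2}, I reduce to the case where both $Y^a$ and $Y$ stay in $F_0$.

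First I would record the uniform bound $\sup_{a\le1}\E\sup_{s\le T}|W_s^a|^p<\infty$: the only genuinely singular term in \eqref{Wa} is $g_a(R)\nabla_\cdot\nu$, whose coefficient integrates against $g_a(R_t^a)\,dt$, and the latter has an $\exp(\lambda\cdot)$–moment controlled uniformly in $a$ by Corollary~\ref{convLat}; a Gronwall argument with this random coefficient gives the bound (this is Lemma~\ref{L7.6}). Taking the inner product of \eqref{Wa} with $\nu=\nabla R$ and using $\Hess R(\cdot,\nabla R)=0$ then yields the scalar equation
\[
df_a(t)=-c_a(t)f_a(t)\,dt-\frac12\langle\Ric^\sharp(W_t^a),\nu\rangle\,dt+\langle W_t^a,D\nu(Y_t^a)\rangle=:-c_a(t)f_a(t)\,dt+dN_t^a ,
\]
so $f_a(t)=e^{-\int_0^tc_a}f_a(0)+\int_0^te^{-\int_s^tc_a(u)\,du}\,dN_s^a$. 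For the tangential part, projecting \eqref{Wa} and differentiating $W_t^{a,T}=W_t^a-f_a(t)\nu(Y_t^a)$ (the martingale bracket is continuous since $\nu(Y^a)$ is), the $\nabla R$–term drops, $g_a(R)\Hess R(W^a)=g_a(R)\Hess R(W^{a,T})$ produces the $-\mcS(W^{a,T})\,dL$ term of \eqref{E27}, and the remaining terms are the Ricci term and the $D\nu$ and It\^o–correction terms of \eqref{E27}. Passing to the limit using $Y^a\to Y$, $\parals_t^a\to\parals_t$, $L^a\to L$ (all in $\scrS_p$), the uniform bound above, and the $L^p([0,T]\times\Omega)$–convergence $f_a\to f$ of the next step, one gets $W^{a,T}\to W^T$ in UCP and locally in $\scrS_p$ (Lemma~\ref{L4}, Lemma~\ref{LpProd}).

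The crux is the normal component. Using the key asymptotics of Lemma~\ref{L2} and Corollary~\ref{C2} — that for $t\notin\SR(\omega)$ one has $e^{-\int_s^tc_a(u)\,du}\to1$ if $s>\a_t$ and $\to0$ if $s<\a_t$, which rests on $R^a$ being, after an equivalent change of measure, a one–dimensional reflected Brownian motion (Lemma~\ref{EqRBM}), on $\int c_a$ blowing up over a complete excursion to $0$ but staying bounded over a partial one, and on there being only finitely many excursions of length $\ge\epsilon$ — I identify the pointwise limit of $f_a(t)$ as $\int_{\a_t}^t dN_s=r_t-r_{\a_t}=f(t)$, matching the pathwise description of Proposition~\ref{P2}, \eqref{E24}--\eqref{E26}. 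Upgrading this to $\int_0^T\E|f_a(t)-f(t)|^p\,dt\to0$ uses dominated convergence with dominating function built from $\sup_{s\le T}|W_s^a|$ (uniformly $L^p$), the exponential moment bound on $L_T^a$, and the elementary estimate $\int_s^te^{-\int_r^tc_a(u)\,du}\,dL_r^a\le\mathrm{const}$, coming from $\int_r^tc_a\,du$ dominating a multiple of $L_t^a-L_r^a$ near the boundary. Finally, since $\phi$ is $C^1$ and $\phi(\nu(y))=0$ on $\partial M$, one has $|\phi(\nu(x))|\le C\,R(x)$ on $F_0$, hence
\[
\phi(W_s^a)-\phi(W_s)=\bigl(\phi(W_s^{a,T})-\phi(W_s^T)\bigr)+f_a(s)\phi(\nu(Y_s^a))-f(s)\phi(\nu(Y_s)) ,
\]
whose normal contribution is $O\!\bigl(R_s^a|f_a(s)|+|f_a(s)-f(s)|\bigr)$: the first piece is small because $\phi\circ\nu$ vanishes to first order on $\partial M$ and $|f_a|$ is uniformly $L^p$, while the second is controlled by the $L^p([0,T]\times\Omega)$–convergence $f_a\to f$ together with the uniform moment bound, after a further localisation near the endpoints of excursions (where $f_a$ can be largest but $R_s^a$ is small). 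Combining with the tangential estimate gives $\sup_{s\le t}\E|\phi(W_s^a)-\phi(W_s)|^p\to0$.

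I expect the main obstacle to be precisely this normal‑part analysis: establishing the indicator‑type limit for $e^{-\int_s^tc_a}$, and above all promoting it from an almost‑sure statement on individual excursion intervals to an $L^p([0,T]\times\Omega)$ statement, while simultaneously controlling the two‑way feedback between the singularly damped scalar equation for $f_a$ and the tangential equation for $W^{a,T}$ — effectively solving a linear system whose coefficient $c_a$ blows up — and matching the limit precisely with the c\`adl\`ag object of Proposition~\ref{P2}.
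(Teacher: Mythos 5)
Your proposal follows the paper's own proof very closely: the same tangential/normal decomposition $W^a=W^{a,T}+f_a\nu(Y^a)$ in the tubular neighbourhood, the same computation of $\nabla_\cdot A^a$ producing the dissipative rate $c_a$ and the shape-operator term against $dL^a$, the same uniform moment bound on $W^a$ via the exponential integrability of $L_T^a$ (Lemma~\ref{Wbdd}), the same variation-of-constants formula for $f_a$ (Lemma~\ref{L0}), the same indicator-type limit for $e^{-\int_s^t c_a}$ proved via Girsanov and excursion theory (Lemma~\ref{EqRBM}, Lemma~\ref{L2}, Corollary~\ref{C2}), the same identification of the limit with $r_t-r_{\a_t}$, and the same $L^p([0,T]\times\Omega)$ convergence of the normal part. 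Your final assembly (using $|\phi(\nu(x))|\le C R(x)$ directly) is a cosmetic variant of the paper's approximation by forms $\phi^\e$ vanishing in a neighbourhood of $\partial M$; both rest on the same observation that the jumps of $f$ occur only where $Y$ touches $\partial M$, where $\phi(\nu)$ is killed to first order.

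There is, however, one structural point where your plan as written would not run: you state that the tangential convergence $W^{a,T}\to W^T$ is obtained ``using \dots the $L^p([0,T]\times\Omega)$--convergence $f_a\to f$ of the next step,'' while that next step (the convergence of $f_a$) in turn requires $\tilde r^a\to\tilde r$, i.e.\ the convergence of $w^{a,T}$. This is circular, and a joint Gronwall on the pair $(f_a,w^{a,T})$ in $\scrS_p$ is not available, precisely because $f_a$ cannot converge in $\scrS_p$ (the limit has jumps). The paper breaks the circle by substituting the Duhamel representation $f_a(t)=f_a(0)e^{-\tilde C_a(t)}+\int_0^t d\tilde u_s^a(w_s^{a,T})$ back into the equation for $w^{a,T}$, so that the tangential system \eqref{matrix-a} is lower triangular and closes on $w^{a,T}$ alone; the tangential convergence is then proved first (with the $\scrS_1$/$\SH_\infty$ Gronwall argument and the integration by parts of Lemma~\ref{L1} to handle $\int\cdot\,dL^a-\int\cdot\,dL$, since $L^a\to L$ only in $\scrS_p$ and not in $\SH_p$), and the normal part afterwards. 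The paper flags this decoupling as ``essential for our proof to work,'' and your write-up needs to incorporate it (or an equivalent two-norm bootstrap) before the Gronwall step; your phrase ``a standard stability estimate'' also understates the work hidden in the singular local-time integrals at this stage.
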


For a non-compact manifold, we have the following result, see Appendix \ref{preliminary}.
\begin{corollary}\Label{T4bis}
Let $M$ be a Riemannian manifold, not necessarily compact. 
Then for any $C^2$ differential $1$-form  $\phi$ such that $\phi(\nu)=0$ in  $\partial M$, 
$\phi( W^a)$ converges to $\phi( W)$ in UCP topology.
\end{corollary}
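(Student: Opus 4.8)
The plan is to deduce Corollary~\ref{T4bis} from the compact case Theorem~\ref{T4} by a localization argument, using that all the relevant convergences in the earlier part of the excerpt (Theorem~\ref{T2}, Proposition~\ref{cvpt}) are already stated in the UCP topology, which is a local notion. First I would fix a point $x\in M^o$ (or more generally the starting point of the flows) and exhaust $M$ by an increasing sequence of relatively compact open sets $U_n$ with smooth boundary such that $\overline{U_n}\subset U_{n+1}$, $\bigcup_n U_n=M$, and such that each $U_n$ contains a genuine piece of $\partial M$ together with its tubular neighbourhood $F_0$, so that the construction of Proposition~\ref{construction} and of the processes $(Y_t^a)$, $(W_t^a)$ restricts sensibly to $\overline{U_n}$. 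Let $\tau^U_n(Y)$ and $\tau^U_n(Y^a)$ denote the first exit times of $Y_\cdot$ and $Y^a_\cdot$ from $U_n$.

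The key mechanism is the remark recorded just after Theorem~\ref{T2}: since $Y^a\to Y$ in UCP, one has $\liminf_{a\to 0}\tau^U_n(Y^a)\ge\tau^U_n(Y)$ almost surely; and since $(Y_t)$ is a conservative RBM (it does not explode, as noted in the proof of Theorem~\ref{T6.5}), $\tau^U_n(Y)\uparrow\infty$ a.s.\ as $n\to\infty$. So given $T>0$ and $\eta>0$, I would choose $n$ large enough that $\P(\tau^U_n(Y)\le T+1)<\eta$, and then $a$ small enough that $\P(\tau^U_n(Y^a)\le T)<2\eta$. On the event $\{\tau^U_n(Y^a)>T,\ \tau^U_n(Y)>T\}$ the paths $Y_\cdot$ and $Y^a_\cdot$ on $[0,T]$ live inside the compact manifold-with-boundary $\overline{U_n}$, and the coefficients $\Ric^\sharp$, $A^a$ and $R$ restricted to $\overline{U_n}$ are exactly of the type covered by Theorem~\ref{T4}. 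The slightly delicate point is that $\overline{U_n}$ has a ``new'' boundary piece $\partial U_n\setminus\partial M$ which is not a reflecting boundary; to handle this cleanly one compares with the processes stopped at $\tau^U_n$, or alternatively glues $\overline{U_n}$ into a genuine compact manifold $\tilde M_n$ (doubling/capping off the artificial boundary while keeping $\partial M\cap \overline{U_n}$ as reflecting boundary) so that $(Y^{\cdot\wedge\tau^U_n}, W^{\cdot\wedge\tau^U_n})$ agrees in law, pathwise up to the exit time, with the corresponding objects on $\tilde M_n$. Applying Theorem~\ref{T4} on $\tilde M_n$ gives $\sup_{s\le T}\E|\phi(W_s^a)-\phi(W_s)|^p\to 0$ for the stopped processes, hence convergence in probability of $\sup_{s\le T}|\phi(W_s^a)-\phi(W_s)|$ on the chosen event.

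Putting these together: for any $\delta>0$,
\begin{align*}
\P\Big(\sup_{s\le T}|\phi(W_s^a)-\phi(W_s)|>\delta\Big)
&\le \P\big(\tau^U_n(Y^a)\le T\big)+\P\big(\tau^U_n(Y)\le T\big)\\
&\quad+\P\Big(\sup_{s\le T}|\phi(W^{a,n}_s)-\phi(W^n_s)|>\delta\Big),
\end{align*}
where $W^{a,n}$, $W^n$ are the processes transplanted to $\tilde M_n$; the last term tends to $0$ as $a\to0$ by Theorem~\ref{T4}, and the first two are $<3\eta$ by the choice of $n$ and $a$. Since $\eta$ is arbitrary this yields $\sup_{s\le T}|\phi(W^a_s)-\phi(W_s)|\to 0$ in probability, i.e.\ UCP convergence of $\phi(W^a)$ to $\phi(W)$ on $[0,T]$; as $T$ is arbitrary we are done.

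The main obstacle I expect is the transplantation/gluing step: one must be sure that cutting $M$ down to $\overline{U_n}$ does not disturb the pathwise constructions of $(Y^a_t)$ and of the damped parallel translation $(W^a_t)$ before the exit time, and in particular that the tubular-neighbourhood data (the function $R$, the vector fields $\sigma_j$ with $\sigma_1=\nabla R$ near $\partial M$, the shape operator $\mathcal S$) extend to a genuine compact manifold $\tilde M_n$ without altering them on $\overline{U_n}$. This is essentially a soft but slightly technical patching argument; because the SDEs are driven by the same Brownian motion and all coefficients are local, the stopped processes coincide, so no real analytic difficulty arises beyond bookkeeping. Everything else is a routine $\varepsilon/\eta$ comparison using results already established.
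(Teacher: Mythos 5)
Your localization skeleton --- exhaustion by relatively compact open sets, the exit--time inequality $\liminf_{a\to0}\tau^{U,a}\ge \tau^{U,0}$, and the observation that UCP is a local notion --- is exactly the mechanism the paper relies on: the corollary is referred to Appendix~\ref{preliminary}, where UCP convergence is \emph{defined} through exit times from relatively compact sets and related to local $\scrS_p$ convergence. But the paper does not deduce the corollary from the \emph{statement} of Theorem~\ref{T4}; it obtains the UCP convergence as the intermediate, purely local step of the \emph{proof} of Theorem~\ref{T4} (the unnumbered lemma of Section~\ref{Subsection9.1}: parts (1)--(2) establish UCP convergence of $\phi(W^a)$ on stochastic intervals where the processes sit in a tubular neighbourhood, with no compactness used; part (3) then invokes compactness only to upgrade UCP to the moment bound). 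In that route no transplantation to a compact manifold is needed at all.

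Your reduction has a genuine gap at the step ``Applying Theorem~\ref{T4} on $\tilde M_n$ gives $\sup_{s\le T}\E|\phi(W^a_s)-\phi(W_s)|^p\to0$ \dots, hence convergence in probability of $\sup_{s\le T}|\phi(W^a_s)-\phi(W_s)|$.'' The conclusion of Theorem~\ref{T4} has the supremum \emph{outside} the expectation: it asserts $L^p$ convergence at each fixed time, uniformly in the time parameter, and this does not imply convergence in probability of the running supremum. (Take $X^a_s=\1_{[U,U+a]}(s)$ with $U$ uniform on $[0,T]$: then $\sup_{s\le T}\E|X^a_s|\le a/T\to 0$ while $\sup_{s\le T}|X^a_s|=1$ almost surely.) So even granting the doubling/capping construction, the compact-case theorem as stated is too weak to yield the UCP conclusion; to close the argument you must appeal to the stronger local fact actually proved inside Section~\ref{section7}, namely that $\phi(W^a)\to\phi(W)$ in UCP on each stochastic interval, and once you do that the gluing becomes superfluous. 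Two secondary caveats: choosing $n$ with $\P(\tau^{U_n}(Y)\le T+1)<\eta$ presupposes that the reflected Brownian motion is conservative, which is not guaranteed on a general non-compact $M$ (Definition~\ref{UCP} localizes with exit times precisely to avoid this assumption); and $\overline{U_n}$ is a manifold with corners where $\partial U_n$ meets $\partial M$, so the capping step, while plausible, is more than bookkeeping.
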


\section{Damped parallel translation as a derivative flow}
\Label{section6}
\setcounter{equation}0

In this section $M$ is a smooth compact manifold with boundary. We prove that the damped parallel translation $W_t$ along
reflected Brownian motion is the weak derivative of a flow which we explain below. 

Let $\gamma:  [0,1]\to M^o$ be a $C^1$ map. Here again for $a>0$,  $\di A^a(x)=\nabla \ln \tanh\left(\frac{R(x)}{a}\right)$. We will built a family of Brownian flows with drift $A^a$ starting at $\gamma(u)$ whose derivative with respect to $u$ is  locally uniformly bounded for a.s. $\omega$. Let us return to (\ref{Yta}).
$$dY_t^a= \sum_{k=1}^m\sigma_k (Y_t^a)\circ dB_t^k
 +\sigma_0(Y_t^a)dt +A^a (Y_t^a) \,dt.$$
 Let us consider its solution flow $\Psi^a$. Let $Y_t^a(0)=\Psi^a(\gamma(0))$.
For $u\in(0,1]$, let $Y_t^a(u)$ denote the solution to the following It\^o  equation:
\begin{equation}
\Label{Ytau}
\begin{split}
dY_t^a(u)&= \parals_{0,u}({Y_t^a(\cdot)}) dY_t^a(0)+A^a(Y_t^a(u))dt,\\
Y_0^a(u)&=\gamma(u)
\end{split}
\end{equation}
where $ \parals_{0,u}({Y_t^a(\cdot)})$ denotes parallel translation along the $C^1$ path $u\mapsto Y_t^a(u)$.
 Recall that the It\^o differentials $dY_t^a(u)$ in~\eqref{Ytau} are defined by
\begin{equation}
\Label{Itodiff}
dY_t^a(u)=\parals_{0,t}d\left(\int_0^\cdot\parals_{0,s}^{-1}\circ dY_s^a(u) \right)_t
\end{equation}
where $\parals_{0,t}$ is parallel transport along $t\mapsto Y_t^a(u)$, and they formally are tangent vectors. Notice that the first differential in the right is an It\^o differential in a fixed vector space and the second one is a Stratonovich differential in a manifold.                
More precisely, putting~\eqref{Yta} in It\^o form $\di dY_t^a= \sum_{k=1}^m\sigma_k (Y_t^a) dB_t^k
  +A^a (Y_t^a) \,dt$
	we have 
\begin{equation}
\Label{Ytaubis}
\left\{
\begin{array}{cc}
dY_t^a(u)&=\sum_{k=1}^m \parals_{0,u}({Y_t^a(\cdot)})\left( \sigma_k (Y_t^a(0))\right) dB_t^k + A^a(Y_t(u))\,dt\\
Y_0^a(u)&=\gamma(u).\hfill
\end{array}
\right.
\end{equation}
The existence of a solution should follow from an iteration method. A proof is given in  \cite[M. Arnaudon, K. A. Coulibaly and A. Thalmaier]{Arnaudon-Coulibaly-Thalmaier:06}, where an approximation procedure with iterated parallel couplings is used to obtain a Cauchy sequence in $H_2$. The advantage is that at each step and each value of $u$ we have a diffusion with the same  generator $ \frac12\Delta+A^a$, and  as the mesh goes to $0$ all problems with cut locus disappear. 
The solution curves $u\mapsto Y_t^a(u)$ are almost surely differentiable and that their derivatives $\partial_uY_t^a(u)$ are locally uniformly bounded for almost surely all $\omega$ and \begin{equation}
\Label{derhordiff}
\partial_uY_t^a(u)=W_t^{a,u} (\dot\gamma(u)),
\end{equation}
where $W_t^{a,u} $ is  the damped parallel translation along $Y^a(u)$. This is, to our knowledge, the only known construction for $\partial_uY_t^a(u)$ a.s. locally uniformly bounded. Our aim is to obtain a similar property for reflected Brownian motion. For this we will let $a\to 0$ in ~\eqref{derhordiff} and obtain a limiting identity in a weak sense. However we believe that our construction indeed yields~\eqref{derhordiff} for $a=0$ in a strong sense.

\begin{proposition}
\Label{Tightness}
The family $\{(Y_t^a(u))_{0\le t\le T,\ 0\le u\le 1},\  a\in (0,1]\}$ of two parameter stochastic processes  is tight.
\end{proposition}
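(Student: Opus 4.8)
The plan is to verify tightness in $C([0,T]\times[0,1];\R^k)$ (after a fixed isometric embedding $M\hookrightarrow \R^k$, which is harmless since $M$ is compact) by checking a Kolmogorov--Chentsov moment criterion in the two variables $t$ and $u$ jointly. By Theorem~\ref{T2} and Corollary~\ref{convLat}, the processes $(Y^a_t)=(Y^a_t(0))$ already converge in $\scrS_p([0,T])$ and the approximate local times $L^a_t$ satisfy the uniform exponential bound \eqref{expLat}; in particular $\sup_{a\in(0,1]}\E[\sup_{t\le T}\rho^p(Y^a_t(0),Y_t)]<\infty$ for every $p$, so the $u=0$ family is tight. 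It remains to control increments in $u$ and the $t$--modulus uniformly in $a$.

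First I would establish the key $u$--increment estimate
$$
\sup_{0\le t\le T}\E\left[\rho^p\bigl(Y^a_t(u_1),Y^a_t(u_2)\bigr)\right]\le C(p,T)\,\|\dot\gamma\|_\infty^p\,|u_1-u_2|^p,
$$
uniformly in $a\in(0,1]$. This is the analogue of \eqref{rhopY} at the level of the approximating flows, and it is where \eqref{expLat} is used decisively. From \eqref{derhordiff} one has $\partial_u Y^a_t(u)=W^{a,u}_t(\dot\gamma(u))$, so $\rho(Y^a_t(u_1),Y^a_t(u_2))\le \int_{u_1}^{u_2}|W^{a,u}_t(\dot\gamma(u))|\,du$; hence it suffices to bound $\E\sup_{t\le T}|W^{a,u}_t|^p$ uniformly in $a$ and $u$. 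Writing the covariant equation \eqref{Wa} for $W^{a,u}_t$ and taking the norm, Grönwall gives $|W^{a,u}_t|\le \exp\bigl(\tfrac12\int_0^t\|\Ric^\#\|\,ds+\int_0^t\|\nabla A^a\|(Y^a_s(u))\,ds\bigr)$; since $\nabla A^a$ restricted to $F_{2/3}$ is, up to bounded terms, $-\mathcal S$ acting along level sets times $\frac{d}{dt}L^{a,u}_t$ (the computation following \eqref{ERat}), and $\mathcal S$ is bounded on the compact manifold, one gets $|W^{a,u}_t|\le C\,e^{C L^{a,u}_T}$ with $L^{a,u}$ the approximate local time of $Y^a(u)$. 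The uniform bound \eqref{expLat}, which holds for every starting point and hence for $Y^a(u)$, then yields $\sup_{a,u}\E[\sup_{t\le T}|W^{a,u}_t|^p]<\infty$ for all $p$. I expect this step --- getting the local-time exponential moment bound to hold \emph{uniformly in} $a$ and in the parameter $u$ --- to be the main obstacle, since it requires re-examining the proof of Corollary~\ref{convLat} to see that the decomposition $L^a_t=R^a_t-R^a_0+\int_0^t\alpha^a_s\,dZ^a_s+\int_0^t\beta^a_s\,ds$ with uniformly bounded coefficients persists for the flow $Y^a_t(u)$ and is uniform over $u\in[0,1]$.

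Next I would control the joint modulus: combining the $u$--estimate above with the standard $t$--increment estimate for \eqref{Ytaubis} (Burkholder--Davis--Gundy applied to the martingale part $\sum_k\parals_{0,u}(\sigma_k(Y^a_t(0)))dB^k_t$, whose integrand is bounded since $M$ is compact, together with the drift term $A^a\,dt$ whose contribution is controlled by $\E[(L^{a}_t-L^a_s)^p]\le C|t-s|^{p/2}$ again from the coefficient decomposition), one obtains, for suitable $p$ large,
$$
\E\left[\|Y^a(t_1,u_1)-Y^a(t_2,u_2)\|^p\right]\le C(p,T)\bigl(|t_1-t_2|^{p/2}+|u_1-u_2|^p\bigr),
$$
uniformly in $a\in(0,1]$, after embedding. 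With the exponent $p$ chosen so that $p/2>2$, Kolmogorov's criterion on the two-dimensional parameter set $[0,T]\times[0,1]$ gives equicontinuity in probability of the laws, hence tightness. Finally, tightness of the initial values is immediate because $Y^a_0(u)=\gamma(u)$ is deterministic and independent of $a$, so the family of laws on $C([0,T]\times[0,1];\R^k)$ is tight; since $M$ is compact and a closed subset of $\R^k$, all mass sits on $C([0,T]\times[0,1];M)$, which proves the proposition.
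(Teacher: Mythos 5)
Your overall strategy is the one the paper uses: a two--parameter Kolmogorov criterion, with the $u$--increments controlled through $\partial_u Y_t^a(u)=W_t^{a,u}(\dot\gamma(u))$ and the uniform moment bound \eqref{BoundWap} coming from the exponential local--time estimate \eqref{expLat}, and the $t$--increments controlled after embedding. However, the step where the paper does its real work is precisely the one you dispatch in a half--sentence, and as written it has a genuine gap. You claim that $\E\left[(L_t^a-L_s^a)^p\right]\le C|t-s|^{p/2}$ uniformly in $a$ ``from the coefficient decomposition'' $L_t^a=R_t^a-R_0^a+\int_0^t\alpha_s^a\,dZ_s^a+\int_0^t\beta_s^a\,ds$. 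That decomposition only tells you that $|R_t^a-R_0^a|$ is bounded by a constant; it says nothing about the size of the increment $R_t^a-R_s^a$ over a short interval, and indeed that increment contains $L_t^a-L_s^a$ itself (see \eqref{ERat}), so the argument is circular. The danger is real: the drift $\tfrac{2}{a\sinh(2R/a)}$ of $R^a$ is of order $1/a$ near the boundary and is \emph{not} uniformly bounded in $a$, so a naive moment estimate for the time--increment fails. The paper's proof circumvents this by analysing the sign and size of the drift of $\left(R(Y_t^a(u_1))-R(Y_{t_1}^a(u_1))\right)^4$: the unbounded drift contributes negatively on $\{R_t\le R_{t_1}\}$, and on $\{R_t\ge R_{t_1}\}$ its contribution is bounded by $b\left(R_t-R_{t_1}\right)^2$ because $\tfrac{2R}{a\sinh(2R/a)}\le 1$; this yields \eqref{HY} and then \eqref{rho4}. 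Some argument of this kind (or an honest proof that $\E[((R_t^a-R_s^a)^+)^p]\le C|t-s|^{p/2}$ uniformly in $a$) is indispensable and is missing from your proposal.

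A second, smaller inaccuracy: in bounding $|W_t^{a,u}|$ you assert that $\nabla A^a$ on $F_{2/3}$ is ``up to bounded terms'' equal to $-\mcS$ times $\tfrac{d}{dt}L_t^{a}$. It is not: $\nabla_w A^a=-c_a(t)\langle w,\nu\rangle\nu-\tfrac{2}{a\sinh(2R/a)}\mcS(w)$ with $c_a\sim a^{-2}$ unbounded, so a Gr\"onwall bound through $\exp\left(\int_0^t\|\nabla A^a\|\right)$ is vacuous. The correct route, which is Lemma~\ref{Wbdd} of the paper, differentiates $|W_t^a|^2$ and discards the normal term $-c_a(t)\langle W_t^a,\nu\rangle^2\le 0$ by its sign, arriving at exactly the conclusion $|W_t^{a,u}|\le|W_0|e^{-\underline{\Ric}\,t}e^{CL_t^{a,u}}$ you want; your conclusion is right but the stated justification would not survive scrutiny.
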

\begin{proof}
We will use the Kolmogorov criterion. For $t_1, t_2, u_1,u_2$ satisfying $0\le t_1<t_2\le T$ and $0\le u_1<u_2\le 1$ and $p\ge 1$,
\begin{align*}
&\E\left[\rho^p\left(Y_{t_1}^a(u_1),Y_{t_2}^a(u_2)\right)\right]\\
&\le 2^{p-1}\left(\E\left[\rho^p\left(Y_{t_1}^a(u_1),Y_{t_2}^a(u_1)\right)\right]+\E\left[\rho^p\left(Y_{t_2}^a(u_1),Y_{t_2}^a(u_2)\right)\right]\right)\\
&\le 2^{p-1}\left(\E\left[\rho^p\left(Y_{t_1}^a(u_1),Y_{t_2}^a(u_1)\right)\right]+\E\left[\left(\int_{u_1}^{u_2}|W_{t_2}^a(u)|\cdot|\dot\gamma(u)|\,du\right)^p\right]\right)\\
&\le 2^{p-1}\left(\E\left[\rho^p\left(Y_{t_1}^a(u_1),Y_{t_2}^a(u_1)\right)\right]+(u_2-u_1)^{p-1}\|\dot\gamma\|_\infty\int_{u_1}^{u_2}\E\left[|W_{t_2}^a(u)|^p\right]\,du\right)\\
&\le 2^{p-1}\left(\E\left[\rho^p\left(Y_{t_1}^a(u_1),Y_{t_2}^a(u_1)\right)\right]+(u_2-u_1)^{p}\|\dot\gamma\|_\infty\sup_{u\in[0,1],\ t\in[0,T]}\E\left[|W_{t}^a(u)|^p\right]\right)\\
&\le 2^{p-1}\left(\E\left[\rho^p\left(Y_{t_1}^a(u_1),Y_{t_2}^a(u_1)\right)\right]+C'(p,T)(u_2-u_1)^{p}\|\dot\gamma\|_\infty\right),
\end{align*}
where $C'(p,T)$ is a constant. We used an estimate on $|W_t^{a,u}|$ given in ~\eqref{BoundWap} below. Here and several time in the sequel, we use the equality in law of the processes $(Y^a(u), W^a(u))$, for each fixed $u$,  and $(Y^a, W^a)$. The latter process was constructed in Sections ~\ref{section3} and ~\ref{section5}. 

For the first term on the right hand side we again use the fact that $u_1$ is fixed and use estimates for $Y^a$, from Theorem~\ref{T2}. Since $M$ is compact we can replace the distance $\rho(x,y)$ on $M$ by the equivalent distance $\|H(x)-H(y)\|$ where $H : M \to \R^d$ is an embedding. We can also assume that $H=(\imath, R)$ is an extension of the construction in~\eqref{Hx} around the boundary. In particular we can assume that the image of $\partial M$ by $H$ is included in $\{R=0\}$. Then we easily check that
\begin{itemize}
\item
  the drift of $\tilde\imath(Y_t^a(u_1))$  is bounded,
	\item 
	the drift of $\left(R(Y_t^a(u_1))-R(Y_{t_1}^a(u_1))\right)^4$ is bounded on $\{R(Y_t^a(u_1))\ge \delta_0\},$
	\item the drift of $\left(R(Y_t^a(u_1))-R(Y_{t_1}^a(u_1))\right)^4$ is negative  on $$\{R(Y_t^a(u_1))\le \delta_0\}\cap \{R(Y_t^a(u_1))\le \{R(Y_{t_1}^a(u_1))\} ,$$
	\item the drift of $\left(R(Y_t^a(u_1))-R(Y_{t_1}^a(u_1))\right)^4$ is positive  on $$\{R(Y_t^a(u_1))\le \delta_0\}\cap \{R(Y_t^a(u_1))\ge \{R(Y_{t_1}^a(u_1))\} $$
	and bounded above by $b\left(R(Y_t^a(u_1))-R(Y_{t_1}^a(u_1))\right)^2$   where $b>0$ is independent of~$a$ (this is a consequence of~\eqref{ERat}). 
	\end{itemize}
	This implies, by a standard calculation, that for some constant $C'>0$,
\begin{equation}
\Label{HY}
\E\left[\left\|H(Y_{t_2}^a(u_1))-H(Y_{t_1}^a(u_1))\right\|^4\right]\le C'|t_2-t_1|^2.
\end{equation} 
Finally, for some positive constant $C$,
\begin{equation}
\Label{rho4}
\E\left[\rho^4\left(Y_{t_1}^a(u_1),Y_{t_2}^a(u_2)\right)\right]\le 8C|t_2-t_1|^2+8C'(4,T)(u_2-u_1)^{4}\|\dot\gamma\|_\infty.
\end{equation}
This concludes the required tightness.
\end{proof}
With this result at hand we  construct our limiting process.
\begin{theorem}
\Label{weakflow}
There is a two parameter continuous  process $(Y_t(u))_{0\le t \le T,\ 0\le u\le 1}$ with the following properties: \begin{itemize}
\item[(1)] for every $u\in[0,1]$, $Y_t(u)$ is a reflected Brownian motion on $M$ started at $\gamma(u)$;
\item[(2)] for every $p\in [1,\infty)$ there exists a number $C'(p,T)$ s.t. for all $0\le u_1< u_2\le 1$,
\begin{equation}
\Label{rhopY2}
\sup_{0\le t\le T}\E\left[\rho^p(Y_t(u_1), Y_t(u_2))\right]\le C'(p,T)\|\dot\gamma\|_\infty (u_2-u_1)^p.
\end{equation}
\end{itemize}
\end{theorem}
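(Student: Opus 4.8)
The plan is to realise $(Y_t(u))$ as a subsequential weak limit of the tight family produced in Proposition~\ref{Tightness} and then to transfer the two asserted properties from the approximations $(Y^a_\cdot(\cdot))$ to the limit. Since $M$ is compact, $C([0,T]\times[0,1];M)$ is a Polish space, so by Proposition~\ref{Tightness} and Prokhorov's theorem there is a sequence $a_k\downarrow 0$ along which the laws of $(Y^{a_k}_t(u))_{0\le t\le T,\,0\le u\le 1}$ converge weakly to the law of a process $(Y_t(u))$; because the tight family consists of $C([0,T]\times[0,1];M)$-valued random variables, the limit is automatically supported on continuous two-parameter paths (this can also be read off from the Kolmogorov-type bound~\eqref{rho4}, which is stable under weak convergence).

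To get (1), fix $u\in[0,1]$. By the iterated parallel-coupling construction recalled in~\eqref{Ytaubis}, the one-parameter process $(Y^a_t(u))_{t\in[0,T]}$ has the same law as the solution $(Y^a_t)_{t\in[0,T]}$ of~\eqref{Yta} started at $\gamma(u)$; by Theorem~\ref{T2} the latter converges in UCP, hence in law on $C([0,T];M)$, to the reflected Brownian motion started at $\gamma(u)$. On the other hand, restriction to the slice $\{u\}$ is a continuous map $C([0,T]\times[0,1];M)\to C([0,T];M)$, so it carries the weak limit of the first paragraph to the law of the $u$-section of $(Y_t(u))$. By uniqueness of weak limits the $u$-section of $(Y_t(u))$ is the reflected Brownian motion started at $\gamma(u)$, which is (1).

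For (2), put $t_1=t_2=t$ in the chain of inequalities in the proof of Proposition~\ref{Tightness}: the first summand vanishes and we obtain, uniformly in $a\in(0,1]$, $p\ge1$, $t\in[0,T]$ and $0\le u_1<u_2\le1$,
\begin{equation*}
\E\left[\rho^p\big(Y^a_t(u_1),Y^a_t(u_2)\big)\right]\le 2^{p-1}C'(p,T)\,\|\dot\gamma\|_\infty\,(u_2-u_1)^p .
\end{equation*}
For fixed $t,u_1,u_2$ the functional $w\mapsto \rho^p\big(w(t,u_1),w(t,u_2)\big)$ is continuous and bounded on $C([0,T]\times[0,1];M)$ (boundedness by compactness of $M$), so weak convergence along $a_k$ yields $\E[\rho^p(Y_t(u_1),Y_t(u_2))]\le 2^{p-1}C'(p,T)\|\dot\gamma\|_\infty(u_2-u_1)^p$; as the right-hand side is independent of $t$, taking the supremum over $t\in[0,T]$ and relabelling the constant gives (2).

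The step I expect to be the real obstacle is the identification of the one-parameter marginals in the second paragraph: it uses essentially both the equality in law between $(Y^a(u))$ and $(Y^a)$ started at $\gamma(u)$ — which itself relies on the parallel-coupling construction of~\cite{Arnaudon-Coulibaly-Thalmaier:06} — and the \emph{strong} convergence statement of Theorem~\ref{T2}, since tightness alone would not pin down the limiting law of the slices. The remaining ingredients (Prokhorov's theorem, integration of bounded continuous functionals against a weakly convergent sequence, and continuity of the limit) are routine.
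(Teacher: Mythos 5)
Your proof is correct and follows essentially the same route as the paper's: extract a weakly convergent subsequence from the tight family of Proposition~\ref{Tightness}, identify the $u$-sections via the equality in law with the process of Theorem~\ref{T2} (whose stronger convergence pins down the limiting marginal as the reflected Brownian motion started at $\gamma(u)$), and obtain \eqref{rhopY2} by setting $t_1=t_2=t$ in the tightness estimate and passing to the limit. Your version merely spells out the routine steps (Prokhorov, continuity of the slice map, boundedness of the test functional) that the paper leaves implicit.
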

\begin{proof}
By Proposition~\ref{Tightness}, there exists a sequence $a_k\to 0$ such that the two parameter family of stochastic processes $Y_\cdot^{a_k}(\cdot)$ converges in law whose limit we denote by $Y_\cdot(\cdot)$. 

Let us fix $u\in [0,1]$. Since the convergence considered is in the weak topology, 
we are allowed to use another construction of $Y_t^a(u)$, namely part (2) in Theorem~\ref{T2}, in which the convergence is stronger. The limit is reflected Brownian motion started at $\gamma(u)$. This yields (1). 

Let us then  take $t_1=t_2=t$
in the computation for tightness in Proposition~\ref{Tightness}.
Then take $k\to \infty$ to obtain (2).
\end{proof}


For each $u\in[0,1]$ fixed, the stochastic processes $Y_t^{a_k}(u)$ converges in law to $Y_t(u)$. 
So the damped parallel translations $W_t^{a_k}(u)$, as stochastic processes on $[0, T]$, converge in law to $W_t(u)$ in the following sense: if $\phi$ is a $C^2$ differential 1-form such that $\phi|\partial M(\nu)=0$, 
then $\phi(W_t^{a_k}(u))$ converges in law to $\phi(W_t(u))$. This is due to the fact that $W^a$ is a functional of $Y^a$, $W$ is a functional of $Y$, so we can apply Corollary~\ref{T4bis}.

Unfortunately this argument does not allow us to prove the convergence of
 $W_t^{a_k}(u)$ converges  to $W_t(u)$, which would yield $\partial_uY_t(u)=W_t(u)(\dot\gamma(u))$. However the following theorem asserts this equality in a weak sense. 
\begin{theorem}
\Label{Weakderivative}
Let $M$ be compact.
For all $f\in C^2(M)$ satisfying $\langle \nabla f,\nu\rangle=0$ on the boundary, then
\begin{equation}
\Label{weakder}
\E\left[f(Y_t(u_2))-f(Y_t(u_1))-\int_{u_1}^{u_2}\langle df(Y_t(u)),W_t(u)(\dot\gamma(u))\rangle\,du\right]=0.
\end{equation} 
\end{theorem}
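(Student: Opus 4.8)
The plan is to obtain \eqref{weakder} by passing to the limit along the subsequence $a_k\to 0$ in the corresponding identity for the smooth flow $Y^{a_k}_t(\cdot)$, which holds exactly because the derivative formula \eqref{derhordiff} is available before reflection. First I would record the pre-limit identity: for each $a>0$, by \eqref{derhordiff} the curve $u\mapsto Y_t^a(u)$ is a.s. $C^1$ with $\partial_u Y_t^a(u)=W_t^{a,u}(\dot\gamma(u))$, so the fundamental theorem of calculus gives, for any $f\in C^2(M)$,
\begin{equation}
\Label{weakder-a}
f(Y_t^a(u_2))-f(Y_t^a(u_1))-\int_{u_1}^{u_2}\langle df(Y_t^a(u)),W_t^{a,u}(\dot\gamma(u))\rangle\,du=0
\end{equation}
pathwise, hence in expectation. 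Note that this step needs no boundary hypothesis on $f$; the condition $\langle\nabla f,\nu\rangle=0$ enters only in the limit, to guarantee regularity of the limiting objects and to make the differential-form convergence of Corollary~\ref{T4bis} applicable.

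Next I would take expectations in \eqref{weakder-a} and pass to the limit $k\to\infty$ term by term. For the first two terms, $f$ is bounded and continuous on the compact manifold $M$, and $Y_t^{a_k}(u_i)\to Y_t(u_i)$ in law (indeed, by part (2) of Theorem~\ref{T2} one may realise this as $\scrS_p$ convergence for each fixed $u_i$), so $\E[f(Y_t^{a_k}(u_i))]\to \E[f(Y_t(u_i))]$. For the integral term I would apply Fubini to write it as $\int_{u_1}^{u_2}\E\langle df(Y_t^{a_k}(u)),W_t^{a_k,u}(\dot\gamma(u))\rangle\,du$, and then for each fixed $u$ invoke the equality in law of $(Y^{a_k}(u),W^{a_k}(u))$ with $(Y^{a_k},W^{a_k})$ together with Theorem~\ref{T4} (or Corollary~\ref{T4bis}): since $\phi:=df$ is a $C^2$ one-form and $df(\nu)=\langle\nabla f,\nu\rangle=0$ on $\partial M$, we have $\E\langle df(Y_s^{a_k}),W_s^{a_k}\rangle\to \E\langle df(Y_s),W_s\rangle$ uniformly in $s\le t$. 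Defining $W_t(u)$ to be the damped parallel translation along the limiting RBM $Y_\cdot(u)$ (constructed in Section~\ref{section5}), this gives pointwise-in-$u$ convergence of the integrand's expectation to $\E\langle df(Y_t(u)),W_t(u)(\dot\gamma(u))\rangle$. A uniform $L^1$ bound, e.g. $\sup_{a,u,s\le t}\E|W_s^{a,u}|\le C$ from \eqref{BoundWap} together with boundedness of $df$ and $\dot\gamma$, justifies dominated convergence in the $u$-integral. Combining the three limits yields \eqref{weakder}.

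The main obstacle is the interchange of the $u$-integral with the limit in $a$ and, more precisely, the fact that we only have convergence in law rather than on a common probability space: the identity \eqref{weakder-a} is an almost-sure pathwise statement for each $a$, but its terms involve the joint law of $(Y^{a}_t(u_1),Y^a_t(u_2),\int_{u_1}^{u_2}\cdots du)$, and after taking expectations this collapses to a statement about one-dimensional marginals and the averaged quantity $\int_{u_1}^{u_2}\E[\cdots]\,du$, each of which does converge by the arguments above. One must therefore be careful to take expectations \emph{before} passing to the limit, so that only the convergences already established (weak convergence of the $Y^{a_k}(u_i)$, and the form-level convergence of $W^{a_k}(u)$ from Theorem~\ref{T4}) are invoked, and to check that the constant in \eqref{BoundWap} is uniform in both $a\in(0,1]$ and $u\in[0,1]$ — the latter again by the equality in law $(Y^a(u),W^a(u))\stackrel{d}{=}(Y^a,W^a)$. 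A secondary technical point is ensuring $f(Y_t(u))$ and $df(Y_t(u))$ are the correct limits under the weak topology on path space; this is handled by the continuity of $f$ and $df$ on compact $M$ and by realising the relevant marginals through the stronger $\scrS_p$ construction of Theorem~\ref{T2} whenever a single value of $u$ is fixed.
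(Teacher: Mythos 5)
Your proposal is correct and follows essentially the same route as the paper: start from the pathwise identity supplied by \eqref{derhordiff}, take expectations and apply Fubini, then pass to the limit term by term using Theorem~\ref{T2} for the endpoint terms and Corollary~\ref{T4bis} (via the equality in law of $(Y^{a_k}(u),W^{a_k}(u))$ with $(Y^{a_k},W^{a_k})$) together with dominated convergence for the integral term. The extra care you take in justifying dominated convergence via the uniform bound \eqref{BoundWap} is a welcome elaboration of a step the paper leaves implicit, but it is not a different argument.
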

\begin{proof}
By~\eqref{derhordiff},
$$
f(Y_t^{a_k}(u_2))-f(Y_t^{a_k}(u_1))-\int_{u_1}^{u_2}\langle df(Y_t^{a_k}(u)),W_t^{a_k}(u)\dot\gamma(u)\rangle\,du=0.
$$
Since all the terms are integrable we can take the expectation,
So 
$$
\E\left[f(Y_t^{a_k}(u_2))\right]-\E\left[f(Y_t^{a_k}(u_1))\right]-\int_{u_1}^{u_2}\E\left[\langle df(Y_t^{a_k}(u)),W_t^{a_k}(u)\dot\gamma(u)\rangle\right]\,du=0.
$$
Now by Theorem~\ref{T2} and Corollary \ref{T4bis} and dominated convergence theorem:
$$
\E\left[f(Y_t^{a_k}(u_2))\right]-\E\left[f(Y_t^{a_k}(u_1))\right]-\int_{u_1}^{u_2}\E\left[\langle df(Y_t^{a_k}(u)),W_t(u)\dot\gamma(u)\rangle\right]\,du=0.
$$
Finally we use Fubini-Tonelli Theorem to obtain~\eqref{weakder}.
\end{proof}
 
\section{Proof of Theorem~\ref{T4}}
\Label{section7}
\setcounter{equation}0

We first reduce the proof of Theorem~\ref{T4} to the class of $C^2$ differential 1-forms $\phi$ vanishing in a 
neighbourhood of the boundary, this is the content of Section \ref{Subsection9.1}.
  We then prove the convergence of the tangential part of the parallel transport in the topology of UCP, followed by the convergence of its normal part $f_t^a$. By the latter we mean that for all  smooth 
 $\phi : M\to \RR_+$ vanishing in a neighbourhood of $\partial M$, 
 $\phi(Y_t^a)f_a(t)\to \phi(Y_t)f(t)$ in the UCP topology. See Sections~\ref{TP} and~\ref{NP}. 

 We describe briefly the strategy and the main difficulties. Thanks to the convergence of parallel transports
 established in Proposition \ref{cvpt} we only need to prove that $w_t^a\to w_t$ where $w_t^a=(\parals_t^a)^{-1}W_t^a$
 and $w_t=(\parals_t)^{-1}W_t$, more precisely that, writing
\begin{equation}
 w_t^a=w_t^{a,T}+f_a(t)n_t^a
  \Label{E33}
 \end{equation}
with $n_t^a=(\parals_t^a)^{-1}\nu(Y_t^a)$ and $w_t^{a,T}$ orthogonal to $n_t^a$, 
$w_t^{a,T}\to w_t^T$ and  
  for any $C^2$ map $\phi: M\to \R$ vanishing in a neighbourhood of $\partial M$, $\phi(Y_t^a)f_a(t)n_t^a\to \phi(Y_t)f(t)n_t$.

Firstly, the integral equation for $(f_a, w^{a,T})$ has the following form
 \begin{equation}\Label{matrix-a}
  \left(\begin{array}{c}
         f_a(t)\\w_t^{a,T}
        \end{array}
\right)=\left(\begin{array}{c}
         f_a(0)\\w_0^{a,T}
        \end{array}
\right)+\int_0^tdM_s^a\left(\begin{array}{c}
         f_a(s)\\w_s^{a,T}
        \end{array}
\right)+\left(\begin{array}{c}
         f_a(0)e^{-\tilde C_a(t)}\\0
        \end{array}
\right),
 \end{equation}
where $(M_t^a)$ is a matrix valued process for the following form
 \begin{equation}\Label{matrix-a2}
  M_t^a=\left(\begin{array}{cc}
         0&\tilde u_t^a\\v_t^{a,N}&v_t^{a,T}
        \end{array}
\right)
\end{equation}
whose components are to be specified later. Also,
\begin{equation}\Label{matrix}
  \left(\begin{array}{c}
         f(t)\\w_t^{T}
        \end{array}
\right)=\left(\begin{array}{c}
         f(0)\\w_0^{T}
        \end{array}
\right)+\int_0^tdM_s\left(\begin{array}{c}
         f(s)\\w_s^{T}
        \end{array}
\right)+\left(\begin{array}{c}
         f(0)e^{-\tilde C(t)}\\0
        \end{array}
\right)
 \end{equation}
where $M_t$ is of the following form:
 \begin{equation}\Label{matrix2}
  M_t=\left(\begin{array}{cc}
         0&\tilde u_t\\v_t^{N}&v_t^{T}
        \end{array}
\right).
\end{equation}

If $(V_t)$ is a vector valued stochastic process, denote
 $$\|V\|_{\scrS_p([0,T])}=\E \left(\sup_{0\le s\le T} \E|V_s|^p\right).$$
We will see that the components of $M^a$ converge to the corresponding components of $M$ in $\SH_p([0,T])$ for all $p\ge 1$, $T>0$, with the exception $\tilde u^a$ which contains $e^{-\tilde C_a(t)}$ and $v_t^{T}$ which contains local time of the distance to boundary. 
The main difficulty is the convergence of $f_a(0)e^{-\tilde C_a(t)}$ to $f(0)e^{-\tilde C(t)}$. The convergence is only in $L^p(dt\times \P)$, see Corollary~\ref{C2}. Also the convergence of $L_t^a$ to $L_t$: it is in $\scrS_p([0,T])$ but not in $\SH_p([0,T])$ and
 will require several integrations by parts.
We note in the last term in equation~\eqref{matrix-a}, the tangential and the normal part decouples.
The matrix~\eqref{matrix-a2} is in the lower triangular form. It is therefore possible
 to split the proof into  the convergence of $w^{a,T}$ to $w^T$ and the convergence of $f_a(t)$ to $f(t)$. This procedure is essential for our proof to work.

 
\subsection{Localisation}
\Label{Subsection9.1}

\begin{lemma}
Let $S_1$ and $S_2$ be stopping 
times such that for $a$ sufficiently small, $Y_t\in E_0$ and $Y_t^a\in E_0$ on $\{\omega: S_1(\omega) \le t \le S_2(\omega)]$.
\begin{enumerate}
\item If  Theorem~\ref{T4} holds for the class of $C^2$ differential 1-forms $\phi$ with $\phi(\nu)$ vanishing in a neighbourhood of the boundary, then it holds for all $C^2$ 1-form $\phi$ such that $\phi(\nu)=0$ on $\partial M$.
\item It is sufficient to prove that for $t\in [S_1,S_2]$,
$\phi( W^a)$ converges to $\phi( W)$ in UCP.  
\item If $\phi( W^a)$ converges to $\phi( W)$ in the UCP topology and $M$ is compact, then
$$\lim_{a\to 0}\E \sup_{s\le t} \left|\phi( W^a)-\phi( W)\right|^p\to 0.$$
\end{enumerate}
\end{lemma}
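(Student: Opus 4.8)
\emph{For part (3)} the only real content is uniform integrability, so I would begin by recording the bound $\sup_{a\in(0,1]}\E\big[\sup_{s\le t}|W^a_s|^q\big]<\infty$ for every $q\ge1$ and $t>0$ (with the analogous bound for $W$). This is Gronwall applied to~\eqref{Wa}: writing $A^a=u(R)\nabla R$ with $u(r)=\tfrac{2}{a\sinh(2r/a)}$, the covariant derivative $\nabla_{W^a_t}A^a$ equals $u'(R^a_t)\langle\nabla R,W^a_t\rangle\,\nabla R+u(R^a_t)\nabla_{W^a_t}\nabla R$; the first term acts only on the normal component of $W^a_t$ and, since $u'\le0$, with the favourable sign, while the second is controlled by $\|\Hess R\|_\infty\,dL^a_t$, so $|W^a_t|\le\exp\big(Ct+C L^a_t\big)|W^a_0|$ and~\eqref{expLat} closes the bound (cf.\ Lemma~\ref{L7.6} and~\eqref{BoundWap}). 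As $M$ is compact and $\phi$ is $C^2$, $|\phi_x(w)|\le C|w|$, hence $\sup_{s\le t}|\phi(W^a_s)-\phi(W_s)|^p\le C^p\sup_{s\le t}(|W^a_s|+|W_s|)^p$ is a family bounded in $L^{q/p}(\P)$ for any $q>p$, and therefore uniformly integrable; since UCP convergence is exactly the statement that $\sup_{s\le t}|\phi(W^a_s)-\phi(W_s)|\to0$ in probability, Vitali's theorem yields $\E\sup_{s\le t}|\phi(W^a_s)-\phi(W_s)|^p\to0$, which is stronger than the asserted $\sup_{s\le t}\E|\cdot|^p\to0$.

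\emph{For part (1),} fix $\varepsilon>0$, take a smooth cutoff $\beta_\varepsilon$ equal to $1$ on $\{R<\varepsilon\}$ and supported in $\{R<2\varepsilon\}\subset F_0$, set $g(y)=\phi_y(\nu_y)$ (a $C^2$ function on $F_0$ which vanishes on $\partial M$ by hypothesis, so $|g|\le C R$ near the boundary), and let $\nu^\flat=\langle\nu,\cdot\rangle$. Then $\tilde\phi_\varepsilon:=\phi-\beta_\varepsilon\,g\,\nu^\flat$, extended by $\phi$ off $F_0$, is $C^2$ and $\tilde\phi_\varepsilon(\nu)=g(1-\beta_\varepsilon)$ vanishes on the neighbourhood $\{R<\varepsilon\}$ of $\partial M$; hence the hypothesis of part~(1) applies to $\tilde\phi_\varepsilon$ and $\sup_{s\le t}\E|\tilde\phi_\varepsilon(W^a_s)-\tilde\phi_\varepsilon(W_s)|^p\to0$. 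For the remainder, $\beta_\varepsilon g\nu^\flat$ is supported in $\{R<2\varepsilon\}$, where $|(\beta_\varepsilon g\nu^\flat)_x(w)|\le 2C\varepsilon|w|$, so the uniform moment bound of part~(3) gives $\limsup_{a\to0}\sup_{s\le t}\E|(\beta_\varepsilon g\nu^\flat)(W^a_s)-(\beta_\varepsilon g\nu^\flat)(W_s)|^p\le C'\varepsilon^p$ with $C'$ independent of $\varepsilon$. Adding the two estimates and letting $\varepsilon\to0$ after $a\to0$ proves part~(1).

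\emph{For part (2),} the argument is pathwise localisation followed by patching of finitely many pieces. Outside the tube $F_{2/3}$ the function $R$ is bounded below by a positive constant, so $A^a$ and $\nabla A^a$ are uniformly bounded in $a$ and converge to $0$ uniformly; thus~\eqref{Wa} converges to $DW_t=-\tfrac12\Ric^\sharp(W_t)\,dt$, which is exactly the equation obeyed by $W$ while $Y$ is in $M^o$, and together with $Y^a\to Y$ (Theorem~\ref{T2}) and the convergence of parallel transports (Proposition~\ref{cvpt}), continuous dependence for linear covariant ODEs gives $W^a\to W$ in UCP on any interval on which $Y$ — hence $Y^a$, for small $a$ — stays in $M\setminus\bar F_{2/3}$. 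By uniform continuity of the path $s\mapsto R(Y_s)$ on $[0,T]$, for almost every $\omega$ the interval $[0,T]$ splits into finitely many such interior subintervals and finitely many subintervals on which $Y\in E_0$; applying the hypothesis of~(2) on each of the latter and patching — the number of pieces being finite and the endpoint data $W^a_{S_i}\to W_{S_i}$ serving as initial data on the next piece — yields UCP convergence of $\phi(W^a)$ to $\phi(W)$ on all of $[0,T]$, and part~(3) then upgrades this to the $\scrS_p$ statement of Theorem~\ref{T4}. It is worth noting that $\phi(W_\cdot)$ is continuous despite the jumps of $W$: the jump at $s\in\SR(\omega)$ is $-\langle W_{s-},\nu_{Y_s}\rangle\,\phi(\nu_{Y_s})$ and $Y_s\in\partial M$ there, where $\phi(\nu)=0$; this is precisely what makes UCP convergence to $\phi(W)$ possible at all.

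The main obstacle is the patching in part~(2): one must formulate the local statement on $[S_1,S_2]$ so that it is stable under convergence of the random initial data $W^a_{S_1}$, and must justify the finite decomposition of $[0,T]$, whose number of pieces is random but a.s.\ finite by uniform continuity. The moment bound behind parts~(1) and~(3) is the other delicate point, since $\nabla A^a$ is unbounded near $\partial M$; the resolution, as above, is that the singular part damps only the normal component and so never enters the Gronwall estimate for $|W^a|$ with an unfavourable sign.
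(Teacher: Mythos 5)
Your proposal is correct and follows essentially the same route as the paper's proof: part (1) by subtracting a cutoff multiple of the normal component of $\phi$ (so that the error term is uniformly small and controlled by the moment bounds of Lemma~\ref{Wbdd}), part (2) by splitting time into interior and tubular pieces and patching, and part (3) by combining UCP convergence with the uniform $\scrS_q$ bounds on $W^a$ and $W$ (the paper's appeal to Corollary~\ref{C4} and Lemma~\ref{Wbdd} is exactly your uniform-integrability/Vitali step). Your write-up is in fact more explicit than the paper's, particularly on the sign of the singular drift in the Gronwall bound and on the finiteness of the time decomposition, but no new idea is introduced.
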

Without loss of generality 
we will assume that $S_1=0$ and let $S_2=S$.

\begin{proof}
Let $\phi$ be a $C^2$ differential 1-form such that $\phi(\nu)=0$ on $\partial M$. Then there exists a family of 
$C^2$ differential $1$-forms $\phi^\e$  such that $\langle \phi^\e,\nu\rangle=0$ in a neighbourhood of $\partial M$ and $\sup_{x\in M} \|\phi^\e(x)-\phi(x)\|<\e$: choose for instance $\phi^\e(u)=\phi(u)-\langle u,\nu\rangle f^\e(\pi(u)) \phi(\nu)$ where $f^\e$ is a smooth function on $M$ satisfying 
\begin{itemize}
\item $f^\e=0$ on $\{R\ge \alpha\}$,
\item $f^\e=1$ on $\{R\le \alpha/2\}$,
\item $0\le f\le 1$ on $M$,
\end{itemize}
and $\alpha\in (0,\delta_0)$ is chosen in such a way that $| \phi(\nu)| <\e$ on $\{R< \alpha\}$.

By the assumption, 
  $\phi^\e(W^a)\to \phi^\e(W)$ in $\scrS_p([0,T])$. On the other hand 
 \begin{align*}
 |\phi(W_t^a)-\phi(W_t)|&\le  |\phi(W_t^a)-\phi^\e(W_t^a)|+ |\phi^\e(W_t^a)-\phi^\e(W_t)|+ |\phi^\e(W_t)-\phi(W_t)|\\
 &\le \e |W_t^a|+|\phi^\e(W_t^a)-\phi^\e(W_t)|+\e|W_t|.
 \end{align*} 
 If $a$ is sufficiently small then $\|\phi^\e(W^a)-\phi^\e(W)\|_{\scrS_p([0,T])}<\e$ by Theorem \ref{T4}. 
Using lemma~\ref{Wbdd} we get
 $$
 \|\phi(W^a)-\phi(W)\|_{\scrS_p([0,T])}<\e(1+2C)
 $$
 for $a$ sufficiently small. Taking $\e\to 0$ we obtain  $\|\phi(W^a)-\phi(W)\|_{\scrS_p([0,T])}\to 0$.
 
 (2) We note that $Y_t^a$ converges to $Y_t$ in UCP topology and inside $M^0$ the coefficients for $W_t^a$ converge smoothly and uniformly to the coefficients of the equation for $W_t$.
If for any $t\in [S_1,S_2]$ where $S_1,S_2$ are stopping  times such that for all $t\in [S_1, S_2]$ and $a$ sufficiently small, $Y_t\in E_0$ and $Y_t^a\in E_0$, then $\phi( W^a)$ converges to $\phi( W)$ in UCP.

(3) 
 By Lemma \ref{L1} below, $\int_0^t\f{2}{a\sinh\left(\f{2R(Y_s^a)}{a}\right)}\,ds$  converges to $L_t$ in $\scrS_p$ for all $p\in[1,\infty)$.
 Let $t>0$.  Since $M$ is compact, by corollary~\ref{C4} and lemma~\ref{Wbdd},  if $\phi( W^a)$ converges to $\phi( W)$ in UCP topology,
  $$\lim_{a\to 0} \E \left( \sup_{s\le t} \left| \phi( W_s^a)-\phi( W_s)\right|^p\right)= 0.$$ 
\end{proof}

 \subsection{Preliminary Computations}
\Label{Subsection4.2}

Let  $a>0$.
The damped parallel translation along $Y_t^a$ satisfies the following equations:
 \begin{align*}
 {D}  W_t^a=&\nabla_{W_t^a} A^a \; dt-\f12\Ric^\sharp_{Y_t^a}(W_t^a)\; dt.
 \end{align*}
 In the tubular neighbourhood $F_0$ on which the approximating SDEs were constructed, take $y\in M^0$ and $w\in T_yM$.
 Then
 \begin{align*}
 \nabla_w A^a=&\nabla_w\nabla \ln\tanh \left(\f{R}{a}\right)=\nabla_w\left(\f{2\nabla R}{a\sinh\left(\f{2R}{a}\right)}\right)\\
 =&-\f{4}{a^2}  \f{\cosh}{\sinh^2} \left(\f{2R}{a}\right) 
  \langle w,\nu_y\rangle \nu_y
 -\f{2}{a\sinh\left(\f{2R}{a}\right)}\mcS(w),
 \end{align*}
 Let us define  $R_t^a=R(Y_t^a)$, $R_t=R(Y_t)$,
 \begin{equation}\label{ca}
 c_a(t)=\f{4}{a^2}\frac {\cosh} {\sinh^2}    \left(\f{2R_t^a}{a} \right),
 \end{equation}
and \begin{equation}\label{E-30-1}
 f_a(t)=\langle w_t^a,n_t^a\rangle=\langle W_t^a,\nu(Y_t^a)\rangle.
 \end{equation}
 We also denote $W_t^{a,T}$ the tangential part of $W_t^a$: 
 \begin{equation}
  \Label{E30}
 W_t^{a,T}=W_t^a-f_a(t)\nu(Y_t^a).
 \end{equation} 
 \begin{definition}
 Let
 \begin{equation}
 L_t^a=\int_0^t\f{2}{a\sinh\left(\f{2R(Y_s^a)}{a}\right)}\,ds.
\end{equation}
 \end{definition}
 Below $\parals_t^{-1}$ is shorthand for $\parals_t^{-1}(Y_\cdot^a)$. For $x\in M$, denote
 $\|\nabla \nu(x)\|^2=\sum_k \|\nabla _{\sigma_k} \nu\|_x^2$. The latter is the Hilbert-Schmidt norm of the linear operator
 $\nabla \nu :T_xM\to T_xM$. In the following formulas we should consider the integrals are in It\^o form. 
 Hence the equation for $W_t^{a,T}$  should be interpreted as for $\parals_t^{-1}W_t^{a,T}$.
\begin{lemma}
\label{covariant-formula}
In the tubular neighbourhood the following  formulae hold.
\begin{align*}
DW_t^a=-\f12\Ric^\sharp(W_t^a)\,dt -c_a(t)  f_a(t) \nu(Y_t^a) \; dt
- \mcS(W_t^a)\;  dL_t^a,\\
d \parals_t^{-1}\nu(Y_t^a) dt= \parals_t^{-1}\sum_k\nabla_{\sigma_k} \nu(Y_t^a) dB_t^k +
\frac{1}{2}  \parals_t^{-1}\trace \nabla^2 \nu (Y_t^a) dt,
\end{align*}
Finally the stochastic differential of the tangential part of $W_t^a$ has the following tangential and normal decomposition
\begin{align*}
DW_t^{a,T} =&-\f12 \left(\Ric^\sharp(W_t^{a,T})\right)^T\, dt-\f12 f_a(t)\left(\Ric^\sharp(\nu(Y_t^a))\right)^T\, dt- \mcS(W_t^a)\;  dL_t^a
\\&-f_a(t) \nabla_{\sigma_k} \nu(Y_t^a) dB_t^k -
\frac{1}{2}  f_a(t)\trace \nabla^2 \nu (Y_t^a) dt\\&
-\frac 12 \langle W_t^{a,T},\nabla_{\sigma_k} \nu (Y_t^a) \rangle \nabla_{\sigma_k} \nu (Y_t^a)  \;dt
  \\
&-\sum_k\<W_t^{a,T}, \nabla_{\sigma_k} \nu(Y_t^a) \> \nu(Y_t^a)dB_t^k -
 \<W_t^{a,T},  \trace \nabla^2 \nu (Y_t^a)  \>\nu(Y_t^a)dt\\
&   +f_a(t) \|\nabla \nu(Y_t^a)\|^2\nu(Y_t^a) dt,
\end{align*}
\begin{align*}
df_a(t)=-c_a(t) f_a(t)dt -\frac 12 \Ric(W_t^a, \nu(Y_t^a))dt
 +\sum_k\<W_t^{a,T},  \nabla_{\sigma_k} \nu (Y_t^a)\>\;dB_t^k\\
   + \f12\<W_t^{a,T}, \trace \nabla^2 \nu(Y_t^a)\>dt-\f12 f_a(t)\|\nabla \nu(Y_t^a)\|^2.
\end{align*}
\end{lemma}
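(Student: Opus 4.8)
All four identities are read in It\^o form (after converting the defining Stratonovich relations) and are obtained by substitution together with the It\^o calculus for vector fields evaluated along the diffusion $(Y_t^a)$, which lives in $M^o$; where convenient one works with $\parals_{t}^{-1}$ of the processes so as to sit in the fixed space $T_{Y_0^a}M$. The first identity is immediate: by the definition of the damped parallel translation $DW_t^a=\nabla_{W_t^a}A^a\,dt-\f12\Ric^\sharp(W_t^a)\,dt$, and the expression for $\nabla_wA^a$ computed just above reads $\nabla_wA^a=-c_a(t)\langle w,\nu\rangle\nu-\frac{2}{a\sinh(2R/a)}\mcS(w)$ with $c_a$ as in \eqref{ca}; inserting $w=W_t^a$, using $\langle W_t^a,\nu(Y_t^a)\rangle=f_a(t)$ and recalling that $dL_t^a=\frac{2}{a\sinh(2R_t^a/a)}\,dt$ gives the claim.

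For the second identity I would apply the It\^o formula for the vector field $\nu$ (extended over the level sets of $R$, so $\nu=\nabla R$ on $F_{2/3}$) along $(Y_t^a)$. Since $(Y_t^a)$ solves the Brownian system \eqref{Yta} with the extra drift $A^a$ and $\f12\sum_k\nabla_{\sigma_k}\sigma_k+\sigma_0=0$, the Stratonovich second-order term collapses to $\f12\trace\nabla^2\nu$, and the only remaining drift contribution is $\nabla_{A^a}\nu\,dt$; but $A^a$ is a scalar multiple of $\nabla R$ and the integral curves of $\nabla R$ are unit-speed geodesics, so $\nabla_{\nabla R}\nabla R=0$ and this term vanishes. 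This produces $D(\nu(Y_t^a))=\sum_k\nabla_{\sigma_k}\nu\,dB_t^k+\f12\trace\nabla^2\nu\,dt$, i.e. the stated formula after applying $\parals_{t}^{-1}$.

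The third identity is the It\^o product rule applied to $f_a(t)=\langle W_t^a,\nu(Y_t^a)\rangle$. The key point is that by the first identity $\parals_{t}^{-1}W_t^a$ has no martingale part, so there is no covariation between $W^a$ and $\nu(Y^a)$ and $df_a(t)=\langle DW_t^a,\nu(Y_t^a)\rangle+\langle W_t^a,D\nu(Y_t^a)\rangle$. One then simplifies using $\langle\mcS(W_t^a),\nu\rangle=0$, the fact that $|\nu|\equiv1$ (hence $\langle\nu,\nabla_X\nu\rangle=0$ for every $X$ and $\langle\nu,\trace\nabla^2\nu\rangle=-\|\nabla\nu\|^2$), and the decomposition $W_t^a=W_t^{a,T}+f_a(t)\nu(Y_t^a)$ to rewrite everything in terms of $W_t^{a,T}$ and $f_a$.

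For the last identity, write $W_t^{a,T}=W_t^a-f_a(t)\nu(Y_t^a)$ and apply the It\^o product rule for a scalar process times a vector field along $(Y_t^a)$; this time the covariation term $d[f_a,\nu(Y^a)]_t=\sum_k\langle W_t^{a,T},\nabla_{\sigma_k}\nu\rangle\nabla_{\sigma_k}\nu\,dt$ is present, because both $f_a$ and $\nu(Y^a)$ carry nontrivial $dB$-parts. Substituting the first three identities and reorganising by means of $\Ric^\sharp(W_t^a)=(\Ric^\sharp(W_t^a))^T+\Ric(W_t^a,\nu)\nu$ and $W_t^a=W_t^{a,T}+f_a\nu$ yields the stated decomposition; note in particular that the two $c_a(t)f_a(t)\nu$ contributions, one from $DW_t^a$ and one from $-df_a(t)\,\nu$, cancel, which is precisely why $c_a$ no longer appears in the equation for $W_t^{a,T}$. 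The computation has no conceptual obstacle: the only delicate point is the careful bookkeeping of the Stratonovich-correction and covariation terms together with the repeated tangential/normal splittings, and the observation that $W^a$ contributes no bracket with $\nu(Y^a)$ whereas $f_a$ does.
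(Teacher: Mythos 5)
Your proposal is correct and follows essentially the same route as the paper: the first identity from the definition of $W^a$ together with the Hessian computation of $\ln\tanh(R/a)$ and $dL_t^a=\tfrac{2}{a\sinh(2R_t^a/a)}dt$; the second from It\^o's formula for $\nu=\nabla R$ along $Y^a$ with $\nabla_\nu\nu=0$ killing the $A^a$-drift; the third from the product rule with no bracket term (since $\parals_t^{-1}W_t^a$ has finite variation) plus $\langle\nu,\trace\nabla^2\nu\rangle=-\|\nabla\nu\|^2$; and the fourth from the product rule on $f_a(t)\nu(Y_t^a)$ including the covariation $\sum_k\langle W_t^a,\nabla_{\sigma_k}\nu\rangle\nabla_{\sigma_k}\nu\,dt$, with the $c_a(t)f_a(t)\nu$ contributions cancelling exactly as in the paper. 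No gaps.
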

 \begin{proof}
 The first formula is clear, the second is straight forward after applying It\^o's formula to 
 the equation for $(Y_t^a)$:
 \begin{equation*}
  (\parals_t^a)^{-1} D\nu(Y_t^a)=(\parals_t^a)^{-1}\left(\langle \nabla \nu, \circ dY_t^a\rangle +\f12 \trace\nabla^2 \nu(Y_t^a)\,dt\right).
 \end{equation*}
Since $\< \nabla \nu, A^a\>=0$,  this yields
 \begin{align*}
  (\parals_t^a)^{-1} D\nu(Y_t^a)
  =&(\parals_t^a)^{-1}\left(\sum_k\left \langle \nabla \nu, \sigma_k(Y_t^a)\right\rangle 
	\,dB_t^k+\f12 \trace \nabla^2 \nu(Y_t^a)\,dt\right)
  \\&+(\parals_t^a)^{-1}\nabla _{A^a(Y_t^a)} \nu dt.
 \end{align*}
 Note that $\nu=\nabla R$. If $\gamma_t$ is the geodesic from $x$ to $\pi(x)$ then $\dot \gamma(t)=\nabla R(\gamma(t))$
 and hence $\nabla_\nu \nu=0$ and the second formula follows. 
 We note also that 
 \begin{align*}
 \<W_t^a, \circ D(\nu(Y_t^a))\>&= \sum_k\<W_t^a, \nabla_{\sigma_k} \nu(Y_t^a) \>dB_t^k 
 + \<W_t^a,\frac{1}{2} \trace \nabla^2 \nu (Y_t^a) \>dt\\
 &=\sum_k \<W_t^a, \nabla_{\sigma_k} \nu(Y_t^a) \>dB_t^k 
 -\frac{1}{2} f_t\|\nabla \nu(Y_t^a)\|^2\\
 & +\frac{1}{2}  \<W_t^{a,T}, \trace \nabla^2 \nu (Y_t^a) \>dt.
 \end{align*}
 Note that the left hand side is in Stratonovich form and the right hand side in It\^o form.
 We work on the third  equation:
 \begin{align*}
 df_t=&\<D W_t^a, \nu(Y_t^a)\>+\<W_t^a,  D(\nu(Y_t^a))\> \\
 =&-c_a(t) f_a(t)dt -\frac 12 \Ric(W_t^a, \nu(Y_t^a))dt
 +\sum_k\<W_t^a,  \nabla_{\sigma_k} \nu(Y_t^a)\>\;dB_t^k\\
  & + \f12\<W_t^a, \trace \nabla^2 \nu(Y_t^a)\>dt.
 \end{align*}
 All stochastic integrals in the above formula are in It\^o form.
  The required identity follows from the observation below:
   $$\< \sum_k \nabla ^2 \nu (\sigma_k, \sigma_k), \nu\>=-\sum_k \<\nabla_{\sigma_k}\nu, \nabla_{\sigma_k}\nu\>=-\|\nabla\nu\|^2.$$

Next we compute the tangential part of the damped parallel translation.
\begin{align*}
DW_t^{a,T} &= DW_t^a-D(f_t\nu(Y_t^a))\\
=&-\f12\Ric^\sharp(W_t^a)\,dt -c_a(t)  f_a(t) \nu(Y_t^a) \; dt
- \mcS(W_t^a)\;  dL_t^a -D(f_t\nu(Y_t^a)).
\end{align*}
 For the normal part of the damped parallel transport, we use product rule
\begin{align*}
D(f_t\nu(Y_t^a))=& \nu(Y_t^a) df_t +f_t D(\nu(Y_t^a))
+  df_tD(\nu(Y_t^a))\\
=&\nu(Y_t^a) \left(-c_a(t) f_a(t)dt -\frac 12 \Ric(W_t^a, \nu(Y_t^a))dt\right)\\
 &+\nu(Y_t^a)\left( \sum_k\<W_t^a,  \nabla_{\sigma_k} \nu(Y_t^a)\>\;dB_t^k
   + \f12\<W_t^a, \trace \nabla^2 \nu(Y_t^a)\>dt.
\right)\\
&+f_t \left( \sum_k\nabla_{\sigma_k} \nu(Y_t^a) dB_t^k +
\frac{1}{2}  \parals_t^{-1}\trace \nabla^2 \nu (Y_t^a) dt\right)
\\
&+\sum_k\langle W_t^a,\nabla_{\sigma_k(Y_t^a)} \nu  \rangle \nabla_{\sigma_k(Y_t^a)} \nu  \;dt.\\
\end{align*}
 Since $\<\nabla _\cdot  \nu, \nu\>$ vanishes,
 $\<\nabla_{W_t^a}  A^a, \nu(Y_t^a)\>dt =  \mcS(W_t^a)dL_t^a.$
  Finally, we bring the above formula back to the equation for $W_t^{a,T}$ and observe that
 the cancellation of the term involving $f_a(t)$. 
\begin{align*}
DW_t^{a,T}
=&-\f12\Ric^\sharp(W_t^a)\,dt - \mcS(W_t^a)\;  dL_t^a
+\frac 12 \Ric(W_t^a, \nu(Y_t^a)) \nu(Y_t^a)dt\\
&-\sum_k \<W_t^a, \nabla_{\sigma_k} \nu(Y_t^a) \> \nu (Y_t^a)dB_t^k 
 +\frac{1}{2} f_t\|\nabla \nu\|^2\nu (Y_t^a) \\
 &-\frac{1}{2}  \<W_t^{a,T}, \trace \nabla^2 \nu (Y_t^a) \>\nu (Y_t^a)dt
-f_t \sum_k\nabla_{\sigma_k} \nu(Y_t^a) dB_t^k\\
& -\frac{1}{2} f_t\trace \nabla^2 \nu (Y_t^a) dt    
-\sum_k \langle W_t^a,\nabla_{\sigma_k(Y_t^a)} \nu   \rangle \nabla_{\sigma_k(Y_t^a)} \nu  \;dt.
\end{align*}
 Following this up and observing  that $$-\f12\Ric^\sharp(W_t^a)+ \frac 12 \Ric(W_t^a, \nu(Y_t^a)) \nu(Y_t^a)
=-\f12(\Ric^\sharp(W_t^a))^T,$$ we see
 \begin{align*}
DW_t^{a,T} =&-\f12(\Ric^\sharp(W_t^a))^T\,dt
- \mcS(W_t^a)\;  dL_t^a 
-\sum_k \<W_t^{a,T}, \nabla_{\sigma_k} \nu(Y_t^a) \> dB_t^k \\
&
 +\frac{1}{2} f_t\|\nabla \nu\|^2 -\frac{1}{2}  \<W_t^{a,T}, \trace \nabla^2 \nu (Y_t^a) \>dt
-f_t \sum_k\nabla_{\sigma_k} \nu(Y_t^a) dB_t^k
\\& -
f_t\trace \nabla^2 \nu (Y_t^a) dt    
-\sum_k \langle W_t^a,\nabla_{\sigma_k(Y_t^a)} \nu   \rangle \nabla_{\sigma_k(Y_t^a)} \nu  \;dt.
\end{align*}
This completes the proof.

 \end{proof}
 
 \begin{lemma}
 Let   $n_t^a=(\parals_t^a)^{-1}\nu(Y_t^a)$. Then
  $\lim_{a\to 0} n^a=n$, in the topology of semi-martingales.
\end{lemma}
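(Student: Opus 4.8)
The plan is to reduce the claim to the stability theorem for stochastic differential equations already invoked in the proof of Proposition~\ref{cvpt}. The starting point is the equation for $n_t^a=(\parals_t^a)^{-1}\nu(Y_t^a)$ recorded in Lemma~\ref{covariant-formula}, namely
$$
dn_t^a=(\parals_t^a)^{-1}\sum_k\nabla_{\sigma_k}\nu(Y_t^a)\,dB_t^k+\tfrac12(\parals_t^a)^{-1}\trace\nabla^2\nu(Y_t^a)\,dt,
$$
valid while $Y_t^a\in F_0$. I would first write down the companion equation for $n_t=(\parals_t)^{-1}\nu(Y_t)$ along the reflected Brownian motion by the same It\^o computation: on $\{Y_t\in F_0\}$ it reads
$$
dn_t=(\parals_t)^{-1}\sum_k\nabla_{\sigma_k}\nu(Y_t)\,dB_t^k+\tfrac12(\parals_t)^{-1}\trace\nabla^2\nu(Y_t)\,dt,
$$
the a priori present local-time term $(\parals_t)^{-1}\nabla_{A(Y_t)}\nu\,dL_t=(\parals_t)^{-1}\nabla_{\nu(Y_t)}\nu\,dL_t$ vanishing because $\nu=\nabla R$ is a unit gradient field, so $\nabla_\nu\nu=0$. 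Thus $n^a$ and $n$ solve linear SDEs driven by the \emph{same} Brownian motion $(B_t)$ and by $t$, with all the $a$-dependence confined to the coefficient processes.

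Next I would localise: since convergence in the semimartingale topology is local and, by Theorem~\ref{T2}, $\liminf_{a\to 0}\tau^a\ge\tau$ for the exit times from any relatively compact piece of $F_0$, it suffices to work on a stochastic interval on which $Y^a$ and $Y$ stay inside a fixed relatively compact subset of $F_0$. There the tensor fields $\nabla_{\sigma_k}\nu$ and $\trace\nabla^2\nu$ are bounded, and so are the isometries $\parals^a,\parals$ and their inverses. Hence the problem reduces to showing that the drift and diffusion coefficients of $n^a$ converge in UCP to those of $n$ with a uniform bound. Writing $\nabla_{\sigma_k}\nu(Y_t^a)=\s(Y_t^a)\bigl(\s^*(Y_t^a)\nabla_{\sigma_k}\nu(Y_t^a)\bigr)$, and similarly for $\trace\nabla^2\nu$, each coefficient factors as the product of $(\parals_t^a)^{-1}\s(Y_t^a)=\bigl(\s^*(Y_t^a)\parals_t^a\bigr)^*$ and an $\R^m$-valued process; by Proposition~\ref{cvpt} the first factor converges in UCP to $(\parals_t)^{-1}\s(Y_t)$, and by Theorem~\ref{T2} the second converges in UCP to its $a=0$ analogue (it is a continuous bounded function of $Y_t^a$). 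A product of uniformly bounded UCP-convergent processes converges in UCP, which gives the required convergence of coefficients.

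Finally I would invoke the stability theorem of \cite{Emery:79} (Theorem~2; in the manifold formulation, \cite{Arnaudon-Thalmaier:98}), exactly as in Proposition~\ref{cvpt}: because the integrators $B_t^k$ and $t$ are fixed and the integrands converge in UCP and are uniformly bounded, $n^a$ converges to $n$ in the semimartingale topology; equivalently, by the Burkholder--Davis--Gundy inequality the martingale parts converge in $\scrS_2$ and the finite-variation parts in total variation. For $M$ compact one may take the localising time to be $T$ itself, and the convergence then holds in $\SH_p([0,T])$ for every $p\ge 1$. I do not expect a genuine obstacle here: the substantive work is already contained in Theorem~\ref{T2} and Proposition~\ref{cvpt}, and the only points needing a little care are the localisation argument and the verification that $\nabla_\nu\nu=0$ removes the boundary local time from the equation governing $n$.
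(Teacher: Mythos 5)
Your proposal is correct and follows essentially the same route as the paper: both write the It\^o equations for $n_t^a$ and $n_t$ (with the local-time term killed by $\nabla_\nu\nu=0$), observe that the coefficients converge in UCP because $Y^a\to Y$ and $\parals^a\to\parals$, and conclude via the stability theorem of Emery as in Proposition~\ref{cvpt}. Your version merely spells out the localisation and the factorisation of the coefficients through $\s\s^\ast$, which the paper leaves implicit.
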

\begin{proof}
By the definition, (\ref{E28}), the stochastic differential $D\nu(Y_t^a)$ is essentially $n_t^a$:
   \begin{equation}
  \Label{E41}
dn_t^a
  =(\parals_t^a)^{-1}\left(\langle \nabla \nu, \sigma_k(Y_t^a)\rangle \,dB_t^k+\f12 \trace \nabla^2 \nu(Y_t^a)\,dt\right).
 \end{equation}
 By the same computation,
  \begin{equation}
	\Label{E41.1}
dn_t
  =(\parals_t)^{-1}\left(\langle \nabla \nu, \sigma_k(Y_t)\rangle \,dB_t^k+\f12 \trace \nabla^2 \nu(Y_t)\,dt\right). \end{equation}
We recall that $Y^a\to Y$ and $ \paral^a(Y^a)\to \paral(Y)$. Let $$\Delta_{\partial M} \nu: =\trace \nabla^2 \nu,$$ where the trace
is taken in the vector space orthogonal to $\nu$.
It follows that
 $$ \nabla \nu(Y^a)\to\nabla \nu(Y), \quad
 \s(Y^a)\to\s(Y), \quad \Delta_{\partial M} \nu(Y^a)\to \Delta_{\partial M} \nu(Y),$$
all in the topology of UCP.  This implies that $n^a$ converges to $n$ in the topology of semi-martingales. 

\end{proof}

Let us define
 $$\underline{\Ric}(x)= \inf_{v\in T_{x}M, |v|=1} \left\{\Ric(v, v)\right\},\  x\in M\quad\hbox{and}\quad\underline{\Ric}=\inf_{x\in M}\underline{\Ric}(x).$$ We also define
$$ \underline {{\mcS}}(x)=\inf_{v\in T_{x}M, |v|=1, \<v, \nabla R\>=0} \left\{  {\mcS}(v,v)\right\},\  x\in M\quad\hbox{and}\quad\underline{{\mcS}}=\inf_{x\in M}\underline{{\mcS}}(x).$$

 \begin{lemma}\Label{Wbdd}
 For any $t>0$ and $a$, 
\begin{equation}\Label{BoundWa2}
|W_t^a|^2\le  |W_0^a|^2\; e^{-\int_0^t \underline{\Ric}(Y_s^a) \,ds-2\int_0^t\underline {{\mcS}}(Y_s^a) dL_s^a}.
\end{equation}
 Suppose that $\underline\Ric$ and $\underline{\mcS}$ are bounded. Then $\sup_{a} \|W^a\|_{\scrS_p([0,T])}$ is finite.
Furthermore 
\begin{equation}\Label{BoundWap}
\E\sup_{t\le T} | W_t^a|^p
\le |W_0^a|^pe^{-p\underline{\Ric}T}C(T,-p\underline{\mcS})
\end{equation}
where $C(T,\lambda)$ is defined in~\eqref{expLat}
 \end{lemma}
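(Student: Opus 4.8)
The plan is to derive a differential inequality for $|W_t^a|^2$ from the covariant SDE \eqref{Wta} and then integrate it by a Gr\"onwall-type argument. First I would compute, using the compatibility of the Levi-Civita connection with the metric, that along $(Y_t^a)$ one has
\begin{equation*}
d|W_t^a|^2 = 2\langle DW_t^a, W_t^a\rangle
= -\langle \Ric^\sharp(W_t^a), W_t^a\rangle\,dt + 2\langle \nabla_{W_t^a}A^a, W_t^a\rangle\,dt .
\end{equation*}
There is no It\^o bracket term here because $W_t^a$ solves a \emph{covariant ODE} along the paths of $(Y_t^a)$, i.e.\ it has finite variation once we parallel-transport back to $T_{Y_0^a}M$; this is the crucial structural point and it is why the estimate comes out pathwise. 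The Ricci term is bounded above by $-\underline{\Ric}(Y_t^a)|W_t^a|^2$ by the very definition of $\underline{\Ric}(x)$.

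Next I would treat the drift term $\langle \nabla_{W_t^a}A^a, W_t^a\rangle$. Using the explicit formula for $\nabla_w A^a$ from Lemma~\ref{covariant-formula} (valid in the tubular neighbourhood $F_0$, and $A^a$ vanishing outside it), namely
\begin{equation*}
\nabla_w A^a = -c_a(t)\langle w,\nu\rangle\,\nu - \frac{2}{a\sinh(2R/a)}\,\mcS(w),
\end{equation*}
one gets $\langle \nabla_{W_t^a}A^a, W_t^a\rangle = -c_a(t) f_a(t)^2 - \mcS(W_t^a, W_t^a)\,\frac{dL_t^a}{dt}$, where $L_t^a$ is as in the Definition preceding the Lemma. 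Since $c_a(t)\ge 0$ the first term only helps, and one discards it. For the second, note $\mcS(W_t^a)$ depends only on the tangential part $W_t^{a,T}$ (the shape operator annihilates the normal direction in the sense $\mcS(\nu)$ is tangential and $\langle \mcS(W_t^a),\nu\rangle=0$), so $\mcS(W_t^a, W_t^a)=\mcS(W_t^{a,T},W_t^{a,T}) \ge \underline{\mcS}(Y_t^a)\,|W_t^{a,T}|^2$, and crucially $|W_t^{a,T}|^2 \le |W_t^a|^2$. Hence
\begin{equation*}
d|W_t^a|^2 \le -\underline{\Ric}(Y_t^a)\,|W_t^a|^2\,dt - 2\,\underline{\mcS}(Y_t^a)\,|W_t^a|^2\,dL_t^a .
\end{equation*}
Integrating this linear differential inequality (Gr\"onwall, pathwise, since everything is of finite variation in $t$) yields \eqref{BoundWa2}.

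For the remaining assertions I would proceed as follows. If $\underline{\Ric}$ and $\underline{\mcS}$ are bounded below, say $\underline{\Ric}\ge -K_1$ and $\underline{\mcS}\ge -K_2$, then \eqref{BoundWa2} gives $\sup_{t\le T}|W_t^a|^2 \le |W_0^a|^2 e^{K_1 T + 2K_2 L_T^a}$, hence
\begin{equation*}
\E\sup_{t\le T}|W_t^a|^p \le |W_0^a|^p e^{p\underline{\Ric}^- T/??}\,\E\!\left[e^{p\underline{\mcS}^- L_T^a}\right],
\end{equation*}
and the exponential moment of $L_T^a$ is finite, uniformly in $a\in[0,1]$, by \eqref{expLat} in Corollary~\ref{convLat}; this gives \eqref{BoundWap} with the constant $C(T,-p\underline{\mcS})$ from \eqref{expLat}, and in particular $\sup_a\|W^a\|_{\scrS_p([0,T])}<\infty$. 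The only mild subtlety — and the place to be careful — is the localisation: the formula for $\nabla_w A^a$ from Lemma~\ref{covariant-formula} holds in $F_0$, while outside $F_0$ one has $A^a\equiv 0$ and $\nabla A^a\equiv 0$, so the drift term simply disappears there and the Ricci estimate alone suffices; one should also check that $\underline{\mcS}$ (defined via directions orthogonal to $\nabla R$) is the right quantity to use precisely because $\mcS(W_t^a)$ lives in, and is fed by, the tangential subspace. Assembling these pieces, with the exponential-moment input quoted from \eqref{expLat}, completes the proof.
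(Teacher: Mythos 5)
Your argument is essentially the paper's own proof: both differentiate $|W_t^a|^2$ along the covariant ODE, discard the non-positive normal damping term $-c_a(t)f_a(t)^2$, bound the Ricci and shape-operator terms from below by $\underline{\Ric}$ and $\underline{\mcS}$, integrate (Gr\"onwall, pathwise) to obtain \eqref{BoundWa2}, and then invoke the uniform exponential moment bound \eqref{expLat} for $L_T^a$ to deduce \eqref{BoundWap}. The only difference is cosmetic: you spell out intermediate steps (the explicit formula for $\nabla_w A^a$, the localisation outside $F_0$) that the paper leaves implicit.
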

 \begin{proof}
 
 We begin with $W^a$. Firstly, 
 \begin{align*}
| W_t^a|^2=&|W_0^a|^2-\int_0^t\langle \Ric^\sharp (W_s^a),W_s^a\rangle\,ds-2\int_0^t\langle \mcS(W_s^a),W_s^a\rangle\,dL_s^a\\&-\int_0^tc_s^a\langle W_s^a,\nu_{Y_s^a}\rangle^2 \,ds
 \end{align*}
where $c_s^a$ is the scalar normal part of $W_t^a$, see (\ref{ca}). It is easy to see that $c_s^a>0$. So 
 \begin{align*}
|W_t^a|^2\le& |W_0^a|^2-\int_0^t \Ric (W_s^a,W_s^a)\,ds-2\int_0^t\langle \mcS(W_s^a),W_s^a\rangle\,dL_s^a\\
 \le&|W_0^a|^2 -\int_0^t|W_s^a|^2 \underline{\Ric}(Y_s^a) \,ds-2\int_0^t|W_s^a|^2\underline{\mcS}(Y_s^a) dL_s^a.
 \end{align*}
 This implies~\eqref{BoundWa2} and~\eqref{BoundWap} immediately follows.

 \end{proof}
\begin{lemma}\label{L7.6}
We also have  
\begin{equation}\Label{BoundWp}
\|W\|_{\scrS_p([0,T])}^p\le |W_0|^pe^{-pT\underline{\Ric}}C(T,-p\underline{\mcS})
\end{equation}
\end{lemma}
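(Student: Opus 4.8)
The plan is to establish, exactly in the spirit of Lemma~\ref{Wbdd}, the pathwise estimate
\begin{equation*}
|W_t|^2\le |W_0|^2\exp\Big(-\int_0^t\underline{\Ric}(Y_s)\,ds-2\int_0^t\underline{\mcS}(Y_s)\,dL_s\Big),\qquad 0\le t\le T,
\end{equation*}
and then to raise it to the power $p/2$, take $\sup_{t\le T}$ and expectations, and use $\underline{\Ric}(Y_s)\ge\underline{\Ric}$, $\underline{\mcS}(Y_s)\ge\underline{\mcS}$ together with \eqref{expLat} of Corollary~\ref{convLat} applied with $a=0$ (so that $L^0=L$) to obtain $\|W\|_{\scrS_p([0,T])}^p\le |W_0|^p e^{-pT\underline{\Ric}}C(T,-p\underline{\mcS})$, recalling that $W_0=\mathrm{Id}_{T_{Y_0}M}$. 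Since $W$ is defined as the $\scrS_p([0,T])$-limit of the truncated processes $W^\e$ solving \eqref{We-100}, it suffices to prove the displayed estimate for each $W^\e$, up to a defect that vanishes as $\e\to 0$, and then to pass to the limit: from $\|W^\e-W\|_{\scrS_p([0,T])}\to 0$ one gets $\|W\|_{\scrS_p([0,T])}=\lim_{\e\to 0}\|W^\e\|_{\scrS_p([0,T])}$.

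The starting point is that the covariant equation \eqref{We-100} for $W^\e$ carries no martingale part, since the Brownian fluctuation is absorbed into the isometric parallel transport; hence $t\mapsto |W_t^\e|^2$ --- with $|W_t^\e|^2=\sum_i|W_t^\e e_i|^2$ the squared Hilbert--Schmidt norm relative to an orthonormal basis $(e_i)$ of $T_{Y_0}M$ --- is a real-valued c\`adl\`ag process of bounded variation, so the Leibniz rule applies off the jump set and gives
\begin{equation*}
d|W_t^\e|^2=-\Ric(W_t^\e,W_t^\e)\,dt-2\langle\mcS(W_t^\e),W_t^\e\rangle\,dL_t .
\end{equation*}
At each $s\in\SR_\e(\omega)$ the value $W_{s-}^\e$ is replaced by its orthogonal projection onto $T_{Y_s}\partial M$, so the jump of $|W^\e|^2$ is $-\sum_i\langle W_{s-}^\e e_i,\nu_{Y_s}\rangle^2\le 0$ and may be discarded. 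For the $dL$-term, $\mcS$ is self-adjoint with $\mcS(\nu)=0$ and range orthogonal to $\nu$, so $\langle\mcS(W_t^\e),W_t^\e\rangle=\langle\mcS(W_t^{\e,T}),W_t^{\e,T}\rangle\ge\underline{\mcS}(Y_t)|W_t^{\e,T}|^2$ on $\{R=0\}\supseteq\mathrm{supp}(dL)$, and writing $|W_t^{\e,T}|^2=|W_t^\e|^2-|f_\e(t)|^2$ this is bounded below by $\underline{\mcS}(Y_t)|W_t^\e|^2-|\underline{\mcS}(Y_t)|\,|f_\e(t)|^2$; the last term is the defect, and it is harmless since $f_\e\to f$ with $f\equiv0$ on $\mathrm{supp}(dL)$ by Proposition~\ref{P2}. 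Combining with $\Ric(W_t^\e,W_t^\e)\ge\underline{\Ric}(Y_t)|W_t^\e|^2$ and integrating the differential inequality against the continuous integrating factor $\exp\!\big(\int_0^t\underline{\Ric}(Y_s)\,ds+2\int_0^t\underline{\mcS}(Y_s)\,dL_s\big)$ --- whose continuity makes it compatible with the non-positive jumps --- yields the pathwise bound for $W^\e$ modulo the defect, and Gronwall's lemma then closes it.

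Finally, letting $\e\to 0$, the $\scrS_p([0,T])$-convergence of $W^\e$ to $W$ transfers the bound; the defect contribution disappears in the limit by dominated convergence, using $f_\e\to f$ and $f\equiv0$ on $\mathrm{supp}(dL)$ (alternatively one extracts an a.s.\ locally uniformly convergent subsequence and invokes Fatou's lemma, recalling that $\sup_{t\le T}|W_t^\e|=\sup_{t\in\mathbb{Q}\cap[0,T]}|W_t^\e|$ for a right-continuous process). I expect the only real obstacle to be precisely this last circle of ideas: $W$ is a limiting object with jumps, so one cannot apply the Leibniz rule to $|W_t|^2$ directly but must run the curvature and shape estimate on the smooth approximations $W^\e$ (or $W^a$) while verifying that the normal component on the support of the local time does not inflate the constant; apart from that, the computation is a verbatim repetition of the proof of Lemma~\ref{Wbdd}.
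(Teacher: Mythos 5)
Your proposal is correct and follows essentially the same route as the paper, whose entire proof is the remark that the computation of Lemma~\ref{Wbdd} applies to $W^\e$ and the conclusion passes to the limit via the $\scrS_p$-convergence of $W^\e$ to $W$. Your write-up simply carries out that computation explicitly, and your handling of the normal-part defect $|\underline{\mcS}|\,f_\e^2\,dL$ in the shape-operator term (vanishing in the limit because $f\equiv 0$ on the support of $dL$ by Proposition~\ref{P2}) is in fact more careful than the paper's own bound $\langle\mcS(W),W\rangle\ge\underline{\mcS}\,|W|^2$ in Lemma~\ref{Wbdd}.
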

\begin{proof}
 Since $W^\e$ converges to $W$, a similar computation holds for $W^\e$, the conclusion for $(W_t)$ follows.
\end{proof}

\subsection{The Local Time}

 Let us recall the notation
 \begin{equation}
  \Label{E28}
  n_t^a=(\parals_t^a)^{-1}\nu(Y_t^a),\ \ric_t^a=(\parals_t^a)^{-1}\Ric^\sharp(\parals_t^a(\cdot)),\ \rms_t^a=(\parals_t^a)^{-1}\mcS (\parals_t^a(\cdot)).
 \end{equation}
 Denote $w_t^a=\parals_t^{-1}(Y^a) W_t^a$. Then
$$ dw_t^a= -c_a(t) f_a(t) n_t^a dt -\rms_t^a(w_t^a)dL_t^a-\f12\ric_t^a(w_t^a)\,dt.$$

%

Recall $c_a $ is a real valued stochastic process defined in (\ref{ca}). 
Let us define a new stochastic process \begin{equation}
\Label{E43}
\tilde c_a(t)=c_a(t)+\f12\|\nabla\nu(Y_t^a)\|_{\rm H.S.}^2+\f12\langle\ric_t^a(n_t^a),n_t^a\rangle,
\end{equation}
and also
\begin{equation}\Label{E31}
  C_a(t)=\int_0^tc_a(s)\,ds, \quad \quad \tilde C_a(s,t)=\int_s^t\tilde c_a(s)\,ds.
\end{equation}

In the tubular neighbourhood,  the following holds.
\begin{lemma}
\Label{L0}
Let $S$ be a bounded stopping time such that $Y_t\in F_0$ on $\{t<S\}$. 
For $0\le s<t\le S$, define
\begin{equation}
\Label{E45}
\begin{split}
\tilde r_t^a=&\int_0^t\langle w_s^{a,T},\na_{\s_k(Y_s^a)}\nu\rangle dB_s^k
-\f12 \int_0^t \langle \ric_s^a(w_s^{a,T}),n_s^a\rangle \,ds\\
&+\f12\int_0^t\langle w_s^{a,T},(\parals_s^a)^{-1}\trace \nabla^2\nu(Y_s^a)\rangle\,ds
\end{split}
\end{equation}
Then
\begin{equation}
 \Label{E46}
 f_a(t)=f_a(0)e^{-\int_0^t \tilde c_a(r)dr}+\int_0^te^{-\int_s^t \tilde c_a(r)dr}\,d\tilde r_s^a.
\end{equation}
 \end{lemma}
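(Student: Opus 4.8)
The plan is to read off equation~\eqref{E46} as the variation-of-constants (Duhamel) representation for the scalar linear equation satisfied by $f_a$; the drift and martingale coefficients of that equation have already been computed in Lemma~\ref{covariant-formula}, so the only remaining work is a regrouping of terms together with the observation that the relevant integrating factor carries no It\^o correction.

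First I would take as starting point the It\^o differential of $f_a$ from Lemma~\ref{covariant-formula},
\begin{align*}
df_a(t)=&-c_a(t)f_a(t)\,dt-\f12\Ric(W_t^a,\nu(Y_t^a))\,dt
+\sum_k\langle W_t^{a,T},\nabla_{\sigma_k}\nu(Y_t^a)\rangle\,dB_t^k\\
&+\f12\langle W_t^{a,T},\trace\nabla^2\nu(Y_t^a)\rangle\,dt-\f12 f_a(t)\|\nabla\nu(Y_t^a)\|^2\,dt,
\end{align*}
which is valid on $\{t<S\}$ precisely because there $Y_t$, and $Y_t^a$ for $a$ small, stays in the tubular neighbourhood $F_0$ where these formulae were derived. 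Transporting everything by $(\parals_t^a)^{-1}$ and writing $f_a(t)=\langle w_t^a,n_t^a\rangle$, I would split $w_t^a=w_t^{a,T}+f_a(t)n_t^a$ (equivalently \eqref{E30}) inside the Ricci term, so that
$$\Ric(W_t^a,\nu(Y_t^a))=\langle\ric_t^a(w_t^{a,T}),n_t^a\rangle+f_a(t)\langle\ric_t^a(n_t^a),n_t^a\rangle.$$
Collecting the three contributions proportional to $f_a(t)\,dt$, namely $-c_a(t)f_a(t)$, $-\f12 f_a(t)\|\nabla\nu(Y_t^a)\|^2$ and $-\f12 f_a(t)\langle\ric_t^a(n_t^a),n_t^a\rangle$, and invoking the definition~\eqref{E43} of $\tilde c_a$, the sum of these three is exactly $-\tilde c_a(t)f_a(t)\,dt$; the terms that remain are precisely those listed in~\eqref{E45}, so that
$$df_a(t)=-\tilde c_a(t)\,f_a(t)\,dt+d\tilde r_t^a.$$
Since $t\mapsto\int_0^t\tilde c_a(r)\,dr$ is a continuous process of finite variation, the product rule gives $d\big(e^{\int_0^t\tilde c_a(r)\,dr}f_a(t)\big)=e^{\int_0^t\tilde c_a(r)\,dr}\,d\tilde r_t^a$ with no covariation correction; integrating from $0$ to $t$ and multiplying through by $e^{-\int_0^t\tilde c_a(r)\,dr}$ produces~\eqref{E46}.

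The computation is essentially routine, and the only points demanding care are bookkeeping ones: one must verify that all stochastic integrals appearing in the formula for $df_a$ inherited from Lemma~\ref{covariant-formula} are already in It\^o form (they are, as noted there), so that the regrouping of drift terms is legitimate; one must keep track of the implicit parallel translations relating the $T_{Y_0^a}M$-valued objects $w_t^{a,T},n_t^a$ to their $T_{Y_t^a}M$-valued counterparts used in Lemma~\ref{covariant-formula}; and one must ensure the hypothesis is arranged so that both $Y_t$ and $Y_t^a$ lie in $F_0$ on the interval considered, which on $\{t<S\}$ holds for $a$ small by Theorem~\ref{T2}, after replacing $S$ by $S\wedge\tau^a$ for the appropriate exit time if necessary. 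Beyond these, the statement follows immediately from the scalar Duhamel formula.
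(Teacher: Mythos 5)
Your proposal is correct and follows essentially the same route as the paper: start from the It\^o equation for $f_a$ in Lemma~\ref{covariant-formula}, split the Ricci term into tangential and normal parts so that the $f_a(t)\,dt$ coefficients assemble into $-\tilde c_a(t)$ as defined in~\eqref{E43} while the remainder is exactly $d\tilde r_t^a$, and conclude by variation of constants. Your added remarks (that the integrating factor is of finite variation so there is no covariation correction, and that the integrals are already in It\^o form) only make explicit what the paper leaves implicit.
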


\begin{proof}
By Lemma \ref{covariant-formula}, the function $f_a(t)$ is a solution to the following equation,
\begin{align*}
df_t=-c_a(t) f_a(t)dt -\frac 12 \Ric(W_t^a, \nu(Y_t^a))dt
 +\sum_k\<W_t^{a,T},  \nabla_{\sigma_k} \nu (Y_t^a)\>\;dB_t^k\\
   + \f12\<W_t^{a,T}, \trace \nabla^2 \nu(Y_t^a)\>dt-\f12 f_a(t)\|\nabla \nu(Y_t^a)\|^2.
\end{align*}
Split the $W_t^a$ terms into its tangential and normal parts:
\begin{align*}
df_t=&-c_a(t) f_a(t)dt -\frac 12 f_a(t)\Ric(\nu(Y_t^a), \nu(Y_t^a))dt 
-\frac12 f_a(t)\|\nabla \nu\|^2dt\\
&-\frac 12 \Ric(W_t^{a,T}, \nu(Y_t^a))dt
 +\sum_k\<W_t^{a,T}, \nabla_{\sigma_k(Y_t^a)} \nu \> dB_t^k \\&
 +\frac{1}{2} \<W_t^{a,T}, \trace \nabla^2 \nu (Y_t^a) \>dt .
\end{align*}
The required identity follows from
 the variation of constant method .
 
%
\end{proof}

\begin{lemma}
 \Label{L1}
 Let $S$ be a stopping time as in Lemma~\ref{L0}.
 Define for $t\in [0,S]$,
 \begin{equation}
  \Label{E36}
  L_t^a=\int_0^t\f{2\,ds}{a\sinh\left(\f{2R_s^a}{a}\right)}.
 \end{equation}
 Let $p,q\in [1,\infty]$ and $r$ defined by $\di \f1r=\f1p+\f1q$.
Let $Z_t^a$ and $Z_t$ be continuous real semimartingales defined on $[0,S]$. Then
\begin{equation}
 \Label{E37}
 \begin{split}
 &\left\|\int_0^\cdot Z_s^a\; d L_s^a-\int_0^\cdot Z_s \; dL_s\right\|_{\scrS_r}\\&
 \le \left\|Z^a-Z\right\|_{\scrS_p}\left\|L_S^a\right\|_q+\left\|L^a-L\right\|_{\scrS_p}\left(\|Z\|_{{\scrS}_q}+\|Z\|_{{\SH}_q}\right).
 \end{split}
\end{equation}

\end{lemma}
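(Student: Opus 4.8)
The plan is to reduce the statement to a triangle-inequality split followed by two standard integration estimates for stochastic integrals against the pair of measures $dL^a$ and $dL$. First I would write
\begin{equation*}
\int_0^t Z_s^a\,dL_s^a-\int_0^t Z_s\,dL_s=\int_0^t (Z_s^a-Z_s)\,dL_s^a+\int_0^t Z_s\,(dL_s^a-dL_s),
\end{equation*}
so that the left-hand side of \eqref{E37} is bounded by the $\scrS_r$-norms of the two terms on the right. For the first term, since $L^a$ is a non-decreasing process (its density $2/(a\sinh(2R_s^a/a))$ is strictly positive), I would bound $\sup_{t\le S}|\int_0^t(Z_s^a-Z_s)\,dL_s^a|\le (\sup_{s\le S}|Z_s^a-Z_s|)\,L_S^a$ pathwise, and then apply Hölder's inequality in $\Omega$ with the conjugate pair $1/r=1/p+1/q$ to get $\|Z^a-Z\|_{\scrS_p}\|L_S^a\|_q$. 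This is the easy half and uses only positivity of $dL^a$ and monotonicity.

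The second term, $\int_0^\cdot Z_s\,(dL_s^a-dL_s)$, is the one that forces the appearance of both the $\scrS_q$- and the $\SH_q$-norms of $Z$, and it is where the work lies. Here one cannot simply bound $|dL^a-dL|$ by a small total-variation measure — indeed, as the paper emphasises, $dL^a$ and $dL$ have mutually singular supports — so the route is integration by parts: write
\begin{equation*}
\int_0^t Z_s\,(dL_s^a-dL_s)=Z_t\,(L_t^a-L_t)-Z_0\,(L_0^a-L_0)-\int_0^t (L_s^a-L_s)\,dZ_s,
\end{equation*}
using that $L^a-L$ has finite variation so that there is no covariation term and the formula is valid for a continuous semimartingale $Z$. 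The first boundary term contributes $\|L^a-L\|_{\scrS_p}\|Z\|_{\scrS_q}$ after Hölder; the integral term is where the semimartingale (i.e. $\SH_q$) norm of $Z$ enters: decomposing $dZ_s=dN_s+dV_s$ into martingale and finite-variation parts, one estimates $\sup_{t\le S}|\int_0^t (L_s^a-L_s)\,dN_s|$ via the Burkholder–Davis–Gundy inequality (bounding the integrand uniformly by $\sup_{s\le S}|L_s^a-L_s|$ and pulling it out), and $\sup_{t\le S}|\int_0^t(L_s^a-L_s)\,dV_s|$ by $(\sup_{s\le S}|L_s^a-L_s|)\cdot\mathrm{Var}(V)_S$; in both cases one uses Hölder with the pair $1/r=1/p+1/q$ and absorbs the martingale/variation contributions into a single constant, which is exactly what the definition of $\|Z\|_{\SH_q}$ records (see the norm conventions in Appendix \ref{preliminary}). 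Collecting the boundary and integral contributions gives the factor $\|L^a-L\|_{\scrS_p}\bigl(\|Z\|_{\scrS_q}+\|Z\|_{\SH_q}\bigr)$, and adding the first half yields \eqref{E37}.

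The main obstacle is bookkeeping the second term correctly: one must resist estimating $\int Z\,(dL^a-dL)$ directly (which would fail, since the two driving measures are mutually singular and their difference is large in total variation) and instead commit to the integration-by-parts identity, which is what transfers the smallness from $L^a-L$ (small in $\scrS_p$) onto $Z$ (controlled in the semimartingale norm) without ever differencing the singular measures. Everything else — the pathwise monotonicity bound for the $dL^a$ term, Hölder with the prescribed exponents, and BDG for the martingale part — is routine once that decomposition is in place.
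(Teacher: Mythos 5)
Your proposal is correct and follows essentially the same route as the paper: the same two-term decomposition, the pathwise bound on $\int(Z^a-Z)\,dL^a$ using monotonicity of $L^a$ (equivalently, that $\|L^a\|_{\SH_q}=\|L^a\|_{\scrS_q}$ so Emery's inequality \eqref{HSH} applies), and integration by parts on $\int Z\,d(L^a-L)$ producing the boundary term $Z_t(L_t^a-L_t)$ and the integral $\int(L^a-L)\,dZ$, estimated exactly as you describe. The only cosmetic difference is that you invoke BDG and H\"older directly where the paper cites \eqref{HSH} wholesale.
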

\begin{proof}
 We have for $t\in[0,S]$
 \begin{align*}
  &\int_0^tZ_s^a\,dL_s^a-\int_0^tZ_s\,dL_s\\
  &=\int_0^t(Z_s^a-Z_s)\,dL_s^a+\int_0^tZ_s\,d(L_s^a-L_s)\\
  &=\int_0^t(Z_s^a-Z_s)\,dL_s^a+\int_0^t(L_s^a-L_s)\,dZ_s+Z_t(L_t^a-L_t).
 \end{align*}
Since $L^a$ is nondecreasing we have 
 $$\|L_\cdot^a\|_{{\SH}_q([0,S])}^q=\E\left( \sup_{s\le S} |L_s^a|^q\right)=\|L_\cdot^a\|_{{\scrS}_q([0,S])}^q,$$
so we get  by ~\eqref{HSH}, 
$$\left\| \int_0^\cdot(Z_s^a-Z_s)\,dL_s^a\right\|_{{\SH}_r([0,S])}
\le \|Z_\cdot^a-Z_\cdot\|_{\scrS_p([0,S])}  \|L_\cdot^a\|_{{\scrS}_q([0,S])}^q.$$
Similar estimates holds for the last two terms on the right hand side of the identity. This concludes the proof.
\end{proof}
Let $S$ be a stopping time such that $Y_t\in F_0$ on $\{t<S\}$. 
\begin{lemma}
\Label{L2}
Let $\alpha_t=\sup_{s\le t} \{s\le t: Y_s \in \partial M\}$. Suppose that $t\not \in \SR(\omega)$.
  For all $s,t\in [0,S]$ satisfying $s<t$, \begin{equation}
\Label{E38}
\begin{split}
\lim_{a\to 0} e^{-\int_s^tc_a(u)\,du}= 1&\quad \hbox{if}\quad s>\a_t\\
\lim_{a\to 0}  e^{-\int_s^tc_a(u)\,du}=0&\quad \hbox{if}\quad s<\a_t.
\end{split}
\end{equation}
The convergence is in probability.
As a consequence, for all $p\ge 1$, 
\begin{equation}
 \Label{E38bis}
\lim_{a\to 0}  \E\left[\int_0^S\left|e^{-C_a(t)}-\1_{\{s>\a_t\}}\right|^p dt\right]=0;
\end{equation}
\begin{equation}
 \Label{E38ter}
\lim_{a\to 0}  \E\left[\int_0^S\left(\int_0^t\left|e^{-\int_s^tc_a(u)\,du}-\1_{\{s>\a_t\}}\right|^p ds\right)dt\right]=0.\end{equation}
\end{lemma}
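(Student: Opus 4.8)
Since $R^a_\cdot\to R_\cdot$ in UCP (Theorem~\ref{T2}) and $L^a_\cdot\to L_\cdot$ in $\scrS_p$ (Corollary~\ref{convLat}), it is enough to prove the convergences in \eqref{E38} almost surely along an arbitrary subsequence $a_k\to 0$; the statement for the full family then follows in probability by the subsequence criterion, and \eqref{E38bis}--\eqref{E38ter} follow by dominated convergence, since $c_a\ge 0$ forces $0\le e^{-\int_s^tc_a}\le 1$ and $0\le e^{-C_a(t)}\le 1$, the exceptional pairs (those with $t\in\SR(\omega)$, a countable set, or $s\in{\mathfrak L}(\omega)$, Lebesgue-null, or $s=\alpha_t$) being negligible for the $ds$- and $ds\,dt$-integrations over the bounded set $[0,S]$. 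So I would fix $\omega$ off a null set on which $\sup_{[0,S]}|R^{a_k}-R|\to 0$, $\sup_{[0,S]}|L^{a_k}-L|\to 0$, the occupation-density bound stated below holds, and $R|_{[0,S]}$ has the excursion structure of a reflected Brownian motion (Lemma~\ref{EqRBM}), and argue pathwise. Throughout write $g_a(r)=\dfrac{2}{a\sinh(2r/a)}$ for the drift coefficient of $R^a$ in \eqref{ERat}; note that $g_a$ is decreasing, that $c_a(r)=g_a(r)^2\cosh(2r/a)\ge g_a(r)^2$, and that $\int_{\kappa a}^\infty g_a(x)\,dx=\int_{2\kappa}^\infty\frac{dy}{\sinh y}=\ln\coth\kappa$.

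\textbf{The case $s>\alpha_t$} is immediate: then $R>0$ on $[s,t]$, so $m:=\min_{[s,t]}R>0$, hence $R^a\ge m/2$ there for small $a$; using $\cosh y/\sinh^2 y\le C e^{-y}$ for $y\ge 1$ gives $c_a\le C'a^{-2}e^{-m/a}$ on $[s,t]$, whence $\int_s^tc_a(u)\,du\le (t-s)C'a^{-2}e^{-m/a}\to 0$ and $e^{-\int_s^tc_a}\to 1$.

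\textbf{The case $s<\alpha_t$} is the heart of the matter. Since $c_a\ge g_a^2$ pointwise, it suffices to prove $V^a:=\int_s^tg_a(R^a_u)^2\,du\to\infty$. First, $\alpha_t\in(s,t]$ lies in $\{R=0\}=\operatorname{supp}(dL)$, and because $t\notin\SR(\omega)$ the interval $(s,t)$ meets $\{R=0\}$ (otherwise $t$ would be the right endpoint of an excursion); hence $c:=L_t-L_s>0$ and $L^a_t-L^a_s=\int_s^tg_a(R^a_u)\,du\to c$. Let $\ell^a(x)$ be the occupation density of $R^a$ over $[s,t]$ (it exists since $\langle R^a\rangle_u=u$) and $\Lambda=\Lambda(\omega):=\sup_a\sup_{x>0}\ell^a(x)$; I claim $\Lambda<\infty$. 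Granting this, choose $\kappa_0\in(0,\infty)$ with $\ln\coth\kappa_0=c/(2\Lambda)$; by the occupation times formula,
\[
\int_s^tg_a(R^a_u)\1_{\{R^a_u>\kappa_0 a\}}\,du=\int_{\kappa_0 a}^\infty g_a(x)\ell^a(x)\,dx\le\Lambda\int_{\kappa_0 a}^\infty g_a(x)\,dx=\Lambda\ln\coth\kappa_0=\tfrac c2,
\]
so that $\int_s^tg_a(R^a_u)\1_{\{R^a_u\le\kappa_0 a\}}\,du\ge (L^a_t-L^a_s)-\tfrac c2\to\tfrac c2$, and then, since $g_a$ is decreasing,
\[
V^a\ \ge\ \int_s^tg_a(R^a_u)^2\1_{\{R^a_u\le\kappa_0 a\}}\,du\ \ge\ g_a(\kappa_0 a)\int_s^tg_a(R^a_u)\1_{\{R^a_u\le\kappa_0 a\}}\,du\ \ge\ \frac{2}{a\sinh(2\kappa_0)}\cdot\frac c4\ \xrightarrow[a\to0]{}\ \infty .
\]
Thus $\int_s^tc_a\to\infty$ and $e^{-\int_s^tc_a}\to 0$. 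Finally \eqref{E38bis} is the case $s=0$: with the convention $\sup\emptyset=-\infty$ one has $\1_{\{0>\alpha_t\}}=\1_{\{t<\zeta\}}$, and $e^{-C_a(t)}=e^{-\int_0^tc_a}$ converges to $\1_{\{t<\zeta\}}$ by the two cases above (with $s=0$); dominated convergence on $[0,S]$ then yields \eqref{E38bis}, and on $\{0\le s\le t\le S\}$ it yields \eqref{E38ter}.

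\textbf{The main obstacle} is the uniform-in-$a$ occupation-density bound $\sup_{x>0}\ell^a_{[s,t]}(x)\le\Lambda(\omega)<\infty$, which is needed precisely down to the scale $x\asymp a$ where $g_a(x)\asymp a^{-1}$ is large. I would obtain it from Tanaka's formula for the one-dimensional semimartingale $R^a$ (writing $\ell^a_{[s,t]}(x)$ in terms of $|R^a_t-x|-|R^a_s-x|$, $\int_s^t\sgn(R^a_u-x)\,dB^1_u$, the bounded drift $\frac12\Delta R$, and $L^a_t-L^a_s$) together with a uniform-in-$x$ maximal inequality for that stochastic integral; the decisive structural point is that the repelling drift $g_a\ge 0$ only shortens the time $R^a$ spends near $0$, so the bound does not blow up as $a\to0$. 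Given this, the remaining scheme is robust: one must cut off at the correct scale $\kappa_0 a$ (a fixed-scale cut-off fails, since $g_a$ decays exponentially away from $0$), and the inequality $c_a\ge g_a^2$ conveniently eliminates any need to analyse a martingale contribution to $\int_s^tc_a$.
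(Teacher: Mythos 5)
Your treatment of the case $s>\alpha_t$, of the identification $L_t-L_s>0$ via the Girsanov equivalence of Lemma~\ref{EqRBM}, and of the passage from \eqref{E38} to \eqref{E38bis}--\eqref{E38ter} by boundedness and dominated convergence all match the paper. The problem is the case $s<\alpha_t$: your argument hinges on the uniform-in-$a$ occupation-density bound $\sup_a\sup_{x>0}\ell^a_{[s,t]}(x)\le\Lambda(\omega)<\infty$, which you yourself flag as ``the main obstacle'' and do not prove. This is a genuine gap, not a routine verification: controlling $\sup_x$ of the Tanaka martingale term $\int_s^t\sgn(R^a_u-x)\,dB^1_u$ simultaneously for all $a$ in a sequence, almost surely at a fixed $\omega$, requires a Barlow--Yor type modulus argument that is nowhere supplied, and without $\Lambda(\omega)<\infty$ your cut-off at scale $\kappa_0 a$ and the lower bound on $V^a$ collapse. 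As written, the divergence $\int_s^t c_a\to\infty$ is not established.

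The gap is also unnecessary, because you chose the weaker of the two elementary lower bounds on $c_a$. You use $\cosh\ge 1$ to get $c_a\ge g_a^2$, which does not integrate to anything expressible in terms of known quantities and hence forces the occupation-density detour. The paper instead uses $\cosh y\ge\sinh y$, i.e.
\begin{equation*}
c_a(u)=\frac{2\cosh\left(\frac{2R_u^a}{a}\right)}{a\sinh\left(\frac{2R_u^a}{a}\right)}\cdot\frac{2}{a\sinh\left(\frac{2R_u^a}{a}\right)}\;\ge\;\frac{2}{a}\cdot\frac{2}{a\sinh\left(\frac{2R_u^a}{a}\right)},
\end{equation*}
so that $\int_s^t c_a(u)\,du\ge\frac{2}{a}\left(L_t^a-L_s^a\right)$ exactly, with no occupation times formula needed. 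Since $L^a\to L$ in UCP and $L_t-L_s>0$, for any fixed $a_0>0$ the $\liminf$ of the right-hand side exceeds $\frac{2}{a_0}(L_t-L_s)$, giving $\limsup_{a\to 0}e^{-\int_s^t c_a}\le e^{-\frac{2}{a_0}(L_t-L_s)}$ in probability, and letting $a_0\to 0$ finishes the proof. I recommend you replace your occupation-density step by this one-line inequality; everything else in your write-up can stay.
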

\begin{proof}
From~\eqref{E38} it is easy to get~\eqref{E38bis} and~\eqref{E38ter} using 
the fact that $e^{-\int_s^tc_a(u)\,du}$ and $\1_{\{s>\a_t\}}$ are uniformly bounded and Fubini-Tonelli theorem. 
 
 So let us prove~\eqref{E38}.
Write 
\begin{align*}
\int_s^tc_a(u)\,du=\int_s^t\f{2\cosh\left(\f{2R_u^a}{a}\right)}{a\sinh\left(\f{2R_u^a}{a}\right)}
\times \f2{a\sinh\left(\f{2R_u^a}{a}\right)}\,du.
\end{align*}
If $s>\a_t$ then there exists $\e(\om)>0$ such that for $u\in [s,t]$, $R_u>\e(\om)$. Since $R^a\to R$ in UCP topology, $\sup_{u\in [s,t]} c_a(u)$ converges to $0$ in probability, and this implies that  $\di e^{-\int_s^tc_a(u)\,du}\to 1$.

If $s<\a_t$ then $L_t-L_s>0$. Indeed, this would be true if $R_t$ was a reflected Brownian motion. But by Girsanov transform we obtain that the law of $R_t$ is equivalent to the one of a reflected Brownian motion (Lemma~\ref{EqRBM}). So this is true. 

Now we have 
\begin{align*}
\int_s^t\f{2\cosh\left(\f{2R_u^a}{a}\right)}{a\sinh\left(\f{2R_u^a}{a}\right)}
\times \f2{a\sinh\left(\f{2R_u^a}{a}\right)}\,du>&\int_s^t\f{2}{a}
\times \f2{a\sinh\left(\f{2R_u^a}{a}\right)}\,du\\
=&\f2a(L_t^a-L_s^a).
\end{align*}
Since $R^a\to R$ in UCP topology we have that $L^a\to L$ in UCP topology. So for all $a_0>0$ the $\liminf$ of the right hand side is larger than $\di \f2{a_0}(L_t-L_s)$. This yields 
$$
\limsup_{a\to 0}e^{-\int_s^tc_a(u)\,du}<e^{-\f2{a_0}(L_t-L_s)}
$$
in probability. Letting $a_0\to 0$ we get 
$$
\lim_{a\to 0}e^{-\int_s^tc_a(u)\,du}=0 \quad \hbox{in probability}.
$$
\end{proof}

From this result we get the following 
\begin{corollary}
\Label{C2}
Define 
\begin{equation}
\Label{E43.1}
\tilde c(t)=-\f12\|\nabla\nu(Y_t)\|_{\rm H.S.}^2-\f12\langle\ric_t(n_t),n_t\rangle
\end{equation}
where $\ric_t$ is defined in~\eqref{wt0}.
For $s,t\in [0,S]$ satisfying $s<t$ we define
\begin{equation}
\Label{E44.1}
\tilde C(s,t)=\left\{ \begin{array} {ll}\int_s^t\tilde c(s)\,ds, \quad &\hbox{if}\quad s>\a_t\\
+\infty,  \quad  & \hbox{if}\quad s\le \a_t
\end{array}\right.
\end{equation}
Then the following convergence holds in probability for $\tilde C_a(s,t)$ defined in~\eqref{E31}:
\begin{equation}
\Label{E47}
\begin{split}
\lim_{a\to 0} e^{-\tilde C_a(s,t)}= e^{-\tilde C(s,t)}.
\end{split}
\end{equation}
Consequently,
for all $p\ge 1$, 
\begin{align}
 \Label{E47bis}
\lim_{a\to 0}  \E\left[\int_0^S\left|e^{-\tilde C_a(t)}-e^{-\tilde C(t)}\right|^p dt\right]= 0,\\
 \lim_{a\to 0} \E\left[\int_0^S\left(\int_0^t\left|e^{-\tilde C_a(s,t)}-e^{-\tilde C(s,t)}\right|^p ds\right)dt\right]= 0.
 \Label{E47ter}
 \end{align}
 With these notations Equation~\eqref{E26} rewrites as
\begin{equation}
 \Label{E26.1}
 f(t)=f(0)e^{-\tilde C(t)}+\int_0^te^{-\tilde C(s,t)}\,d\tilde r_s
\end{equation}
where
\begin{equation}
\Label{E45-2}
\begin{split}
\tilde r_t=&\int_0^t\langle w_s^{T},\na_{\s_k(Y_s)}\nu\rangle dB_s^k
-\f12 \int_0^t \langle \ric_s(w_s^{T}), n_s\rangle \,ds\\
&+\f12\int_0^t\langle w_s^{T},\parals_s^{-1}\trace \nabla^2\nu(Y_s)\rangle\,ds.
\end{split}
\end{equation}

\end{corollary}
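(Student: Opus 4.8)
The plan is to derive~\eqref{E47} from Lemma~\ref{L2} together with the convergences already established, and then to read off~\eqref{E47bis},~\eqref{E47ter} and~\eqref{E26.1} from it. First I would split $\tilde C_a(s,t)=\int_s^t\tilde c_a(r)\,dr$ according to~\eqref{E43} into its singular part $C_a(s,t)=\int_s^tc_a(r)\,dr$ and its ``interacting'' part
$$
G_a(s,t)=\int_s^t\Big(\tfrac12\|\nabla\nu(Y_r^a)\|_{\rm H.S.}^2+\tfrac12\langle\ric_r^a(n_r^a),n_r^a\rangle\Big)\,dr .
$$
Since $Y^a\to Y$ (Theorem~\ref{T2}), $\parals^a\to\parals$ (Proposition~\ref{cvpt}) and $n^a\to n$, all in UCP, the integrand of $G_a$ converges uniformly on $[0,S]$ in probability and is bounded uniformly in $a$ ($M$ being compact); hence $G_a(s,t)$ converges in probability to the corresponding integral along $Y$, which is $\tilde C(s,t)$ on $\{s>\a_t\}$ by~\eqref{E43.1},~\eqref{E44.1}. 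On the other hand Lemma~\ref{L2}, formula~\eqref{E38}, gives $e^{-C_a(s,t)}\to\1_{\{s>\a_t\}}$ in probability, i.e.\ $C_a(s,t)\to0$ on $\{s>\a_t\}$ and $C_a(s,t)\to+\infty$ on $\{s<\a_t\}$. Adding the two limits yields, for $s\neq\a_t$, $\tilde C_a(s,t)\to\tilde C(s,t)$ in probability, the value $+\infty$ on $\{s<\a_t\}$ being exactly the convention of~\eqref{E44.1}; this is~\eqref{E47}.

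Next, \eqref{E47bis} and~\eqref{E47ter} follow from~\eqref{E47} in exactly the way~\eqref{E38bis},~\eqref{E38ter} followed from~\eqref{E38}. The one point needing attention is a uniform upper bound: because $c_a\ge0$ by~\eqref{ca}, one has $\tilde C_a(s,t)\ge G_a(s,t)\ge-C_0S$, where $C_0$ is the uniform-in-$a$ bound on the integrand of $G_a$ over the compact manifold $M$ and $S$ is the bounded stopping time of Lemma~\ref{L0}; hence $0\le e^{-\tilde C_a(s,t)}\le e^{C_0S}$ uniformly in $a,s,t$, and likewise for $e^{-\tilde C(s,t)}$. Convergence in probability together with this bound upgrades to convergence in $L^p(\P)$ for each fixed $(s,t)$, and Fubini--Tonelli plus dominated convergence in the remaining variables then give~\eqref{E47bis} and~\eqref{E47ter}.

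For the reformulation~\eqref{E26.1}, I would redo the computation of Lemma~\ref{covariant-formula} in the limit $a\to0$: in $M^o$ the large term $c_a(t)f_a(t)\,dt$ disappears, and --- interpreting everything via $W^\e$ with $\e\to0$, as in Proposition~\ref{P2} --- one finds that on each excursion interval $(l_\a,r_\a)$ of $(R_t)$ the normal component $f(t)=\langle W_t,\nu_{Y_t}\rangle$ satisfies the $a=0$ analogue of the equation behind~\eqref{E46}, namely $df_t=-\tilde c(t)f_t\,dt+d\tilde r_t$ with $\tilde c,\tilde r$ as in~\eqref{E43.1},~\eqref{E45-2}; this is precisely the variation-of-constants manipulation performed in~\eqref{E45}--\eqref{E46}, now applied to the limiting equation of Proposition~\ref{P2}. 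Using $f(l_\a)=0$ (since $l_\a\in\mathfrak L(\omega)$, Proposition~\ref{P2}) and the sample continuity of $\tilde r$ at the endpoints of excursions, the variation-of-constants formula over $(l_\a,r_\a)$ becomes $f(t)=\int_{\a_t}^te^{-\int_s^t\tilde c(r)\,dr}\,d\tilde r_s$ for $t\ge\zeta$ --- which is~\eqref{E26.1} once one reads $e^{-\tilde C(s,t)}=0$ for $s\le\a_t$ off~\eqref{E44.1} --- while for $t<\zeta$ the initial contribution $f(0)e^{-\tilde C(t)}$ is retained. One may instead pass to the limit directly in~\eqref{E46}, using~\eqref{E47bis},~\eqref{E47ter} and the convergence $\tilde r^a\to\tilde r$ that follows from the convergence of the tangential parts.

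The single non-elementary ingredient is inherited from Lemma~\ref{L2}: the strict increase $L_t-L_s>0$ on $\{s<\a_t\}$, which is invisible in the SDE for $R_t$ and is supplied by the equivalence-of-laws Lemma~\ref{EqRBM}. Granting this, Corollary~\ref{C2} is an assembly of the UCP convergences of \S\ref{section3}--\S\ref{section4}, the positivity of $c_a$, and dominated convergence; the step most easily mishandled is the uniform exponential bound of the second paragraph, which is exactly where $c_a\ge0$ enters.
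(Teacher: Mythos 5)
Your proof is correct and is essentially the argument the paper intends: the paper disposes of \eqref{E47}--\eqref{E47ter} with ``the convergences are obvious'' and checks \eqref{E26.1} by the case split $t<\zeta$ versus $t\ge\zeta$, while your decomposition $\tilde C_a=C_a+G_a$, the uniform bound $e^{-\tilde C_a(s,t)}\le e^{C_0S}$ extracted from $c_a\ge 0$, and the variation-of-constants derivation of \eqref{E26.1} over each excursion (using $f=0$ on ${\mathfrak L}(\omega)$ from Proposition~\ref{P2}) are precisely the details being omitted. Two small remarks: your identification of $\lim_{a\to 0}G_a(s,t)$ with $\tilde C(s,t)$ on $\{s>\a_t\}$ silently corrects the sign in \eqref{E43.1}, which as printed is inconsistent with \eqref{E43} (consistency with \eqref{E47} requires $\tilde c=+\f12\|\nabla\nu\|_{\rm H.S.}^2+\f12\langle\ric_t(n_t),n_t\rangle$); and your variation-of-constants route to \eqref{E26.1} is in fact more careful than the paper's own verification, which appears to discard the weights $e^{-\tilde C(s,t)}$ when it writes $\int_{\a_t}^te^{-\tilde C(s,t)}\,d\tilde r_s=\tilde r_t-\tilde r_{\a_t}$.
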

\begin{proof}
The convergences are obvious. For equation (\ref{E45-2}),  we see if $t<\zeta$,
$e^{-\tilde C(s,t)}\to 0$ for any $s\ge 0$. Hence
$$f(t)=f(0)+\int_0^t\,d\tilde r_s=f(0)+\tilde r_t.$$
If $t\ge\zeta$, 
$$\int_0^te^{-\tilde C(s,t)}\,d\tilde r_s=\int_{\alpha(t)}^te^{-\tilde C(s,t)}\,d\tilde r_s$$
and so
$$f(t)=\int_{\alpha(t)}^t d\tilde r_s=\tilde r(t)-\tilde r(\alpha(t)).$$
\end{proof}

The new expression  \eqref{E26.1} for $f(t)$ is the same form as the equation for $f_a(t)$:
 \begin{align*}f_a(t)=&f_a(0)e^{-\int_0^t \tilde c_a(r)dr}+\int_0^te^{-\int_s^t \tilde c_a(r)dr}\,d\tilde r_s^a\\
 =&f_a(0)e^{-\tilde C_a(t)}+\int_0^te^{-\tilde C_a(s,t)}\,d\tilde r_s^a.\end{align*}
We observe also that $c_a(t)$ does not converge to a finite stochastic process, hence we only expect
that $f_a(t)$ converges to $f(t)$ in a weak sense.
Especially it is only for a set of $t$ of full measure that $f_a(t)\to f(t)$. This will be made precise in part~\ref{NP}

\subsection{Convergence of the tangential parts}\Label{TP}

We will see that tangential parts of $W_t^a$ converges in UCP topology, as for normal parts we have to exclude the boundary times. But both of them converge in $L^p([0,T]\times \Omega)$, this will be proved at the very end of the proof. Let us begin with the first convergence.

\begin{lemma}
\Label{L4}
As $a\to 0$,   $W_\cdot^{a,T}\to W_\cdot^T$ in UCP topology.
\end{lemma}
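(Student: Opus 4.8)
The plan is to first invoke the localisation lemma of \S\ref{Subsection9.1} to reduce to a time interval $[0,S]$ on which both $Y_t$ and (for $a$ small) $Y_t^a$ remain in the product tubular neighbourhood $F_0$, and then to compare the \emph{linear} equations satisfied by $w_t^{a,T}:=(\parals_t^a)^{-1}W_t^{a,T}$ and $w_t^{T}:=\parals_t^{-1}W_t^{T}$, both transported into a common vector space (via $\sigma^\ast$, exactly as in Proposition~\ref{cvpt}), with initial data converging in the sense of that proposition. By Lemma~\ref{covariant-formula}, together with $\mcS(W_t^a)=\mcS(W_t^{a,T})$ (since $\nabla_\nu\nu=0$), the process $w^{a,T}$ solves a linear SDE whose coefficients are smooth tensorial functions of $Y_t^a$, together with the integrator $dL_t^a$ and the scalar $f_a(t)$; the analogous closed equation for $w^{T}$ is the one read off from \eqref{E27}.

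The first substantive step is to eliminate the dependence on $f_a$. Since $f_a$ does not converge in UCP — by Lemma~\ref{L2} the factor $e^{-C_a(t)}$ has the discontinuous limit $\1_{\{s>\a_t\}}$ — a direct Gronwall comparison would fail precisely at those terms. Instead I would substitute the variation-of-constants representation of Lemma~\ref{L0},
\[
f_a(s)=f_a(0)e^{-\tilde C_a(s)}+\int_0^s e^{-\tilde C_a(u,s)}\,d\tilde r_u^a,
\]
with $\tilde r_u^a$ linear in $w^{a,T}$, back into the equation for $w^{a,T}$. This produces a closed linear Volterra-type SDE for $w^{a,T}$ alone, in which $f_a$ has been replaced by memory terms weighted by the kernel $e^{-\tilde C_a(u,s)}$ and by the boundary term $f_a(0)e^{-\tilde C_a(s)}$. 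The limit $w^T$ satisfies the corresponding Volterra equation obtained in the same way from \eqref{E27} and \eqref{E26.1} of Proposition~\ref{P2}.

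Then I would run a Gronwall estimate on $\phi_a(t):=\E\sup_{s\le t}|w_s^{a,T}-w_s^{T}|^2$; compactness of $M$ together with Lemmas~\ref{Wbdd} and~\ref{L7.6} supply all the uniform moment bounds needed. Subtracting the two Volterra equations and applying Burkholder-Davis-Gundy (and Fubini on the iterated integrals) gives $\phi_a(t)\le \eta_a+C\int_0^t\phi_a(s)\,ds$, where $\eta_a\to0$ collects: the convergences $Y_t^a\to Y_t$ and $\parals_t^a\to\parals_t$ in $\scrS_p$ / $\SH_p$ (Theorem~\ref{T2}, Proposition~\ref{cvpt}), hence of all smooth coefficients; the convergence $L^a\to L$ in $\scrS_p$ (Corollary~\ref{convLat}), the $dL^a$-integrals being handled through the integration-by-parts bound of Lemma~\ref{L1} — this is essential because $L^a\not\to L$ in $\SH_p$; the convergences of the kernel $e^{-\tilde C_a(\cdot,\cdot)}\to e^{-\tilde C(\cdot,\cdot)}$ and of $e^{-\tilde C_a(\cdot)}\to e^{-\tilde C(\cdot)}$ in $L^p(ds\,dt\,d\P)$ from \eqref{E47bis}--\eqref{E47ter} of Corollary~\ref{C2}, combined with Hölder's inequality and the $L^p$-boundedness of $w^{a,T}$; and the convergence of the initial data. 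Gronwall yields $\phi_a(t)\to0$, i.e. $w^{a,T}\to w^{T}$ in UCP, and since $\parals_t^a v^a\to\parals_t v$ in UCP by Proposition~\ref{cvpt}, it follows that $W_\cdot^{a,T}=\parals_\cdot^a w_\cdot^{a,T}\to\parals_\cdot w_\cdot^{T}=W_\cdot^T$ in UCP.

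The main obstacle is exactly the $f_a$-terms: because $f_a$ fails to converge in the uniform topology, the argument works only after $f_a$ is ``integrated out'' into a Volterra kernel, whose convergence is available solely in the $L^p(ds\,dt\,d\P)$ sense provided by Corollary~\ref{C2}; a secondary point requiring care is keeping the local-time integrator (which converges in $\scrS_p$ but not in $\SH_p$) under control through the integration-by-parts estimate of Lemma~\ref{L1}.
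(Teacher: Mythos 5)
Your proposal is correct and follows essentially the same route as the paper: the decisive move — substituting the variation-of-constants representation of $f_a$ from Lemma~\ref{L0} back into the tangential equation to obtain a closed Volterra-type system for $w^{a,T}$ alone, then feeding the $L^p(ds\,dt\,d\P)$ convergence of the kernels (Corollary~\ref{C2}), the $\scrS_p$ convergence of $L^a$ via the integration-by-parts bound of Lemma~\ref{L1}, and the uniform moment bounds of Lemma~\ref{Wbdd} into a Gronwall-type comparison — is exactly the paper's argument in \S\ref{TP}. The only technical divergence is in closing the estimate: because the integrators $v^T$, $v^\nu$, $\tilde u$ contain $dL$- and $dB$-parts, the paper does not obtain a clean bound of the form $\phi_a(t)\le\eta_a+C\int_0^t\phi_a(s)\,ds$ but instead works in $\scrS_1$, splits time at stopping times $S'$ chosen so that $\|(v^T)^{S'}\|_{\SH_\infty}<1$ and $\|(v^\nu)^{S'}\|_{\SH_\infty}\|(\tilde u)^{S'}\|_{\SH_\infty}<1$, and uses Emery's inequality \eqref{HSH} together with the integration by parts \eqref{E67} to absorb the term $\int_0^\cdot d(v^{a,T}-v^T)(w^{a,T})$, for which only $\scrS_\infty$ (not $\SH_p$) convergence of $v^{a,T}-v^T$ is available.
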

\begin{proof}


Since $\parals_t^a\to \parals_t$ in the UCP topology it is sufficient to prove that  $w_\cdot^{a,T}\to w_\cdot^T$ in the UCP topology.
We recall from Lemma \ref{covariant-formula}, the term involving $c_a(t)$ cancels and we have

\begin{align*}
DW_t^{a,T} =&-\f12 \left(\Ric^\sharp(W_t^{a,T})\right)^T\, dt-\f12 f_a(t)\left(\Ric^\sharp(\nu(Y_t^a))\right)^T\, dt- \mcS(W_t^a)\;  dL_t^a
\\&-f_a(t) \nabla_{\sigma_k} \nu(Y_t^a) dB_t^k -
\frac{1}{2}  f_t\trace \nabla^2 \nu (Y_t^a) dt
\\&-\frac 12 \langle W_t^{a,T},\nabla_{\sigma_k} \nu (Y_t^a) \rangle \nabla_{\sigma_k} \nu (Y_t^a)  \;dt
  \\
&-\sum_k\<W_t^{a,T}, \nabla_{\sigma_k} \nu(Y_t^a) \> \nu(Y_t^a)dB_t^k -
 \<W_t^{a,T},  \trace \nabla^2 \nu (Y_t^a)  \>\nu(Y_t^a)dt\\
&   +f_t \|\nabla \nu(Y_t^a)\|^2\nu(Y_t^a) dt.
\end{align*}
Hence
\begin{equation}
\Label{E60}
\begin{split}
dw_t^{a,T}=&-\f12 \ric_t^a(w_t^{a,T})\,dt-\f12 f_a(t)\left(\Ric^\sharp(\nu(Y_t^a))\right)^T\, dt
-\rms_t^a(w_t^{a,T})\,dL_t^a\\
&-f_a(t) (\parals_t^a)^{-1} \nabla_{\sigma_k} \nu(Y_t^a) dB_t^k -
\frac{1}{2}  f_a(t)  (\parals_t^a)^{-1}  \trace \nabla^2 \nu (Y_t^a) dt\\
&
-\frac 12 \langle w_t^{a,T},(\parals_t^a)^{-1} \nabla_{\sigma_k} \nu (Y_t^a) \rangle (\parals_t^a)^{-1} \nabla_{\sigma_k} \nu (Y_t^a)  \;dt
  \\
&-\sum_k\<w_t^{a,T}, (\parals_t^a)^{-1} \nabla_{\sigma_k} \nu(Y_t^a) \> n_t^a\,dB_t^k \\&-
 \<w_t^{a,T},  \trace (\parals_t^a)^{-1} \nabla^2 \nu (Y_t^a)  \>n_t^a\;dt\\
&   +f_a(t) \|\nabla \nu(Y_t^a)\|^2n_t^a dt.
\end{split}
\end{equation}
We define the processes $v_t^{a,T}$, $v_t^{a,\n}$ :

\begin{equation}
\Label{E49}
\begin{split}
v_u^{a,T}(\cdot)=&-\f12 \int_0^u\ric_t^a(\cdot)\,dt-\int_0^u\rms_t^a(\cdot)\,dL_t^a\\
&
-\frac 12 \int_0^u\langle \cdot,(\parals_t^a)^{-1} \nabla_{\sigma_k} \nu (Y_t^a) \rangle (\parals_t^a)^{-1} \nabla_{\sigma_k} \nu (Y_t^a)  \;dt
  \\
&-\sum_k\int_0^u\<\cdot, (\parals_t^a)^{-1} \nabla_{\sigma_k} \nu(Y_t^a) \> n_t^a\,dB_t^k \\&-
\int_0^u  \<\cdot,  \trace (\parals_t^a)^{-1} \nabla^2 \nu (Y_t^a)  \>n_t^a\;dt.\end{split}
\end{equation}
Also,

\begin{equation}
\Label{E51}
\begin{split}
v_u^{a,\nu}=&-\int_0^u \f12 (\parals_t^a)^{-1}\left(\Ric^\sharp(\nu(Y_t^a))\right)^T\, dt
-\int_0^u  (\parals_t^a)^{-1} \nabla_{\sigma_k} \nu(Y_t^a) dB_t^k\\
& -\frac{1}{2} \int_0^u   (\parals_t^a)^{-1}  \trace \nabla^2 \nu (Y_t^a) dt
   +\int_0^u \|\nabla \nu(Y_t^a)\|^2n_t^a dt.
\end{split}
\end{equation}
With these notations and the expression for $f_a(t)$ in formula ~\eqref{E46} we have 
$$dw_t^{a,T}=dv_t^{a,T}(w_t^{a,T})+f_a(t)dv_t^{a,\nu}$$ and so
\begin{equation}
\Label{E53}
dw_t^{a,T}=dv_t^{a,T}(w_t^{a,T})+\left(f_a(0)e^{-\tilde C_a(t)}+\int_0^te^{-\int_s^t\tilde c_a(u)\,du}\,d\tilde r_s^a\right)dv_t^{a,\nu}.
\end{equation}
We also have 
\begin{equation}
\Label{E55}
dw_t^T=dv_t^T(w_t^T)+\left(f(0)e^{-\tilde C(t)}+\int_{0}^te^{-\tilde C(s,t)}\,d\tilde r_s\right)dv_t^{\nu},
\end{equation}
(recall that $\di e^{-\tilde C(s,t)}=0$ if $s<\a_t$), where
\begin{equation}
\Label{E52}
\begin{split}
v_u^{\nu}
=&-\f12 \int_0^u  \ric_t(n_t)^T\,dt+\int_0^u \|\na\nu(Y_t)\|^2n_t\,dt
-\int_0^u (\parals_t)^{-1} \nabla_{\s(Y_t)\,dB_t} \nu\\
&-\f12 \int_0^u (\parals_t)^{-1}\D^{h,T}\nu(Y_t)\,dt,
\end{split}
\end{equation}
\begin{equation}
\Label{E50}
\begin{split}
 v_u^{T}(\cdot)
=&-\f12 \int_0^u \ric_t(\cdot)\,dt-\int_0^u\rms_t(\cdot)\,dL_t
-\int_0^u\langle \cdot,(\parals_t)^{-1}\na_{\s^T(Y_t)\,dB_t}\nu\rangle n_t\\
&
-\f12\int_0^u \langle \cdot,(\parals_t)^{-1}\D^{h} \nu(Y_t)\rangle n_t\,dt\\&
-\sum_{j\ge 2}\int_0^u \langle \cdot,\na_{\s_j(Y_t)}\nu\rangle(\parals_t)^{-1} \nabla_{\s_j(Y_t)} \nu \,dt.
\end{split}
\end{equation}
We investigate further (\ref{E53})

\begin{align*}
dw_t^{a,T}=&dv_t^{a,T}(w_t^{a,T})+f_a(0)e^{-\tilde C_a(t)}dv_t^{a,\nu}\\
&+ \left( \int_0^te^{-\int_s^t\tilde c_a(u)\,du} \langle w_s^{a,T},\na_{\s_k(Y_s^a)}\nu\rangle dB_s^k\right)dv_t^{a,\nu}\\
&-\f12 \left( \int_0^te^{-\int_s^t\tilde c_a(u)\,du} \langle \ric_s^a(w_s^{a,T}),n_s^a\rangle \,ds\right)dv_t^{a,\nu}\\
&+\f12  \left( \int_0^te^{-\int_s^t\tilde c_a(u)\,du} \langle w_s^{a,T},(\parals_s^a)^{-1}\trace \nabla^2\nu(Y_s^a)\rangle\,ds
\right) dv_t^{a, \nu}.
\end{align*}
From this the required convergence should follow: when $a$ approaches zero,
$v_t^{a,\nu}$ approaches $v_t^{a}$ and  $v_t^{a,T}$ approaches $v_t^{T}$.
If furthermore if $f_a(0)\to f(0)$, then
$$\lim_{a\to 0} f_a(0)e^{-\tilde C_a(t)}
=f(0)e^{-\tilde C(t)}.$$
Hence the components of $w_t^{a,T}$ is the solution to a  system of non-Markovian stochastic differential equations
whose coefficients converge, and furthermore $v_t^{a,T}(w_t^{a,T})$ converges only in UCP, not in $\SH_p([0,T])$.
These factors explain why the proof below is long given this simple explanation. 
To prove that $v_t^{a,T}\to v_t^T$ in UCP topology, c.f. \eqref{E49} and \eqref{E50}, we only need to prove that 
$\di \int_0^\cdot \rms_t^a(\cdot)\,dL_t^a\to \int_0^\cdot \rms_t(\cdot)\,dL_t$ in UCP topology. 
This is a consequence of Lemma~\ref{L1}, together with the facts that UCP topology is equivalent to local convergence 
in ${\scrS}_p$ and that the random variables $L_S^a$ are uniformly bounded in $L^2$.

To make the rest of the proof more transparent let us define
\begin{equation}
\Label{E57}
\begin{split}
\tilde u_t=&\int_0^te^{-\tilde C(s,t)}\langle \cdot,\na_{\s^T(Y_s)\,dB_s}\nu\rangle\\
&-\f12 \int_0^t e^{-\tilde C(s,t)}\langle \ric_s(\cdot),n_s\rangle \,ds+\f12\int_0^te^{-\tilde C(s,t)}\langle \cdot, \parals_s^{-1}\D^{h,T} \nu(Y_s)\rangle\,ds;
\end{split}
\end{equation}
\begin{equation}
\Label{E56}
\begin{split}
\tilde u_t^a=&\int_0^te^{-\tilde C_a(s,t)}\langle \cdot,\na_{\s^T(Y_s^a)\,dB_s}\nu\rangle\\
&-\f12 \int_0^t e^{-\tilde C_a(s,t)}\langle \ric_s^a(\cdot),n_s^a\rangle \,ds+\f12\int_0^te^{-\tilde C_a(s,t)}\langle \cdot, \parals_s^{-1}\D^{h,T} \nu(Y_s^a)\rangle\,ds
\end{split}
\end{equation}
Then, by Lemma \ref{covariant-formula}, we may write
\begin{eqnarray}
\Label{E58}
f_a(t)&=&f_a(0)e^{-\tilde C_a(t)}+\int_0^t d\tilde u_s^a(w_s^{a,T}),\\
f(t)&=&f(0)e^{-\tilde C(t)}+\int_0^t d\tilde u_s(w_s^{T}).
\Label{E59}
\end{eqnarray}
Take these equalities back to equations (\ref{E53}) and (\ref{E55}), we see
\begin{equation*}
dw_t^{a,T}=dv_t^{a,T}(w_t^{a,T})+\left(f_a(0)e^{-\tilde C_a(t)}+\int_0^td\tilde u_s^a(w_s^{a,T})\right)dv_t^{a,\nu}.
\end{equation*}
We also have 
\begin{equation*}
dw_t^T=dv_t^T(w_t^T)+\left(f(0)e^{-\tilde C(t)}+\int_{0}^td\tilde u_s(w_s^{T})\right)dv_t^{\nu}.
\end{equation*} 
Let us simply compute the difference of the two matrices:
\begin{equation}
\Label{E61}
\begin{split}
&d(w_t^{a,T}-w_t^T)\\=&d(v_t^{a,T}-v_t^T)(w_t^{a,T})+dv_t^T(w_t^{a,T}-w_t^T)+f_a(t)\,d(v_t^{a,\nu}-v_t^\nu)\\
&+dv_t^\nu \left(f_a(0)e^{-\tilde C_a(t)}-f(0)e^{-\tilde C(t)}\right)\\
&+dv_t^\nu\int_{0}^td(\tilde u_s^a-\tilde u_s)(w_s^{a,T})+ dv_t^\nu \int_{0}^td\tilde u_s(w_s^{a,T}-w_s^T).
\end{split}
\end{equation}

Now we recall that convergence in UCP topology is implied by local convergence in ${\scrS}_1$. 
For a stopping time ${S'}$ smaller than $S$ we have 
\begin{equation}
\Label{E62}
\left\|\left(\int_0^\cdot dv_t^T (w_t^{a,T}-w_t^T\right)^{S'}\right\|_{{\scrS}_1}\le \|(v^T)^{S'}\|_{{\SH}_\infty}
\cdot \|(w_\cdot^{a,T}-w_\cdot^T)^{S'}\|_{{\scrS}_1}
\end{equation}
Since $v_0^T=0$ and  $v^T$ has locally bounded ${\SH}_\infty$ norm we can split the time interval and we only have to make the proof on $[0,S']$ where ${S'\le S}$ is  a stopping time  so that 
\begin{equation}
\Label{E62.1}
\|(v^T)^{S'}\|_{{\SH}_\infty}<1.
\end{equation}
Then using an argument analogous to that for (\ref{E62}) we see
\begin{equation}
\Label{E63}
\begin{split}
&\left\|\left(\int_0^\cdot dv_t^\nu\int_0^t d\tilde u_s (w_s^{a,T}-w_s^T\right)^{S'}\right\|_{{\scrS}_1}\\&\le \|(v^\nu)^{S'}\|_{{\SH}_\infty}
\cdot \left\|\left(\int_0^\cdot d\tilde u_t (w_t^{a,T}-w_s^T\right)^{S'}\right\|_{{\scrS}_1}\\
&\le \|(v^\nu)^{S'}\|_{{\SH}_\infty}\cdot \|(\tilde u)^{S'}\|_{{\SH}_\infty}\cdot  \|(w_\cdot^{a,T}-w_\cdot^T)^{S'}\|_{{\scrS}_1}
\end{split}
\end{equation}
Since  $v^{\nu}$ and $\tilde u$ have locally bounded ${\SH}_\infty$ norms, with the same argument we can take ${S'}$ so that \begin{equation}
\Label{E63.1}
\|(v^\nu)^{S'}\|_{{\SH}_\infty}\cdot \|(\tilde u)^{S'}\|_{{\SH}_\infty}<1.
\end{equation}
We want to prove that $\|(w_\cdot^{a,T}-w_\cdot^T)^{S'}\|_{{\scrS}_1}\to 0$ as $a\to 0$. Using 
(\ref{E61}-\ref{E62.1}), \eqref{E63}, \eqref{E63.1} and Gronwall lemma, it is sufficient to prove that 
\begin{equation}
\Label{E64}
\lim_{a\to 0}\left\| \left(\int_0^\cdot d(v_t^{a,T}-v_t^T)(w_t^{a,T})\right)^{S'}\right\|_{{\scrS}_1}= 0,
\end{equation}
\begin{equation}
\Label{E65}
\lim_{a\to 0}\left\|\left(\int_0^\cdot dv_t^\nu \left(f_a(0)e^{-\tilde C_a(t)}-f(0)e^{-\tilde C(t)}\right)\right)^{S'}\right\|_{{\scrS}_1}
=0.
\end{equation}
and
\begin{equation}
\Label{E66}
\lim_{a\to 0}\left\|\left(\int_0^\cdot dv_t^\nu\int_{0}^td(\tilde u_s^a-\tilde u_s)(w_s^{a,T})\right)^{S'}\right\|_{{\scrS}_1}
=0.
\end{equation}

For~\eqref{E64} we write 
\begin{equation}
\Label{E67}
\int_0^t d(v_s^{a,T}-v_s^T)(w_s^{a,T})=w_t^{a,T}(v_t^{a,T}-v_t^T)-\int_0^t
v_s^{a,T}-v_s^T)dw_s^{a,T}.
\end{equation}
From~\eqref{E60} and Lemma~\ref{Wbdd} we see that the processes $w^{a,T}$ are uniformly bounded in ${\SH}_2$. Since$v_t^{a,T}\to v_t^T$ in UCP topology, $v_t^{a,T}\to v_t^T$ locally in ${\scrS}_\infty$. We have
$$
\left\|\left(\int_0^\cdot
v_s^{a,T}-v_s^T)dw_s^{a,T}\right)^{S'}\right\|_{{\scrS}_2}\le \|(v^{a,T}-v^T)^{S'}\|_{{\scrS}_{\infty}}\cdot \|w_s^{a,T}\|_{{\SH}_2}
$$
and 
\begin{align*}
\|w_\cdot^{a,T}(v_\cdot^{a,T}-v_\cdot^T)\|_{{\scrS}_2}&\le \|(v^{a,T}-v^T)^{S'}\|_{{\scrS}_{\infty}}\cdot \|w_s^{a,T}\|_{{\scrS}_2}\\&\le 3\|(v^{a,T}-v^T)^{S'}\|_{{\scrS}_{\infty}}\cdot \|w_s^{a,T}\|_{{\SH}_2}.
\end{align*}
From this, ~\eqref{E67} and the fact that ${\scrS}_1$ norm is smaller than ${\scrS}_2$ norm, we obtain ~\eqref{E64}.

For  \eqref{E65} it is sufficient to compute the ${\SH}_2$ norm of 
$$
\left(\int_0^\cdot dv_t^\nu \left(f_a(0)e^{-\tilde C_a(t)}-f(0)e^{-\tilde C(t)}\right)\right)^{S'}
$$
and to use the dominated convergence theorem.

Finally let us prove \eqref{E66}. This can be done  by modifying ${S'}$, using the facts that the processes $W^{a,T}$
have uniformly bounded ${\scrS}_2$ norms and $\tilde u^a\to \tilde u$ in UCP topology. For this last point, use~\eqref{E47ter} 
in Corollary \ref{C2} and 
Corollary~\ref{C3}.
\end{proof}

\subsection{Convergence of the normal parts}\Label{NP}
\begin{lemma}
\Label{LpProd}
For all $p\in[1,\infty)$ and $T>0$, 
\begin{equation}
\Label{LpProdeq}
 \E\left[\int_0^T|f_a(t)-f(t)|^p\,dt\right]\to 0.
 \end{equation}
\end{lemma}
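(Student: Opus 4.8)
The plan is to establish \eqref{LpProdeq} from the representation \eqref{E58}--\eqref{E59} for $f_a$ and $f$, together with the convergence statements already assembled in Corollary~\ref{C2}, Lemma~\ref{L1}, Lemma~\ref{L4} and Lemma~\ref{Wbdd}. Writing
\[
f_a(t)-f(t)=\Big(f_a(0)e^{-\tilde C_a(t)}-f(0)e^{-\tilde C(t)}\Big)+\Big(\int_0^t d\tilde u_s^a(w_s^{a,T})-\int_0^t d\tilde u_s(w_s^T)\Big),
\]
it suffices, by the triangle inequality in $L^p([0,T]\times\Omega)$, to treat the two brackets separately. For the first bracket, I would use $f_a(0)\to f(0)$ together with \eqref{E47bis} of Corollary~\ref{C2}, which gives exactly $\E\int_0^T|e^{-\tilde C_a(t)}-e^{-\tilde C(t)}|^p\,dt\to 0$; boundedness of all these exponentials by $1$ and of the initial conditions handles the cross terms. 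So the first bracket contributes $o(1)$ in $L^p(dt\times\P)$ with no further work.

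The second bracket is the substantive part. I would split it as
\[
\int_0^t d(\tilde u_s^a-\tilde u_s)(w_s^{a,T}) + \int_0^t d\tilde u_s(w_s^{a,T}-w_s^T),
\]
and bound each in $\scrS_p$ (hence in $L^p(dt\times\P)$ after integrating in $t$ over $[0,T]$, which only costs a factor $T$). For the second of these, $\tilde u$ has locally bounded $\SH_p$ norm (its ingredients are the $\ric$, the $\D^{h,T}\nu$ terms and a stochastic integral against Brownian motion, all with bounded coefficients on the compact $M$, and $e^{-\tilde C(s,t)}$ is bounded), so by the $\SH_p$--$\scrS_p$ integral estimate (the inequality labelled \eqref{HSH} in the appendix) this term is controlled by $\|(w^{a,T}-w^T)\|_{\scrS_p([0,T])}$, which tends to $0$ by Lemma~\ref{L4} (UCP convergence of $W^{a,T}\to W^T$ upgrades to $\scrS_p$ on the compact manifold via Corollary~\ref{C3}/Corollary~\ref{C4}, exactly as in the proof of part (3) of the localisation lemma). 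For the first, one integrates by parts as in \eqref{E67}: $\int_0^t d(\tilde u_s^a-\tilde u_s)(w_s^{a,T}) = w_t^{a,T}(\tilde u_t^a-\tilde u_t)-\int_0^t (\tilde u_s^a-\tilde u_s)\,dw_s^{a,T}$, use that $w^{a,T}$ is uniformly bounded in $\SH_2$ (Lemma~\ref{Wbdd}) and that $\tilde u^a\to\tilde u$ in UCP — which is precisely what \eqref{E47ter} of Corollary~\ref{C2}, the convergence of the local-time integrals in Lemma~\ref{L1}, and the UCP convergences $Y^a\to Y$, $\parals^a\to\parals$, $n^a\to n$ combine to give — hence locally in $\scrS_\infty$, so both pieces tend to $0$ in $\scrS_p$ on a small enough stopping interval. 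Patching over a finite partition of $[0,T]$ into such intervals (as is done throughout \S\ref{section7}) completes the estimate.

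The main obstacle I anticipate is the same one flagged in the introduction to \S\ref{section7}: the factor $e^{-\tilde C_a(s,t)}$ inside $\tilde u_t^a$ does \emph{not} converge in $\SH_p$ — only in $L^p(dt\times\P)$ — because $c_a(t)$ blows up near the boundary. So one cannot simply bound $\int_0^\cdot d(\tilde u_s^a-\tilde u_s)(w_s^{a,T})$ in $\SH_p$ directly; the integration-by-parts step \eqref{E67} is essential precisely to move the troublesome convergence from an $\SH_p$ statement about $d\tilde u^a$ to an $\scrS_\infty$ (i.e. uniform) statement about $\tilde u^a-\tilde u$, where \eqref{E47ter} applies. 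Likewise the $L_t^a\to L_t$ convergence is only in $\scrS_p$, not $\SH_p$, so the local-time integrals appearing in $v^{a,T}$ and $\rms^a$ must be handled through Lemma~\ref{L1} rather than a naive estimate. Once these two points are respected, the argument is a routine assembly; I would also remark that combining \eqref{LpProdeq} with Lemma~\ref{L4} yields the $L^p([0,T]\times\Omega)$ convergence of the full process $W^a\to W$ claimed at the end of \S\ref{section5}, and feeds directly into Theorem~\ref{T4}.
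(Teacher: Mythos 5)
Your decomposition and most of the ingredients match the paper's proof: the paper also starts from $f_a(t)=f_a(0)e^{-\tilde C_a(t)}+\int_0^te^{-\tilde C_a(s,t)}\,d\tilde r_s^a$ and $f(t)=f(0)e^{-\tilde C(t)}+\int_0^te^{-\tilde C(s,t)}\,d\tilde r_s$, disposes of the initial-condition terms by boundedness of the exponentials and \eqref{E47bis}, and handles the piece carrying $w^{a,T}-w^T$ by the boundedness of $e^{-\tilde C(s,t)}$, the $\SH_p$ convergence of $\tilde r^a\to\tilde r$ (via \eqref{HSH}, Lemma~\ref{L4} and Lemma~\ref{Wbdd}). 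Up to a regrouping of the cross terms, that is the same argument.

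The gap is in your treatment of the weight-difference piece, i.e.\ $\int_0^t\bigl(e^{-\tilde C_a(s,t)}-e^{-\tilde C(s,t)}\bigr)d\tilde r_s^a$ (your $\int_0^t d(\tilde u_s^a-\tilde u_s)(w_s^{a,T})$). You propose to bound it in $\scrS_p([0,T])$ by integrating by parts and invoking ``$\tilde u^a\to\tilde u$ in $\scrS_\infty$, where \eqref{E47ter} applies.'' This cannot work: \eqref{E47ter} is an $L^p(ds\,dt\times\P)$ statement, not a uniform one, and no uniform-in-$t$ statement is available here. Indeed $t\mapsto\int_0^te^{-\tilde C(s,t)}\,d\tilde r_s^{\,}=\tilde r_t-\tilde r_{\alpha_t}$ jumps at every right endpoint of an excursion (these are precisely the jumps of $f$), whereas $t\mapsto\int_0^te^{-\tilde C_a(s,t)}\,d\tilde r_s^a$ is continuous; a family of continuous processes cannot converge in $\scrS_p$ to a process with jumps (the uniform distance is at least half the largest jump, as Section~\ref{outline} emphasises). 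This is exactly why the lemma is stated in $L^p([0,T]\times\Omega)$ and not in $\scrS_p$. The paper's proof instead \emph{fixes} the outer time $t$, bounds $\E\bigl|\int_0^t(e^{-\tilde C_a(u,t)}-e^{-\tilde C(u,t)})\,d\tilde r_u^a\bigr|^p$ by $C(p,T)\,\E\int_0^t|e^{-\tilde C_a(u,t)}-e^{-\tilde C(u,t)}|^p\,du$ via the $\scrS_p$--$\SH_p$ comparison on $[0,t]$, and only then integrates over $t\in[0,T]$ and applies \eqref{E47ter}. Note also that for fixed $t$ the integrand $u\mapsto e^{-\tilde C_a(u,t)}-e^{-\tilde C(u,t)}$ is \emph{not adapted} (it sees $\alpha_t$); the paper points out this is harmless because $d\tilde r_u^a$ contains no integration against $B^1$. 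Your proposal does not address adaptedness, and it must be addressed before any stochastic-integral estimate is applied to this term. Replacing your integration-by-parts step by this fixed-$t$ Fubini argument repairs the proof.
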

\begin{proof}
Write 
\begin{align*}
f_a(t)-f(t)&=\left(f_a(0)-f(0)\right)e^{-\tilde C_a(t)}+f(0)\left(e^{-\tilde C_a(t)}-e^{-\tilde C(t)}\right)\\
&+\int_0^t\left(e^{-\tilde C_a(s,t)}-e^{-\tilde C(s,t)}\right)d\tilde r_s^a
+\int_0^te^{-\tilde C(s,t)}\,d\left(\tilde r_s^a-\tilde r_s\right).
\end{align*}
The first term in the right converges to $0$ in $L^p([0,T]\times \P)$ due to the positiveness of $\tilde C_a(t)$. The second term in the right converges to $0$ due to~\eqref{E47bis}. For the last term in the right we use boundedness of $e^{-\tilde C(s,t)}$ and the fact that $\tilde r^a_t\to \tilde r_t$ in $\SH_p([0,T])$ due to~\eqref{E45} and~\eqref{E45-2} together with~\eqref{HSH} and Lemmas~\ref{L4} and~\ref{Wbdd}
which allow to prove that $w_t^{a,T}\to w_t^T$ in  $\scrS_q([0,T])$, $q\in[1,\infty)$.

We are left to prove that 
$$
\int_0^t\left(e^{-\tilde C_a(s,t)}-e^{-\tilde C(s,t)}\right)d\tilde r_s^a\to 0\quad \hbox{in}\quad L^p([0,T]\times \P.
$$
Here it is easier to replace $S$ by $T\ge S$ which is deterministic.
We have 
\begin{align*}
&\E\left[\int_0^T\left|\int_0^s\left(e^{-\tilde C_a(u,s)}-e^{-\tilde C(u,s)}\right)d\tilde r_u^a\right|^p\,ds\right]\\
&=\int_0^T\E\left[\left|\int_0^s\left(e^{-\tilde C_a(u,s)}-e^{-\tilde C(u,s)}\right)d\tilde r_u^a\right|^p\right]\,ds\\
&\le C(p,T)\int_0^T\E\left[\int_0^s\left|e^{-\tilde C_a(u,s)}-e^{-\tilde C(u,s)}\right|^p\,du\right]\,ds.
\end{align*}
The last inequality comes from the fact that the identity map from $\scrS_p([0,s])$ to $\SH_p([0,s])$ is continuous and bounded by $C(p,s)$ satisfying $0<C(p,s)\le C(p,T)$. Notice that the fact that $u\mapsto e^{-\tilde C_a(u,s)}-e^{-\tilde C(u,s)}$ is not adapted is not a problem since in $d\tilde r_u^a$ there is no integration with respect to $B^1$. We conclude with~\eqref{E47ter} which is easily seen to be true with $S$ replaced by $T$.
\end{proof}
\begin{lemma}
\Label{L3}
For all $p\in [1,\infty)$, $T>0$ and all  smooth 
 $\phi : M\to \RR_+$ vanishing in a neighbourhood of $\partial M$, 
 $\phi(Y_t^a)f_a(t)\to \phi(Y_t)f(t)$ in $\scrS_p([0,T])$.
\end{lemma}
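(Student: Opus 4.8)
The plan is to decompose the difference $\phi(Y_t^a)f_a(t)-\phi(Y_t)f(t)$ into two pieces,
\[
\phi(Y_t^a)f_a(t)-\phi(Y_t)f(t)=\bigl(\phi(Y_t^a)-\phi(Y_t)\bigr)f_a(t)+\phi(Y_t)\bigl(f_a(t)-f(t)\bigr),
\]
and estimate each in $\scrS_p([0,T])$. The first term is the easy one: since $\phi$ vanishes in a neighbourhood of $\partial M$, say on $\{R\le\alpha\}$, the factor $\phi(Y_t^a)-\phi(Y_t)$ is supported on $\{R(Y_t^a)>\alpha\}\cup\{R(Y_t)>\alpha\}$, i.e.\ in the interior region where the drifts $A^a$ are uniformly small; moreover $\phi$ is Lipschitz so $|\phi(Y_t^a)-\phi(Y_t)|\le\|\nabla\phi\|_\infty\,\rho(Y_t^a,Y_t)$, which tends to $0$ in $\scrS_{2p}([0,T])$ by Theorem~\ref{T2}. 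Combined with the uniform $\scrS_{2p}$-bound on $f_a$ coming from Lemma~\ref{Wbdd} (since $|f_a(t)|\le|W_t^a|$ and $\sup_a\|W^a\|_{\scrS_q([0,T])}<\infty$ for all $q$), the Cauchy--Schwarz inequality gives that this term converges to $0$ in $\scrS_p([0,T])$.

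The delicate term is $\phi(Y_t)\bigl(f_a(t)-f(t)\bigr)$, because Lemma~\ref{LpProd} only provides convergence of $f_a-f$ in $L^p([0,T]\times\Omega)$, not uniformly in $t$. Here the point is precisely that $\phi$ kills the neighbourhood of the boundary where the jumps of $f$ (and the $e^{-C_a(t)}$ oscillations of $f_a$) live. Using the representations \eqref{E58}--\eqref{E59},
\[
f_a(t)-f(t)=\bigl(f_a(0)-f(0)\bigr)e^{-\tilde C_a(t)}+f(0)\bigl(e^{-\tilde C_a(t)}-e^{-\tilde C(t)}\bigr)+\int_0^t d\tilde u_s^a(w_s^{a,T})-\int_0^t d\tilde u_s(w_s^{T}),
\]
I would multiply through by $\phi(Y_t)$ and treat the terms separately. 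The last (integral) term is handled as in the proof of Lemma~\ref{LpProd}: $\tilde u^a\to\tilde u$ in UCP and $w^{a,T}\to w^T$ locally in $\scrS_q$ by Lemma~\ref{L4}, so $\phi(Y_\cdot)\int_0^\cdot d\tilde u_s^a(w_s^{a,T})\to\phi(Y_\cdot)\int_0^\cdot d\tilde u_s(w_s^T)$ in $\scrS_p$; multiplication by the bounded continuous process $\phi(Y_t)$ preserves $\scrS_p$-convergence (and one can absorb it inside, since on $\{Y_t\in F_0\}$, where $R(Y_t)\le 2\delta_0$, one may take $\alpha<\delta_0$ so that $\phi(Y_t)=0$ there — effectively localising to the interior where everything converges nicely). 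For the terms containing $e^{-\tilde C_a(t)}$, observe that on $\operatorname{supp}\phi\subset\{R\ge\alpha\}$ one has $R_t^a\ge\alpha/2$ for small $a$, hence $c_a(t)\to 0$ and $\tilde C_a(t)\to\tilde C(t)$ uniformly on that set by Lemma~\ref{L2}; thus $\phi(Y_t)e^{-\tilde C_a(t)}\to\phi(Y_t)e^{-\tilde C(t)}$ in $\scrS_p$, and with $f_a(0)\to f(0)$ (which itself follows from $Y_0^a\to Y_0$ and continuity of the initial frame, or is an assumed normalisation $f_a(0)=f(0)=\langle\mathrm{Id},\nu\rangle$) the first two terms converge to $0$ in $\scrS_p$.

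The main obstacle I anticipate is making rigorous the claim that $\phi(Y_t)$ localises the problem away from the boundary in a way compatible with the stopping-time splitting used throughout Section~\ref{section7}: one needs that whenever $\phi(Y_t)\neq 0$ the process is at distance $\ge\alpha$ from $\partial M$, and on the complementary time set $\phi(Y_t)=\phi(Y_t^a)=0$ for $a$ small by the UCP convergence $R^a\to R$ (Theorem~\ref{T2}), so the product $\phi(Y_t^a)f_a(t)-\phi(Y_t)f(t)$ vanishes identically there. Once this localisation is in place, the convergence on the interior region reduces to the smooth, uniform convergence of the coefficients of the (finite-variation) equation for $W^a$ together with the $\scrS_p$-bounds of Lemma~\ref{Wbdd}, and the proof is routine. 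Finally, as in the localisation lemma of \S\ref{Subsection9.1}, passing from a general stopping time $S$ to a deterministic horizon $T$ and patching the two regimes (interior versus $F_0$) via Corollary~\ref{C3} and the fact that UCP convergence is local completes the argument.
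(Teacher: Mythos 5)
Your decomposition of $\phi(Y_t^a)f_a(t)-\phi(Y_t)f(t)$ into $\bigl(\phi(Y_t^a)-\phi(Y_t)\bigr)f_a(t)+\phi(Y_t)\bigl(f_a(t)-f(t)\bigr)$ is exactly the paper's, and your treatment of the first piece is fine. The gap is in the second piece. You propose to multiply the representation \eqref{E58}--\eqref{E59} by $\phi(Y_t)$ and argue that the factor $\phi(Y_t)$ localises everything away from the boundary. But the quantities $\tilde C_a(t)=\int_0^t\tilde c_a(s)\,ds$ and $\int_0^te^{-\tilde C_a(s,t)}\,d\tilde r_s^a$ depend on the \emph{whole history} $s\in[0,t]$, not on the position of $Y_t$ at the terminal time; the constraint $R(Y_t)\ge\alpha$ coming from $\operatorname{supp}\phi$ does not prevent $Y_s^a$ from having spent time near $\partial M$ for $s<t$, where $\tilde c_a(s)$ blows up. In particular the assertion ``$c_a(t)\to 0$ and $\tilde C_a(t)\to\tilde C(t)$ uniformly on that set by Lemma~\ref{L2}'' is not what Lemma~\ref{L2} gives (it gives convergence in probability for fixed $s<t$, and for $t>\zeta$ the relevant mechanism is accumulation of local time forcing $e^{-\tilde C_a(t)}\to 0$, not smallness of $c_a$ at time $t$). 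More seriously, the integral term is precisely the one for which the paper only ever obtains $L^p([0,T]\times\Omega)$ convergence (Lemma~\ref{LpProd}, via \eqref{E47ter}), because the kernel $e^{-\tilde C_a(s,t)}-e^{-\tilde C(s,t)}$ does not converge uniformly in $(s,t)$ near $s=\a_t$; saying it is ``handled as in the proof of Lemma~\ref{LpProd}'' and that multiplication by $\phi(Y_t)$ ``preserves $\scrS_p$-convergence'' is circular, since there is no $\scrS_p$-convergence to preserve.

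The paper closes this gap differently: after reducing to UCP convergence (legitimate because $\phi$ is bounded and $\sup_a\|f_a\|_{\scrS_p([0,T])}<\infty$), it applies It\^o's product rule to $\phi(R_t)\bigl(f_a(t)-f(t)\bigr)$ rather than multiplying the closed-form solution. This has two effects you are missing. First, the unbounded coefficient $\tilde c_a$ then appears only in the combination $\tilde c_a(t)\phi(R_t)(\cdots)\,dt$ and $\phi(R_t)(\tilde c(t)-\tilde c_a(t))f(t)\,dt$, i.e.\ evaluated at the \emph{same} time $t$ as $\phi(R_t)$, where the vanishing of $\phi$ near $\partial M$ makes $\phi(R_t)\tilde c_a(t)$ uniformly controlled. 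Second, the difference $f_a(t)-f(t)$ itself appears only under time integrals (against $d\phi(R_t)$ and against bounded drifts), where the $L^p(dt\times\P)$ convergence \eqref{LpProdeq} suffices; the uniform-in-$t$ statement is then recovered for the \emph{product} by Gronwall, using also that $\tilde r^a\to\tilde r$ in the semimartingale topology. If you want to salvage your route, you must prove a uniform-over-$\{t:R_t\ge\alpha\}$ estimate on $\int_0^t\bigl(e^{-\tilde C_a(s,t)}-e^{-\tilde C(s,t)}\bigr)\,d\tilde r_s^a$, which is essentially as hard as the lemma itself; the product-rule argument is the device that avoids having to do so.
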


\begin{proof}
Since $\phi$ is bounded and the processes $f_a(t)$ are uniformly bounded in $\scrS_p([0,T])$ independently of~$a$, it is sufficient to prove convergence in UCP topology. 

We have 
\begin{align*} \phi(R_t^a)f_a(t)-\phi(R_t)f(t)=\left(\phi(R_t^a)-\phi(R_t)\right)f_a(t)+\phi(R_t)\left(f_a(t)-f(t)\right).
\end{align*}
Since the processes $f_a(t)$ are uniformly bounded in $\scrS_p([0,T])$ independently of~$a$ and $R_t^a\to R_t$ in $\scrS_p([0,T])$, the fist term in the right 
converges to $0$ in UCP topology. Let us consider the second term:
\begin{align*}
 d\left(\phi(R_t)(f_a(t)-f(t))\right)=&(f_a(t)-f(t))d\phi(R_t)+ \phi(R_t)d\left(f_a(t)-f(t)\right)\\&
 +d\phi(R_t)d\left(f_a(t)-f(t)\right).
\end{align*}
The integral of the first term in the right converges to~$0$ in UCP topology, due to~\eqref{E47ter} and the fact that $\phi(R_t)$
has uniformly bounded absolutely continuous local characteristics.

On the other hand 
\begin{align*}
 &\phi(R_t)d\left(f_a(t)-f(t)\right)\\=&-\tilde c_a(t)\phi(R_t)(f_a(t)-f(t))\,dt +\phi(R_t)(\tilde c(t)-\tilde c_a(t))f(t)\,dt\\
 &+\phi(R_t)(f_a(t)-f(t))d\tilde r_a(t)+\phi(R_t) f(t)d(\tilde r_t^a-\tilde r_t).
\end{align*}

From subsection~\ref{TP} together with~\eqref{E45} and~\eqref{HSH} we get that $\tilde r^a\to \tilde r$ in semimartingale topology. 

So due to the presence of $\phi(R_t)$ which vanishes in a neighbourhood of $\partial M$ all the terms behave nicely, with the help of~\eqref{LpProdeq}.

Finally the covariance term can be treated with similar methods.
\end{proof}
With this we completed the proof of Theorem~\ref{T4} and close this section.
  

\appendix
\section{\\The half line example}
\Label{section-half-line}

On the half line we select a reflected Brownian motion with `good' sample path properties. 
To begin with, we consider two reflected Brownian motions: the solution to the Skorohod problem associated with a Brownian motion $x+B_t$ and the solution to the Tanaka problem associated with
$x+\int_0^t \sgn(x+B_s) dB_s$. The first is a stochastic flow, see Lemma \ref{reflected-BM} below, while the second is not. 

The solution and the derivative flow to the Skorohod problem for $x+B_t$ is approximated by solutions 
 and derivative flows to a family of SDEs with explicit drifts. Furthermore,  its derivative flow is shown to coincide with the damped parallel translation introduced in Appendix \ref{section:damped}.

Denote the space of real valued continuous function with $f(0)=0$ by $C_0(\R; \R)$ 
and its subset of non-negative valued functions by $C_0(\R; \R_+)$. To each $x\ge 0$ and $f\in C_0(\R, \R)$
there exists a unique nondecreasing  function
$h\in C_0(\R, \R_+)$ such that, for $g(t):=x+f(t)+h(t)$,  $\int_0^t \1_{\{0\}} (g(s))  dh(s) =h(t)$.
The pair $(g,h)$ is the solution to the Skorohod problem associated to $(x, f)$ and is denoted by
\begin{equation}
\label{SkPr}
\Phi_\cdot(x, f)=(g,h).
\end{equation}
 It is well known that $h(t)=-\inf_{0\le s \le t} \{(x+f(s))\wedge 0\}$.
 
If $B_t$ is a standard real valued Brownian motion, then the Skorohod problem 
defines the pair of stochastic processes $(X_t(x), L_t(x))$, and $L_t(x)$ is called the local time at $0$ of $X_t(x)$
and \begin{equation}
X_t(x)=x+B_t+L_t(x).
\end{equation}

On the other hand, the process $(|x+B_t|)$ is also a reflected Brownian motion. In fact, by Tanaka's formula, $
|x+B_t|=x+\beta_t+2\l_t^0(\omega)$
where $\beta_t=\int_0^t \sgn(x+B_s) dB_s$ is a Brownian motion and $\l_t^0$ is the local time of $x+B_t$.
The local time $\l_t^0$ is also the boundary time, i.e. the total time spent by $x+B_t$ on the boundary $\{0\}$ before time $t$.
The local time of a brownian motion was introduced by P. L\'evy (1940) as a Borel measurable function 
$ \Omega\times \R_+\times \R\to \R_+$ such that (1) for all $f\in \B_b(\R;\R)$,
 $\int_0^t f(x+B_s)ds=\int_\R f(a) \l_t^ada$, and  (2)  $(t,a)\mapsto \l^a_t(\omega)$ is continuous  a.s..
It is also well known that $\l_t^0=\lim_{\e \to 0}\frac{1}{ \e} \int_0^t \1_{[0, \e)}(x+B_s)ds= \lim_{\e \to 0}\frac{1}{ 2\e} \int_0^t \1_{(-\e, \e)}(x+B_s)ds$. 
It is clear that  $(|x+B_t(\omega)|, 2\l_t^0(\omega))$ is the solution
to the Skorohod problem associated with $x+\beta_\cdot(\omega)$, and $|x+B_t|$ is not a stochastic flow.

It turns out that $X_t(x)=x+B_t+L_t(x)$ has many nice properties.
Despite that the probability distribution of $X_t(x)$ is that of a reflecting Brownian, on a sample path level it is not at all
the reflected path! It is rather, a lifted path, by `the lower envelope' curve. The lower envelope curve is the 
unique continuous decreasing curve that is below the given curve $(B_t)$. Let $0<x<y$.  Let $\tau(y)=\inf \{t>0: X_t(y)=0\}$. 
It is clear that  $X_t(y)-X_t(x)=y-x$ until $X_t(x)$ reaches zero and
 the two stochastic processes coalesce  when $X_t(y)$ reaches zero.  
 If we compensate $x$ by $L_t(x)$, the two processes  $X_t(x+L_t(x))$ and $X_t(x)$ are equal for all $t$.

In Lemma \ref{reflected-BM} we summarise the sample properties of $X_t(x)$ and discuss differentiability of $X_t(x)$ with respect to $x$. 
These properties are elementary and not surprising. It is perhaps more surprising that these elementary properties of $X_t(x)$ are passed to the reflected Brownian motion on a manifold with boundary.
We should mention that differentiability with respect to the initial value was studied in   \cite[K. Burdzy]{Burdzy:09} 
and   \cite[S. Andres]{Andres:11} for domains in~$\R^2$ and polygons. 

For $s<t$ define $\theta_s B=B_{s+\cdot}-B_s$.  Let $\xi$ be an $\F_s$ measurable random variable and
$(X_{s,t}(\xi, \theta_s B), L_{s,t}(\xi, \theta_sB))$ the solution to the Skorohod problem for $(\xi, \theta_s B)$,
$$X_{s,t}(\xi, \theta_s B)=\xi+ (\theta_s B)_{t-s}+L_{s,t}(\xi, \theta_sB).$$
Define $L_{s,t}(\xi, \theta_sB)=0$ for $0\le t \le s$. For simplicity we also omit  $B$ in the flow, and 
write $X_t(x)$ for $X_t(x,B)$.
Let $T(x,y)=\inf \{t>0, \, X_t(x)=X_t(y)\}$ be the first time $X_t(x)$ and $X_t(y)$ meet.

\begin{lemma}\Label{reflected-BM}
The following statements hold pathwise.
\begin{enumerate}
\item For all $0\le s <t$, $x\in \R$,
 $$X_{s,t} (X_s(x, B), \theta_s B )=X_t(x,B), \quad L_t(x,B)= L_s(x,B)+L_{s,t}(X_s(x, B),\theta_s B).$$
 \item Let  $0<x<y$,  then $X_t(x)$ and $X_t(y)$ coalesce
 at the finite time $T(x,y)$. Furthermore $T(x,y)=\tau(y)$ and
$L_{\tau(y)}(x)=y-x$.
\item For all $t\ge 0$ and $x>0$, $X_t(x+L_t(x))=X_t(x)$ .
\item  For all $x\ge 0$ and $t\ge 0$, $$\partial_x X_t(x) =\left\{\begin{aligned}
&1,  \qquad t<\tau(x) \\
&0, \qquad   t>\tau(x)\end{aligned} \right..$$
\end{enumerate}
\end{lemma}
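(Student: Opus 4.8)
\textbf{Proof proposal for Lemma~\ref{reflected-BM}.}

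The plan is to establish the four statements in order, each following from the explicit Skorohod formula $h(t)=-\inf_{0\le s\le t}\{(x+f(s))\wedge 0\}$ and the flow property of $B$. For (1), I would argue by uniqueness in the Skorohod problem: the candidate pair $\big(X_{s,t}(X_s(x,B),\theta_sB),\,L_s(x,B)+L_{s,t}(X_s(x,B),\theta_sB)\big)$ is easily checked to be nondecreasing in its second component, to start from $X_s(x,B)$ at time $s$, and to have its boundary-support property inherited from the two pieces (for $s'\le s$ from $L(x,B)$, for $s'\ge s$ from $L_{s,t}$). Hence it solves the Skorohod problem for $(x,B)$ on $[0,t]$ and must coincide with $(X_t(x,B),L_t(x,B))$ by uniqueness; this is just the cocycle identity for the lift-by-lower-envelope construction.

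For (2), I would use that as long as neither process has touched $0$, both evolve as $x+B_t$ and $y+B_t$, so $X_t(y)-X_t(x)=y-x$ and the gap is preserved. The smaller process $X_t(x)$ hits $0$ first, at a time I call $\tau(x)$... but actually, since $X_\cdot(y)$ hits $0$ at $\tau(y)>\tau(x)$, and $X_\cdot(x)$ must itself have a well-defined first hitting time which by the envelope description is exactly the first time $x+B_t$ would go negative, i.e.\ $-x=\inf_{s\le t}B_s$. Once $X_t(x)$ is at $0$ it receives local-time push while $X_t(y)$ is still strictly positive and pushless, so the gap $y-x$ is eroded at exactly the rate $dL_t(x)$; the two coalesce precisely when the accumulated push equals $y-x$, which is the instant $X_t(y)$ itself reaches $0$, namely $\tau(y)$. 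Thus $T(x,y)=\tau(y)$ and $L_{\tau(y)}(x)=y-x$. Statement (3) is then immediate: $X_t(x+L_t(x))$ and $X_t(x)$ have initial gap $L_t(x)$ which, by the same erosion argument applied up to the running value, is consumed exactly by time $t$ — more cleanly, one shows $X_s(x+L_t(x))=X_s(x)+ (L_t(x)-L_s(x))^+$ for $s\le t$ and evaluates at $s=t$; alternatively it follows directly from (1) with the roles of the two starting points, since adding the accumulated local time to the start is undone by the Skorohod map.

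For (4), differentiate in $x$: for $t<\tau(x)$ the process is $x+B_t$ in a neighbourhood of $x$ (the hitting time $\tau$ is continuous and strictly positive, so nearby initial points also have not hit $0$ by time $t$), giving $\partial_x X_t(x)=1$; for $t>\tau(x)$, statement (2) with $y$ slightly larger than $x$ shows $X_t(y)=X_t(x)$ for all $y$ in a one-sided neighbourhood once $t>\tau(y)$, and since $\tau(y)\downarrow\tau(x)$ as $y\downarrow x$ this forces the right derivative to vanish; a symmetric comparison with $y<x$ handles the left derivative. The main obstacle — and the only genuinely delicate point — is the erosion-rate bookkeeping in (2)–(3): one must justify rigorously that the difference $X_t(y)-X_t(x)$ decreases by exactly $dL_t(x)$ while $X_t(x)=0<X_t(y)$ and stays constant otherwise, which amounts to writing $X_t(y)-X_t(x)=\big((y-x)-L_t(x)\big)^+$ and verifying this from the two Skorohod representations; everything else is a routine consequence of the envelope formula. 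I would also remark that the non-differentiability set $\{\tau(x)\}$ has measure zero in $t$, so the pathwise derivative in (4) is the relevant one for later use.
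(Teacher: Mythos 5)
Your proposal is correct and follows essentially the same route as the paper: part (1) by uniqueness of the Skorohod problem for the concatenated pair, parts (2)--(3) from the explicit envelope formula $L_t(x)=-\inf_{s\le t}\{(x+B_s)\wedge 0\}$, and part (4) by one-sided comparison of starting points. Your closed-form identity $X_t(y)-X_t(x)=\big((y-x)-L_t(x)\big)^+$ is a slightly tidier packaging of the paper's computation of $L_{\tau(y)}(x)$ and its direct verification of (3), and your use of $\tau(y)\downarrow\tau(x)$ for the right derivative in (4) is equivalent to the paper's appeal to $L_t(x)>0$ for $t>\tau(x)$, both resting on the same almost-sure property of the running minimum of Brownian motion.
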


\begin{proof}
For part (1), we observe that,
$$X_{s,t}(X_s(x, B), \theta_sB) =x+B_t+ L_s(x, B)+L_{s,t}(X_s(x), \theta_sB).$$
Define $\tilde L(r)=L_r(x, B)$ when $r\le s$ and
$\tilde L(t)=L_s(x, B)+L_{s,t}(X_s(x), \theta_sB)$ for $t>s$. Then $\tilde L\in  C_0(\R_+, \R_+)$,
and $(X_{s,t}(X_s(x, B), \theta_sB) , \tilde L)$ solves the Skorohod problem
for $(x, B)$. By the uniqueness of the Skorohod problem, 
$X_t(x, B)=X_{s,t}(X_s(x, B), \theta_sB) $ and
$\tilde L(t)=L_t(x, B)$.

Part  (2).  From the construction of the solution of the Skorohod problem,
 it is easy to see that
$\tau(x) <\tau(y)$ and $X_t(y)-X_t(x)=y-x$ on $\{t<\tau(x)\}$, and $X_{\tau(y)}(y)=X_{\tau(x)}(x)$ on $\{t =\tau(y)\}$.
By the flow property, $0\le X_t(x)\le X_t(y)$ a.s. for all time. 
In other words,  the two curves $\{ X_s(x), s\le t\}$ and $\{X_s(y), s\le t\}$ are parallel on $\{t<\tau(x)\}$,
 until the lower curve hits zero after which the distance between the two curves decreases until  $X_t(y)$ reaches zero,
upon which point the two curves meet. The accumulated upward lift that $X_t(x)$ receives up to $\tau(y)$ is
 $$-\inf_{0\le s \le \tau(y, \omega)}\{ (x-y+y+B_s(\omega))  \wedge 0\}=y-x.$$
This shows that $X_{\tau(y)}(x)=0$ and together with the flow property we see the coalescence. We completed the proof that 
$T(x,y)=\tau(y)$ and 
$L_{\tau(y)}(x)=y-x$.

Part (3). On $\{t<\tau(x)\}$, $X_t(x+L_t(x))=X_t(x)$ trivially. If $t\ge \tau(x+L_t(x))$, 
\begin{align*}
X_t (x+L_t(x))&=x+L_t(x)+B_t(x)-\inf_{0\le s\le t}(  (x+L_t(x)+B_s )\wedge 0)\\
&=x+L_t(x)+B_t(x)-\inf_{0\le s\le t}(  (x+B_s )\wedge 0) -L_t(x)=X_t(x).
\end{align*}
If $\tau(x)\le t< \tau(x+L_t(x))$,
$X_t(x+L_t(x))=x+L_t(x)+B_t$ while $X_t(x)$ receives the kick of the size $L_t(x)$: $X_t(x)=x+B_t+L_t(x)$.

Part (4).  Take $t<\tau(x)$. Then  $t<\tau(x+\e)$ for $\e>t-\tau(x)$  and $X_t(x+\e)=X_t(x)+\e$, consequently $\partial_x X_t(x)=1$. Suppose $t> \tau(x)$. Then by part (2),
 $X_t(x)=X_t(x-\e)$ for any $\e<0$. 
If $0<\e<L_t(x)$,
$0\le X_t(x+\e)-X_t(x)\le X_t(x+L_t(x))-X_t(x)=0$. We used part (3) in the last step. Hence
$\partial_x X_t(x)=0$ for $t> \tau(x)$. This completes the proof.
\end{proof}

 A consequence of Lemma \ref{reflected-BM} is the following. If we pick up a time $t>\tau(x)$,  then
$X_t(x+L_t(x))$ must reach $0$ between $\tau(x)$ and $t$.

In the following we construct a family of stochastic processes $\{ X_\cdot^a(x), a> 0\}$ 
with the properties stated below illustrating the general construction.
(1) For each $a$, $X_\cdot^a$ is a stochastic flow and $x\mapsto X_t^a(x)$ is a diffeomorphism on its image; (2) they approximate the reflected Brownian motion;
(3) their derivatives approximate $\partial _x X_t(x)$.

Let $\phi(x)=\int_0^{x } e^{-\frac{y^2}{ 2}}dy$. For $x>0$ and $a>0$ let   $$u^a(x)=P(\tau(x)>a)
=\sqrt{\frac{2}{ \pi}}\phi\left({\frac{x}{ \sqrt a}}\right)=2P_a \1_{(0, x)}(0),$$ 
where $P_t$ denotes the heat semigroup.
Thus $\partial_x u^a=2p_a$ where $p_a$ is the Gaussian kernel.
 Formally  $u^0( 0)=P(\tau(0)>0) =0$ and for $x>0$, $u^0( x)=P(\tau(x)>0)=1$,
and $\frac{\partial } { \partial x} \ln u^0(x)=\frac{\partial }{ \partial x} \1_{(-\infty, x)}= \delta_0(x)$,  the Dirac mass at $0$.  
 Note that $\ln u^a$ is a concave function with positive gradient:
\begin{eqnarray}
 \Label{E2}
 \partial_x\ln u^a&=&\frac{1}{\sqrt a} (\ln \phi)'(\frac{x}{\sqrt a})=\f{1} {\sqrt a}  \f{e^{-\frac{x^2}{2a}}}{\phi(\frac{x} {\sqrt a})}>0;\\
 \partial_x^2(\ln u^a)&=&\frac{1}{  a} (\ln \phi)''(\frac{x}{ \sqrt a})
= - \frac{xe^{- \frac{x^2}{ 2a}}}{ a^{ \frac {3}{ 2}} \phi( \frac{x}{ \sqrt a})  }
 - \frac{ e^{- \frac {x^2}{ 2a}}  } { a\phi( \frac{x}{ \sqrt a}) } <0.
  \Label{E1}
\end{eqnarray}

\begin{proposition}\Label{P1}
Let $X_t^a(x)$ be the solution to
\begin{equation}
\Label{line}
X_t^a(x)=x+B_t+\int_0^t \partial_x \ln u^a(X^a_s(x))ds.
\end{equation}
Then $x\mapsto X_t^a(x)$ is an increasing function, $a\mapsto X_t^a(x)$ decreases as $a$ decreases to zero. For every $(t, x, \omega)$, $\lim_{a \downarrow 0} X_t^a(x)$ exists.
For every $x\ge 0$, the following holds for almost surely all $\omega$:
 $\lim_{a \downarrow 0} X_t^a(x) =X_t(x)$ for all $t$.
 \end{proposition}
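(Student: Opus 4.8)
The plan is to exploit the fact that, by construction, all the equations \eqref{line} and the Skorohod equation $X_t(x)=x+B_t+L_t(x)$ are driven by the \emph{same} Brownian motion $B$, so everything can be carried out pathwise. First I would establish monotonicity in $x$: since $\partial_x\ln u^a$ is smooth on $(0,\infty)$ and each $X^a_\cdot(x)$ stays in $(0,\infty)$ (the drift $\partial_x\ln u^a$ blows up like $c/x$ near $0$, which is the same mechanism as in \eqref{ERat}, so the solution never reaches the boundary and is globally defined), the flow $x\mapsto X^a_t(x)$ is the solution flow of an ODE-perturbed SDE with smooth coefficients on $(0,\infty)$, hence a strictly increasing diffeomorphism onto its image. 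Next, monotonicity in $a$: I would fix $x$ and two parameters $0<a'<a$ and compare $X^{a'}_t(x)$ with $X^a_t(x)$ by a comparison argument for one-dimensional SDEs with common noise. The key analytic input is that $a\mapsto \partial_x\ln u^a(r)=\tfrac1{\sqrt a}e^{-r^2/(2a)}/\phi(r/\sqrt a)$ is, for each fixed $r>0$, \emph{monotone in $a$} in the appropriate direction — a one-variable calculus check on $\tfrac1{\sqrt a}(\ln\phi)'(r/\sqrt a)$; combined with the concavity \eqref{E1} this lets a standard comparison theorem conclude $X^{a'}_t(x)\le X^a_t(x)$ pathwise. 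Monotone bounded sequences converge, so $X^0_t(x):=\lim_{a\downarrow 0}X^a_t(x)$ exists for every $(t,x,\omega)$.

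The substance is identifying this limit with the Skorohod solution $X_t(x)$. I would pass to the limit in the integrated equation
\[
X^a_t(x)=x+B_t+\int_0^t \partial_x\ln u^a(X^a_s(x))\,ds=x+B_t+\tilde L^a_t(x),
\]
where $\tilde L^a_t(x):=\int_0^t\partial_x\ln u^a(X^a_s(x))\,ds$ is nondecreasing in $t$. By monotone convergence $X^a_t(x)\downarrow X^0_t(x)$ forces $\tilde L^a_t(x)$ to converge too, to some nondecreasing process $L^0_t$ with $X^0_t(x)=x+B_t+L^0_t$; moreover $X^0_t(x)\ge 0$ since each $X^a_t(x)>0$. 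It remains to show $L^0$ charges time only on $\{X^0=0\}$, i.e. $\int_0^t \mathbf 1_{\{X^0_s>0\}}\,dL^0_s=0$, which by uniqueness of the Skorohod problem \eqref{SkPr} identifies $(X^0,L^0)=(X_\cdot(x),L_\cdot(x))$. For this, fix $\delta>0$ and a time interval on which $X^0_s\ge\delta$; on that interval, by monotonicity $X^a_s\ge\delta$ for all $a$, so the drift is bounded by $\sup_{r\ge\delta}\partial_x\ln u^a(r)=\tfrac1{\sqrt a}e^{-\delta^2/(2a)}/\phi(\delta/\sqrt a)\to 0$ as $a\to0$, exactly the estimate used in the proof of Theorem \ref{T2} and Corollary \ref{convLat}. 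Hence the increment of $\tilde L^a$, and thus of $L^0$, over that interval is $0$. A covering argument over the (countably many) excursion intervals of $X^0$ away from $0$ gives $\int_0^t\mathbf 1_{\{X^0_s>0\}}dL^0_s=0$.

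The main obstacle I anticipate is the $a$-monotonicity of the drift and the attendant comparison step: one must verify carefully the direction of monotonicity of $a\mapsto\tfrac1{\sqrt a}e^{-r^2/(2a)}/\phi(r/\sqrt a)$ (equivalently of $a\mapsto u^a(x)$, i.e. $a\mapsto P(\tau(x)>a)$, which is manifestly \emph{decreasing} in $a$ — but one needs the monotonicity of the \emph{logarithmic derivative}, which is subtler and is where the real calculus happens), and then invoke the one-dimensional comparison theorem in a setting where the coefficient is only locally Lipschitz on $(0,\infty)$ and blows up at $0$; the blow-up is actually helpful (it keeps solutions in the open half-line and makes the comparison robust near the boundary), but it must be handled, e.g. by localizing with stopping times $\tau^a_\delta=\inf\{t:X^a_t(x)=\delta\}$ and letting $\delta\downarrow 0$. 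The a.s.\ statement "for all $t$" then follows because the limiting process and $X_\cdot(x)$ are both continuous in $t$, so agreement on a countable dense set of times (a null set of $\omega$ being discarded) upgrades to agreement for all $t$.
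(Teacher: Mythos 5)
Your plan is correct and follows essentially the same route as the paper's proof: comparison in $x$, monotonicity of the drift in $a$ (the paper verifies $\partial_a\partial_x\ln u^a>0$ by the explicit computation reducing to the sign of $y(\ln\phi)''(y)+(\ln\phi)'(y)$, the calculus check you deferred) giving a monotone limit, positivity of $X_t^a$ from the $\tfrac{1}{2x}$ lower bound on the drift near $0$, and identification of the limit with the Skorohod solution by showing the limiting nondecreasing process does not charge $\{\bar X>0\}$, using exactly your estimate $\sup_{r\ge\delta}\partial_x\ln u^a(r)=\partial_x\ln u^a(\delta)\to 0$. The only cosmetic differences are that the paper handles the boundary by comparing with a squared Bessel process rather than by localizing with the stopping times $\tau^a_\delta$, and obtains $x$-monotonicity from the one-dimensional comparison theorem rather than from the smooth flow property.
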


\begin{proof}
That $ X_t^a(x)$ increases with $x$ follows from the comparison theorem one dimensional SDEs.
We also observe that the drift $\partial_x \ln u^a(x)$ in (\ref{line}) increases with $a$. 
$$\partial_a\partial_x\ln u^a=-\frac{1}{2 a^{ \frac {3}{ 2}}} (\ln \phi)'(\frac{x}{ \sqrt a})
- \frac{x}{ 2\sqrt a a^{ \frac {3}{ 2}}} (\ln \phi)''(\frac{x}{ \sqrt a})>0.$$
For $y>0$, define
$$ F(y)=  y (\ln \phi)''(y)+(\ln \phi)'(y).$$
It is clear that $F(y)$ is negative for $y$ sufficiently large.  
By the comparison theorem,
  $ X_t^a(x)$ increases with $a$ and  $\bar X_t(x)=\lim_{a\downarrow 0}X_t^a(x)$ exists
  for every $t,x, \omega$.
Consequently  $$A_t^x:=\lim_{a\downarrow 0}\int_0^t \partial_x \ln u^a(X_s^a(x))ds$$
  exists and
   $$\bar X_t(x)=x+B_t+A_t^x.$$
Let $f(t,a)=  \int_0^t \partial_x \ln u^a(X_s^a(x))ds$, which is positive and increasing with $t$. Thus $A_t^x$  is non-negative and nondecreasing in $t$. 
 
  Note that $\lim_{a\to 0}  \partial_x \ln u^a(x)=0$ for $x>0$, but the convergence is not uniform in $x$.  For $x\in (0, \sqrt a]$,
$$\partial_x \ln u^{a}\ge \frac{1}{ \sqrt a} \frac{e^{-\frac{x^2}{ 2a}} }{ {\frac{x}{ \sqrt a}}}\ge \frac{1-\frac{x^2}{ 2a}}{ x} >\frac{1}{ 2x}.$$
By comparison with the Bessel square process $Bes^2$ or standard criterion for diffusion process, for almost surely all $\omega$, $X_t^a(x)$ cannot reach~$0$.
  Next we observe that, $X_t(y)$ is a flow, 
   $X^a_t(y)>X^a_t(x)$ whenever $y>x$.
   Thus $X_t^a(x)>0$ for all $a>0$.  
The limiting process $\bar X_t(x)$ has the property:
$$\bar X_0(x)=x, \qquad \bar X_t(x) \ge 0.$$

Let $S<T$ be random times with $\bar X_t(x,\omega)>0$ for  $t \in [S, T]$. Let 
$$\delta(\omega)=\inf \{\bar X_t(x, \omega),\ t\in [S(\omega), T(\omega)]\}>0.$$
The function  $x\mapsto \partial _x \ln u^a(x)$ decreases,
$$ \int_S^T \partial_x \ln u^a(\bar X_s(x))ds \le  \int_S^T \partial_x \ln u^a(\delta(\omega))ds.$$
Then, since $X_s^a(x)\ge \bar X_s(x)$,
 \begin{align*}A_T(\omega)-A_S(\omega) &= \lim_{a\downarrow 0}   \int_S^T \partial_x \ln u^a(X_s^a(\omega))ds\\
&\le  \lim_{a\downarrow 0}   \int_S^T \partial_x \ln u^a(\bar X_s(\omega))ds\\
&\le \lim_{a\downarrow 0}   \int_S^T \partial_x \ln u^a(\delta(\omega))ds=0.
\end{align*}
 This implies that
$$\int_0^t \1_{\{\bar X_s(x)>0\}}d A_s^x=0$$
and $(\bar X_t(x), A_t^x)$ solves the Skorohod problem associated to $x+B_t$.
\end{proof}
%
%
%
%
\begin{lemma}
\Label{Lemma-u3}  For all $x>0$ and $a>0$,
 $\partial_x^3 \ln u^a>0$.
\end{lemma}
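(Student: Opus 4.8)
The plan is first to strip away the parameter $a$. Since $u^a(x)=\sqrt{2/\pi}\,\phi(x/\sqrt a)$, one has $\ln u^a(x)=\mathrm{const}+(\ln\phi)(x/\sqrt a)$, so each $x$-derivative produces a factor $a^{-1/2}$ and $\partial_x^3\ln u^a(x)=a^{-3/2}(\ln\phi)'''(x/\sqrt a)$. Hence it suffices to prove $(\ln\phi)'''(y)>0$ for every $y>0$; this also makes transparent that $a>0$ enters only through its sign.

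Next I would compute $(\ln\phi)'''$ in closed form using only the elementary identity $\phi''(y)=-y\,\phi'(y)$. Writing $g:=(\ln\phi)'=\phi'/\phi=e^{-y^2/2}/\phi(y)$, which is positive (cf. \eqref{E2}), one gets $(\ln\phi)''=-yg-g^2$, and then, differentiating once more and substituting $g'=-yg-g^2$, the identity $(\ln\phi)'''=g\bigl(y^2+3yg+2g^2-1\bigr)=g\bigl((y+g)(y+2g)-1\bigr)$. Since $g>0$, the lemma is reduced to the purely scalar inequality $(y+g(y))(y+2g(y))>1$ for all $y>0$.

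To finish I would feed in the crude but sufficient lower bound $g(y)>e^{-y^2/2}/y$, which follows at once from $\phi(y)=\int_0^y e^{-t^2/2}\,dt<y$. For $0<y\le 1$ this gives $3yg(y)>3e^{-y^2/2}\ge 3e^{-1/2}>1$ (since $e<9$ forces $\sqrt e<3$), so the bracket $y^2+3yg+2g^2-1$ is strictly positive; for $y\ge 1$ one has $y^2-1\ge 0$ together with the strictly positive terms $3yg$ and $2g^2$, so the bracket is again positive. Combining the two ranges yields $(\ln\phi)'''>0$ on $(0,\infty)$, hence $\partial_x^3\ln u^a>0$ for all $x,a>0$.

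I expect no genuine obstacle here; the only point needing a little care is the lower bound on $g$ near $y=0$, where $g$ blows up like $1/y$. One should resist discarding the cross term and bounding $(y+g)(y+2g)$ below by $2g^2$ alone, since that estimate degrades near $y=1$; keeping the term $3yg$ and using the trivial bound $\phi(y)<y$ is exactly enough, so no finer Mills-ratio-type estimate is required.
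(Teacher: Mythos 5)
Your proof is correct and follows essentially the same route as the paper: reduce to $a=1$ by scaling, derive the Riccati identity $g'=-yg-g^2$ for $g=(\ln\phi)'$, obtain $(\ln\phi)'''=g\bigl((y^2-1)+3yg+2g^2\bigr)$, and split at $y=1$. The only (immaterial) difference is the elementary bound on $(0,1)$: you use $\phi(y)<y$ to get $3yg>3e^{-1/2}>1$, whereas the paper combines $\phi(y)<y$ with $e^{-y^2/2}>1-y^2/2$ to get $3yg>1-y^2$; both close the case identically.
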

\begin{proof}
It is clearly sufficient to consider the case $a=1$.
$$
A(x)=\partial_x\ln u^1(x)=\phi'(x).
$$
We have from~\eqref{E1}
$$
A'(x)=-xA(x)-A^2(x)
$$
and this implies
\begin{equation}\Label{E4}
\phi'''(x)=A''(x)=(x^2-1)A(x)+3xA^2(x)+2A^3(x).
\end{equation}
It is clearly positive when $x\ge 1$. For $0\le x<1$,
$$
A(x)=\f{e^{-x^2/2}}{\int_0^xe^{-y^2/2}\,dy}>\f{1-x^2/2}{x}>\f{1-x^2}{3x}.
$$
Hence $$\phi'''(x)\ge  A(x) \left( (x^2-1)+3xA(x)\right)>0.$$
This completes the proof.
\end{proof}

Since  $\ln u_t^a(x)$ is smooth, the derivative flow
$V_t^a(x)=\partial_x X_t^a(x)$ exists and satisfies the linear equation $\dot V_t^a=( \partial^2_x \ln u^a) V_t^a$.
We prove that $V_t^a$ converges to $1$ when $t<\tau(x)$ and converges to $0$ when $t>\tau(x)$.
In the sequel, by  $V_{\tau (x+h)}^a (x)$ we mean $\partial_x X_s^a (x) |_{s={\tau (x+h)}} $,
and  $\tau(x+h)$ is not differentiated.

\begin{theorem}\Label{T1}
Let $X_t^a(x)$ be the solution to (\ref{line}). Let $V_t^a(x)=\partial_x X_t^a(x)$. Then  the following holds.
\begin{enumerate}
\item For all positive $a$ and $t$,  $x\mapsto V_t^a(x)$ is increasing and
$t\mapsto V_t^a(x)$ decreases.  For any $y>x$,
$$\lim_{a \to 0} V_{\tau (y)}^a (x)=0.$$
\item  For almost all $\omega$ the following holds for all $x>0$ and  $t\ge 0$ such that $t\not =\tau(x)$:
$$\lim_{a\downarrow 0} V_t^a(x) = \partial_x X_t(x).$$
Furthermore,
$$\lim_{a \to 0}\E\int_0^T \left| V_t^a(x)- { \partial_ x} X_t(x)\right| dt=0.$$
\end{enumerate}

\end{theorem}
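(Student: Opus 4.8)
The plan is to start from the fact that, $\ln u^a$ being smooth, the derivative flow $V_t^a(x)=\partial_xX_t^a(x)$ solves the scalar linear equation $\dot V_t^a=\partial_x^2\ln u^a(X_t^a(x))\,V_t^a$ with $V_0^a=1$, so that
\[
V_t^a(x)=\exp\!\left(\int_0^t\partial_x^2\ln u^a\big(X_s^a(x)\big)\,ds\right).
\]
The monotonicity statements in (1) then follow at once: by \eqref{E1} the integrand is negative, hence $V_t^a(x)\in(0,1]$ and $t\mapsto V_t^a(x)$ is non-increasing; by Lemma~\ref{Lemma-u3} the map $z\mapsto\partial_x^2\ln u^a(z)$ is increasing, and by Proposition~\ref{P1} the map $x\mapsto X_s^a(x)$ is increasing, so the exponent, and hence $V_t^a(x)$, is increasing in $x$. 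What remains are the two convergence assertions, and both rest on the elementary estimate
\[
-\partial_x^2\ln u^a(z)\ \ge\ \frac{c_0}{\sqrt a}\,\partial_x\ln u^a(z)\qquad(z>0,\ a>0),
\]
with $c_0=e^{-1/2}$; call this $(\star)$. It comes from \eqref{E1}--\eqref{E2}: writing $-\partial_x^2\ln u^a(z)=\big(\tfrac za+\partial_x\ln u^a(z)\big)\partial_x\ln u^a(z)$ and substituting $z=u\sqrt a$, the bracket becomes $\tfrac1{\sqrt a}\big(u+H(u)\big)$ with $H(u)=e^{-u^2/2}/\phi(u)$; since $H(u)\ge e^{-u^2/2}/u\ge e^{-1/2}$ on $(0,1]$ and $u+H(u)>u\ge1$ for $u\ge1$, one gets $u+H(u)\ge e^{-1/2}$ everywhere.

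To prove $\lim_{a\to0}V_{\tau(y)}^a(x)=0$ for $y>x$, introduce the approximate boundary clock $\ell_t^a:=\int_0^t\partial_x\ln u^a(X_s^a(x))\,ds=X_t^a(x)-x-B_t$, which by Proposition~\ref{P1} converges pathwise to $L_t(x)$ for every $t$. Since $X_s(x)=x+B_s$ for $s\le\tau(x)$ we have $L_{\tau(x)}(x)=0$, while $L_{\tau(y)}(x)=y-x$ by Lemma~\ref{reflected-BM}(2); hence $\ell_{\tau(y)}^a-\ell_{\tau(x)}^a\to y-x>0$. Using that $-\partial_x^2\ln u^a>0$ and then $(\star)$,
\[
-\ln V_{\tau(y)}^a(x)\ \ge\ \int_{\tau(x)}^{\tau(y)}\big(-\partial_x^2\ln u^a(X_s^a)\big)\,ds\ \ge\ \frac{c_0}{\sqrt a}\,\big(\ell_{\tau(y)}^a-\ell_{\tau(x)}^a\big)\ \longrightarrow\ +\infty,
\]
so $V_{\tau(y)}^a(x)\to0$; together with Lemma~\ref{reflected-BM}(4) this completes (1). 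I expect this to be the main obstacle: the key is precisely the ``reverse'' comparison $(\star)$, which converts the boundedness-and-non-degeneracy of the clock increment $\ell_{\tau(y)}^a-\ell_{\tau(x)}^a$ into divergence at the explicit rate $a^{-1/2}$; everything else is soft.

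For (2), fix $x>0$ and argue $\omega$ by $\omega$ on the full-measure set where Proposition~\ref{P1} holds (the statement for all $x$ at once holds off a random countable exceptional set of levels, as in Lemma~\ref{reflected-BM}). If $t<\tau(x)$ then $X_s(x)=x+B_s$ on $[0,t]$, so $\delta:=\tfrac12\inf_{s\le t}(x+B_s)>0$ and $X_s^a(x)\ge X_s(x)\ge2\delta$ there; a direct bound from \eqref{E1} (both $\tfrac za\partial_x\ln u^a(z)$ and $(\partial_x\ln u^a(z))^2$ are controlled by $a^{-3/2}e^{-2\delta^2/a}$-type terms on $\{z\ge2\delta\}$) gives $\sup_{z\ge2\delta}|\partial_x^2\ln u^a(z)|\to0$, whence the exponent tends to $0$ and $V_t^a(x)\to1=\partial_xX_t(x)$. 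If $t>\tau(x)$, then a.s.\ for every $s>0$ one has $\sup_{u\le\tau(x)+s}(-B_u)>x$ (a Brownian motion started at $0$ is immediately positive), so $\tau(x)=\inf\{r:\sup_{u\le r}(-B_u)>x\}$ and there is $y>x$ with $\tau(y)<t$; since $r\mapsto V_r^a(x)$ is non-increasing, $0\le V_t^a(x)\le V_{\tau(y)}^a(x)\to0=\partial_xX_t(x)$ by part (1) and Lemma~\ref{reflected-BM}(4). Finally $0\le V_t^a(x)\le1$ and $V_t^a(x)\to\partial_xX_t(x)$ for Lebesgue-almost every $t\in[0,T]$ and almost every $\omega$, so dominated convergence applied first in $t$ and then in $\omega$ yields $\E\int_0^T|V_t^a(x)-\partial_xX_t(x)|\,dt\to0$, which is the last assertion.
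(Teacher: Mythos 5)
Your proof is correct, and for the central claim of part (1) --- that $V^a_{\tau(y)}(x)\to 0$ for $y>x$ --- it takes a genuinely different route from the paper's. The paper never estimates the exponent in $V_t^a(x)=\exp\bigl(\int_0^t\partial_x^2\ln u^a(X_s^a(x))\,ds\bigr)$ directly: it exploits the monotonicity of $x\mapsto V_t^a(x)$ (from Lemma~\ref{Lemma-u3}) through the mean value theorem, writing $0\le V^a_{\tau(x+h)}(x)\le h^{-1}\bigl(X^a_{\tau(x+h)}(x+h)-X^a_{\tau(x+h)}(x)\bigr)\le h^{-1}X^a_{\tau(x+h)}(x+h)$, and then invokes Proposition~\ref{P1} together with $X_{\tau(x+h)}(x+h)=0$ to send the right-hand side to zero. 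You instead prove the pointwise inequality $(\star)$, $-\partial_x^2\ln u^a\ge c_0\,a^{-1/2}\,\partial_x\ln u^a$ (which checks out: with $u=z/\sqrt a$ the ratio equals $a^{-1/2}(u+A(u))$ with $A(u)=e^{-u^2/2}/\phi(u)$, and $u+A(u)\ge e^{-1/2}$ on $(0,\infty)$), and convert the approximate local time accumulated on $[\tau(x),\tau(y)]$, which tends to $y-x>0$ by Proposition~\ref{P1} and Lemma~\ref{reflected-BM}(2), into the lower bound $-\ln V^a_{\tau(y)}(x)\ge c_0(y-x)/(2\sqrt a)$ for small $a$. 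Your argument buys an explicit rate $V^a_{\tau(y)}(x)\le e^{-c(y-x)/\sqrt a}$ and avoids differentiating the flow in $x$ at the random time $\tau(x+h)$; the paper's argument is softer, needing only flow monotonicity and the convergence of $X^a$. The remaining pieces (monotonicity in $t$ and $x$; the case $t<\tau(x)$ via $X^a_s\ge X_s\ge 2\delta>0$ and the vanishing of $\partial_x^2\ln u^a$ away from the origin; the case $t>\tau(x)$ by dominating $V_t^a(x)$ by $V^a_{\tau(y)}(x)$ for some $y>x$ with $\tau(y)<t$; dominated convergence for the $L^1$ claim) coincide with the paper's up to cosmetic differences --- the paper takes $y=x+L_t(x)$ via Lemma~\ref{reflected-BM}(3) where you use the immediate growth of the running maximum of $-B$ after $\tau(x)$, and both are valid. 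The only soft spot is the ``for almost all $\omega$, for all $x>0$'' formulation in (2): like the paper, you really argue for each fixed $x$, and upgrading to a single null set would require using the monotonicity in $x$ of $X_t^a(x)$ and $V_t^a(x)$ over rational levels; since the paper's own proof does not address this either, it is not a gap relative to the intended argument.
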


\begin{proof} 
We observe that 
$$\frac{d}{ dt}V_t^a(x)=\partial_x ^2 (\ln u_t^a)(X_t^a(x))V_t^a(x),$$
and $V_0^a(x) =1$, leading to the formula,
\begin{equation}\Label{E3}
V_t^a(x)=e^{\int_0^t \partial^2_x( \ln u^a)(X_s^a(x)) ds}. 
\end{equation}
(1) Since $ \partial^2_x( \ln u^a)<0$, $V_t^a(x)$ decreases with $t$.
 We differentiate ~\eqref{E3} to see that
$$
\partial_xV_t^a(x)=V_t^a(x)\int_0^t  (\partial^3_x \ln u^a)(X_s^a(x)) V_s^a(x) ds.
$$
 Firstly, letting $V_0^a=1$. By 
Lemma \ref{Lemma-u3}, $\partial_x^3 \ln u^a>0$, so  $x\mapsto V_t^a(x)$ is increasing. 
 
Let $x, \omega$ be fixed.  Let $t\not =\tau(x, \omega)$ be a non-negative number and $h>0$.
There is a number $\theta(\omega)\in [0,1]$ s.t.
$$\frac{X_t^a(x+h)-X_t^a(x) }{ h}
 =  \partial_x X_t^a(x+\theta h)\ge  \partial_x X_t^a(x).$$
Since $\tau(x+h, \omega) \not= \tau(x,\omega)$ for a.e. $\omega$, for almost all $\omega$ we may set $t=\tau(x, \omega)$:
$$ 0\le \partial_x X_t^a (x)|_{t=\tau(x+h)}
 \le \frac{X_{\tau(x+h)} ^a(x+h)-X_{\tau(x+h)}^a(x) }{ h}
 \le\f{X_{\tau(x+h)} ^a(x+h)}{h}.$$
Take $h=L_t(x)$. By Proposition~\ref{P1}
$$\lim_{a\to 0}  X_{\tau(x+L_t(x))} ^a(x+L_t(x))=X_{\tau(x+L_t(x))} (x+L_t(x))=0  .$$
Thus for any $h>0$,  $$\lim_{a\downarrow 0}V^a_{\tau(x+h)} (x)= 0.$$

(2)  Let $x>0$. By Lemma \ref{reflected-BM}, $X_t(x+L_t(x))=X_t(x)$ for all $t\ge 0$.
So if $t>\tau(x)$, then $t\ge \tau(x+L_t(x))$. 
Suppose that $t>\tau(x)$. Since $V_t^a(x)$ decreases with $t$,
$$ 0\le V_t^a(x) \le V_{\tau(x+L_t(x))}^a(x).$$
By the conclusion of part (1), the right hand side converges to $0$ as $a\to 0$.
%

If $t<\tau(x)$, $X_t^a(x)>X_t(x)>0$ by comparison theorem for SDEs. Also $ \partial^2_x \ln u^a<0$,
$$1\ge \lim_{a\downarrow 0}\exp\left(\int_0^t \partial^2_x \ln u^a(X_s^a(x)) ds\right)
\ge \exp\left( \lim_{a\downarrow 0}\int_0^t \partial^2_x \ln u^a(X_s(x)) ds\right).$$
On the other hand for every $y$, $\partial^2_x \ln u^a(y)\to 0$ and 
$\inf_{s\in [0, t]}X_s(x)>0$ for $t<\tau(x)$. This concludes that $\lim_{a\downarrow 0}\exp\left(\int_0^t \partial^2_x \ln u^a(X_s^a(x)) ds\right)=1$. Note that $V_t^a(x)$ is uniformly bounded to conclude the convergence in $L^1$.
\end{proof}


\section{\\Convergence in $\scrS^p$ and in $\SH^p$}
\label{preliminary}
Let $a_0>0$ and let  $\{ (Y^a_t,\  t <\xi^a), {a\in [0,a_0)}\}$ be a family of continuous semi-martingales with values 
in a manifold $M$. If $U$ is an open domain in $M$, let $\tau^{U,a}$ denote the exit times:
$$\tau^{U,a}=\inf\{ t>0: Y_t^a\not \in U\}.$$

 \begin{definition}\Label{UCP} 
 \begin{enumerate}
 \item[(1)] We say that $Y^a$ converges to $Y^0$ in the topology of uniform convergence in probability on compact time sets (UCP) if 
 \begin{itemize}
 \item[(1a)] for all relatively compact open domain $U\subset M$, 
 $$\di\liminf_{a\to 0}\tau^{U,a}\ge \tau^{U,0},$$ 
 \item[(1b)]   for all  $t>0$, the following convergence holds in probability:
 $$\lim_{a\to 0}\di \sup_{s\le t\wedge \tau^{U,a}\wedge \tau^{U,0}}  \rho\left(Y_s^a, Y_s^0\right )\stackrel{(P)}{=} 0$$ \end{itemize}
 \item[(2)]
  \Label{Dp} 
  Let $p\in [1,\infty)$.
  We say that $Y^a$ converges to $Y^0$ locally in ${\scrS}_p$ if there exists an increasing sequence of stopping times 
  $(T_n)_{n\ge 1}$ with $\lim_{n\to \infty} T_n=\xi^0$ such that for some $a_1>0$ and for all  $a<a_1$ 
  and all $n\in \N$,  $T_n<\xi^a$ a.s.   and 
  \begin{equation}
   \Label{Dpeq}
   \lim_{a\to 0}\E\left[\sup_{t\le T_n}\rho^p(Y_t^a,Y_0^a)\right]=0
  \end{equation}
 \end{enumerate}
 \end{definition}
 
 Notice that  given (1b), condition (1a) is equivalent to  $\di
 \liminf_{a\to 0}\xi^a\ge \xi^0$.

Let $\DD$ denote the space of real-valued  adapted, C\`adl\`ag stochastic processes, defined on some filtered 
probability space $(\Omega,\SF, (\SF_t)_{t\ge 0}, \P)$ satisfying the usual conditions. We are mainly interested
in special semi-martingales from $\DD$. Below an element of $\DD$ is assumed to be also a special semi-martingale.
 
For two real valued semi-martingales $X, Y\in \DD$ we define the distance functions:
 $$r(X,Y)=\sum_{n>0} 2^{-n} \E \left(1\wedge \sup_{0\le t\le n}|X_t-Y_t|\right),$$
 $$\hat r(X,Y)=\sup_{|H|\le 1} r\left(\int_0^t  H_s d(X_s-Y_s) \right),$$ where the supremum is taken over all predictable processes $H$ bounded by $1$. The distance $r$ is compatible with UCP:  $$\sup_{0\le s\le t} |X_s^{(n)}-X_s| \to 0 \quad (\hbox{ in probability }) $$  
 for each $t>0$ if and only if $r(X^n-X)$ converges to $0$. 
 The distance $\hat r$ induces the semi-martingale topology on the vector space of semi-martingales.

Define \begin{align*}
\scrS^p&=\{ X\in \DD:  \|X\|_{\scrS^p} = \| \sup_t | X_t|\|_{L^p}<\infty\}, \\
\SH^p&= \{ X\in \DD:  \|X\|_{\SH^p}=
\inf \left\{  \left| \,|X_0|+ [M,M]_\infty^{\frac{1}{ 2}} +\int_0^\infty |dA_s|\right|_{L^p}<\infty\right\}.
\end{align*} where the infimum is taken over all 
semi-martingale decompositions $X=X_0+M+A$. 
 When the time interval is restricted to a finite time interval $[0,T]$ the notations will be $\scrS^p([0,T])$ and $\SH^p([0,T])$.
 
A semi-martingale is locally in $\scrS^p$ and $\SH^p$ if there exists a sequence of stopping times $T_n$ increasing to infinity such that
$X^{T_n}\chi_{\{T_n>0\}}$ are in these spaces. It is  prelocally in these spaces if all $X^{T_n-}\chi_{\{T_n>0\}}$ are,
where $X^{T-}(t)=X_t \chi_{ [0, T)}(t) +X_{T-} \; \chi_{[T, \infty)}(t)$. Let $(X^{(n)})$ and $X$ be semi-martingales.
Let $1\le p<\infty$.
If $X^{(n)}$ converges to $X$ is the semi-martingale topology, then there exists a subsequence that converges prelocally in 
$\SH^p$. If $X^{(n)}$ converges to $X$ prelocally in $\SH^p$ then it converges in the semi-martingale topology.
 Extension to $\R^k$-valued processes is done by considering the components. 
 
The following estimate of M. Emery is useful:
If $Y$ is a semi-martingale and $H$ a left continuous process with right limit, and $\frac{1}{ p}+\frac{1}{ q}=\frac{1}{ r}$ where
$p, q\in [1, \infty]$, then
\begin{equation}\Label{HSH}
\left\| \int_0^\infty H_s dZ_s \right\|_{\SH^r} \le \|H\|_{\scrS^p} \|Z\|_{\SH^q}.
\end{equation}

We review these convergence in the settings that the semi-martingales may have finite life times or take values in a manifold.
See ~\cite[M. Arnaudon and A. Thalmaier]{Arnaudon-Thalmaier:98} for details.

Let $Z_t=Z_0+M_t+A_t$ be a semi-martingale in $\RR^k$ with lifetime $\xi$ and the canonical 
 decomposition of $Z_t$ into starting point, local martingale $M_t$ starting at~$0$ and a finite variation process $A_t$  starting 
 at~$0$. Define 
 \begin{equation}
  \Label{vZ}
  v(Z)_t=\sum_{i=1}^k\left(|Z_0^i|+{\<}M^i,M^i{\>}_t^{1/2}+\int_0^t |dA^i|_s\right),\quad t<\xi.
 \end{equation}
Let $T>0$. 
 We say that a family of semi-martingales $Z^{(n)}$ converges to $0$ in ${\scrS}_p([0,T])$
 if $\di\E\left[\sup_{s\le T} |Z_s^{(n)}|^p\right]\to 0$.  It converges to $0$  in ${\SH}_p([0,T])$ if 
 $v(Z^{(n)})\to 0$ in  ${\scrS}_p([0,T])$.

To define this for a manifold valued stochastic process, we will use an embedding $\Phi : M\to \RR^k$. The definition will in fact 
 be independent of this embedding.
 
 \begin{definition}
 \Label{SM}
 Let $(Y_t^a)$ be a family of semi-martingales indexed by $a$.
   \begin{enumerate}
  \item We say that $Y^a$ converges to $Y^0$ in semi-martingale topology or in SM topology if the semi-martingale norm of $\Phi(Y^a)-\Phi(Y^0)$,
  $v\left(\Phi(Y^a)-\Phi(Y^0)\right)$, converges to~$0$ in UCP topology.
\item  \Label{Hp}
   Let $p\in [1,\infty)$.
  We say that $Y^a$ converges to $Y^0$ locally in ${\SH}_p$ if the processes $v\left(\Phi(Y^a)-\Phi(Y^0)\right)$
  converge to~$0$ locally in ${\scrS}_p$.  
  \end{enumerate} \end{definition}

  The convergence in the semi-martingale topology is stronger than convergence in the UCP topology. However it is a remarkable fact 
  that they coincide on the subset of martingales in the manifold. 
 The following characterisations of convergence will be very useful (see~\cite[M. Arnaudon and A. Thalmaier] {Arnaudon-Thalmaier:98}).
 
 \begin{proposition}
  \Label{P3}
  \begin{itemize}
   \item If  $Y^a\to Y^0$ as $a\to 0$ in UCP topology then for all $p\in[0,\infty)$ there
   exists a sequence $a_k\to 0$ such that $Y^{a_k}\to Y^0$ as $k\to\infty$ locally in ${\scrS}_p$.
   \item If  $Y^a\to Y^0$ as $a\to 0$ in SM topology then for all $p\in[0,\infty)$ there
   exists a sequence $a_k\to 0$ such that $Y^{a_k}\to Y^0$ as $k\to\infty$ locally in ${\SH}_p$.
   \item If for some $p\in[1,\infty)$ $Y^a\to Y^0$ as $a\to 0$ locally in $\scrS_p$ then $Y^a\to Y^0$ as $a\to 0$
   in UCP topology.
   \item If for some $p\in[1,\infty)$ $Y^a\to Y^0$ as $a\to 0$ locally in ${\SH}_p$ then $Y^a\to Y^0$ as $a\to 0$
   in SM topology.
  \end{itemize}
 \end{proposition}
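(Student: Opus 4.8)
The plan is to reduce everything to the Euclidean setting through the fixed embedding $\Phi:M\to\R^k$, which is legitimate because, as recalled just before the statement, none of the four notions of convergence depends on the choice of $\Phi$. Within $\R^k$ I would first observe that the two assertions concerning the SM and $\SH_p$ topologies follow from the two assertions concerning the UCP and $\scrS_p$ topologies: by Definition~\ref{SM}, convergence of $Y^a$ to $Y^0$ in the SM topology \emph{is} UCP convergence of the increasing processes $t\mapsto v\big(\Phi(Y^a)-\Phi(Y^0)\big)_t$ to $0$, and local $\SH_p$ convergence of $Y^a$ \emph{is} local $\scrS_p$ convergence of those same processes; so it suffices to carry out the $\scrS_p$/UCP arguments for the $\R^k$-valued processes $v\big(\Phi(Y^a)-\Phi(Y^0)\big)$ (taken componentwise), still taking the localizing sequence to increase to $\xi^0$, which is licit because $\liminf_a\xi^a\ge\xi^0$. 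Thus only two implications require a genuine argument — that local $\scrS_p$ convergence implies UCP convergence, and that UCP convergence implies local $\scrS_p$ convergence along a subsequence — and the first of these is elementary.

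For the first: local $\scrS_p$ convergence supplies stopping times $T_n\uparrow\xi^0$, a threshold $a_1>0$, the property $T_n<\xi^a$ a.s.\ for $a<a_1$ (whence $\xi^a\ge\xi^0$ a.s., so that condition $(1a)$ of Definition~\ref{UCP} holds once $(1b)$ does, by the remark following that definition), and the bounds $\E\big[\sup_{t\le T_n}\rho^p(Y_t^a,Y_t^0)\big]\to0$. Chebyshev's inequality turns these into $\sup_{t\le T_n}\rho(Y_t^a,Y_t^0)\to0$ in probability for each $n$; then for $(1b)$ one fixes a relatively compact $U$, $t>0$ and $\varepsilon>0$, picks $n$ with $\P\big(T_n<t\wedge\tau^{U,0}\big)<\varepsilon$, and uses that on the complementary event the supremum over $[0,\,t\wedge\tau^{U,a}\wedge\tau^{U,0}]$ is dominated by the one over $[0,T_n]$.

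For the main step — that UCP convergence implies local $\scrS_p$ convergence along a subsequence — I would fix an exhaustion $U_1\subset U_2\subset\cdots$ of $M$ by relatively compact open sets with $\overline{U_n}\subset U_{n+1}$, and set $T_n=\tau^{U_n,0}\wedge n$, so that $T_n\uparrow\xi^0$ and, by continuity of paths together with $\overline{U_n}\subset U_{n+1}$, $T_n<\tau^{U_{n+1},0}$ a.s.\ (adjusting the truncation level if necessary to avoid the borderline event $\{\tau^{U_n,0}=n\}$). On the random interval $[0,\,T_n\wedge\tau^{U_{n+1},a}]$ both $Y^a$ and $Y^0$ remain in $\overline{U_{n+1}}$, hence there $\|\Phi(Y_t^a)-\Phi(Y_t^0)\|\le\mathrm{diam}\,\Phi(\overline{U_{n+1}})<\infty$; since the UCP hypothesis gives $\sup_{t\le T_n\wedge\tau^{U_{n+1},a}\wedge\tau^{U_{n+1},0}}\rho(Y_t^a,Y_t^0)\to0$ in probability, a uniformly bounded sequence that converges in probability converges in $L^p$, whence $\E\big[\sup_{t\le T_n\wedge\tau^{U_{n+1},a}}\rho^p(Y_t^a,Y_t^0)\big]\to0$ for each fixed $n$. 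Because $\liminf_a\tau^{U_{n+1},a}\ge\tau^{U_{n+1},0}>T_n$, the event $\{\tau^{U_{n+1},a}\le T_n\}$ has probability tending to $0$ and on its complement the truncation at $\tau^{U_{n+1},a}$ is inactive, so in fact $\E\big[\sup_{t\le T_n}\rho^p(Y_t^a,Y_t^0)\big]\to0$ for each $n$. A diagonal extraction then yields a single subsequence $a_k\to0$ serving all $n$ at once, along which moreover $T_n<\xi^{a_k}$ for $k$ large (again via $\liminf_k\tau^{U_{n+1},a_k}>T_n$); this is precisely the asserted local $\scrS_p$ convergence of $Y^{a_k}$ to $Y^0$.

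I expect the main obstacle to be this last argument. The exit times $\tau^{U_n,a}$ are controlled only through a $\liminf$ in $a$, never uniformly in $a$, which is exactly what forces the passage to a subsequence and what must be handled carefully when converting convergence in probability on the random, $a$-dependent interval $[0,\,T_n\wedge\tau^{U_{n+1},a}]$ into genuine $\scrS_p$ convergence on the fixed localizing interval $[0,T_n]$; the boundedness provided by the exhaustion by relatively compact sets is the ingredient that makes the dominated-convergence step go through. Once the $\scrS_p$/UCP pair is established, the $\SH_p$/SM pair costs nothing beyond the reductions of the first paragraph.
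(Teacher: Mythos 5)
The paper itself offers no proof of this proposition --- it is quoted from \cite{Arnaudon-Thalmaier:98} --- so your argument has to stand on its own. Your reduction of the SM/$\SH_p$ bullets to the UCP/$\scrS_p$ bullets applied to the real-valued processes $v\bigl(\Phi(Y^a)-\Phi(Y^0)\bigr)$ is correct and essentially definitional, and your Chebyshev proof of the third (hence fourth) bullet, including the treatment of condition (1a) via the remark after Definition~\ref{UCP}, is fine.

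The gap is in the main step, UCP $\Rightarrow$ local $\scrS_p$ along a subsequence, and it lies in your choice of localizing times $T_n=\tau^{U_n,0}\wedge n$, which depend on $Y^0$ alone. Two things fail. First, in passing from $\E\bigl[\sup_{t\le T_n\wedge\tau^{U_{n+1},a}}\rho^p\bigr]\to0$ to $\E\bigl[\sup_{t\le T_n}\rho^p\bigr]\to0$ you discard the event $\{\tau^{U_{n+1},a}\le T_n\}$ because its probability tends to $0$; but on that event $Y^a$ has left $U_{n+1}$ before $T_n$, so for noncompact $M$ the integrand $\sup_{t\le T_n}\rho^p(Y^a_t,Y^0_t)$ is no longer dominated by $\mathrm{diam}\,\overline{U_{n+1}}$ and need not even be finite, since $Y^a$ may explode before $T_n$ there; a small-probability event carrying an unbounded integrand cannot be dropped. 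Second, Definition~\ref{Dp} demands $T_n<\xi^{a_k}$ \emph{almost surely} for every $k$ beyond a fixed rank, whereas the pathwise inequality $\liminf_k\tau^{U_{n+1},a_k}\ge\tau^{U_{n+1},0}>T_n$ only yields $\tau^{U_{n+1},a_k}(\omega)>T_n(\omega)$ for $k\ge K(\omega)$ with $K$ depending on $\omega$, which is strictly weaker. Both defects are repaired by the standard Emery-type device: after extracting $a_k$ so that $\sum_k\P\bigl(\tau^{U_{n+1},a_k}\le n\wedge\tau^{U_n,0}\bigr)$ is summable (for each $n$, over $k\ge n$) and the truncated suprema converge fast, one takes localizing times that involve the subsequence itself, e.g.\ $T_n=n\wedge\tau^{U_n,0}\wedge\inf_{k\ge n}\tau^{U_{n+1},a_k}$; Borel--Cantelli then gives $T_n\uparrow\xi^0$ a.s., by construction $T_n<\xi^{a_k}$ a.s.\ for all $k\ge n$, and on $[0,T_n]$ every $Y^{a_k}$ with $k\ge n$ stays in the compact set $\overline{U_{n+1}}$, so your bounded-convergence step then goes through. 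As written, however, the argument does not establish the first (or, via your reduction, the second) bullet.
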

 As a consequence, a standard way to establish UCP or SM convergence given by the following:
 \begin{corollary}
  \Label{C3}
  \begin{itemize}
   \item $Y^a\to Y^0$ as $a\to 0$ in UCP topology if and only if there exists  $p\in [1,\infty)$ such that for any $a_k\to 0$ there exists a subsequence 
   $a_{k_\ell}$ such that $Y^{a_{k_\ell}}\to Y^0$ locally in ${\scrS}_p$.
   \item  $Y^a\to Y^0$ as $a\to 0$ in SM topology if and only if there exists  $p\in [1,\infty)$ such that for any $a_k\to 0$ there exists a subsequence 
   $a_{k_\ell}$ such that $Y^{a_{k_\ell}}\to Y^0$ locally in ${\SH}_p$.
  \end{itemize}
 \end{corollary}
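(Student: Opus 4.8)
The plan is to derive Corollary~\ref{C3} from Proposition~\ref{P3} by invoking the elementary subsequence principle for convergence in a metrizable topology: in such a setting $Y^a\to Y^0$ as $a\to 0$ holds if and only if every sequence $a_k\to 0$ admits a subsequence $a_{k_\ell}$ along which $Y^{a_{k_\ell}}\to Y^0$. Both relevant topologies are metrizable — the UCP topology through the distance $r$ together with the exit times $\tau^{U,a}$ for $U$ ranging over a countable exhaustion of $M$, and the semi-martingale topology through $\hat r$ applied to an embedding $\Phi$ — so this principle applies in each case. I will treat the UCP statement in detail; the SM statement is identical after replacing $\scrS_p$ by $\SH_p$, ``UCP'' by ``SM'', and the first and third bullets of Proposition~\ref{P3} by the second and fourth.

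For the forward implication I would assume $Y^a\to Y^0$ in UCP. Restricting to any sequence $a_k\to 0$ preserves UCP convergence, so applying the first bullet of Proposition~\ref{P3} to the subfamily $\{Y^{a_k}\}$ with $p=1$ yields a subsequence $a_{k_\ell}$ with $Y^{a_{k_\ell}}\to Y^0$ locally in $\scrS_1$. Fixing $p=1$ once and for all, this is exactly the right hand side of the asserted equivalence.

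For the converse I would suppose there is a $p\in[1,\infty)$ such that every $a_k\to 0$ has a subsequence converging to $Y^0$ locally in $\scrS_p$. Given an arbitrary sequence $a_k\to 0$, I extract such a subsequence $a_{k_\ell}$ and use the third bullet of Proposition~\ref{P3} to upgrade local $\scrS_p$ convergence to UCP convergence, so that $Y^{a_{k_\ell}}\to Y^0$ in UCP. Since $a_k\to 0$ was arbitrary, every sequence tending to zero then has a subsequence converging to $Y^0$ in UCP, and the subsequence principle delivers $Y^a\to Y^0$ in UCP. The SM case runs the same way through the second and fourth bullets and the $\SH_p$ localisation.

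The hard part will be justifying the subsequence principle in the present generality, where the semi-martingales carry finite lifetimes $\xi^a$ and take values in a manifold. Concretely I must verify that the exit-time condition (1a), $\liminf_{a\to0}\tau^{U,a}\ge\tau^{U,0}$, is stable under the extractions; using the remark after Definition~\ref{UCP} that, given (1b), condition (1a) is equivalent to $\liminf_{a\to0}\xi^a\ge\xi^0$, this reduces to a countable $\liminf$ statement which is manifestly preserved along subsequences. This bookkeeping aside, all the analytic content is already contained in Proposition~\ref{P3}, so the remaining steps are routine.
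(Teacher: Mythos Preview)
Your proposal is correct and matches the paper's intended argument: the paper gives no explicit proof of Corollary~\ref{C3}, presenting it simply ``as a consequence'' of Proposition~\ref{P3}, and your use of the subsequence principle together with the four bullets of Proposition~\ref{P3} is precisely the standard argument being alluded to.
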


	For processes which take their values in a compact manifold and which are defined in bounded times, we have the following easy relations.
	\begin{corollary}
	\Label{C4}
	Assume that $M$ is compact and that all processes are defined on some deterministic time interval $[0,T]$.
	The following equivalences hold:
	\begin{itemize}
	\item $Y^a\to Y^0$ as $a\to 0$ in UCP topology;
	\item $Y^a\to Y^0$ as $a\to 0$ in $\scrS_p$ for some $p\in[1,\infty)$;
	\item $Y^a\to Y^0$ as $a\to 0$ in $\scrS_p$ for all $p\in [1,\infty)$.
	\end{itemize}
	Similarly, we have the equivalences
	\begin{itemize}
	\item $Y^a\to Y^0$ as $a\to 0$ in SM topology;
	\item $Y^a\to Y^0$ as $a\to 0$ in $\SH_p$ for some $p\in[1,\infty)$;
	\item $Y^a\to Y^0$ as $a\to 0$ in $\SH_p$ for all $p\in [1,\infty)$.
	\end{itemize}

	\end{corollary}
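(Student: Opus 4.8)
The plan is to organise each of the two triples as a cycle of implications in which two arrows are essentially formal and one is substantive. In both triples the implication ``for all $p$'' $\Rightarrow$ ``for some $p$'' is trivial, and the implication ``for some $p$'' $\Rightarrow$ (UCP, resp.\ SM) is supplied by Proposition~\ref{P3}: since $M$ is compact the processes do not explode and, on the deterministic interval $[0,T]$, convergence in $\scrS_p([0,T])$ (resp.\ $\SH_p([0,T])$) coincides with local convergence in $\scrS_p$ (resp.\ $\SH_p$)---one simply takes the constant reducing sequence $T_n\equiv T$---so the third and fourth bullets of Proposition~\ref{P3} apply verbatim. Thus the entire content of the corollary is the single substantive arrow in each triple: UCP $\Rightarrow$ ``$\scrS_p$ for all $p$'', and SM $\Rightarrow$ ``$\SH_p$ for all $p$''.

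For the first substantive arrow I would use compactness directly. Fix an embedding $\Phi:M\to\RR^k$; since $\Phi(M)$ is bounded, $\rho(Y_t^a,Y_t^0)\le \mathrm{diam}(M)=:D<\infty$ uniformly in $t,a,\omega$. UCP convergence on the compact manifold (with $U=M$, so that the exit times in Definition~\ref{UCP} are infinite) means precisely that $\sup_{t\le T}\rho(Y_t^a,Y_t^0)\to 0$ in probability. Arguing by subsequences, from any $a_k\to 0$ I extract a further subsequence along which this supremum tends to $0$ almost surely; the bound by $D$ and dominated convergence then give $\E[\sup_{t\le T}\rho^p(Y_t^a,Y_t^0)]\to 0$ along that subsequence, for every $p\in[1,\infty)$. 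Since the limit is the same $0$ for every subsequence, the standard subsequence principle upgrades this to convergence of the full family, i.e.\ to $\scrS_p([0,T])$ convergence for all $p$. This closes the first cycle.

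For the second substantive arrow the same scheme should be run, but now on the real-valued increasing process $V^a:=v\!\left(\Phi(Y^a)-\Phi(Y^0)\right)$ of \eqref{vZ}: by definition SM convergence is the statement $V^a\to 0$ in UCP (equivalently, since $V^a$ is nondecreasing, $V_T^a\to 0$ in probability), while $\SH_p([0,T])$ convergence is $\E[(V_T^a)^p]\to 0$. Thus the SM/$\SH_p$ equivalences are exactly the UCP/$\scrS_p$ equivalences for the family $\{V^a\}$, and I would again pass to an almost surely convergent subsequence and try to invoke dominated convergence.

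The hard part is exactly here, and it is where compactness no longer suffices: the controlling process $V^a$ records the quadratic variation and the total variation of $\Phi(Y^a)-\Phi(Y^0)$, and these are \emph{not} bounded by $\mathrm{diam}(M)$---indeed the drifts $A^a$ blow up near $\partial M$, so $V_T^a$ admits no pathwise uniform bound. To close the argument one must replace the trivial bound $V^a\le D$ by uniform integrability of the family $\{(V_T^a)^p\}$. I would obtain this from a uniform higher-moment estimate $\sup_a\E[(V_T^a)^{p'}]<\infty$ for some $p'>p$, after which de la Vall\'ee-Poussin's criterion yields the required uniform integrability and the subsequential dominated-convergence argument of the previous paragraph goes through, giving $\SH_p([0,T])$ convergence for every $p$. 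In the present setting such a uniform $\SH_{p'}$ bound is available because the diffusion coefficients are bounded on the compact manifold (so the martingale part of $\Phi(Y^a)$ has uniformly bounded quadratic variation) while the finite-variation parts are controlled uniformly in $a$ through the exponential local-time estimate \eqref{expLat} of Corollary~\ref{convLat}; combined with the subsequence criterion of Corollary~\ref{C3}, this furnishes the missing uniform integrability and completes the second cycle.
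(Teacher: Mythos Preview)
The paper states this corollary without proof. Your treatment of the first triple is correct and is the intended argument: on a compact manifold the Riemannian distance is bounded by $\mathrm{diam}(M)$, so UCP convergence of $\sup_{t\le T}\rho(Y_t^a,Y_t^0)$ plus dominated convergence yields $\scrS_p([0,T])$ convergence for every $p$; the remaining two arrows are formal via Proposition~\ref{P3}. Nothing more is needed here.

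For the second triple you correctly isolate the obstruction: the controlling process $V^a=v\!\left(\Phi(Y^a)-\Phi(Y^0)\right)$ is \emph{not} bounded on a compact manifold, so the dominated-convergence step has no analogue. But your resolution is misdirected. Corollary~\ref{C4} sits in Appendix~\ref{preliminary} as a general assertion about an arbitrary family of continuous semimartingales on a compact $M$; it is not a statement about the specific approximations from Theorem~\ref{T2}. Invoking the local-time bound \eqref{expLat} of Corollary~\ref{convLat}, or the boundedness of the particular diffusion coefficients, is therefore illegitimate---those estimates are simply unavailable for a generic family $\{Y^a\}$.

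In fact the second block, read as a general statement, is false. Take $M=S^1$, $Y^0\equiv p$ a fixed point, and $Y^a_t=p+\tfrac14\sin(t/a^2)\,\xi_a$ where $\xi_a$ is an $\F_0$-measurable Bernoulli variable with $\P(\xi_a=1)=a$. Each $Y^a$ is a smooth bounded-variation semimartingale with values in a fixed arc; one computes $V^a_T=0$ with probability $1-a$ and $V^a_T\asymp T/a^2$ with probability $a$. Hence $V^a_T\to 0$ in probability (so SM convergence holds, by Definition~\ref{SM}), yet $\E[V^a_T]\asymp 1/a\to\infty$, so $\SH_1([0,T])$ convergence fails. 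The point is that Proposition~\ref{P3} delivers only \emph{local} $\SH_p$ convergence along subsequences, and---unlike on the $\scrS$ side---compactness of $M$ does not upgrade ``local'' to ``global on $[0,T]$'' for $\SH_p$. So either the second block should be read with ``locally in $\SH_p$'' throughout, or it carries an implicit uniform $\SH_{p'}$-bound hypothesis; the paper uses only the first block (see the localisation lemma in \S\ref{Subsection9.1}), so the issue is harmless for the main results.
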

	
\section{\\ Ikeda and Watanabe's Damped parallel Translation}
\Label{section:damped}
The parallel transport $P_t$ along a semi-martingale $(Z_t)$
 is the semi-martingale with values in  $L(T_{Z_0}M,T_{Z_t}M)$  solving the Stratonovich SDE
\begin{equation}\Label{Pt}
 \circ dP_t={\mathfrak h}_{P_t}(\circ dZ_t), \quad P_0={\rm Id}_{T_{Z_0}M}
\end{equation}
where ${\mathfrak h}_{P_t}$ denotes horizontal lift to the orthonormal frame bundle.
We have identified $\R^d$ with $T_{Z_0}M$.  Parallel transport
is an isometry, a proof for its existence on manifolds 
with boundary can be found in  \cite[N. Ikeda and S. Watanabe]{Ikeda-Watanabe}. For simplicity we also use the notation
 $\paral_t(Z)$.

If $Z_t$ is a diffusion process with generator $\L=\f12 \D+U$, where $U$ is a time dependent vector field,
 remaining in the interior of $M$ for all time (which happens if $M$ has no boundary or if $U$ is sufficiently strong in a neighbourhood of the boundary), then 
the parallel transport $P_t$ along $Z_t$ is the diffusion process whose generator on differential $1$-forms  is $ \f12 \trace\nabla^2+\nabla_U$. If $\Delta^1 =-(d^\ast d +d d^\ast)$ is the Hodge Laplacian, ${\rm trace}\na^2=\Delta^1+\Ric$.
The damped parallel translation $W_t$ along $Z_t$ is the solution to the equation
\begin{equation}\Label{CovderW}
 DW_t=\left(\nabla_{W_t}U-\f12\Ric^\sharp (W_t)\right)\,dt, \quad W_0={\rm Id}_{T_{Z_0}M},
\end{equation}
where the covariant derivative $DW_t$ is defined to be $\parals_t d\left(\parals_t^{-1}W_t\right)$.
The process $(W_t)$  is a diffusion process with generator on $1$-forms $L^W$ :\begin{equation}\Label{GenWt}
\L^W\a= \f12\Delta^1 \a+\nabla_U\a +\a \left( \nabla_\cdot U\right).
\end{equation}

The fundamental property of $\L^W$ is its commutation with differentiation: 
\begin{equation}\Label{commut}
 d( \L f)=\L^W( df),\qquad f\in C^\infty(M).
\end{equation}
As a consequence, if $F\in C^{1,2}([0,T]\times M,\RR)$ is such that $F(t,Z_t)$ is a local martingale, then $dF(t,W_t)$ is also a local
martingale, where $dF$ is the differential of $F$ in the second variable. On the other hand,~\eqref{CovderW} together with the fact that
$P_t$ is an isometry yield estimations on the norm of $W_t$. This allows to estimate the norm of $dF$. 
Another fundamental property is that $W_t$ is the derivative of the flow corresponding to parallel couplings of $L$-diffusions. 


We construct a damped parallel transport along Brownian motion in a manifold with boundary. The covariant derivative $DW_t$ has three components: one coming from the behaviour in $M^0$
(the usual one), one tangential to $\partial M$ absolutely continuous with respect to $dL_t$ and involving the shape operator of 
$\partial M$, and the third is normal to $\partial M$ and has jumps. This is similar to what happens for the half line. 
Concerning the half line case 
the flow corresponding to parallel coupling of reflected Brownian motion is $ \partial_x X_t$, so in 
this case $W_t=\partial_x X_t$ and the study is complete.

We also define the second fundamental form and shape operator 
for level sets of the distance function to the boundary.  Let  $
 S(r)=\{y\in F_0,\ R(y)=r\}.$
Within a tubular neighbourhood $F_0$ of the boundary, $R$ is smooth around $x$. Let $r=R(x)$ and $\nu_x=\s_1(x)=\nabla R(x)$.
For $w\in T_xM$, $w'\in T_xS(r)$ we define $\Pi_x:T_xM\times T_xS(r)\to \R$ and ${\mcS}_x:T_xM\to T_xS(r)$ by
\begin{equation}
 \Label{So}
 \Pi(w,w')= \langle {\mcS}(w),w'\rangle=-\langle \nabla_w\nu,w'\rangle=-\nabla dR(w,w') 
\end{equation}
The bilinear map $\Pi$ is said to be the second fundamental form of $S(r)$ and
${\mcS}_x$ its shape operator or the Weingarten map. 


Assume that there exists $\delta_0>0$ and a tubular neighbourhood of $\partial M$ with radius
  $3\delta_0$. If $D$ is a set denote $\tau_D$  the exit time of $Y$ from $D$. 

\begin{lemma}\Label{EqRBM}
Let $U$ be a relatively compact set of $M$. Let $R_0<\delta_0$, $Y_0\in U$, $Y_t$ the reflected Brownian motion, and
$$\tau_{2\delta_0}=\inf\{t: R_t=2\delta_0\}.$$
Then  under a probability measure equivalent to $P$,
$\{ R_t, t <\tau_{\delta_0}\wedge \tau_U\wedge T\}$ is the solution to a Skorohod problem for a one dimensional Brownian motion on $\R_+$.
\end{lemma}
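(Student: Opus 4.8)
The plan is to show that, on the stopped interval, $R_t=R(Y_t)$ solves a one-dimensional stochastic differential equation which, after a Girsanov change of measure killing the drift, becomes exactly the Skorohod equation on $\R_+$; uniqueness of the Skorohod problem then gives the claim. Write $\tau:=\tau_{\delta_0}\wedge\tau_U\wedge T$, where $\tau_{\delta_0}=\inf\{t:R_t=\delta_0\}$ (consistently with the notation of the lemma).

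First I would derive the equation for $R_t$ on $\{t<\tau\}$. There $Y_t$ stays in $F_{2/3}=\Phi(\partial M\times[0,\delta_0))$, where $R$ is the (smooth) distance function to $\partial M$, $\sigma_1=\nabla R$ with $|\nabla R|\equiv 1$ (Proposition~\ref{construction}(2)), and $\langle\sigma_1,\sigma_j\rangle=0$ for $j\ge 2$ (Proposition~\ref{construction}(4)). Applying It\^o's formula to \eqref{Yt}, using that the generator of $Y$ in $M^0$ is $\tfrac12\Delta$ and that $\langle\nabla R,A\rangle=1$ on the support of $dL$, one obtains
\[
dR_t=dB_t^1+\tfrac12\Delta R(Y_t)\,dt+dL_t,\qquad t<\tau,
\]
where $B^1$ is a standard one-dimensional Brownian motion (the component of $B$ singled out by the orthogonality relation), $L$ is non-decreasing and, by Definition~\ref{DRSF}, increases only on $\{R_t=0\}$, and $R_t\ge 0$.

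Next I would remove the drift by Girsanov's theorem. Since $\overline U$ is compact and $\Delta R$ is smooth on $F_0\supset F_{2/3}$, the process $\tfrac12\Delta R(Y_t)$ is bounded, say by $K$, on $\{t<\tau\}$. Define $Q$ on $\F_\tau$ by
\[
\frac{dQ}{dP}=\exp\left(-\int_0^\tau\tfrac12\Delta R(Y_s)\,dB_s^1-\tfrac12\int_0^\tau\Big(\tfrac12\Delta R(Y_s)\Big)^2\,ds\right).
\]
Novikov's condition holds (the relevant exponent being bounded by $\tfrac12K^2T$), so $Q$ is a probability measure equivalent to $P$; under $Q$ the process $\widetilde B_t:=B_t^1+\int_0^t\tfrac12\Delta R(Y_s)\,ds$ is a standard Brownian motion on $\{t<\tau\}$. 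The equation for $R$ then reads $dR_t=d\widetilde B_t+dL_t$ with $R_t\ge0$ and $\int_0^t\1_{\{R_s>0\}}\,dL_s=0$, i.e.\ $(R_t,L_t)_{t<\tau}$ solves the Skorohod problem for the Brownian motion $R_0+\widetilde B_\cdot$ on $\R_+$; by uniqueness of that problem (cf.\ Appendix~\ref{section-half-line}) it is its solution.

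The step that will need the most care is the bookkeeping around the stopping time and the embedding: one must check that the drift is genuinely bounded on the stopped interval — this uses relative compactness of $U$, since $F_{2/3}$ itself need not be relatively compact when $\partial M$ is non-compact — that only the distinguished component $B^1$ of the driving Brownian motion is being transformed, and that equivalence of laws is asserted only up to $\tau$, which is all that is needed since $R_t$ is only being described for $t<\tau$. Beyond these points the argument is the standard Girsanov/Skorohod computation.
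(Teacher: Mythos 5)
Your proof is correct and follows essentially the same route as the paper's: apply It\^o's formula together with Proposition~\ref{construction} to obtain $dR_t=dB_t^1+\tfrac12\Delta R(Y_t)\,dt+dL_t$, use a Girsanov change of measure driven by the exponential martingale of $-\int_0^\cdot \tfrac12\Delta R(Y_s)\,dB_s^1$ to absorb the drift, and invoke uniqueness of the Skorohod problem. Your explicit verification of Novikov's condition via boundedness of $\Delta R$ on the stopped interval is a detail the paper leaves implicit, but it is the same argument.
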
 
\begin{proof}
Within $E_0$, because of Proposition~\ref{construction} (2),
$$R_t=R_0+B_t^1
 +\int_0^t \Delta R(Y_s) ds +L_t. $$
Let $Q$ be the probability measure whose density with respect to $P$ is
the exponential martingale of $-\int_0^t\Delta R(Y_s) dB^1_s$.
Then under $Q$, $\tilde B_t^1:=B_t^1
 +\int_0^t \Delta R(Y_s) ds$ is a Brownian motion. On the other hand,  $L_t$ is nondecreasing and $dL_t$ vanishes when  $R_t\not=0$. Since $R_t\ge 0$ we have $(R,L)=\Phi(0,\tilde B^1)$, the solution to Skorohod problem. See \eqref{SkPr}. 
 By the uniqueness of the Skorohod problem, under $Q$,
$\{ R_t, t <\tau_{2\delta_0}\wedge\tau_U\wedge T\}$ has the law of a one dimensional reflected Brownian motion.
\end{proof}
 

Let ${\mathfrak L}(\omega) =\{t\ge 0: Y_t(\omega) \in \partial M\}$ be the set of times that $Y_t$ spends on the boundary. It has Lebesque measure zero for a.s. all $\omega$ and its complement 
$$(0,\infty)\setminus {\mathfrak L}(\omega)=\cup _\alpha (l_\alpha(\omega), r_\alpha(\omega))$$ is the union of countably many disjoint open intervals,  the excursion intervals.
Denote the set of right end times of excursions by $\SR(\om)$:
$$\SR(\om)=\cup_\alpha\{ r_\alpha(\omega)\}.$$
  
 We are interested in defining a damped parallel translation $W_t$ along $Y_t$, which agrees with the usual one
 during an excursion, and pick up a change of direction when exiting the boundary. The normal direction 
 on the boundary is zero: we remove $-\<W_t, \nu(Y_t)\> \nu(Y_t)$ upon the process entering the boundary. 
We would have liked to define a stochastic processes $(W_t)$, if it were possible, with values in $n\times n$ matrices, satisfying

\begin{align*}
(\parals_t )^{-1}W_t =&(\parals_{r_\alpha} )^{-1}W_{r_{\alpha}} 
-\f12\int_{r_{\alpha}}^t (\parals_s )^{-1} \Ric^\sharp(\parals_s-) ((\parals_s )^{-1} W_s)\,ds\\
&- \int_{r_{\alpha}}^t (\parals_s )^{-1} \mcS(\parals_s-)\left( ( \parals_s )^{-1} W_s^\e\right)\,dL_s, \quad 
t\in (r_{\alpha}, r_{\alpha+1}) 
\end{align*}
\begin{align*}
&(\parals_{r_{\alpha+1}})^{-1}W_{r_{\alpha+1}}\\&
=(\parals_{(r_{\alpha+1})-} )^{-1}W_{(r_{\alpha+1})-}
-\langle W_{(r_{\alpha+1})-},\nu_{Y_{(r_{\alpha+1})-}}\rangle (\parals_{r_{(\alpha+1})-})^{-1} \nu_{Y_{(r_{\alpha+1})-}}.
\end{align*}
Given $(l_\alpha, r_\alpha)$, for any $\epsilon>0$ there is an excursion $(l_{\alpha'}, r_{\alpha'})$  such that $0<l_{\alpha'}-r_\alpha<\epsilon$, and so the heuristic definition given above does not make sense. 


We remedy this problem with an approximation adding jumps only on excursions
of size greater or equal to $\epsilon$. We consider the set of excursions of lengths greater or equal to a given size $\e>0$
and define
$$\SR_\e(\omega)=\{ s=r_\alpha(\om)\in\SR(\om): r_\a(\om)-l_\a(\om)\ge \e\},$$
where the excursions of size greater than or equal to $\varepsilon$ are ordered with $l_1$ the first time $Y_t$ hits the boundary and we consider only $\alpha \in \N$.
If $r_\alpha-l_\alpha\ge \varepsilon$,
\begin{align*}
(\parals_t )^{-1}W_t^\e &=(\parals_{r_\alpha} )^{-1}W_{r_{\alpha}}^\e 
-\f12\int_{r_{\alpha}}^t (\parals_s )^{-1} \Ric^\sharp(\parals_s-) ((\parals_s )^{-1} W_s^\e)\,ds\\
&- \int_{r_{\alpha}}^t (\parals_s )^{-1} \mcS(\parals_s-)\left( ( \parals_s )^{-1} W_s^\e\right)\,dL_s, \quad 
t\in (r_{\alpha}, r_{\alpha+1})
\end{align*}
\begin{align*}
&(\parals_{r_{\alpha+1}})^{-1}W_{r_{\alpha+1}}^\e\\&
=(\parals_{(r_{\alpha+1})-} )^{-1}W_{(r_{\alpha+1})-}^{\e}-\langle W_{(r_{\alpha+1})-},\nu_{Y_{(r_{\alpha+1})-}}\rangle (\parals_{r_{(\alpha+1})-})^{-1} \nu_{Y_{(r_{\alpha+1})-}}.
\end{align*}
where $(W^{\epsilon, T})$ denotes the tangential part of $W_t$. This takes into consideration those times slightly before  $(l_{\alpha+1},r_{\alpha+1})$ and is relevant to the
integration with respect to $L_t$.

Since $Y_t$ spends Lebesgue time $0$ on the boundary, for integration with respect to a continuous
process we could ignore the boundary process. We would like to simply remove the normal part of 
$W_t$ upon it touches down to the boundary. We are lead to the following alternative description. Let $v\in T_{Y_0}M$
and $t>0$, for almost surely all $\omega$, the following folds,

{\begin{align*}
(\parals_t )^{-1}W_t^\e  =& Id
-\f12\int_{0}^t (\parals_s )^{-1} \Ric^\sharp(\parals_s-) ((\parals_s )^{-1} W_s^\e )\,ds\\
&- \int_{0}^t (\parals_s )^{-1} \mcS(\parals_s-)\left( ( \parals_s )^{-1} W_s^\e\right)\,dL_s, \quad 
t\not \in \SR_\e(\omega) \\
(\parals_t)^{-1}W_t^\e
=& (\parals_{t-} )^{-1}W_{t-}^\e 
-\sum_{s\le t, s\in \SR_\e(\omega)} \langle W_{t-}^\e,\nu_{Y_{t-}}\rangle (\parals_{t-})^{-1} \nu_{Y_{t-}}.
\end{align*}
In other words, $W_t^\e$ is continuous at any time $t$ that is not an element of $\SR_\e(\omega)$,
and satisfies the following covariant equation
$$DW_t =-\frac{1}{2} \Ric^{\#}(W_t)\,dt-\mcS(W_t)\,dL_t.$$
If $t$ is the right hand side of an excursion, we remove the normal part of its component.
This description will be used in Theorem \ref{T3}.

Define
\begin{equation}
\label{T}
{\mathcal T}
=\{ \a_t \in \SH_\infty([0,T]):  \a_t\in T_{Y_t}^\ast M, \a_t=0 \hbox { on } \{Y_t\not\in D\} \hbox{ for some D} \}.
\end{equation}
where $D$ is relatively compact subset of $M$.
 This is the set of bounded semi-martingale with values in the pull back cotangent bundle by $Y_t$, with the property that there is relatively compact subset $D$ of $M$ such that $\a_t=0$ whenever $Y_t\not\in D$. 
Denote by $\tau^D$ the first exit time from $D$ by $Y_t$.
 
 \begin{definition}
\label{define-w}
The limit process  $W_t \in L(T_{Y_0}M,T_{Y_t}M)$, below in  Theorem \ref{T3},
 is said to be a solution to the following equation
 \begin{equation}\Label{W}
 DW_t=-\f12\Ric^\sharp(W_t)\,dt-\mcS(W_t)\,dL_t-\1_{\{t\in \SR(\om)\}}\langle W_{t-},\nu_{Y_t}\rangle \nu_{Y_t}, \quad W_0={\rm Id}_{T_{Y_0}M}.
\end{equation}
\end{definition} 
\begin{theorem}
 \Label{T3}
Let
 $W_t^\e$ the solution to
\begin{equation}\Label{We}
 DW_t^\e=-\f12\Ric^\sharp(W_t^\e)\,dt-\mcS(W_t^\e)\,dL_t
-\1_{\{t\in \SR_\e(\om)\}}\langle W_{t-}^\e,\nu_{Y_{t}}\rangle \nu_{Y_t}, \quad W_0^\e={\rm Id}_{T_{Y_0}M}.
\end{equation}
There exists an  adapted right continuous stochastic process $W_t$ such that 
$\lim_{\epsilon \to 0} W_t^\epsilon= W_t$ in UCP and in $\scrS_p $ for $M$ compact, and any $p\ge 1$. Furthermore for any $\alpha\in {\mathcal T}$,
 $$\lim_{\epsilon\to 0}\left(\int_0^{\cdot\wedge \tau_D}  \alpha_s (DW_s^\e) \right)\stackrel{\scrS_2} =\left(\int_0^{\cdot\wedge \tau_D} \alpha_s (DW_s) \right).$$
\end{theorem}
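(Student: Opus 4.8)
The plan is to establish Theorem~\ref{T3} in three stages: first construct the limit $W_t$ pathwise via a monotone/exhaustion argument on excursions, then prove convergence of $W_t^\e$ in the stated topologies, and finally pass to the limit under the stochastic integral against $\alpha\in\mathcal T$. The key structural observation is that the equation \eqref{We} is driven by three pieces --- a smooth Ricci term, an $L_t$-term involving the shape operator, and a countable sum of ``normal deletions'' indexed by $\SR_\e(\omega)\subset\SR_{\e'}(\omega)$ for $\e<\e'$. Since $\SR(\omega)$ is a countable set and, by Lemma~\ref{EqRBM}, $R_t$ has the law of a one-dimensional reflected Brownian motion up to an equivalent change of measure, the excursion structure is well understood: the total length $\sum_\alpha (r_\alpha-l_\alpha)$ over excursions of length $\ge\e$ is finite on $[0,T]$, so $\SR_\e(\omega)$ is locally finite and $W_t^\e$ is a genuine càdlàg process solving a (pathwise) linear ODE between the finitely many jump times in each compact interval.

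First I would fix $\omega$ outside a null set and analyze $w_t^\e=(\parals_t)^{-1}W_t^\e$, which lives in the fixed vector space $L(T_{Y_0}M,\R^d)$. Between consecutive elements of $\SR_\e(\omega)$ it solves the linear equation $dw_t^\e = -\tfrac12 \ric_t(w_t^\e)\,dt - \rms_t(w_t^\e)\,dL_t$ with coefficients that are (pathwise) bounded on compacts; at each $s\in\SR_\e(\omega)$ it undergoes the bounded linear map $I - \langle\cdot,n_s\rangle n_s$ (orthogonal projection killing the normal direction). To get convergence as $\e\to 0$ I would compare $w_t^\e$ and $w_t^{\e'}$ for $\e<\e'$: they satisfy the same continuous equation and $\SR_{\e'}(\omega)\subset\SR_\e(\omega)$, so $\delta_t := w_t^\e - w_t^{\e'}$ picks up new projections only at the extra jump times $\SR_\e\setminus\SR_{\e'}$, which all come from excursions of length in $[\e,\e')$. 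The crucial point, exactly as in Proposition~\ref{P2} and its proof, is that at the right endpoint $r_\alpha$ of a short excursion the normal component $\langle W_{r_\alpha-}^{\e'},\nu\rangle$ is itself small --- bounded by a continuous functional integrated over the short excursion interval $(l_\alpha,r_\alpha)$ plus the left-limit value which vanishes (in the $\e'$-process) at $l_\alpha$ if $l_\alpha$ is itself a deleted endpoint, and is otherwise controlled by $\sup$ of the excursion. Summing these contributions over the short excursions and using that their total length tends to $0$, together with a Gronwall estimate for the linear flow, gives $\sup_{t\le T}\|\delta_t\|\to 0$ a.s., hence pathwise uniform convergence to a limit $W_t$. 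The càdlàg property and the identification of $W_t$ as satisfying \eqref{W} (interpreted as the $\e\to0$ limit, as the text stipulates) then follow. For the $\scrS_p$ statement when $M$ is compact, I would invoke the uniform bound $\E\sup_{t\le T}|W_t^\e|^p \le C$ coming from Lemma~\ref{Wbdd}-type estimates (the projections only decrease the norm, so $|W_t^\e|^2\le |W_0|^2 e^{-\int_0^t\underline\Ric\,ds - 2\int_0^t\underline\mcS\,dL_s}$ holds verbatim), which gives uniform integrability and upgrades a.s./UCP convergence to $\scrS_p$ convergence via Corollary~\ref{C4}.

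For the final integral assertion, I would write $\int_0^{\cdot\wedge\tau_D}\alpha_s(DW_s^\e)$ by splitting $DW_s^\e$ into its continuous part $-\tfrac12\Ric^\sharp(W_s^\e)\,ds - \mcS(W_s^\e)\,dL_s$ and its pure-jump part $-\sum_{s\in\SR_\e}\langle W_{s-}^\e,\nu_{Y_s}\rangle\nu_{Y_s}$. The continuous part converges in $\scrS_2$ because $W^\e\to W$ in $\scrS_p$, $\alpha\in\SH_\infty$, $L_t^a$ and $L_t$ are controlled, and one uses the Emery inequality \eqref{HSH} together with the $\scrS_p$--$\SH_p$ relations; the $\Ric$ term is handled the same way using boundedness of $\Ric^\sharp$ on $D$. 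For the jump part, $\int_0^{t}\alpha_s(\text{jump of }W^\e) = -\sum_{s\le t,\, s\in\SR_\e(\omega)}\langle W_{s-}^\e,\nu_{Y_s}\rangle\,\alpha_s(\nu_{Y_s})$; since $\alpha_s$ is bounded, this sum is dominated by $\|\alpha\|_\infty \sum_{s\in\SR_\e}|\langle W_{s-}^\e,\nu_{Y_s}\rangle|$, and the same short-excursion estimate used above shows the increments added when passing from $\e'$ to $\e$ are $\scrS_2$-small, so the sequence of integrals is Cauchy in $\scrS_2$ and converges to the corresponding object for $W$. The main obstacle I anticipate is precisely the bookkeeping in this short-excursion estimate: one must show that $\sum_{\alpha:\, \e\le r_\alpha-l_\alpha<\e'} |\langle W_{r_\alpha-}^{\e'},\nu\rangle|$ (and its square-mean) tends to $0$, which requires combining the quadratic-vanishing bound on $f(t)^2$ over an excursion (the computation in the proof of Proposition~\ref{P2}, giving $f(r_\alpha)^2 \lesssim \int_{l_\alpha}^{r_\alpha}(\cdots)$ with a continuous integrand) with the fact that $\sum_\alpha (r_\alpha - l_\alpha)\to 0$ as the cutoff shrinks, plus uniform $L^p$ control of $W^\e$ from Lemma~\ref{Wbdd}. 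Everything else is a routine application of Gronwall, the Emery inequality, and the $\scrS_p$/$\SH_p$/UCP dictionary recorded in Appendix~\ref{preliminary}.
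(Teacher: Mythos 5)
Your overall strategy is the paper's: pass to $w_t^\e=\parals_t^{-1}W_t^\e$, compare $w^\e$ with $w^{\e'}$, absorb the common jumps (which only help, since they contribute $-2\sum\langle w_{s-}^\e-w_{s-}^{\e'},n_s\rangle^2\le 0$), control the cross terms coming from the extra jump times, apply Gronwall with the exponential moment bound on $L_T$, and handle the integral assertion by integration by parts against $\alpha$ plus Emery's inequality. However, there is a genuine gap in your central estimate. You claim that at the right endpoint $r_\alpha$ of a short excursion the normal component $\langle W^{\e'}_{r_\alpha-},\nu\rangle$ is ``bounded by a continuous functional integrated over the short excursion interval $(l_\alpha,r_\alpha)$,'' with the value at $l_\alpha$ either vanishing or ``controlled by $\sup$ of the excursion.'' This is false: the normal part of $W^{\e'}$ is reset to zero only at ends of excursions of length at least $\e'$, so at $r_\alpha-$ it carries the accumulated drift \emph{and martingale} contributions since the last such reset $u\in\SR_{\e'}$, and $(u,r_\alpha)$ can be much longer than $(l_\alpha,r_\alpha)$; moreover $l_\alpha$, being a \emph{left} endpoint of an excursion, is never a reset time of $W^{\e'}$. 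The quadratic computation you invoke from Proposition~\ref{P2} does not apply here: it is carried out for the limit process, whose normal part is reset at \emph{every} element of $\SR$, and it uses an increasing sequence $t_n\in\SR$ converging to $t$ --- precisely the feature the $\e'$-approximation lacks.

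The correct version of your estimate, which is what the paper proves, is that each cross term equals a stochastic integral $\int_u^{r_\alpha}d\bigl(\langle w^{\e'}_{r-},n_r\rangle\langle n_r,w^{\e'}_{r-}-w^{\e}_{r-}\rangle\bigr)$ over the interval back to the last vanishing time $u$ of the normal part; these intervals are pairwise disjoint (because each extra jump time is itself a reset time of $w^{\e'}$), so the whole sum $K_t$ is a single semimartingale $\int_0^t(a_s\,ds+b_s\,dB_s)$ with bounded coefficients supported on the set $U(\e)$ of times not contained in excursions longer than $\e$. Its smallness is then obtained in $L^q$ via BDG from $\E[\lambda(U(\e)\cap[0,T])]\to 0$ --- not pathwise, since the martingale part of the accumulated normal component is not controlled by the length of its support for a fixed $\omega$. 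Consequently your claim of almost-sure uniform convergence of $W^\e$ does not follow from the sketch; what one gets is Cauchyness in $\scrS_p$, hence UCP convergence (which is all the theorem asserts) and a.s.\ uniform convergence only along subsequences. You should also note the disjointness point explicitly: without it, ``summing the contributions'' over overlapping intervals would destroy the uniform bound on the integrand. Your treatment of the final integral statement by splitting $DW^\e$ into continuous and jump parts can be made to work as a Cauchy argument, but the paper's integration by parts $\int_0^t\alpha_s(DW_s^\e)-\int_0^t\alpha_s(DW_s^{\e'})=\alpha_t(W_t^\e-W_t^{\e'})-\int_0^tD\alpha_s(W_s^\e-W_s^{\e'})$ reduces it directly to the already-established $\scrS_2$ convergence of $W^\e$ and is cleaner, since it avoids any separate discussion of whether the limiting jump sum converges absolutely.
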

The same result but with different formulation can be found in ~\cite[N. Ikeda and S. Watanabe]{Ikeda-Watanabe}. 
We give a proof close to~\cite[N. Ikeda and S. Watanabe]{Ikeda-Watanabe}, which will be used for our approximation result (Theorem~\ref{T4} and 
Corollary~\ref{T4bis}).

\begin{proof}
Since the definition and convergence are local in $Y$, we can assume that $M$ is compact.
Since $Y_t(\om)$ has a finite number of 
excursions larger than $\e$, the process $W_t^\e$ is a well defined right continuous process. We first prove that as $\e\to 0$, 
$(W_t^\e)_{t\in [0,T]}$ converges in $\scrS_p$ to a process which we will call $(W_t)_{t\in [0,T]}$. 

Using the parallel translation process $\parals_t$ along $Y_t$, we reformulate the  equation as an equation in the linear space
$L(T_{Y_0}M, {T_{Y_0}}M)$. Set 
\begin{align}
\Label{wt0}
\ric_t=\parals_t^{-1}\circ \Ric_{Y_t}^\sharp\circ \parals_t, \quad \rms_t=\parals_t^{-1}\circ \mcS_{Y_t}\circ \parals_t,
\quad
\n_t=\parals_t^{-1}(\nu_{Y_t}).
\end{align}
Then $W_t^\epsilon$ is a solution to (\ref{We}) if and only if
 $w_t^\epsilon=(\parals_t )^{-1}W_t^\e$ satisfies the following equations.
  For any  $r_\alpha\in \SR_\e(\om)$,
\begin{align*}
w_t^\e =&-w_{r_{\alpha}}^\e 
-\f12\int_{r_{\alpha}}^t (\parals_s )^{-1} \ric_s (w_s^\e)\,ds
- \int_{r_{\alpha}}^t  \rms_s (w_s^\e)\,dL_s, \quad 
t\in (r_{\alpha}, r_{\alpha+1}) \\
w_{r_{\alpha+1}}^\e =&w_{(r_{\alpha+1})-}^\e 
-\langle w_{(r_{\alpha+1)})-}^\e,\n_{(r_{\alpha+1})-}\rangle \n_{(r_{\alpha+1}-)},
\end{align*}
This means $(w_t^\e)$ satisfies the following equation:
 \begin{equation}\Label{wteps}
dw_t^\e=-\f12\ric_t(w_t^\e)\,dt-\rms_t(w_t^\e)\,dL_t-\1_{\{t\in \SR_\e(\om)\}}\langle w_{t-}^\e,n_t\rangle n_t, \quad w_0={\rm Id}.
\end{equation}
Let $0<\e'<\e$, the difference between $w_t^\e$ and $w_t^{\e'}$ is given by
\begin{align*}
 w_t^\e-w_t^{\e'}=&\int_0^t\left(-\f12\ric_s(w_s^\e)+\f12\ric_s(w_s^{\e'})\right)\,ds+
 \int_0^t\left(-\rms_s(w_s^\e)+\rms_s(w_s^{\e'})\right)\,dL_s\\
 &-\sum_{\{s\in \SR_\e(\om)\cap[0,t]\}}\langle w_{s-}^\e,n_s\rangle n_s
 +\sum_{\{s\in \SR_{\e'}(\om)\cap[0,t]\}}\langle w_{s-}^{\e'},n_s\rangle n_s\\
 =&-\f12\int_0^t\ric_s(w_s^\e-w_s^{\e'})\,ds-\int_0^t\rms_s(w_s^\e-w_s^{\e'})\,dL_s\\
 &-\sum_{\{s\in \SR_{\e}(\om)\cap[0,t]\}}\langle w_{s-}^\e-w_{s-}^{\e'},n_s\rangle n_s
 +\sum_{\{s\in (\SR_{\e'}(\om)\backslash\SR_{\e})\cap[0,t]\}}\langle w_{s-}^{\e'},n_s\rangle n_s.
\end{align*}
Consequently,
\begin{align*}
\|w_t^\e-w_t^{\e'}\|^2=&-\int_0^t\langle\ric_s(w_s^\e-w_s^{\e'}),w_s^\e-w_s^{\e'}\rangle\,ds-2\int_0^t\langle \rms_s(w_s^\e-w_s^{\e'}),w_s^\e-w_s^{\e'}\rangle \,dL_s\\
&-2\sum_{\{s\in \SR_{\e}(\om)\cap[0,t]\}}\langle w_{s-}^\e-w_{s-}^{\e'},n_s\rangle^2 \\&
 +2\sum_{\{s\in (\SR_{\e'}(\om)\backslash\SR_{\e})\cap[0,t]\}}\langle w_{s-}^{\e'},n_s\rangle 
\langle n_s, w_{s-}^\e-w_{s-}^{\e'}\rangle
\end{align*}
which yields 
\begin{align*}
\|w_t^\e-w_t^{\e'}\|^2\le &
2\sum_{\{s\in (\SR_{\e'}(\om)\backslash\SR_{\e})\cap[0,t]\}}\langle w_{s-}^{\e'},n_s\rangle 
\langle n_s, w_{s-}^\e-w_{s-}^{\e'}\rangle
\\&+\int_0^t \|w_s^\e-w_s^{\e'}\|^2\left(-\underline{\Ric}(Y_s) \,ds-2\underline {{\mcS}}(Y_s) dL_s\right)
\end{align*}
and
\begin{equation}
\Label{bdddeltawt}
\|w_t^\e-w_t^{\e'}\|^2\le K_t
+\int_0^t \|w_s^\e-w_s^{\e'}\|^2\left(\rho\,ds+2C dL_s\right)
\end{equation}
where $\rho, C\ge 0$, $-\rho$ is a lower bound for the Ricci curvature, $C$ is an upper bound for the norm of the shape operator,
 and  
\begin{equation}
 \Label{Kt}
 K_t=\sup_{s\le t}\left|2\sum_{\{r\in (\SR_{\e'}(\om)\backslash\SR_{\e})\cap[0,s]\}}\langle w_{r-}^{\e'},n_r\rangle 
\langle n_r, w_{r-}^\e-w_{r-}^{\e'}\rangle\right|.
\end{equation}
So using Gronwall lemma we get 
 \begin{equation}
  \Label{bounddeltawt}
  \sup_{s\le t}\|w_s^\e-w_s^{\e'}\|^2\le K_te^{\rho t +2CL_t}.
 \end{equation}

 On the other hand it is a remarkable but not surprising fact that  each term $$\langle w_{s-}^{\e'},n_s\rangle 
\langle n_s, w_{s-}^\e-w_{s-}^{\e'}\rangle$$
 can be written as a stochastic integral over an interval not containing any excursion 
 of size larger than
 $\e$. This comes from the fact that the normal part of $w_t^{\e'}$ is set to zero at the end of each excursion of size at least $\e'$.
 More precisely,
 $$
 \langle w_{s-}^{\e'},n_s\rangle 
\langle n_s, w_{s-}^\e-w_{s-}^{\e'}\rangle=\int_u^sd\langle w_{r-}^{\e'},n_r\rangle 
\langle n_r, w_{r-}^\e-w_{r-}^{\e'}\rangle
 $$
where $u$ is the last vanishing time of $\langle w_{r}^{\e'},n_r\rangle $ before $s$. Now since we are outside excursions of size larger than $\e'$ the process inside the integral has no jumps, and since the range of $\rms_r$ is orthogonal to $n_r$,
the process inside the
 integral is a continuous semi-martingale whose drift is absolutely continuous 
 with respect to $ds$ with bounded derivative, see~\eqref{wteps} and~\eqref{E41.1}. 
 Consequently letting 
 $$ C_t:=2\sum_{\{s\in (\SR_{\e'}(\om)\backslash\SR_{\e})\cap[0,t]\}}\langle w_{s-}^{\e'},n_s\rangle 
\langle n_s, w_{s-}^\e-w_{s-}^{\e'}\rangle$$
we can write 
\begin{equation}
\Label{Cteq}
C_t=\int_0^t a_s ds+b_sdB_s
\end{equation}
where $a_s$, $b_s$ are uniformly bounded, $B_s$ is a real-valued Brownian motion and $a_s$, $b_s$ vanish outside $U(\e)\cap [0,t]$ where $U(\e)$ is the set of times not 
 contained in excursions larger than~$\e$.
 
 From this we get for $q>1$
\begin{equation}
\Label{Ktq}
\E[K_T^q]^{1/q}\le C_q\E\left[\lambda(U(\e)\cap [0,T])\right]
\end{equation}
for some $C_q>0$, where $\lambda$ is the Lebesgue measure in $\R$.
On the other hand by~\eqref{expLat} in Corollary~\ref{convLat} the random variable $e^{\rho T+2CL_T}$ has finite moments of any order. 
As a consequence, using~\eqref{bounddeltawt} and H\"older inequality, for any $p\ge 1$
 $$
 \E[|\sup_{t\le T}w_t^\e-w_t^{\e'}|^p]^{1/p}\le C_p'\E\left[\lambda(U(\e)\cap [0,T])\right].
 $$
for some $C_p'>0$.
 The left hand side goes to $0$ as $\e\to 0$, so for $\e_n\to 0$, $w^{\e_n}$ is a Cauchy sequence in $\scrS_p$, 
 it converges to some process $w$. Clearly $w$ does not depend on the sequence.  Letting
 $W_t=\parals_tw_t$  then $W_t^\e$ converges to $W_t$ in $\scrS_p$.
 
 Let us now prove that $DW^\e$ converges to $DW$ in the sense given by theorem~\ref{T3}. 
 Let
 $(\a_t)_{t\in [0,T]}$ be a $\SH_\infty$ semimartingale taking its values in $T^\ast_{Y_t}M$, bounded by $1$.
 We have for $0<\e'<\e$
 \begin{align*}
  &\int_0^t\a_s \left( DW_s^\e\right) - \int_0^t\a_s \left( DW_s^{\e'}\right)\\
  &=\a_t \left(W_t^\e-W_t^{\e'}\right)-\int_0^t D\a_s \left(  W_s^\e-W_s^{\e'}\right).
 \end{align*}
 By (\ref{HSH}),  a result of M. Emery, we see that
 \begin{align*}
 &\E\left[\sup_{t\le T}\left\|\int_0^t \a_s \left(DW_s^\e\rangle\right) - 
 \int_0^t \a_s \left(  DW_s^{\e'}\right) \right\|^2\right]\\
  &\le \|\a\|_{\scrS_\infty([0,T])}\|W^\e-W^{\e'}\|_{\scrS_2([0,T])}
  +\|W^\e-W^{\e'}\|_{\scrS_2([0,T])}\|\a\|_{\SH_\infty([0,T])}.
 \end{align*} 

From the first part of the proof and the assumption on $\a$, we get that 
  for $\e_n\to 0$, 
 $\left(\int_0^\cdot \a_s \left( DW_s^{\e_n}\right)\right)_{t\in [0,T]}$ is a Cauchy sequence in $\scrS_2([0,T])$, 
 so it converges to some process which is linear in $\alpha_s$. Consequently 
 we denote it by $\left((\int_0^\cdot \a_s \left( DW_s\right)\right)_{t\in [0,T]}$.
\end{proof}


%
%
%

\providecommand{\bysame}{\leavevmode\hbox to3em{\hrulefill}\thinspace}

\end{document}